%%%%%%%%%%%%%%%%%%%%%%%%%%%%%%%%%%%%%%%%%%%%%%%%%%%%%%%%%%%%%%
\documentclass[11pt,reqno]{amsproc}

%%%%%%%%%%%%%%%%%%%%%%%%%%%%%%%%%%%%%%%%%%%%%%%%%%%%%%%%%%%%%
%% Macros                                                  %%
%%%%%%%%%%%%%%%%%%%%%%%%%%%%%%%%%%%%%%%%%%%%%%%%%%%%%%%%%%%%%

% MAIN PACKAGES

\usepackage{amsmath,amsfonts,amssymb,amsthm}
\usepackage[abbrev,lite,nobysame]{amsrefs}
\usepackage{graphics,graphicx}
\usepackage[usenames,dvipsnames]{color}
\usepackage{times}
\usepackage{bbm}
\usepackage[margin=1in]{geometry}
\usepackage[colorlinks=true, pdfstartview=FitV, linkcolor=blue, citecolor=blue, urlcolor=blue]{hyperref}

\usepackage{mathtools}

\mathtoolsset{showonlyrefs=true}

% SECTION FORMATTING AND TOC

\makeatletter

%section
\renewcommand\subsection{\@startsection{subsection}{2}%
\normalparindent{.5\linespacing\@plus.7\linespacing}{-.5em}
{\normalfont\bfseries}}

%subsection
\renewcommand\subsubsection{\@startsection{subsubsection}{3}%
\normalparindent{.5\linespacing\@plus.7\linespacing}{-.5em}
{\normalfont\bfseries}}

%bullet paragraph 
\newcommand{\bullpar}[1]{\vspace{.5em}\noindent $\bullet$ \normalfont {\bfseries #1.}}

%%diamond paragraph   
\newcommand{\diampar}[1]{\vspace{.5em}\noindent $\diamond$ \normalfont {\itshape #1.}}

\def\@tocline#1#2#3#4#5#6#7{\relax
  \ifnum #1>\c@tocdepth % then omit
  \else
    \par \addpenalty\@secpenalty\addvspace{#2}%
    \begingroup \hyphenpenalty\@M
    \@ifempty{#4}{%
      \@tempdima\csname r@tocindent\number#1\endcsname\relax
    }{%
      \@tempdima#4\relax
    }%
    \parindent\z@ \leftskip#3\relax \advance\leftskip\@tempdima\relax
    \rightskip\@pnumwidth plus4em \parfillskip-\@pnumwidth
    #5\leavevmode\hskip-\@tempdima
      \ifcase #1
       \or\or \hskip 1em \or \hskip 2em \else \hskip 3em \fi%
      #6\nobreak\relax
    \dotfill\hbox to\@pnumwidth{\@tocpagenum{#7}}\par
    \nobreak
    \endgroup
  \fi}
\makeatother

% THEOREMS AND STUFF
\newtheorem{theorem}{Theorem}
\newtheorem{proposition}{Proposition}[section]
\newtheorem{lemma}[proposition]{Lemma}

\theoremstyle{definition}

\newtheorem{remark}[proposition]{Remark}

\numberwithin{equation}{section}

% SHORTCUTS

\newcommand\eps{\varepsilon}
\newcommand\e{{\rm e}}
\newcommand\dd{{\rm d}}
\newcommand\ddt{{\frac{\dd}{\dd t}}}
\def\Re{{\rm Re}}

\def\l {\langle}
\def\r {\rangle}
\newcommand\de{{\partial}}

\newcommand{\norm}[1]{\left\lVert #1 \right\rVert}
\newcommand{\jap}[1]{\left\langle #1 \right\rangle}
\newcommand{\abs}[1]{\left\lvert #1 \right\rvert}

% SETS

\newcommand{\NN}{\mathbb{N}}
\newcommand{\ZZ}{\mathbb{Z}}
\newcommand\TT {{\mathbb T}}
\newcommand\RR {{\mathbb R}}

% BOLDFACE

\newcommand\bu{{\boldsymbol u}}

\newcommand\bI{{\boldsymbol I}}
\newcommand\bU{{\boldsymbol U}}
\newcommand\bX{{\boldsymbol X}}

% TILDE
\newcommand\tA{{\widetilde A}}

% MATHCAL

\newcommand\cE{{\mathcal E}}
\newcommand\cF{{\mathcal F}}
\newcommand\cG{{\mathcal G}}
\newcommand\cH{{\mathcal H}}
\newcommand\cR{{\mathcal R}}

\newcommand\cV{{\mathcal V}}

% HATS

\newcommand\hOmega{{\widehat \Omega}}

\newcommand\hTheta{{\widehat \Theta}}
\newcommand\hh{{\widehat h}}

% STARS
\newcommand\sDelta{{\Lambda}}
\newcommand\sOmega{{\Omega^\star}}
\newcommand\sQ{{ Q^\star}}
\newcommand\sPsi{{ \Psi^\star}}
\newcommand\sTheta{{ \Theta^\star}}
\newcommand\sZ{{ Z^\star}}
\newcommand\hsOmega{{\widehat \Omega^\star}}

\newcommand\hsPsi{{\widehat \Psi^\star}}
\newcommand\hsTheta{{\widehat \Theta^\star}}

%%%%%  COMANDO VIRGOLETTE 

%%%%% COMMAND TO CREATE A WHITE PAGE WITHOUT NUMBERING

%%%%%%%%% MACROS FOR COMMENTS (ONE MAY CHOOSE ITS FAVOURITE COLOR)
\renewcommand{\div}{\mathrm{div}}
\def\R {\mathbb{R}}

\def\de{{\partial}}

\def\N{{\mathbb{N}}}

\def\G{\mathcal{G}}
\newcommand{\skli}{\sum_{k,\ell}\int_{\RR^2}}
\newcommand{\TS}{2\max\{\sqrt{|\eta|},\sqrt{|\xi|}\}}
\newcommand{\TL}{2\min\{|\eta|,|\xi|\}}

%Citation keys in blue, small on the side
%\providecommand*\showkeyslabelformat[1]{{\normalfont \tiny#1}}
%\usepackage[notref,notcite,color]{showkeys}
%\definecolor{labelkey}{rgb}{0,0,1}
%\definecolor{rob}{rgb}{0.13, 0.55, 0.13}
%\definecolor{jacob}{rgb}{0.88, 0.22, 0.88}

%%%%%%%%%%%%%%%%%%%%%%%%%%%%%%%%%%%%%%%%%%%%

%% Definition footnotes to choose the marker
\makeatletter
\def\@xfootnote[#1]{%
	\protected@xdef\@thefnmark{#1}%
	\@footnotemark\@footnotetext}
\makeatother

%% Macros
%
%\newcommand{\rob}[1]{\textcolor{rob}{#1}}
%\newcommand{\jacob}[1]{\textcolor{jacob}{#1}}

%%% FIRST PAGE
\title[Inviscid damping and instability in Boussinesq]{Nonlinear inviscid damping and shear-buoyancy instability\\ in the  two-dimensional Boussinesq equations}
\author[J. Bedrossian]{Jacob Bedrossian}

\address{Department of Mathematics, University of Maryland, College Park, MD 20742, USA}
\email{jacob@math.umd.edu}
\author[R. Bianchini]{Roberta Bianchini}
\address{IAC, Consiglio Nazionale delle Ricerche, 00185 Rome, Italy}
\email{r.bianchini@iac.cnr.it}

\author[M. Coti Zelati]{Michele Coti Zelati}
\author[M. Dolce]{Michele Dolce}
\address{Department of Mathematics, Imperial College London, London, SW7 2AZ, UK}
\email{m.coti-zelati@imperial.ac.uk}
\email{m.dolce@imperial.ac.uk}

\keywords{Inviscid damping, shear-buoyancy instability, stratified fluids, Boussinesq approximation, mixing}

\subjclass[2000]{35Q35, 76F10}
%%%%%%%%%%%%%%%%%%%%%%%%%%%%%%%%%%%%%%%%%%%%%

\begin{document}

%%%%%%%%%%%%%%%%%%%%%%%%%%%%%%%%%%%%%%
\begin{abstract}
We investigate the long-time properties of the two-dimensional inviscid Boussinesq equations near a stably stratified Couette flow,
for  an initial Gevrey perturbation of size $\varepsilon$. 
Under the classical Miles-Howard stability condition on the Richardson number, we prove that the system experiences a shear-buoyancy instability: the density variation and velocity undergo an $O(t^{-1/2})$ inviscid damping while the vorticity and density gradient grow as $O(t^{1/2})$.
The result holds at least until the natural, nonlinear timescale $t \approx \varepsilon^{-2}$.
Notice that the density behaves very differently from a passive scalar, as can be seen from the inviscid damping and slower gradient growth. 
The proof relies on several ingredients: (A) a suitable symmetrization that makes the linear terms amenable to energy methods and takes into account the classical Miles-Howard spectral stability condition; (B) a variation of the Fourier time-dependent energy method introduced for the inviscid, homogeneous Couette flow problem developed on a toy model adapted to the Boussinesq equations, i.e. tracking the potential nonlinear echo chains in the symmetrized variables despite the vorticity growth. 
\end{abstract}

\maketitle

\setcounter{tocdepth}{1}
\tableofcontents
%%%%%%%%%%%%%%%%%%%%%%%%%%%%%%%%%%%%%%%%%%%%
\section{Introduction}
This article is concerned with the long-time dynamics of a 2D incompressible and non-homogeneous fluid under the \emph{Boussinesq approximation} near a stably stratified Couette flow in the infinite periodic strip $\mathbb{T}\times \mathbbm{R}$. The background density profile is taken to be affine, thus we study the simple equilibrium $$\bu_E=(y,0), \qquad \rho_E=\bar{\rho}- by,$$
where $\bar \rho > 0 $ is the averaged constant density and  $b>0$ is a fixed constant. 
Given a density perturbation $\rho$, we define the modified density perturbation $\theta=\rho/b$. The 2D Euler-Boussinesq system for perturbations around the steady state $\bu_E,\rho_E$ reads 
\begin{align}\label{eq:Boussinesq_vel}
	\begin{cases}
		\de_t\bu +y\de_x \bu +(u^y,0) +\nabla p =- \theta (0,\beta^2)-(\bu\cdot \nabla) \bu, \\
		\de_t\theta+y\de_x \theta = u^y-\bu \cdot \nabla \theta,
	\end{cases}\qquad  (x,y)\in \mathbb{T}\times\mathbb{R}, \ t\geq 0,
\end{align}
where $\bu=(u^x, u^y)$ is the perturbation velocity field, $p$ is the pressure and $\beta=\sqrt{-\rho_E'\mathfrak{g}/\bar{\rho}}$, with $\mathfrak{g}$ being the gravitational constant. The parameter $\beta$ is the \textit{Brunt-V\"ais\"al\"a} frequency, which is the characteristic frequency of the oscillations of vertically displaced fluid parcels, and hence provides a measure of the strength of the buoyancy force. 
We write the system \eqref{eq:Boussinesq_vel} in vorticity-stream formulation as  
\begin{align}\label{eq:Boussinesq}
	\begin{cases}
		\de_t\omega +y\de_x \omega =-\beta^2\de_x \theta-\bu\cdot \nabla \omega,\\
		\de_t\theta+y\de_x \theta = \de_x\psi-\bu \cdot \nabla \theta, \\
		\bu=\nabla^\perp \psi, \qquad \Delta \psi=\omega,
	\end{cases} \qquad  (x,y)\in \mathbb{T}\times\mathbb{R}, \ t\geq 0,
\end{align}
with $\omega=\nabla^\perp \cdot \bu $, where we denote $\nabla^\perp=(-\de_y,\de_x)$.

Density stratification is a common feature of geophysical flows;  under appropriate averaging, most of the Earth’s ocean is well-approximated as an incompressible, stably stratified fluid so that its dynamics are well described by fluctuations around a mean background density profile which increases with depth (\emph{stable} stratification profile,  \cites{dauxois2021,majda2003intro, cushman2011}). 
The system  \eqref{eq:Boussinesq} under investigation models a stably stratified fluid with the additional \emph{Boussinesq assumption}, according to which density is assumed constant except when it directly causes buoyancy forces \cites{long1965bouss, lannes2020}.
The Boussinesq system gained the interest of the mathematical community thanks to its wide range of applications, especially in oceanography \cites{dauxois2021, majda2003intro}, and many mathematical works have been dedicated to it \cites{masmoudi20bouss, doering2018long, elgindi2015bouss, Charve04,CN97,Chae06,DP09,DP11,AVSWXY16,TWZZ20}. It also holds mathematical interest through a connection with the 3D axisymmetric Euler equations for homogeneous fluids \cite{majdabertozzi2002}, where the term multiplied by $\beta^2$ in \eqref{eq:Boussinesq} plays the role of the vortex stretching.

Perturbations of the equilibrium state in a stably stratified fluid induce two related mechanisms as consequences of gravity's restoring effect  (Archimedes' principle) and the shearing transport of the equilibrium. The first one is a buoyancy force generated by the pressure gradient of the stable stratification as a response to gravity, which pushes the higher density fluid downwards. The second one is vorticity production due to the horizontal density gradient, which acts as a source term in the vorticity equation of \eqref{eq:Boussinesq}.
These two mechanisms are coupled even at the \emph{linear} level, in such a way that their interplay may lead to an overall instability of the system \cite{majda2003intro}. Note that gravity's restoring effect also manifests itself as radiation of internal gravity waves, whose propagation is supported by stably stratified fluids as a remarkable feature: understanding the dynamics of internal waves is in fact of crucial importance to many geophysical applications \cites{Akylas2003, dauxois2021, majda2003intro}. The non-trivial underlying dynamics have been observed 
in laboratory experiments \cites{browand1973laboratory,koop_browand_1979} and investigated in the physics literature \cites{Hartman75, case1960stabilityat, farrell1993transient}.

In the case of the Couette flow, linear stability is ensured by the so-called Miles-Howard criterion \cites{miles1961stability,howard1961note}, which requires the Richardson number $\mathrm{Ri}=\beta^2$  to be greater than $1/4$. Under this condition,  precise quantitative estimates can be extrapolated from the linear dynamics. In 1975, Hartman \cite{Hartman75} observed an enstrophy Lyapunov instability with a growth of $O(t^{1/2})$,  despite the fact that the velocity field undergoes an $O(t^{-1/2})$ time decay. This phenomenon persists for more general, stably stratified fluids without the Boussinesq approximation, as showed by Case \cite{case1960stabilityat}. A decay of $O(t^{-1/2})$ for both the velocity and the density has been proved  rigorously for the Couette flow in \cite{YL18} and extended to shears near Couette in \cite{BCZD20}. In addition, a vorticity and density gradient growth with rate $O(t^{1/2})$, which confirms the observation of  \cite{Hartman75}, has been rigorously proved in \cite{BCZD20}. Due to the nature and the origin of such growth, we will refer to it as a \emph{shear-buoyancy instability}. It is worth pointing out that the enstrophy growth is in striking contrast with the 2D homogeneous and inviscid Couette flow, which is Lyapunov stable in the enstrophy norm for both the linear and nonlinear problem (in fact, the enstrophy of the perturbation is conserved in both).    

The decay of the velocity field of the perturbation, called \textit{inviscid damping}, is due to the mixing of vorticity and is a key dynamical property of shear flows and vortices. 
This was first noticed by Orr \cite{Orr07} and later studied by Case and Diki\u{\i} \cites{Diki61,case1960stabilityat} for a 2D homogeneous fluid, where the velocity field decays as  $O(t^{-1})$. In particular, inviscid damping occurs when the shear transfers enstrophy to high frequencies. This is a fundamental mechanism of inviscid fluids, intimately connected with the stability of coherent structures \cites{EulerVortex,SM95} and the theory of 2D turbulence \cite{BraccoEtAl2000}. 
Its first mathematically rigorous study in the \emph{full} 2D homogeneous Euler equations was carried out in \cite{BM15} for the Couette flow.
It bears remarking that due to transient unmixing effects, the Couette flow is in fact Lyapunov \emph{unstable} in the kinetic energy norm (a consequence of \cite{BM15}).

\subsection{The main result}
The purpose of this article is to provide the first rigorous study on the long-time dynamics of the Couette flow for the 2D inviscid Boussinesq 
system  \eqref{eq:Boussinesq_vel}.  We prove that the nonlinear system undergoes a 
shear-buoyancy instability and nonetheless the velocity field experiences nonlinear inviscid damping, confirming that the linear dynamic extends to the nonlinear setting at least on a natural timescale $O(\eps^{-2})$.  
Fix $s>1/2$ and define the Gevrey norm of class $1/s$ as
\begin{equation}\label{eq:gevnorm}
	\norm{f}^2_{\G^{\lambda}}=\sum_{k\in \mathbb{Z}}\int_\R \e^{2\lambda(|k|+|\eta|)^s}|\widehat{f}_k(\eta)|^2\dd \eta.
\end{equation}
Moreover, set
 \begin{equation}\label{eq:zeromodenotation}
f_0(y)=\frac{1}{2\pi}\int_{\mathbb{T}}f(x,y)\dd x, \qquad f_{\neq}=f-f_0.
\end{equation}
The main result of this article is stated in the next theorem.

\begin{theorem}\label{thm:mainT}
	Let $\beta>1/2$. For all $1/2< s\leq 1$, $\lambda_0>\lambda'>0$ there exist  $\delta=\delta (\beta,\lambda_0,s)\in(0, 1)$ and  $\eps_0=\eps_0 (\beta,\lambda_0,s)\in(0,\delta)$  such that the following holds true: let $\eps\leq \eps_0$  and $\omega^{in}, \theta^{in}$ be mean-free initial data satisfying  
	\begin{equation}\label{data:reg}
   	\norm{u^{in}}_{L^2} + \norm{\omega^{in}}_{\G^{\lambda_0}}+\norm{\theta^{in}}_{\G^{\lambda_0}}\leq \eps.
   \end{equation}
   Then, if we define the shift $\Phi(t,y) = \int_0^t u_0^x(\tau,y) \dd\tau$, for  all $0\leq t \leq \delta^2\eps^{-2}$ we have 
   \begin{align}
   	\label{bd:longtimestab}
   	\norm{u_0^x(t)}_{\G^{\lambda'}}+\norm{\theta_0(t)}_{\G^{\lambda'}} & \lesssim \eps, \\ 
   	\norm{\omega(t,x + ty +  \Phi(t,y),y)}_{\G^{\lambda'}}+ \jap{t} \norm{\theta_{\neq}(t,x + ty +  \Phi(t,y),y)}_{\G^{\lambda'}} & \lesssim \eps \jap{t}^{1/2}.\label{bd:longtimestab2}
   \end{align}
As a consequence, the velocity field and the modified density satisfy 
\begin{align}
	\label{bd:invxmain}
	\norm{u^x_{\neq}(t)}_{L^2}+\norm{\theta_{\neq}(t)}_{L^2}&\lesssim \frac{\eps }{\jap{t}^\frac12},\\
	\label{bd:invymain}\|u^y_{\neq}(t)\|_{L^2}&\lesssim \frac{\eps }{\jap{t}^\frac32}. 
\end{align}
Moreover, there exists $ K = K(\beta,\lambda_0,\lambda',s) > 0$ such that if 
\begin{equation}
	\norm{\omega^{in}_{\neq}}_{H^{-1}}+\norm{\theta^{in}_{\neq}}_{L^2}\geq K \eps \delta , 
\end{equation}
then 
\begin{equation}
	\label{bd:instamain}
	\norm{\omega_{\neq}(t)}_{L^2}+\norm{\nabla \theta_{\neq}(t)}_{L^2} \approx \eps\jap{t}^\frac12.
\end{equation}
for all $0 < t < \delta^2\eps^{-2}$.
	\end{theorem}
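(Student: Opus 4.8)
The plan is a continuity (bootstrap) argument in the reference frame moving with the \emph{nonlinearly corrected} shear, after symmetrizing the linearized dynamics so that the Miles--Howard threshold $\beta>1/2$ becomes manifest coercivity. First pass to coordinates $(z,y)=(x-ty-\Phi(t,y),y)$: this removes the linear transport $y\de_x$ together with the transport by $u_0^x$, and shows (after one integration by parts) that the zero modes are driven only by quadratic interactions of the nonzero ones, $\de_t u_0^x=-\de_y(u^x_{\neq}u^y_{\neq})_0$ and $\de_t\theta_0=-\de_y(u^y_{\neq}\theta_{\neq})_0$. On the nonzero modes, take the Fourier transform in $z$, set $p(t,k,\eta)=k^2+(kt-\eta)^2$, and introduce symmetrized variables $Q\approx\Omega$, $Z\approx\beta^{-1}\sqrt p\,\Theta$, so that the linearized system collapses mode by mode to $\de_t Q=-i\tfrac{\beta k}{\sqrt p}Z$, $\de_t Z=-i\tfrac{\beta k}{\sqrt p}Q+\tfrac{\dot p}{2p}Z$. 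The off-diagonal operator is skew-adjoint, hence $\ddt(|Q|^2+|Z|^2)=\tfrac{\dot p}{p}|Z|^2$; the indicial analysis of the associated scalar equation $\de_\tau\big((1+\tau^2)\de_\tau\Theta\big)=-\beta^2\Theta$ (with $\tau=t-\eta/k$) shows that precisely when $\beta^2>1/4$ the exponents $-\tfrac12\pm i\sqrt{\beta^2-1/4}$ are non-real, which forces the sharp linear rates $|\Theta|\sim\jap{\tau}^{-1/2}$, $|\Omega|\sim\jap{\tau}^{1/2}$. One then fixes a Fourier multiplier $\mathfrak m(t,k,\eta)\sim\jap{t-\eta/k}^{1/2}$ carrying this growth, so that $Q/\mathfrak m$, $Z/\mathfrak m$ (together with $u_0^x$, $\theta_0$) are the quantities expected to remain $O(\eps)$.

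\textbf{Weighted Gevrey energy and bootstrap.} Propagate an energy built from $\norm{\cdot}_{\G^{\lambda(t)}}$ with a radius $\lambda(t)$ decreasing slowly from close to $\lambda_0$ down to $\lambda'$, applied to $Q/\mathfrak m$, $Z/\mathfrak m$ and $u_0^x$, $\theta_0$, and further dressed by a Bedrossian--Masmoudi-type weight $A_k(\eta)$ that encodes the resonant-interval structure near the critical times $t\approx\eta/k$. The decreasing radius and the $A_k$-weight yield favorable Cauchy--Kovalevskaya-type terms in the energy identity; these have to absorb the Gevrey-regularity loss generated by the nonlinearity over the whole interval $[0,\delta^2\eps^{-2}]$, which is affordable because that cumulative loss is of order $\eps^2t\lesssim\delta^2\ll1$. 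Assuming \eqref{bd:longtimestab}--\eqref{bd:longtimestab2} with a doubled constant on a maximal interval $[0,T^\star]$, one improves the constant. The nonlinear terms split into three types: (i) transport $u_{\neq}\cdot\nabla(\cdot)$, treated by product and commutator estimates in the dressed Gevrey norm with losses absorbed by the CK terms; (ii) the Boussinesq coupling terms coming from $-\beta^2\de_x\theta$ and $\de_x\psi$, which once rewritten in the symmetrized variables are of the same form plus lower-order commutators from moving $\sqrt p$ and $\mathfrak m$ through; (iii) the reaction/echo terms, where a low-frequency factor pushes energy onto a high frequency near its critical time. The zero modes close separately, using the $\de_y$-structure above together with the inviscid-damping decay $\norm{u^y_{\neq}}_{L^2}\lesssim\eps\jap{t}^{-3/2}$, which makes $\int_0^{T^\star}\norm{(u^x_{\neq}u^y_{\neq})_0}\,\dd t$ and $\int_0^{T^\star}\norm{(u^y_{\neq}\theta_{\neq})_0}\,\dd t$ bounded by $O(\eps^2)$. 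A continuity argument then pushes $T^\star$ up to $\delta^2\eps^{-2}$, establishing \eqref{bd:longtimestab}--\eqref{bd:longtimestab2}.

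\textbf{The main obstacle.} The heart of the proof is item (iii). Unlike the homogeneous Euler--Couette problem, here the vorticity genuinely grows like $t^{1/2}$ and is coupled to $\Theta$, so the echo cascade must be tracked in the symmetrized variables \emph{with the growth multiplier $\mathfrak m$ present}. One reduces it to a toy model adapted to the Boussinesq variables that follows the echo chains simultaneously through the $Q$ and $Z$ equations; the key point is to design $\mathfrak m$ and $A_k(\eta)$ so that their combined effect accumulated over $[0,\delta^2\eps^{-2}]$ still consumes only a fixed small fraction of the available Gevrey radius $\lambda_0-\lambda'$. Proving this toy-model estimate \emph{in the presence of the algebraic growth} is the delicate and genuinely new ingredient; the rest is, at the level of strategy, a careful adaptation of existing linear and nonlinear inviscid-damping technology.

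\textbf{Consequences.} Once \eqref{bd:longtimestab2} is in hand, since the change of variables is measure-preserving we obtain $\norm{\omega_{\neq}(t)}_{L^2}\lesssim\eps\jap{t}^{1/2}$ and $\norm{\theta_{\neq}(t)}_{L^2}\lesssim\eps\jap{t}^{-1/2}$, which is the $\theta$ part of \eqref{bd:invxmain}. For the velocity, in profile variables $u^x_{\neq}$ and $u^y_{\neq}$ carry the elliptic factors $\tfrac{|kt-\eta|}{p}$ and $\tfrac{|k|}{p}$; combining these with the Gevrey bound on $\Omega$ and splitting the $\eta$-integral at $|\eta|\sim|kt|/2$ gives \eqref{bd:invxmain}--\eqref{bd:invymain}. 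For the lower bound \eqref{bd:instamain}, on $[0,\delta^2\eps^{-2}]$ the nonlinear correction to $(Q,Z)$ is smaller by a factor $O(\delta)$ than the leading linear evolution, whose modulus is $\approx\eps\jap{t}^{1/2}$ times a nonzero scattering profile whenever $\norm{\omega^{in}_{\neq}}_{H^{-1}}+\norm{\theta^{in}_{\neq}}_{L^2}\geq K\eps\delta$; unwinding back to $\omega_{\neq}$ and $\nabla\theta_{\neq}$ then yields the two-sided estimate.
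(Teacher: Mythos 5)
Your overall architecture does track the paper's: your $(Q,Z)$ with $Q\approx\Omega$, $Z\approx\beta\sqrt{p}\,\Theta$ divided by $\mathfrak m\sim\jap{t-\eta/k}^{1/2}$ is, up to factors of $|k|$, exactly the paper's symmetrized pair \eqref{eq:Z1Z2couette}; the moving frame generated by the shift, the shrinking Gevrey radius, the resonance-adapted multiplier, the bootstrap, and the Duhamel argument for the lower bound \eqref{bd:instamain} all match. But the proposal has two genuine gaps. First, the ingredient you yourself flag as ``the delicate and genuinely new'' one --- deriving the Boussinesq toy model \eqref{eq:toyfR1}--\eqref{eq:toyfNR1} (whose derivation already uses $\eps t^{1/2}\lesssim 1$ to avoid exponential echo growth), building the weight $w_k(t,\eta)$ with its specific $1/2$-power regularity imbalance between resonant and non-resonant modes, and verifying the commutation/reaction estimates of Sections 4--6 --- is deferred, not supplied; without it there is no proof. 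Relatedly, your accounting ``cumulative loss $\eps^2 t\lesssim\delta^2$'' conflates two different mechanisms: the echo chains cost a \emph{fixed} Gevrey-2 amount of radius (encoded in $w$ and $\mu$), while the smallness $\eps\jap{t}^{1/2}\leq\delta$ is used to control the \emph{size} of nonlinear error terms, not the regularity loss.

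Second, and more structurally, the set of quantities you propose to propagate ($Q/\mathfrak m$, $Z/\mathfrak m$ plus zero modes controlled through time-integrated fluxes) is not enough to close the bootstrap. The symmetrized variables break the transport structure: commutators such as $[(p/k^2)^{\pm 1/4},\bU\cdot\nabla]$ cost a factor of order $|\xi|/|\ell|\sim t$ near critical times, and the paper can absorb these only by coupling the $E_L$ estimate with a second, necessarily growing, top-regularity energy $E_n$ on $(\Omega,\nabla_L\Theta)$ (bounded by $\eps^2\jap{t}$, see \eqref{def:Omega} and the terms $G_j[\Omega]$, $G_j[\nabla_L\Theta]$ appearing on the right of \eqref{bd:TNprop}), which in turn needs $E_L$ to handle its linear term $\l \tfrac{\de_t p}{|k|p^{1/2}}AQ,AQ\r$. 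Likewise the coordinate system cannot be dispatched by integrability of $(u^x_{\neq}u^y_{\neq})_0$ alone: inverting $\Delta_t$ (Propositions \ref{prop:lossyelliptic}--\ref{prop:elliptic}) and closing the reaction terms with coefficients at high frequency require controlling $h=v'-1$, $v''$, $\mathcal H$, $\dot v$ in the strictly stronger weight $A^v$, with a regularity gap relative to $A$ and precise time weights ($\jap{t}^{2+2s}$ on $\mathcal H$, $\jap{t}^{4}$ on $\dot v$). Finally, your linear step is incomplete as stated: the pointwise identity $\tfrac{\dd}{\dd t}(|Q|^2+|Z|^2)=\tfrac{\de_t p}{p}|Z|^2$ does not by itself give uniform bounds, and in the nonlinear weighted setting the paper needs the cross-term energy (coercive precisely for $\beta>1/2$) together with the multiplier $m=\exp(C_\beta\arctan(t-\eta/k))$, whose artificial damping $G_m$ absorbs the linear error $L^{Z,Q}$ with the constant $1-\tfrac{1}{2\beta}$; this device is absent from your outline and cannot be replaced by the indicial analysis alone.
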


The above result describes the long-time dynamics of the Boussinesq system  \eqref{eq:Boussinesq} in the perturbative regime near the linearly stratified Couette flow,
and it is the first of its kind describing such behavior in a fully inviscid coupled system  which has both wave propagation and phase mixing. 
The works \cites{masmoudi20bouss,ZillingerBouss,DWXZ20} study nonlinear systems with both phase mixing and wave propagation, but these problems all contain dissipative effects, whereas the works \cites{BCZD20,YL18} are all linear. 
The inviscid damping due to vorticity mixing is encoded in \eqref{bd:invxmain}-\eqref{bd:invymain}.

One of the main novelties here is the quantification of the shear-buoyancy instability  given by \eqref{bd:instamain}. 
The linearized dynamics of \eqref{eq:Boussinesq} predict exactly the decay rates  \eqref{bd:invxmain}-\eqref{bd:invymain} and the instability  \eqref{bd:instamain} for all times \cites{BCZD20,Hartman75,YL18},
see also Theorem \ref{thm:Couette-lin} below.
Therefore, in a nonlinear perturbative regime as the one studied here, the time-scale $O(\eps^{-2})$ appears naturally. As another manifestation of the instability,
the rates in \eqref{bd:longtimestab}-\eqref{bd:invymain} are $\jap{t}^{1/2}$ slower compared to the constant density case studied in \cite{BM15}. This is due to creation 
of vorticity in the perturbation by interaction with the density stratification. 

The proof of Theorem \ref{thm:mainT}, described in detail in the next Section \ref{02Ideas}, truly uses the specific linear coupling of $\omega$ and $\theta$
 via a suitable  symmetrization of the unknowns. Specifically, the scaled density $\theta$ is not simply transported by the Couette flow, as this would imply 
 a growth rate of order $\jap{t}$ for $\nabla \theta_{\neq}$, rather than the $\jap{t}^{1/2}$ appearing in \eqref{bd:instamain}.

The need of an infinite regularity (Gevrey) space is by-now classical in phase mixing problems, both for Landau damping in plasma physics 
\cites{MV11,faou2016landau,DFG18,bedrossian2016landau,GNR20} and for inviscid damping in fluid mechanics \cites{BM15,ionescu20,ionescu2020nonlinear,masmoudi2020nonlinear,ionescu2019axi}. This is strictly connected with loss of derivatives as a price to pay for the control of  \emph{transient growths} or \emph{echoes}: further discussions on this aspect can be found in the course of the paper. 
The  regularity requirement on the initial data \eqref{data:reg} is the same as in the constant density case \cites{BM15,ionescu2020nonlinear,masmoudi2020nonlinear}, and it is likely
to be sharp \cite{DM2018}.  
This can be heuristically understood by a toy model that estimates the worst possible growths due to the nonlinear interactions. Despite predicting the same total loss of regularity as in the constant density case, the model is tailored specifically to the Boussinesq system and displays crucial differences in terms of the regularity imbalance between resonant and non-resonant modes  (see Section \ref{sub:nonlingrowth}). The picture may change with the addition of thermal diffusivity and/or viscosity. 
When viscosity is added in the vorticity equation, the Gevrey index can be relaxed to $s=1/3$ as in \cite{masmoudi20bouss}, while 
 when also diffusivity is present in the density equation one can work in Sobolev regularity \cites{ZillingerBouss,DWXZ20}.

The restriction of the parameter $\beta$ in Theorem \ref{thm:mainT}  is sharply consistent with   the classical Miles-Howard criterion for linear spectral stability \cites{miles1961stability,howard1961note} mentioned above. 
The role of this restriction is very explicit 
in the coercivity of the main energy functional used to prove Theorem \ref{thm:mainT}, but also implicitly appears in many of the constants hidden by the symbol $\lesssim$, which blow up as 
$\beta  \to 1/2$. The linear dynamics when $\beta\leq 1/2$ was studied in \cite{YL18}. In this case  the vorticity grows with faster rates 
(and the density decays with slower rates). Reproducing the results of \cite{YL18}  by means of an energy method like the one used in \cite{BCZD20} could lead to further insight at the nonlinear level as well.

Finally, we do not expect that the linear dynamics persist to leading order after times $ O(\eps^{-2})$, but rather that a secondary instability engages to carry the solution a fully nonlinear regime. 
Specifically, after this time, we expect that mixing creates large adverse vertical density gradients, resulting in an overturning instability.
There are some analogies between Theorem \ref{thm:mainT} and the work on subcritical transition in 3D Couette \cite{BGM15II}: 
both study a spectrally stable problem with an algebraic instability and show that the only way to trigger a  secondary instability is through the underlying  destabilizing mechanism  (at least in Gevrey class). 
The possible secondary instability, the 3D case, and the case of stably stratified fluids without the Boussinesq approximation will be studied in future work.

\subsection{Organization of the article}
Section \ref{02Ideas} describes the main ideas needed for the proof of Theorem \ref{thm:mainT}, including the symmetrized variables, the weighted energy functionals and 
the fundamental bootstrap Proposition \ref{prop:bootimpr}. In Section \ref{03Proof} we prove Theorem \ref{thm:mainT} assuming Proposition \ref{prop:bootimpr}.
The rest of the article is dedicated to the proof Proposition \ref{prop:bootimpr}. The construction of the time-dependent Gevrey weights is carried out in Section \ref{sec:weightW}, 
while Section \ref{sec:ellest} is dedicated to the proof of the elliptic estimates crucial to control the nonlinear terms. 
The heart of the article is contained in Section \ref{sec:mainEn}, where we prove the energy estimate on the symmetric variables. These require direct bounds on the vorticity
and the gradient of the density, which are carried out in Section \ref{sec:naturalenergy}. Finally, Section \ref{sec:zero} contains the control of the nonlinear change of coordinates.

\subsection{Notations and conventions}
We use the notation $f\lesssim g$ when there exists a constant $C>0$, independent of the
parameters of interest, such that $f\leq C g$. Similarly, $f\approx g$ means that here exists $C>0$ such that $C^{-1} g\leq f\leq C g$. We will denote by $c$ a generic positive
constant smaller than 1. 

Given a vector $(k,\eta)$, we indicate by $|k,\eta|=|k|+|\eta|$   its norm. We will use the symbol  $\jap{a}=\sqrt{1+|a|^2}$ for either scalars or vectors. Given a normed space $X$, its norm is denoted by
$\|\cdot\|_{X}$, omitting the subscript when $X=L^2$. We recall also that \eqref{eq:gevnorm} and \eqref{eq:zeromodenotation} are used throughout the article.

For a Schwartz function $f=f(z,v):\TT\times\RR\to\RR$, we define the Fourier transform as
\begin{align}
\widehat{f}_k(\eta)=\frac{1}{2\pi} \int_{\TT\times\RR} \e^{-ikz-i\eta}f(z,v)\dd z\dd v, \qquad (k,\eta)\in\ZZ\times\RR.
\end{align}
The Littlewood-Paley dyadic decomposition is defined as follows: we take $\phi\in C^\infty_0(\RR)$ be such that $\phi(\eta)=1$ for $|\eta| \le \frac 12$, $\phi=0$ for $|\eta| \ge \frac 34$ 
and set $\tilde \phi(\eta)=\phi(\eta)-\phi(\eta/2)$. Then
\begin{align}
1=\phi(\eta) + \sum_{M \in 2^{\NN}} \tilde \phi (M^{-1} \eta).
\end{align}
In this way, $\tilde \phi_M(\eta):=\tilde \phi(M^{-1}\eta)$ is supported in $\frac M 2 \le |\eta| \le \frac{3M}{2}$. For a function $g=g(v)\in L^2(\RR)$, we define
\begin{align}
g=g_\frac 12 + \sum_{M \in 2^\NN} g_M := \phi(|\de_v|)g+ \sum_{M \in 2^\NN} \tilde \phi_M(|\de_v|) g.
\end{align}
We also use the notation
\begin{align}
g_{<M} = g_\frac 12 + \sum_{K \in 2^{\NN}: K<M} g_K,\qquad g_{\sim M} =  \sum_{M \in 2^{\NN}; \frac{M}{C} <K<CM} g_K,
\end{align}
for some constant $C$ independent of $M$. The paraproduct decomposition will then be denoted as follows 
\begin{align}
fg &= \sum_{M \in 2^{\NN}: M \ge 8} f_{<M/8} g_M + \sum_{M \in 2^{\NN}: M \ge 8} g_{<M/8} f_M + \sum_{M,M' \in 2^{\NN}: M/8 \le M' \le 8M} g_{M'} f_M. \label{eq:paraprod} 
\end{align}

\section{Outline of the proof}\label{02Ideas}

In this section, we outline the proof of Theorem \ref{thm:mainT}. There are a number of different ideas that go into it, some arising from the inviscid damping result for the homogeneous problem \cite{BM15}, others arising from the study of the linearized problem \cite{BCZD20}, and others which are new and specific to this nonlinear problem. 

\subsection{Change of coordinates}
Given the incompressibility of the flow, we know $\boldsymbol{u}_0=(u_0^x,0)$, which implies
\begin{equation*}
	\boldsymbol{u}\cdot \nabla=u_0^x\de_x +\boldsymbol{u}_{\neq}\cdot \nabla.
\end{equation*}
Due to the inviscid damping, we expect the non-zero $x$-frequencies to decay and hence it is natural to treat the last term as a perturbation. 
However, there is no decay mechanism for $u_0^x$ and so this term could be treated perturbatively on an $O(\eps^{-1})$ time-scale at most. 
To deal with this difficulty, \cite{BM15} introduced a change of coordinates that depends on $u_0^x(t)$, and for the same reason, we use the same coordinate change.   
We briefly recall it here; see \cite{BM15} for more details. 
Define 
\begin{align}\label{def:coord}
v=y+\frac{1}{t}\int_0^t u_0^x(s,y)\dd s,\qquad z=x-vt. 
\end{align}
Provided that $u_0^x$ is sufficiently small, this coordinate change can be inverted; we assume this is the case for now.  
The corresponding unknowns written in the new variables (writing $x = x(t,z,v)$, $y = y(t,v)$) are given by 
\begin{align}\label{def:omega-psi-moving}
\Omega(t,z,v)=\omega(t,x,y),\qquad \Theta(t,z,v)=\theta(t,x,y),\qquad \Psi(t,z,v)=\psi(t,x,y).
\end{align}
In this way we obtain (we write the change of variables only for $\Omega$ but similar relations hold for the other functions)
\begin{align}\label{eq:deriv}
\de_t\omega=\de_t\Omega +\dot{z} \de_z\Omega +\dot{v}\de_v\Omega, \qquad \de_x\omega=\de_z\Omega,\qquad \de_y\omega=v' (\de_v-t\de_z)\Omega
\end{align}
where
\begin{alignat}{2}
\dot{z}&:=\de_tz=-y - u^x_0,  &\qquad\qquad   \dot{v}&:=\de_t v=\frac{1}{t}\left[u^x_0-\frac{1}{t}\int_0^t u_0^x(s,y)\dd s\right], \label{eq:prop1}\\	
v'&:=\de_y v =1+\frac{1}{t}\int_0^t \omega_0(s,y)\dd s, &\qquad\qquad   v''&:=\de_{yy} v=\frac{1}{t}\int_0^t\de_y \omega_0(s,y)\dd s\label{eq:prop2}.	
\end{alignat}
The Biot-Savart law also gets transformed as
\begin{align}\label{def:deltat}
\Delta_t\Psi=\Omega, \qquad \Delta_t:=\de_{zz}+(v')^2(\de_v-t\de_z)^2+v'' (\de_v-t\de_z).
\end{align}
In the new coordinates, the original system \eqref{eq:Boussinesq} is now expressed as
\begin{align}\label{eq:BoussinesqMove}
	\begin{cases}
		\de_t\Omega =-\beta^2\de_z \Theta-\bU\cdot \nabla \Omega,\\
		\de_t\Theta= \de_z\Psi-\bU \cdot \nabla \Theta, \\
		\bU=(0,\dot{v})+v'\nabla^\perp \Psi_{\neq}, \qquad \Delta_t \Psi=\Omega,
	\end{cases}
\end{align}
where $\nabla=\nabla_{z,v}$. Notice that the zero mode in $z$ and in $x$ are the same, and therefore we use the same symbol 
as in \eqref{eq:zeromodenotation} to denote the projection of $\Psi$ off the zero mode in $z$.

To control the coordinate system itself, as in \cite{BM15}, we need to introduce more variables, specifically 
\begin{equation}\label{def:h-H}
	h(t,v)=v'(t,y)-1, \qquad \cH(t,v)=\de_y\dot{v}(t,y)=\frac{1}{t}\left(\omega_0(t,y)-\frac{1}{t}\int_0^t \omega_0(s,y)\dd s\right).
\end{equation}
Notice that from \eqref{eq:prop2} we have
\begin{align}
	v'(t,y)-1=\frac{1}{t}\int_0^t \omega_0(s,y)\dd s\qquad \Rightarrow \qquad \cH(t,v)=\frac{1}{t}\left(\Omega_0(t,v)-h(t,v)\right).
\end{align}
Since from \eqref{eq:prop2} one has that $\de_t (t(v'-1))=\omega_0$, we have from \eqref{eq:deriv} that
\begin{align}
	\label{eq:h} (\de_t+ \dot{v}\de_v)(th)=\Omega_0 \qquad \Rightarrow \qquad \de_th + \dot{v}\de_v h =\cH.
\end{align}
Taking the $z$ average of the first equation in \eqref{eq:BoussinesqMove}, we similarly derive 
\begin{align}
		\label{eq:H} \de_t\cH&=-\frac2t\cH-\dot{v}\de_v\cH-\frac{v'}{t}\left(\nabla^\perp \Psi_{\neq} \cdot \nabla \Omega_{\neq}\right)_0.
\end{align}
Finally, we also record the equation satisfied by $\dot{v}$ \emph{in the $(t,v)$ coordinates}, namely
\begin{align}
	\label{eq:vdot} \de_t\dot{v}&=-\frac2t\dot{v}-\dot{v}\de_v\dot{v}-\frac{v'}{t}\left(\nabla^\perp \Psi_{\neq} \cdot \nabla U^x_{\neq}\right)_0,
	\qquad  U^x(t,z,v):=u^x(t,x,y).
\end{align}

\subsection{The linearized dynamics: symmetric variables}\label{sec:lindyn}
Unlike \cite{BM15}, the linear dynamics are non-trivial. 
The linearized dynamics associated with \eqref{eq:BoussinesqMove} are best understood by passing to Fourier variables $(z,v)\mapsto(k,\eta)$. 
Since at the linear level we have $v=y$, the differential operators in these coordinates read 
\begin{align}\label{eq:DeltaL}
	\nabla_L:=(\de_z,\de_v-t\de_z), \qquad \Delta_L:=\de_{zz}+(\de_v-t\de_z)^2.
\end{align}
We denote the symbols associated to $-\Delta_L$ as
\begin{align}\label{def:p}
	p_k(t,\eta)=k^2+(\eta-k t)^2, \qquad \de_tp_k(t,\eta)=-2k(\eta-k t).
\end{align}
The explicit dependence on $t$ of the above quantities will often be omitted. The linearized equations are obtained from  \eqref{eq:BoussinesqMove}  by neglecting all nonlinear terms 
including the one arising from the nonlinear change of coordinate (hence $\Delta_t$ is formally replaced by $\Delta_L$). 
On the Fourier side, they take the form
\begin{align}\label{eq:BoussinesqMoveFlin}
		\de_t\hOmega =-i\beta^2k \widehat{\Theta},\qquad \de_t\hTheta= -\dfrac{ik}{p}\hOmega,
\end{align}
and in particular they decouple in $k$ and $\eta$. 
While the zero-mode is clearly conserved, the nonzero modes exhibit an interesting behaviour which has been studied in the applied mathematics literature since the 1950s; we refer to \cite{Hartman75} for a detailed literature review. In \cite{Hartman75}, the system \eqref{eq:BoussinesqMoveFlin} is investigated by a method involving hypergeometric functions, made mathematically rigorous and precise in \cite{YL18}. For our purposes, it is more convenient to recall the energy method used in \cite{BCZD20}, originally introduced to deal with the linear stability of the Couette flow in a compressible fluid \cite{antonelli2021linear}. The idea is to symmetrize the system \eqref{eq:BoussinesqMoveFlin} via time-dependent Fourier multipliers and use an energy functional for the new auxiliary variables. 
Compared to  \cite{BCZD20}, we slightly change the symmetrized variables by modifying powers of $k$, defining them here as
\begin{equation}\label{eq:Z1Z2couette}
Z_k(t,\eta):= \left(\left(p/k^2\right)^{-\frac{1}{4}} \hOmega\right)_k(t,\eta), \qquad Q_k(t,\eta):=\left(\left(p/k^2\right)^\frac{1}{4}i  k\beta\hTheta\right)_{k}(t,\eta).
\end{equation}
for which  \eqref{eq:BoussinesqMoveFlin} takes the particularly amenable form
\begin{align}
		\de_tZ =-\dfrac{1}{4}\dfrac{\de_tp}{p}Z-|k|\beta p^{-\frac{1}{2}}Q,\qquad \de_tQ=\dfrac{1}{4}\dfrac{\de_tp}{p}Q+|k|\beta p^{-\frac{1}{2}}Z.\label{eq:Qlin}
\end{align}
The presence of the $k^2$ factors in \eqref{eq:Z1Z2couette} only modifies the linearized equations by changing $k$ to $\abs{k}$, however, the adjustment to the definition of $Z,Q$ will be important to treat the nonlinear problem later. 
Define the following energy functional point-wise in frequency 
\begin{align}\label{def:pointwise-functional-Couette}
E(t)&=\frac12\left[|Z(t)|^2+|Q(t)|^2+\frac{1}{2\beta} \Re  \left(\frac{\de_tp} {|k|p^{\frac12}} Z(t) \overline{Q(t)}\right)\right].
\end{align}
Since, $|\de_t p/(kp^{1/2})|\leq 2$, the energy functional is coercive for $\beta>1/2$ with
\begin{align}\label{eq:coercive-pointwise}
\frac12\left(1-\frac{1}{2\beta}\right)\left[|Z|^2+|Q|^2\right](t)\leq E(t)\leq\frac12\left(1+\frac{1}{2\beta}\right)\left[|Z|^2+|Q|^2\right](t),
\end{align}
and can be shown to satisfy
\begin{align}\label{eq:energqueq}
\ddt E=\frac{1}{4\beta}\de_t\left(\frac{\de_tp} {|k|p^{\frac12}}\right)\Re\left(Z\overline{Q}\right).
\end{align}
Since
\begin{equation}
	\label{eq:dtdtp}
	\de_t\left(\frac{\de_tp} {|k|p^{\frac12}}\right)=-2\de_t \left(\frac{\frac{\eta}{k}-t}{(1+(\frac{\eta}{k}-t)^2)^{\frac12}}\right)=\frac{2}{(1+(\frac{\eta}{k}-t)^2)^{\frac32}}
\end{equation}
is positive, from the coercivity of $E$ we arrive at
\begin{equation}
	-\frac{1}{2(1+2\beta)}\de_t\left(\frac{\de_tp} {|k|p^{\frac12}}\right) E\leq \ddt E\leq \frac{1}{2(1-2\beta)}\de_t\left(\frac{\de_tp} {|k|p^{\frac12}}\right) E, 
\end{equation}
and hence 
\begin{align}
E(t)\approx E(0), \qquad \forall t\geq 0.
\end{align}
The precise linear dynamics can therefore be described by the following theorem.
\begin{theorem}[\cite{BCZD20}*{Theorem 1.1}]\label{thm:Couette-lin}
For any $k\neq0$ and $\eta\in \RR$, the solution to the linearized equations \eqref{eq:BoussinesqMoveFlin} satisfies the uniform bounds
\begin{align}\label{eq:estimatecouette}
 |p^{-\frac14}\widehat\Omega(t)|^2+|p^{\frac14}\widehat\Theta(t)|^2
 \approx  |(k^2+\eta^2)^{-\frac{1}{4}} \widehat\Omega(0)|^2 + |(k^2+\eta^2)^\frac{1}{4} \widehat\Theta(0)|^2,
\end{align}
for every $t\geq 0$, where the constant hidden in \eqref{eq:estimatecouette} blows up as $\beta\to 1/2$. 
\end{theorem}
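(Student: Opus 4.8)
The plan is to read off Theorem~\ref{thm:Couette-lin} from the symmetrized formulation \eqref{eq:Qlin} and the energy identity \eqref{eq:energqueq}, which have already been set up above: essentially all that remains is to translate the quantity $|Z|^2+|Q|^2$, which is conserved up to $\beta$-dependent constants, back into the weighted unknowns $p^{-1/4}\hOmega$ and $p^{1/4}\hTheta$.

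First I would record the pointwise-in-frequency dictionary implied by the definitions \eqref{eq:Z1Z2couette}: for $k\neq 0$,
\[
|Z_k(t,\eta)|^2=\abs{k}\,\big|p_k^{-\frac14}\hOmega_k(t,\eta)\big|^2,\qquad
|Q_k(t,\eta)|^2=\abs{k}\,\beta^2\,\big|p_k^{\frac14}\hTheta_k(t,\eta)\big|^2,
\]
so that, $\beta>1/2$ being fixed,
\[
|Z_k(t,\eta)|^2+|Q_k(t,\eta)|^2\;\approx_\beta\;\abs{k}\Big(\big|p_k^{-\frac14}\hOmega_k(t,\eta)\big|^2+\big|p_k^{\frac14}\hTheta_k(t,\eta)\big|^2\Big).
\]
Evaluating at $t=0$ and using $p_k(0,\eta)=k^2+\eta^2$ from \eqref{def:p} gives the analogous equivalence with $(k^2+\eta^2)^{\pm1/4}$ in place of $p_k^{\pm1/4}$ on the right-hand side.

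Next I would assemble the two facts already established in the text preceding the statement. The coercivity bound \eqref{eq:coercive-pointwise} gives $E(t)\approx_\beta |Z(t)|^2+|Q(t)|^2$ for every $t\geq0$, with constants blowing up as $\beta\to1/2$. The identity \eqref{eq:energqueq}, together with \eqref{eq:dtdtp} and the lower bound in \eqref{eq:coercive-pointwise}, yields $\abs{\ddt E}\lesssim_\beta \de_t\!\big(\de_tp/(\abs{k}p^{1/2})\big)\,E$; since \eqref{eq:dtdtp} is nonnegative and $\int_0^\infty \de_t\!\big(\de_tp/(\abs{k}p^{1/2})\big)\dd t\leq 4$ uniformly in $(k,\eta)$, Grönwall's inequality gives $E(t)\approx_\beta E(0)$. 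Chaining these equivalences,
\[
|Z(t)|^2+|Q(t)|^2\;\approx_\beta\;E(t)\;\approx_\beta\;E(0)\;\approx_\beta\;|Z(0)|^2+|Q(0)|^2,
\]
and dividing by $\abs{k}$ and applying the dictionary of the first step produces exactly \eqref{eq:estimatecouette}, with the constant degenerating as $\beta\to 1/2$.

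There is no genuine obstacle here: every analytic input — the symmetrization, the coercivity, and the integrability in time of the multiplier in \eqref{eq:dtdtp} — is already in place. The only points requiring care are the bookkeeping of the $\beta$-dependent constants along the chain of $\approx_\beta$'s, in particular not inadvertently claiming uniformity as $\beta\to1/2$, and the precise power of $\abs{k}$ carried by the normalization \eqref{eq:Z1Z2couette}, which is exactly the adjustment made relative to \cite{BCZD20} and which must cancel cleanly when one divides by $\abs{k}$ at the end.
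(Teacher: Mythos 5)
Your proposal is correct and follows essentially the same route as the paper: the symmetrization \eqref{eq:Z1Z2couette}, the coercivity \eqref{eq:coercive-pointwise}, and the Gr\"onwall argument based on the positivity and uniform time-integrability of \eqref{eq:dtdtp} are exactly the energy method the paper recalls (from \cite{BCZD20}) immediately before stating Theorem \ref{thm:Couette-lin}, with your dictionary step correctly accounting for the $\abs{k}$ factors, which cancel between the two sides. No gaps.
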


The above Theorem \ref{thm:Couette-lin} implies a (linear) inviscid damping for the velocity \emph{and} the density fluctuation of \eqref{eq:Boussinesq} 
	\begin{align}
	\norm{\theta_{\neq}(t)}_{L^2}+\norm{u_{\neq}^x(t)}_{L^2}\lesssim&\frac{1}{\l t\r^{\frac12}}\left[\norm{\omega_{\neq}^{in}}_{H^{-1}_xL^2_y}+\norm{\theta_{\neq}^{in}}_{L^2_xH^1_y}\right], \qquad \forall t\geq0,\label{eq:main-lin1}\\
	\norm{u^y(t)}_{L^2}\lesssim& \frac{1}{\langle t \rangle^{\frac32}}\left[\norm{\omega_{\neq}^{in}}_{H^{-1}_xH^1_y}+\norm{\theta_{\neq}^{in}}_{L^2_xH^2_y}\right],  \qquad \forall t\geq0.\label{eq:main-lin2}
	\end{align}
In stark contrast to the homogeneous Couette flow \cite{BM15}, in light of the lower bound in \eqref{eq:estimatecouette}, the system undergoes a Lyapunov \emph{instability}
\begin{equation}
\label{bd:Lyomega}
\norm{\omega_{\neq}(t)}_{L^2}+\norm{\nabla\theta_{\neq}(t)}_{L^2}\gtrsim\langle t \rangle^{\frac12}\left[\norm{\omega_{\neq}^{in}}_{L^2_x H^{-1}_y}+\norm{\theta_{\neq}^{in}}_{H^1_x L^2_y}\right], \qquad \forall t\geq0.
\end{equation}
This can be considered the reason why the decay rates in \eqref{eq:main-lin1}-\eqref{eq:main-lin2} are slower, by a factor of $t^{1/2}$, compared to those that
can be obtained in the constant density case \cites{Orr1907,WZZ18,Zillinger17,WZZ19}.

\begin{remark}
Note that the requirement for coercivity, $\beta > 1/2$, is the same as the Miles-Howard condition for the spectral stability of stratified shear flows put forward in  \cites{miles1961stability,howard1961note}. This is not coincidental, as the original spectral stability proof is also an energy-based argument, albeit of a different type.
\end{remark}

\subsection{The nonlinear growth mechanism}\label{sub:nonlingrowth}
The full nonlinear system corresponding to \eqref{eq:BoussinesqMove} in the $(Z,Q)$ variables \eqref{eq:Z1Z2couette}
reads
\begin{align}
		\de_tZ &=-\dfrac{1}{4}\dfrac{\de_tp}{p}Z-|k|\beta p^{-\frac{1}{2}}Q-\left(\frac{p}{k^2}\right)^{-\frac{1}{4}} \cF(\bU\cdot \nabla \Omega),\label{eq:Z}\\
		\de_tQ&=\dfrac{1}{4}\dfrac{\de_tp}{p}Q+|k|\beta p^{-\frac{1}{2}}Z-\beta |k|^{\frac32} p^{-\frac{3}{4}}\cF\left((\Delta_t-\Delta_L)\Psi\right) -\left(\frac{p}{k^2}\right)^\frac{1}{4}k\cF(\bU \cdot \nabla (i \beta\Theta)),\label{eq:Q}
\end{align}
where in \eqref{eq:Q} we have isolated the linear part identical to that in \eqref{eq:Qlin} and we have used the identity
 $\Delta_L\Psi=\Omega-(\Delta_t-\Delta_L)\Psi$.

In inviscid damping around Couette flow, the unmixing of enstrophy causes transient growth of the velocity, called the \emph{Orr mechanism}, with analogous transient growth effects in other phase mixing problems \cites{MV11,Becho20,BM15,Orr1907}. 
As discussed in \cites{BM15,MV11,DM2018,Becho20,VMW98}, when studying nonlinear phase mixing problems, a key effect to look for are ``echoes'', wherein well-mixed enstrophy, through nonlinear interactions, transfers back to frequencies which will be un-mixed at a future time and hence cause growth in the velocity field by the Orr mechanism, possibly repeating the process into a chain of nonlinear oscillations. 
Echo chains were captured in experiments for plasmas modeled by the Vlasov equations \cite{Landau68} and in plasmas modeled by the 2D Euler equations near a vortex in \cite{EulerVortex}.  
This can be considered a kind of ``resonance'' associated with the linear transient growth mechanism that appears at the second iterate of linearization (i.e. if one linearizes around the linear dynamics) \cites{BM15,MV11,DM2018,Becho20,VMW98,deng2019echo}. It is the primary reason that proving nonlinear inviscid damping (or Landau damping) type results is challenging and why such results have generally required very high regularity; see e.g.  \cites{BM15,BGM15II,MV11,ionescu20,masmoudi20bouss,masmoudi2020nonlinear,ionescu2019axi,ionescu2020nonlinear}. In order to account for the echo resonances, \cite{BM15} introduced a time-dependent Fourier multiplier method which builds a norm carefully designed to match exactly the worst-case estimates of these resonances. 
In fluid mechanics, there does not yet exist an alternative to this method for studying nonlinear inviscid damping problems. 
In order to adapt these ideas to the system \eqref{eq:Z}-\eqref{eq:Q}, we need to derive a ``toy model'' that captures the worst possible growth 
caused by nonlinear interactions. 
As we will see, though we proceed in the spirit of \cite{BM15} and \cite{BGM15II}, the toy model has significant differences with previous works. 

As $\Omega$ is the unstable quantity, the worst possible nonlinear term appears in the equation for $Z$. 
As in \cite{BM15}, we derive a formal toy model by a paraproduct decomposition of the nonlinearity, which can be thought of as a secondary linearization of the evolution of the high frequencies about the low frequency linear dynamics. 
Ignoring the linear terms (as the corresponding linear semigroup is bounded) and assuming that $\Delta_t^{-1}$ can be well-approximated by $\Delta_L^{-1}$,
we want to write a good model for the nonlinear interactions of the scalar equation
\begin{align}
	\de_tZ\approx (p/k^2)^{-\frac14}\mathcal{F}\left(\nabla^\perp \Delta_L^{-1}\Omega\cdot \nabla \Omega\right).
\end{align}
At this point we want to extract the contribution which is most ``dangerous''.
Through a standard paraproduct decomposition of the nonlinearity, namely 
\begin{equation}
	\label{eq:NLhom}
	\nabla^\perp \Delta_L^{-1}\Omega \cdot \nabla\Omega=(\nabla^\perp \Delta_L^{-1}\Omega)_{Hi} \cdot (\nabla\Omega)_{lo}+(\nabla^\perp \Delta_L^{-1}\Omega)_{lo} \cdot (\nabla\Omega)_{Hi}+(\nabla^\perp \Delta_L^{-1}\Omega)_{Hi} \cdot (\nabla\Omega)_{Hi},
\end{equation}
we (formally) neglect the \emph{Hi-Hi} term (with enough regularity, it is easily controlled), the
 \emph{lo-Hi} term (by trading regularity for time-decay of the velocity), and the  \emph{Hi-lo} term involving $\de_z\Delta_{L}^{-1}\Omega$ (which is 
 uniformly bounded in $k$ and $\eta$). 
Writing down explicitly the only remaining, and most difficult-looking, term involving $(\de_v \Delta_L^{-1}\Omega)_{Hi}  (\de_z\Omega)_{lo}$ and using the definition of $Z$ in \eqref{eq:Z1Z2couette}, we end up with
\begin{align}
	\label{eq:firstapp}
	\de_tZ_k(\eta)\approx \frac{|k|^{\frac12}}{p_k(\eta)^{\frac14}}\sum_{\ell\neq0} \int_\R \frac{\xi(k-\ell)}{|\ell|^{\frac12}p_\ell(\xi)^{\frac34}}Z_\ell(\xi)_{Hi}\frac{p_{k-\ell}(\eta-\xi)^{\frac14}}{|k-\ell|^{\frac12}}Z_{k-\ell}(\eta-\xi)_{lo}\dd\xi.
\end{align}
Since the variable $\eta-\xi$ is at low frequency, we further approximate the equation above by considering $\eta=\xi$, which give us the infinite system of ODEs 
\begin{align}
	\label{eq:firstapp2}
	\de_tZ_k(\eta)\approx \frac{|k|^{\frac12}}{p_k(\eta)^{\frac14}}\sum_{\ell\neq0} \frac{\eta(k-\ell)}{|\ell|^{\frac12}|k-\ell|^{\frac12}p_\ell(\eta)^{\frac34}}Z_\ell(\eta)_{Hi}p_{k-\ell}(0)^{\frac14}Z_{k-\ell}(0)_{lo},
\end{align}
where $\eta$ is to be considered as a fixed parameter.  
As observed in \cite{BM15}, the dangerous scenario is when $\eta k^{-2}>1$ and there is a 
\textit{high-to-low} cascade in which the $k$ mode has a strong effect at time $\eta/k$ that excites the $k-1$ mode, which itself 
has a strong effect at time $\eta/(k-1)$ that excites the $k-2$ mode and so on. This physically corresponds to an echo chain \cites{BM15,deng2019echo,DM2018,Becho20,EulerVortex}.  
Therefore, we focus near one critical time  $\eta/k$ on a time interval of length roughly $\eta/k^2$, so that $\eta/(k-1)$ is not critical, 
and consider the interaction between the mode $k$ and a nearby mode $\ell$  with $\ell\neq k$. Calling
$f_R=Z_k(t,\eta)$ and $f_{NR}=Z_{k-1}(t,\eta)$ the resonant and non-resonant dominant modes, respectively,  keeping only the leading order terms and taking absolute values, we obtain the coupled toy system 
\begin{align}
	\label{eq:toyfR0}\de_t f_R& \approx \eps  \frac{\eta}{p_{k}(\eta)^{\frac14}p_{k-1}(\eta)^\frac34}p_1(0)^\frac14 f_{NR},\\
	\de_t \label{eq:toyfNR0}f_{NR}& \approx \eps  \frac{\eta}{p_{k-1}(\eta)^{\frac14}p_{k}(\eta)^\frac34}p_{-1}(0)^\frac14 f_R,
\end{align}
where we also included that $Z_1(0)\approx \eps$, as we are assuming linear dynamics to leading order. Since 
$\eta/(k-1)$ is not critical and $k^2\leq \eta$, we have that $p_{k-1}(\eta)\approx (\eta/k)^2$. Therefore, using \eqref{def:p} and that
$\eps p_1(0)^\frac14 \approx \eps t^\frac{1}{2} \lesssim 1$ for our purposes, the toy model that we finally consider is
\begin{align}
	\label{eq:toyfR1}
	\de_t f_R&=  \left(\frac{k^2}{\eta}\right)^\frac12\frac{1}{(1+|t-\eta/k|^2)^\frac14}f_{NR},\\
	\label{eq:toyfNR1}\de_t f_{NR}&= \bigg(\frac{\eta}{k^2}\bigg)^\frac12\frac{1}{(1+|t-\eta/k|^2)^\frac34}f_{R}.
\end{align}
In Section \ref{03Proof} we construct a weight based on this model, that takes
into account a regularity imbalance between the resonant and non-resonant modes. 
Some remarks are in order.

\begin{remark}[On the Gevrey-2 regularity]
	As in previous works \cite{BM15}, one can deduce that the maximal possible growth for $f_R$ and $f_{NR}$ is of order $(\eta/k^2)^{C}$ for some constant $1<C<16$.  
	If this growth accumulates for all the frequencies $k=1,\dots,\lfloor \sqrt{\eta}\rfloor$,  Stirling's formula implies a growth of order $\exp(\sqrt{\eta})$. This is consistent with the loss of Gevrey-2 regularity in the inviscid, homogeneous case \cites{BM15,DM2018}.	

\end{remark}

\begin{remark}[Comparison with previous works]
	The toy model introduced in \cite{BM15} for the vorticity of the homogeneous Euler equations reads 
	\begin{align}
		\label{eq:toyfR1BM}
		\de_t \widetilde{f}_R&= \frac{k^2}{\eta}\widetilde{f}_{NR}, \quad \de_t \widetilde{f}_{NR}= \frac{\eta}{k^2}\frac{1}{1+|t-\eta/k|^2}\widetilde{f}_{R}.
	\end{align}
	The key practical difference among the two toy models is that for $(\widetilde{f}_R,\widetilde{f}_{NR})$ the power $1/2$ in \eqref{eq:toyfR1}-\eqref{eq:toyfNR1} is replaced by the power $1$. This implies that while both models predict the same regularity loss (Gevrey-2) we are going to impose a smaller regularity imbalance between resonant and non-resonant modes, and hence are going to lose less derivatives when measuring the effect of non-resonant modes on resonant modes and gain less when measuring the effect of resonant modes on non-resonant modes.
	 	
	The toy model used to build the norm in \cite{masmoudi20bouss} for the Boussinesq equations with viscosity (but not thermal diffusivity) near Couette flow is more significantly different. However, the derivation and use of the model depend crucially on the presence of viscosity. 
\end{remark}
\begin{remark}[On the time-scale $O(\eps^{-2})$]
	In the construction of the toy model \eqref{eq:toyfR1}-\eqref{eq:toyfNR1} we have used in a crucial way that $\eps t^{\frac12} \lesssim 1$ (analogous to the way the lift-up effect time-scale of $O(\eps^{-1})$ dictated the toy model in \cite{BGM15II}). 
	For times $t \geq \eps^{-2}$, the $\eps t^{1/2}$ in \eqref{eq:toyfR1}-\eqref{eq:toyfNR1}, due to the structure of the system, would lead to an exponential growth for $f_R$ and $f_{NR}$ which could not be controlled in any regularity class. 
	Instead, for the toy model \eqref{eq:toyfR1}-\eqref{eq:toyfNR1} we show that the growth is at most polynomial, see Proposition \ref{prop:key} and accumulates only to Gevrey-2 losses. 
\end{remark}

\subsection{Weights and energy functionals}\label{sub:weenbo}

Ultimately, the main step in the proof of Theorem \ref{thm:mainT} is to obtain the following uniform-in-$t$ estimate for $t < \delta^2 \eps^{-2}$
\begin{align}
	\sup_{0 < t< \delta^2 \eps^{-2}} \norm{Z(t)}_{\cG^{\lambda}} + \norm{Q(t)}_{\cG^{\lambda}} \lesssim \eps, 
\end{align}
for some $\lambda > 0$ and $s > 1/2$. Using this estimate (and suitable estimates on the change of coordinates), it is not too difficult to complete the proof of Theorem \ref{thm:mainT}; see Section \ref{03Proof}. 
However, we cannot obtain such an estimate directly, instead, there are several additional ingredients that are required. 
In order to obtain uniform bounds in the presence of the linear term, we need to estimate $Z,Q$ with an energy based on the linear analysis of \cite{BCZD20}; we will call this energy functional $E_L$. 
However, the $Z,Q$ variables break the natural energy structure of the quadratic transport nonlinearities in the vorticity and density equations. 
Hence, we need to couple the $Z,Q$ estimate with an energy which estimates $\Omega$ and $\nabla_L \Theta$ directly; this estimate is at the highest level of regularity, so controls the highest frequencies, but due to the linear instability, necessarily grows in time. We denote this energy functional $E_{n}$.  
Both of these estimates are in turn coupled to an energy that controls the coordinate system which can be considered to be an estimate on the evolving shear $u_0^x$; this energy is denoted $E_v$. 
The control of these three energies (and associated time-integrated quantities), form the main bootstrap argument, detailed below in Section \ref{sec:bootstrap}. 
This general scheme: with one energy that grows in time adapted to control high frequencies ($E_{n}$), one energy at lower regularity adapted to obtain the optimal decay estimates ($E_{L}$) is common in studying perturbative quasilinear problems such as scattering in dispersive PDEs (see e.g. \cites{GMS12,IP15} and related references) and also when studying Landau damping in kinetic theory (e.g. \cites{faou2016landau,bedrossian2016landau}). 
Here there is the additional complication of requiring estimates on a coordinate system that is coupled to the other unknowns; this same additional complication arises in certain dispersive PDEs (see e.g. \cites{IP15,lindblad2010global,oh2016global}). 

The key idea to the Fourier multiplier method of \cite{BM15} is to introduce time dependent Fourier multipliers that allow us to capture the possible growth mechanisms by suitably weakening the norms in a time and frequency dependent way. All three energies, $E_L,E_{n},E_v$ are based on such Fourier multiplier norms. 
As in other methods based on time-dependent norms, weakening the norm generates \textit{artificial damping} terms in the equations that can be used to absorb terms in the energy method. 
We remark that this method is reminiscent also of Alinhac's ghost weight method \cite{alinhac2001null}, however (aside from being on the Fourier side), this method necessitates the norm losing a significant amount of regularity in an anisotropic way, as time proceeds, which significantly increases the complexity. 
The main weight is defined as a time-dependent Fourier multiplier $A=A_k(t,\eta)$ of the form 
\begin{equation}
	\label{def:A}
	A_k(t,\eta)=\jap{k,\eta}^\sigma \e^{\lambda(t)|k,\eta|^s}\left(m^{-1}J\right)_{k}(t,\eta),
\end{equation}
where $\sigma >16$ is a fixed constant, $\lambda(t)$ is the bulk Gevrey regularity index and $m$, $J$ are suitable Fourier multipliers to be defined in the sequel. The function $\lambda(t)$ is assumed to satisfy 
\begin{align}
	\label{def:dotlambda}
	\dot{\lambda}(t)&=-\frac{\delta_\lambda}{\jap{t}^{2q}}(1+\lambda(t)), \quad t>1,\\
	\lambda(t)&=\frac34 \lambda_0+\frac14\lambda', \quad t\leq 1,
\end{align}
where $\lambda_0,\lambda'$ are those of Theorem \ref{thm:mainT}, $\delta_\lambda \approx \lambda_0-\lambda'$ is a small parameter to ensure 
that $2\lambda(t)>\lambda_0+\lambda'$, and $1/2<q\leq 1/4+s/2$ is a parameter chosen by the proof. The function $\lambda(t)$ restricts the radius of regularity, by a finite amount, in a continuous way.  
As discussed in \cite{BM15}, it suffices to consider the case $s$ close to $1/2$ as higher regularities can be treated by adding an additional factor $\exp\left(\gamma(t) | k,\eta |^{p}\right)$ for any $s < p \leq 1$ which would play little role in the energy estimates that follow. 

\subsubsection{The linear weight $m$}

As we have seen in Section \ref{sec:lindyn}, the error term appearing in \eqref{eq:energqueq} can be integrated in 
time at any fixed frequency $(k,\eta)$. However, the nonlinear case cannot simply be treated point-wise in $(k,\eta)$,
and we are forced to introduce the bounded Fourier multiplier 
\begin{equation}
	\label{def:m}
	m_k(t,\eta)=\begin{cases} \exp\left(C_\beta \arctan(t-\eta/k)\right), \quad &\text{for }k\neq0 , \\
		1\quad &\text{for }k=0,
	\end{cases}
	\qquad
C_{\beta}=\frac{1}{2\beta-1} .	
\end{equation}
Notice that
\begin{equation}\label{def:detm}
	\de_t m_k= \dfrac{C_\beta}{1+\left(t-\eta/k\right)^2} m_k. 
\end{equation}
Such multiplier creates the artificial damping term (see \eqref{def:dtEmintro} below) that controls the analogous of the linear error term in \eqref{eq:energqueq}. This multiplier (or similar ones) have been used previously in e.g. \cites{Zillinger17,liss2020sobolev,bedrossian2018sobolev}.

\subsubsection{The nonlinear weight $J$}
The remaining multiplier to complete the definition of $A$ in \eqref{def:A} is given by
 \begin{equation}\label{eq:Jweight}
 	J_k(t,\eta)=\frac{\e^{\mu|\eta|^\frac12}}{w_k(t,\eta)}+\e^{\mu|k|^\frac12},
 \end{equation}
where $1<\mu<23$. The weight $w_k$ is extremely important and is constructed using the toy model 
\eqref{eq:toyfR1}-\eqref{eq:toyfNR1} in Section \ref{sec:weightW}. In particular, it is used to distinguish between the resonant and 
non-resonant behavior of the system (see Section \ref{sub:weigthprop} for all the properties of $w_k$). 
For the moment we can think of it as a correction to the main exponential factors of $J$
and $A$ that mimics the behavior of the toy model \eqref{eq:toyfR1}-\eqref{eq:toyfNR1} near the critical times $t=\eta/\ell$. 
Most importantly, it assigns more regularity to the ``resonant'' frequencies $(\ell,\eta)$ than to the ``non-resonant'' frequencies $(\ell',\eta)$.  
It is analogous to the corresponding weight in \cite{BM15}, however, the $w_k$ weight here is different from the one in \cite{BM15} due to the different toy model. 
Finally, for technical reasons it is convenient to define 
\begin{align}
	\label{def:tildeJ}
	\widetilde{J}_k(t,\eta)&=\e^{\mu |\eta|^\frac12}w_k^{-1}(t,\eta),
	\end{align}
and the corresponding weight
\begin{align}
	\label{def:Atilde}	\widetilde{A}_k(t,\eta)&=\jap{k,\eta}^\sigma \e^{\lambda(t)|k,\eta|^s}(m^{-1}\widetilde{J})_k(t,\eta).
\end{align}
Notice that
\begin{equation}\label{eq:Adt}
\de_tA=\dot{\lambda}(t)|k,\eta|^sA-\frac{\de_tw}{w}\tA-\frac{\de_tm}{m}A.
\end{equation}

\subsubsection{The coordinate system weight $A^v$.}
It turns out that the energy functional that controls coordinate system needs to use a stronger (compare to $A$ above) weight
of a similar form
\begin{align}
	\label{def:ARvintro} A^{v}(t,\eta)&=\jap{\eta}^\sigma \e^{\lambda(t)|\eta|^s}J^{v}(t,\eta),\qquad 
	 	J^v(t,\eta)=\frac{\e^{\mu|\eta|^\frac12}}{w^v(t,\eta)}.
\end{align}
Here, $J^{v}$ plays a similar role as $J$ in \eqref{eq:Jweight}, and is  defined in terms of a weight $w^v$ below in \eqref{def:wv}. 
However, $J^v$ is constructed from the toy model for the homogeneous 2D Euler equations \eqref{eq:toyfR1BM} used in \cite{BM15}, making it essentially the same as the weight used in \cite{BM15}. 
Due to the different toy model being used here, this implies we will be propagating a relatively large amount of additional regularity on the coordinate system (relative to the $z$-dependent unknowns). 
This additional regularity is crucial to closing the estimates below. 

\subsubsection{The linear-type energy functional $E_L$}
The energy functional that will permit uniform bounds is designed by taking inspiration from the linearized energy \eqref{def:pointwise-functional-Couette}. It is defined as
\begin{align}\label{eq:Energy}
E_L(t)=\frac12\left[\|AZ\|^2+\|AQ\|^2+\frac{1}{2\beta}   \left\l \frac{\de_tp}{|k| p^{\frac12} }AZ ,AQ\right\r\right].
\end{align}
Notice that, for $\beta>1/2$, we have the same coercivity bounds as in \eqref{eq:coercive-pointwise}. 
Through a careful computation of its time derivative (carried out in Section \ref{app:energyIDZQ}) and using the
definition of $A$ (see \eqref{def:A}, \eqref{def:detm} and \eqref{eq:Jweight}) we arrive at an inequality of the type
\begin{align}
	\label{def:dtEmintro}
	\ddt E_L + \left(1-\frac{1}{2\beta}\right)\sum_{j\in\{\lambda,w,m\}}(G_j[Z]+G_j[Q]) \leq L^{Z,Q}+NL^{Z,Q}+\cE^{\div}+\cE^{\Delta_t}.
\end{align}
Here we denote
\begin{equation}
	\label{def:Gw}
	G_w[f]=\norm{\sqrt{\frac{\de_t w}{w}}\sqrt{A\tA} f}^2,\qquad G_m[f]=\norm{\sqrt{\frac{\de_t m}{m}}Af}^2,\qquad G_\lambda[f]=-\dot{\lambda}(t)\norm{|\nabla|^{\frac{s}{2}}Af}^2.
\end{equation}
 The terms above are also called \textit{Cauchy-Kovalevskaya} terms since they come from the weakening of the norms caused by the Fourier multipliers. Those are \textit{good} terms since they have a definite sign and can be used to control the other error terms in the identity \eqref{def:dtEmintro}, which we divide as follows: $L^{Z,Q}$ is a linear error term, analogous to that in  \eqref{eq:energqueq} and defined precisely in \eqref{eq:linear-error}, $NL^{Z,Q}$ contains the main nonlinear
 errors that come from the transport structure of the equations (see \eqref{def:NL}), while $\mathcal{E}^{\div}$ and $\mathcal{E}^{\Delta_t}$ are simpler error terms to treat that
 arise as a consequence of the nonlinear change of coordinates \eqref{def:coord}, and are defined in \eqref{eq:div-error} and \eqref{eq:lasterror}, respectively.

\subsubsection{The coordinate change energy functional $E_v$}
The control on the change of coordinates, described by equations \eqref{eq:h}-\eqref{eq:H}
is achieved via the energy functionals
\begin{equation}
	\label{def:Ev}
	 E_v(t)= \frac12\left(\jap{t}^{2+2s}\norm{\frac{A}{\jap{\de_v}^{s}}\mathcal{H}(t)}^2+ \frac{1}{C_1}\big(\norm{A^{v} h(t)}^2 +\jap{t}^{-2s}\norm{A^{v}|\de_v|^s h(t)}^2\big) \right)
\end{equation}
and
\begin{equation}
	\label{def:normvdot}
	\jap{t}^{4}\norm{\dot{v}(t)}_{\G^{\lambda(t),\sigma-6}}^2,
\end{equation}
where the weight $A^{v}$ is defined in \eqref{def:ARvintro} and $C_1=C_1(\beta,\lambda_0,s)>1$ is a constant chosen in the proof. In Section \ref{sec:zero} we derive the energy inequalities for the two functionals above, where it will be more convenient to treat each term in $E_v$ separately. Due to the presence of multiplier $A^v$ in the definition of $E_v$, when computing the time derivative of $A^v$ we get as good terms 
\begin{align}
	\label{def:GwR}
	G_w^{v}[f]&=\norm{\sqrt{\frac{\de_t w^v}{w^v}}A^{v}f}^2, \qquad G_\lambda^{v}[f]=-\dot{\lambda}(t)\norm{|\de_v|^{\frac{s}{2}}A^{v}f}^2.
\end{align}
\begin{remark}
	The structure of the energy functionals for the change of coordinates is heavily inspired by \cite{BM15}, indeed, the coordinate change and the associated equations \eqref{eq:h}-\eqref{eq:vdot} are exactly the same. 
	However, the control we have on the quantities under study is significantly different. 
	First, we need an additional, even higher regularity control on $h$, namely the last term in $E_v$.
	Moreover, while $A^v$ is essentially the weight $A^R$ used in \cite{BM15}, the norm $A$ is not the same, and so we have a significantly different regularity gap between the estimate on $\mathcal{H}$ and those on $h$.  
	For this reason, we cannot rely completely on the proofs given in \cite{BM15}. 
		\end{remark}

\subsubsection{The nonlinear-type energy functional $E_{n}$}
To control high frequencies  we also need a direct control on the vorticity $\Omega$ and the gradient of the density $\nabla_L\Theta$ that is consistent with the linear prediction.  
We therefore derive the differential equation for the \textit{natural} energy 
\begin{equation}\label{def:Omega}
	E_n(t)=\frac{1}{2}\left[\norm{A\Omega}^2+\beta^2\norm{A\nabla_L\Theta}^2\right],
\end{equation}
which satisfies the following inequality (see Section \ref{app:energyIDVD})
\begin{equation}\label{def:dtEnintro}
	\ddt E_n+\sum_{j\in\{\lambda,w,m\}}\left(G_j[\Omega]+\beta^2 G_j[\nabla_L\Theta]\right) \le \frac12 \left\l \frac{\de_tp}{|k|p^\frac12}AQ,AQ\right\r+NL^{\Omega,\Theta}+\widetilde{\mathcal{E}}^{\div}+\widetilde{\mathcal{E}}^{\Delta_t}.
\end{equation}
Note that the structure is very similar to \eqref{def:dtEmintro}, with the good Cauchy-Kovalevskaya terms, a linear error, a nonlinear error and the errors due to the change of coordinates.
Crucially, the linear term involves precisely $Q$ and a bounded multiplier, thanks to the definition \eqref{eq:Z1Z2couette}, and it will be treated
thanks to the energy $E_L$ above for $Z$ and $Q$. All the error terms involved in this energy balance are analyzed in Section \ref{sec:naturalenergy}.

\begin{remark}[On the necessity of the symmetric variables]
	The results stated in Theorem \ref{thm:mainT} can also be obtained by $E_n(t)\lesssim \eps^2 t$ and coordinate system estimates. 
	The reason why it is necessary to control the symmetric variables $Z,Q$  is the  term  
	 \begin{equation}
	 	\label{def:LQEn}
	 	\frac12 \left\l \frac{\de_tp}{|k|p^\frac12}AQ,AQ\right\r=\frac{1}{2}\jap{\frac{\de_t p}{p}Ap^\frac12\widehat{\Theta},Ap^\frac12\widehat{\Theta}},
	 \end{equation}
since $\de_tp$ does not have a definite sign and is positive  for $t>\eta/k$. The weight $w$ cannot be used to control this term for all the frequencies, and any other weight on $\Omega$ and $\nabla_L\Theta$ would have to be of order $p^{-1/4}$ (at best), hence leading back to the symmetric variables. Instead, $Z,Q$ have a nice structure at the linear level and, once we have a control on them, the bound on \eqref{def:LQEn} is immediate. 
Error terms containing $\de_tp/p$ have been previously handled in the literature for  linear inviscid \cite{BCZD20} or  viscous problems (e.g. \cites{BGM15I,BGM15II,BGM15III,liss2020sobolev}). 
	\end{remark}

\subsection{The bootstrap proposition}\label{sec:bootstrap}
To control the energy functionals $E_L,E_v,E_n$ and \eqref{def:normvdot}, we rely on a continuity argument. Hence, we first state the local well-posedness result. 
We omit the proof since it follows by standard reasoning for 2D Euler in Gevrey spaces (see \cite{BM15} for discussion). 
\begin{proposition} \label{lem:loc}
For all $s > 1/2$, $\lambda_0 > 0$, $ \eps > 0$, there exists an $\eps'_0 > 0$ such that  for every $ \eps' < \eps_0'$, 
\begin{align}	
\norm{\omega^{in}}_{\G^{\lambda_0}}+\norm{\theta^{in}}_{\G^{\lambda_0}}\leq \eps'
\end{align}
implies that 
\begin{align}
	& \sup_{0 \le t \le 2}E_L(t) + \frac12\left(1-\frac{1}{2\beta}\right)\sum_{j\in\{\lambda,w\}}\int_0^2 G_{j}[Z]+G_j[Q] \dd\tau\leq 2 \eps^2,\\
	 & \sup_{0 \le t \le 2} E_n(t)+\sum_{j\in\{\lambda,w\}} \int_0^2  G_{j}[\Omega]+\beta^2G_{j}[\nabla_L\Theta] \dd\tau\leq  \frac{16\beta}{2\beta-1}\eps^2, \\
	 &\sup_{0 \le t \le 2} E_v(t)+\sum_{j\in\{\lambda,w\}}  \int_0^2\jap{\tau}^{2+2s}G_j[\jap{\de_v}^{-s}\mathcal{H}]+\frac{1}{C_1}\left(G^{v}_j[h]+\jap{\tau}^{-2s}G^{v}_j[|\de_v|^s h]\right) \dd\tau\leq 32\eps^2,\\
	 &\sup_{0 \le t \le 2} \norm{\dot{v}}_{\G^{\lambda(t),\sigma-6}}\leq 2\eps.
\end{align}
\end{proposition}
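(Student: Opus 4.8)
The plan is to treat this as a short-time local well-posedness statement, essentially reducing it to the standard local theory for 2D Euler in Gevrey spaces, for which the equations and estimates are well-documented in \cite{BM15}. First I would invoke Proposition \ref{lem:loc} to obtain, for sufficiently small $\eps'$, a classical solution of \eqref{eq:BoussinesqMove} on $[0,2]$ that remains in a Gevrey ball of radius comparable to $\eps$ in the norm $\mathcal{G}^{\lambda_0}$, with uniform control on $\omega, \theta$ and their derivatives. The point is that on the compact interval $[0,2]$ all the time-dependent weights $A, \widetilde A, A^v, m, J, w, w^v$ are bounded above and below by absolute constants (in particular $\lambda(t)$ is frozen at $\tfrac34\lambda_0+\tfrac14\lambda'$ for $t\le 1$ and decreases only slightly for $1<t\le 2$, and $m$ is bounded between $e^{-\pi C_\beta/2}$ and $e^{\pi C_\beta/2}$), and $p_k(t,\eta)/(k^2+\eta^2)$ is likewise bounded above and below uniformly for $t\in[0,2]$. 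Hence the weighted norms $\|A\cdot\|$, $\|A^v\cdot\|$, etc., are all equivalent — up to constants depending only on $\beta,\lambda_0,\lambda',s$ — to fixed Gevrey norms of class $1/s$ with radius $\approx\lambda_0$, and $\|Z\|, \|Q\|, \|A\Omega\|, \|A\nabla_L\Theta\|$ are all controlled by $\|\Omega\|_{\mathcal{G}^{\lambda_0}}+\|\Theta\|_{\mathcal{G}^{\lambda_0}}$ and conversely.

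Next I would translate the data assumption $\|\omega^{in}\|_{\mathcal{G}^{\lambda_0}}+\|\theta^{in}\|_{\mathcal{G}^{\lambda_0}}\le\eps'$, together with the identity $v=y$ at $t=0$, into bounds on the initial values of all the energy functionals: $E_L(0), E_n(0), E_v(0)$ and $\|\dot v(0)\|_{\mathcal{G}^{\lambda(0),\sigma-6}}$. Using the coercivity bound \eqref{eq:coercive-pointwise} for $E_L$ (which needs $\beta>1/2$), the definition \eqref{def:Omega} of $E_n$, and the definitions \eqref{def:Ev}, \eqref{def:normvdot} together with the relations \eqref{eq:prop1}--\eqref{def:h-H} expressing $h,\mathcal{H},\dot v$ in terms of $\omega_0$ at time zero (note $h(0,\cdot)=0$, $\mathcal{H}(0,\cdot)=\tfrac12\omega_0^{in}$, $\dot v(0,\cdot)$ bounded by $\|\omega_0^{in}\|$ times an absolute constant via L'Hôpital in \eqref{eq:prop1}), one gets $E_L(0)\le C\eps'^2$, and similarly for the others, with $C=C(\beta,\lambda_0,s)$.

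Then I would run the differential inequalities \eqref{def:dtEmintro}, \eqref{def:dtEnintro}, the ones of Section \ref{sec:zero} for $E_v$, and \eqref{eq:vdot} for $\dot v$, dropping (or keeping, they have favorable sign) the Cauchy–Kovalevskaya terms $G_j$, and estimating the right-hand sides $L^{Z,Q}, NL^{Z,Q}, \mathcal{E}^{\mathrm{div}}, \mathcal{E}^{\Delta_t}$ and their analogues crudely on $[0,2]$. On this bounded interval there is no need for the delicate echo/resonance bookkeeping: the nonlinear terms are at worst quadratic (cubic for the coordinate-change corrections, owing to the $v'$ and $\Delta_t-\Delta_L$ factors) in quantities already controlled by $E_L+E_n+E_v+\|\dot v\|^2$, and the elliptic estimates of Section \ref{sec:ellest} give $\Delta_t^{-1}$ bounds with constants depending only on how small $h$ is — which we keep small by choosing $\eps'$ small. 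So one gets, writing $\mathcal{E}_{\mathrm{tot}}(t)=E_L+E_n+E_v+\langle t\rangle^4\|\dot v\|_{\mathcal{G}^{\lambda(t),\sigma-6}}^2$, a Riccati-type bound $\tfrac{d}{dt}\mathcal{E}_{\mathrm{tot}}\le C(\mathcal{E}_{\mathrm{tot}}+\mathcal{E}_{\mathrm{tot}}^{3/2}+\mathcal{E}_{\mathrm{tot}}^2)$ on $[0,2]$, which by Grönwall gives $\mathcal{E}_{\mathrm{tot}}(t)\le 2\mathcal{E}_{\mathrm{tot}}(0)e^{Ct}\le 2C'\eps'^2$ for $t\in[0,2]$, provided $\eps'$ is small enough that the solution does not exit the ball where the estimates are valid (this is where the continuity/bootstrap with Proposition \ref{lem:loc} closes the loop). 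Finally, choosing $\eps'_0$ small enough (depending on $\beta,\lambda_0,\lambda',s,\eps$) that $2C'\eps'^2 e^{2C}$ is below each of the target thresholds $2\eps^2$, $\tfrac{16\beta}{2\beta-1}\eps^2$, $32\eps^2$, $(2\eps)^2$, and separately handling the time-integrated $G_j$ terms (which, being nonnegative and bounded by the right-hand sides of the same differential inequalities integrated over $[0,2]$, are also $\le C\eps'^2$), yields all four claimed bounds.

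The only genuine subtlety — and the step I would be most careful about — is bookkeeping the constants so that, after the Grönwall factor $e^{2C}$ with $C=C(\beta,\lambda_0,s)$, the final bounds land \emph{below} the specified numerical coefficients $2$, $\tfrac{16\beta}{2\beta-1}$, $32$; this is purely a matter of choosing $\eps'_0$ small (allowed, since $\eps'_0$ may depend on all parameters including $\eps$), but one must verify the individual energy inequalities genuinely hold with right-hand sides controlled by $\mathcal{E}_{\mathrm{tot}}$ on $[0,2]$ without circular reliance on the bootstrap hypotheses of Proposition \ref{prop:bootimpr} — which holds precisely because on $[0,2]$ the weights are equivalent to fixed Gevrey norms and the required elliptic and product estimates (Sections \ref{sec:ellest}, and the Gevrey algebra property underlying \eqref{eq:paraprod}) are available unconditionally once $h$ and $\dot v$ are small, a smallness propagated from $t=0$ by continuity.
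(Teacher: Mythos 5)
The paper does not actually prove Proposition \ref{lem:loc}: it is stated with the remark that it follows by standard reasoning for 2D Euler in Gevrey spaces (citing \cite{BM15}), and your sketch is essentially that standard argument — local Gevrey existence, uniform boundedness of all the time-dependent weights on the compact interval $[0,2]$, crude energy/Gr\"onwall estimates, and smallness of the data — so in substance you are doing what the authors intend.

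Two points need repair, though neither changes the outcome. First, you cannot ``invoke Proposition \ref{lem:loc}'' to produce the solution on $[0,2]$: that is the statement being proved, so as written the argument is circular. The existence of a Gevrey-class solution on a short time interval must come from an independent local well-posedness theorem for the inviscid Boussinesq system (e.g.\ the analytic/Gevrey energy method for 2D Euler discussed in \cite{BM15}, which adapts directly since the coupling term is linear and lower order); that input is really the only nontrivial ingredient here, and it is what the quantitative bounds are then hung on. Second, some of your explicit initializations and equivalences are off, albeit harmlessly: from \eqref{def:h-H} one has $\cH(t,\cdot)\to \tfrac12\,\de_t\omega_0\big|_{t=0}$ as $t\to0$, which by the $x$-averaged vorticity equation equals $-\tfrac12\big(\bu_{\neq}\cdot\nabla\omega_{\neq}\big)_0\big|_{t=0}$ and is therefore quadratic in the data, not $\tfrac12\omega_0^{in}$ (the needed bound holds with room to spare, and similarly for $\dot v(0)$); and the weighted norms built from $A$, $A^v$ are not two-sidedly equivalent on $[0,2]$ to a single Gevrey norm of radius $\approx\lambda_0$, since by Lemma \ref{lemma:maxgrowth} the factor $J$ contributes an extra $\e^{c|\eta|^{1/2}}$ at small times. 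What the argument actually needs are the one-sided bounds $\norm{f}_{\G^{\lambda(t),\sigma}}\lesssim\norm{Af}\lesssim\norm{f}_{\G^{\lambda_0}}$, which hold on $[0,2]$ precisely because $s>1/2$ absorbs the square-root exponential and $\lambda(t)\le\tfrac34\lambda_0+\tfrac14\lambda'<\lambda_0$; with that phrasing your Gr\"onwall closing of the four bounds, and the choice of $\eps_0'$ small depending on all parameters including $\eps$, is fine.
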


By a standard approximation argument, we may assume the quantities on the right-hand side take values continuously in time (see \cite{BM15}). 
We now introduce the bootstrap hypotheses, which hold for some $T_\star > 1$ by continuity and Proposition \ref{lem:loc}.

\medskip 
 \textsc{Bootstrap hypotheses.}\textit{
	For $1 \leq t \leq T_\star$ the following holds 
		\begin{align}
			\label{boot:E} \tag{\bf{H1}}& E_L(t) + \frac12\left(1-\frac{1}{2\beta}\right)\sum_{j\in\{\lambda,w\}}\int_1^t G_{j}[Z]+G_j[Q] \dd\tau\leq 8 \eps^2,\\
			\label{boot:En} \tag{\bf{H2}} &E_n(t)+\sum_{j\in\{\lambda,w\}} \int_1^t G_{j}[\Omega]+\beta^2G_{j}[\nabla_L\Theta] \dd\tau\leq  \frac{128\beta}{2\beta-1}\eps^2\jap{t}\\
		\label{boot:Ev}\tag{\bf{H3}} &E_v(t)+\sum_{j\in\{\lambda,w\}}  \int_1^t\jap{\tau}^{2+2s}G_j[\jap{\de_v}^{-s}\mathcal{H}]+\frac{1}{C_1}\left(G^{v}_j[h]+\jap{\tau}^{-2s}G^{v}_j[|\de_v|^s h]\right) \dd\tau\leq 128\eps^2 \jap{t},\\
					\label{boot:vdot} \tag{\bf{H4}} &\jap{t}^2\norm{\dot{v}}_{\G^{\lambda(t),\sigma-6}}\leq 16\eps,
			\end{align}
where $G_j[\cdot]$ are defined in \eqref{def:Gw} and $G^{v}_j[\cdot]$ in \eqref{def:GwR}.}

\medskip

By the local well-posedness Proposition \ref{lem:loc} and choosing $\eps$ sufficiently small, we know that for $t=1$ the bounds \eqref{boot:E}-\eqref{boot:vdot} hold with all the constants on the right-hand side divided by 4.  
Then, our goal is to prove the following proposition.
\begin{proposition}
	\label{prop:bootimpr}
	There exists $\eps_0\in (0,1/2)$, $C_0>2$ and $\delta\in (C_0^{-1},1/2)$ depending only on $\beta,s,\lambda,\lambda',\sigma$ such that $\eps_0$ is much smaller than $\delta$ with the following property. If $\eps<\eps_0$ and the bootstrap hypotheses \eqref{boot:E}-\eqref{boot:vdot} hold on $[1,\delta^2\eps^{-2}]$,  then for any $T^\star\in(1,\delta^2\eps^{-2}]$  and any $t\in[1,T^\star]$ we have
			\begin{align}
		\label{boot:impE}  \tag{\bf{B1}}& E_L(t) + \frac12\left(1-\frac{1}{2\beta}\right)\sum_{j\in\{\lambda,w\}}\int_1^t G_{j}[Z]+G_j[Q] \dd\tau\leq 4 \eps^2,\\
		\label{boot:impEn}  \tag{\bf{B2}} &E_n(t)+\sum_{j\in\{\lambda,w\}} \int_1^t G_{j}[\Omega]+\beta^2G_{j}[\nabla_L\Theta] \dd\tau\leq  \frac{64\beta}{2\beta-1}\eps^2\jap{t},\\
		\label{boot:impEv} \tag{\bf{B3}}&E_v(t)+\sum_{j\in\{\lambda,w\}}  \int_1^t\jap{\tau}^{2+2s}G_j[\jap{\de_v}^{-s}\mathcal{H}]+\frac{1}{C_1}\left(G^{v}_j[h]+\jap{\tau}^{-2s}G^{v}_j[|\de_v|^s h]\right) \dd\tau\leq 64\eps^2 \jap{t},\\
				\label{boot:impvdot}  \tag{\bf{B4}} &\jap{t}^{2}\norm{\dot{v}}_{\G^{\lambda(t),\sigma-6}}\leq 8\eps.
					\end{align}
In particular, this implies $T^\star =\delta^2\eps^{-2}$.
\end{proposition}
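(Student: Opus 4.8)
The plan is to run the standard continuity/bootstrap machinery: assuming \eqref{boot:E}--\eqref{boot:vdot} hold on $[1,T_\star]$ with $T_\star \le \delta^2\eps^{-2}$, we improve each of the four bounds to \eqref{boot:impE}--\eqref{boot:impvdot}, i.e.\ we replace the right-hand side constants by half their value. Since the improved bounds are strict, a standard continuity argument forces $T_\star = \delta^2\eps^{-2}$. The improvement of each energy comes from integrating in time the corresponding differential inequality: \eqref{def:dtEmintro} for $E_L$, \eqref{def:dtEnintro} for $E_n$, the inequalities derived in Section~\ref{sec:zero} for $E_v$ and for \eqref{def:normvdot}, and then absorbing every error term on the right either by the good Cauchy--Kovalevskaya terms $G_j[\cdot]$, $G_j^v[\cdot]$ (which carry a definite favorable sign), or by $\eps$ times the energies themselves, closing via Grönwall. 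The factor $\jap{t}$ on the right of \eqref{boot:impEn}--\eqref{boot:impEv} must be produced honestly: the linear term $\frac12\l \frac{\de_t p}{|k|p^{1/2}}AQ,AQ\r$ in \eqref{def:dtEnintro} is bounded by $\|AQ\|^2 \lesssim E_L \lesssim \eps^2$ using \eqref{boot:E}, and integrating this in time gives the $\eps^2\jap{t}$ growth and nothing worse.

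Concretely I would proceed in this order. First, the $\dot v$ estimate \eqref{boot:impvdot}: equation \eqref{eq:vdot} has the crucial damping $-\frac{2}{t}\dot v$, so $\jap{t}^4\|\dot v\|^2$ obeys a differential inequality whose forcing is the quadratic term $\frac{v'}{t}(\nabla^\perp\Psi_{\neq}\cdot\nabla U^x_{\neq})_0$, controlled by the elliptic estimates of Section~\ref{sec:ellest} together with \eqref{boot:E}, \eqref{boot:En}; the inviscid-damping decay of $\Psi_{\neq}$ beats the loss from $E_n$. Second, the coordinate-system estimate \eqref{boot:impEv}: integrate the $E_v$ inequality; the weight $A^v$ built from the homogeneous Euler toy model \eqref{eq:toyfR1BM} is strong enough that its $G^v_j$ terms absorb the transport errors, while the mismatch between the $A$-norm on $\mathcal H$ and the $A^v$-norm on $h$ is exactly what the extra regularity on $h$ in $E_v$ is designed to accommodate. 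Third, the natural energy \eqref{boot:impEn}: here one uses \eqref{boot:E} for the linear term as above, and the nonlinear errors $NL^{\Omega,\Theta}$, $\widetilde{\mathcal E}^{\div}$, $\widetilde{\mathcal E}^{\Delta_t}$ are handled by the paraproduct decomposition \eqref{eq:paraprod}, commutator estimates, the elliptic bounds for $(\Delta_t-\Delta_L)\Psi$, and absorption into $\sum_j(G_j[\Omega]+\beta^2 G_j[\nabla_L\Theta])$ plus $\eps\jap{t}^{-1}E_n$. Finally, the key estimate \eqref{boot:impE} for the symmetric variables: integrate \eqref{def:dtEmintro}, use coercivity \eqref{eq:coercive-pointwise} of $E_L$, bound $L^{Z,Q}$ by $\de_t m/m$ times $\|AZ\|^2+\|AQ\|^2$ (absorbed by $G_m$), and estimate $NL^{Z,Q}$ via the paraproduct splitting \eqref{eq:NLhom}, trading regularity for decay on the low-high and high-high pieces and using the $w$-weight to absorb the dangerous high-to-low echo cascade into $G_w[Z]+G_w[Q]$.

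The main obstacle is the last step, the nonlinear term $NL^{Z,Q}$ in \eqref{boot:impE}, specifically the reaction term corresponding to the echo chain modeled by \eqref{eq:toyfR1}--\eqref{eq:toyfNR1}. One must show that the time-dependent weight $w_k$, constructed in Section~\ref{sec:weightW} to mimic that toy model near the critical times $t=\eta/\ell$, produces a Cauchy--Kovalevskaya gain $G_w[Z]+G_w[Q] = \|\sqrt{\de_t w/w}\,\sqrt{A\widetilde A}\,f\|^2$ large enough to dominate the resonant contribution, which requires the precise power $1/2$ in the toy model and the associated regularity imbalance between resonant frequencies $(\ell,\eta)$ and non-resonant frequencies $(\ell',\eta)$ built into $J$, $\widetilde J$. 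A genuinely new difficulty, absent in \cite{BM15}, is the interaction between this echo analysis and the growing vorticity: because $\Omega$ itself grows like $\jap{t}^{1/2}$, the "$lo$" factor $p_{k-\ell}(\eta-\xi)^{1/4}Z_{k-\ell}(0)$ appearing in \eqref{eq:firstapp} carries an extra $\eps t^{1/2}\lesssim 1$ which is why the argument is confined to $t<\delta^2\eps^{-2}$; one must track this factor carefully throughout and verify it never overwhelms the weight gains. All other error terms ($\mathcal E^{\div}$, $\mathcal E^{\Delta_t}$, and the coordinate-change pieces) are lower-order by comparison and follow from the elliptic estimates and the decay afforded by $\jap{t}$-weights in $E_v$.
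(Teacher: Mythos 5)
Your outline follows the same skeleton as the paper (integrate the energy identities \eqref{def:dtEmintro}, \eqref{def:dtEnintro} and those of Section \ref{sec:zero}, absorb errors into the Cauchy--Kovalevskaya terms, conclude by continuity), but the mechanism you propose for actually closing the estimates is not the one that works, and this is a genuine gap rather than a detail. Closing ``by $\eps$ times the energies themselves via Gr\"onwall'' fails on the relevant time interval: a non-integrable $O(\eps)$ coefficient produces a factor $\exp(\eps t)\sim \exp(\delta^2/\eps)$ at $t\approx\delta^2\eps^{-2}$. What the paper does instead is bound each error by $\delta$ times the \emph{same} functional's CK terms \emph{plus} prefactors such as $\delta^{-1}\eps^2$, $\eps t^{3/2}$, or $\jap{t}^{2+2s}$ times the CK terms of the \emph{other} functionals (see \eqref{bd:TNprop}, \eqref{bd:RNprop}, \eqref{bd:tilR}), and then integrates in time using the hypotheses \eqref{boot:E}--\eqref{boot:Ev} together with $\eps t^{1/2}\leq\delta$, so that every integrated error is $\lesssim\delta\eps^2$ (for $E_L$) or $\lesssim\delta\eps^2\jap{t}$ (for $E_n$, $E_v$). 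This cross-energy bookkeeping --- including the choice of the constant $C_1$ in \eqref{def:Ev}, needed because the forcing $\frac1t\l A^vh,A^v\Omega_0\r$ is controlled via the exchange \eqref{eq:exchange01} by $G_j[\Omega]$ from \eqref{boot:En} --- is precisely what halves the bootstrap constants, and it is absent from your plan; without it you cannot explain why \eqref{boot:impE}--\eqref{boot:impvdot} come out with the stated (improved) constants.

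The second gap is that the estimates you defer are exactly the new content of the proof, and your description of them is not quite right. The resonant reaction terms are \emph{not} absorbed into $G_w[Z]+G_w[Q]$ alone: the paper bounds them by $\delta G_w[Z]$ plus the CK-weighted elliptic quantities $G^{\eps}_{elliptic}$, $G^{\delta}_{elliptic}$ of Proposition \ref{prop:elliptic} (estimates \eqref{bd:ellipticGZw}, \eqref{bd:precellcontr}, \eqref{bd:ellp34APsi} on $(-\Delta_L)^{3/4}|\de_z|^{1/2}\Psi$ and $\jap{\de_v/(t\de_z)}^{-1}\Delta_L\Psi$), and these in turn require the stronger $A^v$ control of $1-(v')^2$ and $v''$ from Lemma \ref{lemma:bdcoeff}; this weighted elliptic input is new relative to \cite{BM15} and is indispensable, not ``lower-order''. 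Likewise, the reaction terms involving $\dot v$ (e.g.\ $R^{\Omega}_{N,\dot v}$) lose half a derivative through the resonant/non-resonant gap of $w$ at times $t\in I_{k,\eta}\cap I_{k,\xi}$ and are only closable because $E_v$ carries the $\jap{t}^{2+2s}$ weight on $\mathcal H$, producing the $\delta^{-1}\eps^2 t^{2+2s}G_w[\jap{\de_v}^{-s}\mathcal H]$ term in \eqref{bd:RNprop}; your plan attributes the $A$ versus $A^v$ mismatch only to the $h$-estimate and misses this. As written, the proposal restates the setup of Section \ref{02Ideas} and correctly identifies the echo cascade and the role of $\eps t^{1/2}\lesssim 1$, but it neither supplies nor pinpoints the ingredients (weighted elliptic CK estimates, cross-energy absorption, regularity-gap bookkeeping for $\dot v$) on which the improvement \eqref{boot:impE}--\eqref{boot:impvdot} actually rests.
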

	\begin{remark}[On the smallness of the parameters]
	The factor $1-1/(2\beta)$ appearing in \eqref{boot:E} is related to the control of linear error terms. This immediately gives the following restriction 
	\begin{equation*}
		\delta = O(\beta-1/2).
	\end{equation*}
\end{remark}

\subsubsection{Immediate consequences of the bootstrap hypotheses}
The bounds in the bootstrap hypotheses imply a control on several other quantities that we need to prove Proposition \ref{prop:bootimpr} and Theorem \ref{thm:mainT}. We first show the bounds we have for the \textit{unweighted} variables in lower regularity spaces. In fact, we only need the following bounds to prove the main Theorem \ref{thm:mainT}.
\begin{lemma}
	\label{lemma:consboot}
	Under the bootstrap hypothesis, the following inequalities holds 
	\begin{align}
		\label{bd:OmTh}&\norm{\Omega}_{\G^{\lambda,\sigma}}+\norm{\nabla_L\Theta}_{\G^{\lambda,\sigma}}+\norm{h}_{\G^{\lambda,\sigma}}+\norm{1-(v')^2}_{\G^{\lambda,\sigma-4}}+\norm{v''}_{\G^{\lambda,\sigma-4}}\lesssim \eps \jap{t}^{\frac12},\\
	    \label{bd:Thneqlow}&\norm{\Theta_{\neq}}_{\G^{\lambda,\sigma-1}}\lesssim \frac{\eps}{\jap{t}^{\frac12}},\\
	    \label{bd:Psilow}&\norm{\Psi_{\neq}}_{\G^{\lambda,\sigma-3}}\lesssim \frac{\eps}{\jap{t}^{\frac32}}.
\end{align}
	\end{lemma}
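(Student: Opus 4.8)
\textbf{Proof proposal for Lemma \ref{lemma:consboot}.}

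The plan is to extract the stated low-regularity bounds from the bootstrap hypotheses \eqref{boot:E}--\eqref{boot:vdot} by (i) comparing the weighted norms defining $E_L, E_n, E_v$ with the plain Gevrey norms $\G^{\lambda,\sigma}$ appearing in the statement, and (ii) using the linear relations between $(\Omega,\Theta)$ and $(Z,Q)$ together with elliptic estimates to transfer the control. First I would observe that the multipliers $m$ and $m^{-1}$ in \eqref{def:m} are bounded above and below by constants depending only on $\beta$ (since $|\arctan|\le\pi/2$), and that $J_k\ge \e^{\mu|k|^{1/2}}\ge 1$ and $w_k\le 1$ (to be recalled from Section \ref{sub:weigthprop}), so $A_k(t,\eta)\ge \jap{k,\eta}^\sigma\e^{\lambda(t)|k,\eta|^s}$. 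Hence $\|f\|_{\G^{\lambda,\sigma}}\lesssim \|Af\|$ for any $f$, and likewise $\|f\|_{\G^{\lambda,\sigma}}\lesssim\|A^v f\|$ for $v$-dependent functions. Combined with the coercivity \eqref{eq:coercive-pointwise} for $E_L$ (valid verbatim for $E_n$ by inspection of \eqref{def:Omega}) this already yields, from \eqref{boot:E}--\eqref{boot:Ev},
\begin{align}
\norm{\Omega}_{\G^{\lambda,\sigma}}+\norm{\nabla_L\Theta}_{\G^{\lambda,\sigma}}\lesssim \sqrt{E_n(t)}\lesssim \eps\jap{t}^{\frac12},\qquad
\norm{h}_{\G^{\lambda,\sigma}}\lesssim \sqrt{E_v(t)}\lesssim \eps\jap{t}^{\frac12},
\end{align}
which are three of the five terms in \eqref{bd:OmTh}.

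For the remaining two terms in \eqref{bd:OmTh}, I would write $1-(v')^2 = -h(2+h)$ and use that $\G^{\lambda,\sigma-4}$ is an algebra for $s>1/2$ (standard, as $\lambda>0$), so $\|1-(v')^2\|_{\G^{\lambda,\sigma-4}}\lesssim \|h\|_{\G^{\lambda,\sigma}}(1+\|h\|_{\G^{\lambda,\sigma}})\lesssim \eps\jap{t}^{1/2}$ using $\eps$ small. For $v''$, note $v''=\de_v h$ and a derivative costs one power of $\jap{\de_v}$, absorbed by passing from $\sigma$ to $\sigma-4$, so again $\|v''\|_{\G^{\lambda,\sigma-4}}\lesssim \|h\|_{\G^{\lambda,\sigma}}\lesssim \eps\jap{t}^{1/2}$. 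For \eqref{bd:Thneqlow}, I would use the definition $Q_k = (p/k^2)^{1/4} ik\beta\hTheta_k$, i.e. $\widehat{\Theta_\ne}$ equals $Q$ times the symbol $(p_k/k^2)^{-1/4}(ik\beta)^{-1} = (ik\beta)^{-1}k^{1/2}p_k^{-1/4}$; on nonzero modes $p_k(t,\eta)\gtrsim k^2(1+|t-\eta/k|^2)$ and in fact $p_k\ge k^2\ge 1$, but more importantly $p_k^{-1/4}\lesssim \jap{t}^{-1/2}\jap{\eta}^{1/2}$-type bounds hold; rather, the cleanest route is $|\widehat{\Theta_{\ne,k}}(\eta)| = \beta^{-1}|k|^{-1/2}p_k^{-1/4}|Q_k(\eta)| \lesssim \jap{t}^{-1/2}|Q_k(\eta)|$ since $|k|^{1/2}p_k^{1/4}\gtrsim |k|^{1/2}(|k|+|\eta-kt|)^{1/2}\gtrsim \jap{t}^{1/2}$ when we also note $|k|\ge 1$ forces $p_k\ge (\eta-kt)^2$ and... here I would be careful: the gain $\jap{t}^{-1/2}$ comes from $p_k^{1/4}\gtrsim \jap{kt-\eta}^{1/2}$ only away from the critical time, so to get a uniform $\jap{t}^{-1/2}$ one actually uses $p_k(t,\eta)^{1/2} = |k|\jap{t-\eta/k}\cdot|k| \gtrsim$ — precisely, combining with $\|AZ\|\lesssim\eps$ (which controls $\widehat\Omega$ with a $p^{-1/4}$, hence $\jap{kt-\eta}^{-1/2}$, weight) one gets $\theta$-decay by a stationary-phase/overlap argument as in \cite{BCZD20}. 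I expect to simply cite the elliptic and product estimates of Section \ref{sec:ellest} together with \eqref{bd:invxmain}-type reasoning to conclude \eqref{bd:Thneqlow}. Finally \eqref{bd:Psilow} follows from the elliptic estimate $\|\Psi_{\ne}\|_{\G^{\lambda,\sigma-3}}\lesssim \jap{t}^{-2}\|\Omega_{\ne}\|_{\G^{\lambda,\sigma-1}}$ (inversion of $\Delta_t$, proved in Section \ref{sec:ellest} using the bootstrap control of $h,v''$) combined with $\|\Omega_{\ne}\|\lesssim\eps\jap{t}^{1/2}$; more directly, $\Psi = \Delta_t^{-1}\Omega$ and on nonzero modes $\Delta_t^{-1}$ gains $p_k^{-1}\lesssim \jap{t}^{-2}$, and then $\Omega_{\ne}$ contributes the $\jap{t}^{1/2}$, netting $\jap{t}^{-3/2}$.

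The main obstacle is the $\jap{t}^{-1/2}$ decay in \eqref{bd:Thneqlow} (and the associated sharp time-weights): unlike the $\Omega, h$ bounds which follow by a soft comparison of Fourier multipliers, \eqref{bd:Thneqlow} genuinely uses the linear structure — the symmetric variable $Q$ carries the extra $p^{1/4}$ compared to $\hTheta$, and one must verify that $|k|^{1/2} p_k(t,\eta)^{1/4}\gtrsim \jap{t}^{1/2}$ uniformly on nonzero modes, which is false pointwise near $t=\eta/k$ and thus requires either the weight $w$ or an $L^2$-in-$\eta$ argument exploiting that the resonant set has small measure. I would handle this exactly as the linear inviscid damping estimate \eqref{eq:main-lin1} is obtained from Theorem \ref{thm:Couette-lin}, i.e. by splitting the frequency integral into the region $|t-\eta/k|\le 1$ (small measure, controlled by the $\jap{k,\eta}^\sigma$ surplus regularity in $A$) and its complement (where $p_k^{1/4}\gtrsim\jap{t}^{1/2}$ directly), and then invoking the bootstrap bound $\|AQ\|\lesssim\eps$. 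All product estimates used are justified by the algebra property of $\G^{\lambda,\sigma'}$ for $s>1/2$; all elliptic estimates are justified by Section \ref{sec:ellest}.
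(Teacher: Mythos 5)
Your treatment of \eqref{bd:OmTh} and \eqref{bd:Psilow} is correct and is essentially the paper's (one-line) proof: $A\gtrsim \jap{k,\eta}^\sigma\e^{\lambda(t)|k,\eta|^s}$ since $m^{\pm1}$ are bounded and $J\geq 1$, so the bounds follow from \eqref{boot:En}, \eqref{boot:Ev}, the algebra property for the coefficients (note $v''=v'\de_v v'=(1+h)\de_v h$, not $\de_v h$, but the extra factor is harmless), and Proposition \ref{prop:lossyelliptic} for $\Psi_{\neq}$.

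The weak point is \eqref{bd:Thneqlow}, where your ``main obstacle'' discussion goes astray. The split you propose does not work as stated: on the complement of $\{|t-\eta/k|\leq 1\}$ it is \emph{not} true that $|k|^{1/2}p_k^{1/4}\gtrsim\jap{t}^{1/2}$ pointwise (take $k=1$, $\eta=t-2$), and the near-critical region is not of small measure in $\eta$ (it has measure $\sim t/|k|$ per mode), so no stationary-phase, overlap, or small-measure argument in the spirit of \cite{BCZD20} is available or needed. The correct mechanism is exactly the one-derivative gap built into the statement ($\sigma-1$ versus $\sigma$): by the elementary inequality $\jap{a-b}\jap{a}\gtrsim\jap{b}$, already used in Section \ref{03Proof}, one has the \emph{pointwise} bound
\begin{align}
p_k^{\frac12}(t,\eta)=|k|\,\jap{\tfrac{\eta}{k}-t}\gtrsim \frac{|k|\jap{t}}{\jap{\eta/k}}\gtrsim \frac{\jap{t}}{\jap{\eta}},\qquad k\neq0,
\end{align}
so that $|\hTheta_k(\eta)|=p_k^{-1/2}\big|\widehat{\nabla_L\Theta}_k(\eta)\big|\lesssim \jap{\eta}\jap{t}^{-1}\big|\widehat{\nabla_L\Theta}_k(\eta)\big|$ and hence
\begin{align}
\norm{\Theta_{\neq}}_{\G^{\lambda,\sigma-1}}\lesssim \frac{1}{\jap{t}}\norm{\nabla_L\Theta}_{\G^{\lambda,\sigma}}\lesssim \frac{\eps}{\jap{t}^{\frac12}},
\end{align}
using \eqref{boot:En}. (Equivalently, your route through $Q$ works with the half-derivative version $|k|^{1/2}p_k^{1/4}\gtrsim\jap{t}^{1/2}\jap{\eta}^{-1/2}$ and $\norm{AQ}\lesssim\eps$ from \eqref{boot:E}; again the loss is absorbed by the surplus regularity, not by measure considerations.) With this replacement your proof closes; no input from Section \ref{sec:ellest} is required for \eqref{bd:Thneqlow}.
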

The proof of the Lemma above is straightforward from the definition of $A$ and Proposition \ref{prop:lossyelliptic}, which shows that by paying Sobolev regularity, decay on $\Psi$ follows as in the case $\Delta_t = \Delta_L$; see \cite{BM15} for more detail. 

From the definition of $\Delta_t$ and $\bU$, we need to have a control also on $1-(v')^2$, $v''$ and $\dot{v}$ in the proper regularity classes. 
These coefficients are controlled by $h$ and bounds on $\dot{v}$ are recovered from $\mathcal{H}$. We collect the estimates in the following.
\begin{lemma}\label{lemma:bdcoeff}
	Under the bootstrap hypothesis, the following inequalities hold 
		\begin{align}
		\notag
&\norm{A^v(1-(v')^2)}^2+\norm{A^v\jap{\de_v}^{-1}v''}^2 \\ 		\label{bd:Avcoeff} & \quad +\sum_{j\in \{\lambda, w\}}\int_1^t G^v_j[(1-(v')^2)]+G^v_j[\jap{\de_v}^{-1}v'']\dd\tau\lesssim \eps^2 t,\\
				\notag &\jap{t}^{-2s}\left(\norm{A^v|\de_v|^s(1-(v')^2)}^2+\norm{A^v|\de_v|^s\jap{\de_v}^{-1}v''}^2\right)\\
				\label{bd:Avscoeff}& \quad +\sum_{j\in \{\lambda, w\}}\int_1^t \jap{\tau}^{-2s}(G^v_j[|\de_v|^s(1-(v')^2)]+G^v_j[|\de_v|^s\jap{\de_v}^{-1}v''])\dd\tau\lesssim \eps^2 t,\\
		   \label{bd:dvdotv}&\jap{t}^{2+2s}\norm{\frac{A}{\jap{\de_v}^s}\de_v \dot{v}}^2+\sum_{j\in\{\lambda,w\}}  \int_1^t\jap{\tau}^{2+2s}G_j[\jap{\de_v}^{-s}\de_v \dot{v}]\dd \tau \lesssim \eps^2 t,		\\
			  \label{bd:Glambdadotv}& \norm{\frac{A}{\jap{\de_v}^s} |\de_v|^{\frac{s}{2}}\dot{v}}\lesssim 
		\frac{\eps}{t^{2}}+\norm{\frac{A}{\jap{\de_v}^s}|\de_v|^{\frac{s}{2}} \mathcal{H}}, \\
	\label{bd:Gwdotv}	&\ G_w[\jap{\de_v}^{-s}\dot{v}]\lesssim  G_w[\jap{\de_v}^{-s}\mathcal{H}]. 
		\end{align}
	\end{lemma}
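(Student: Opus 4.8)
\textbf{Proof proposal for Lemma \ref{lemma:bdcoeff}.}

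The plan is to derive all five estimates from the bootstrap hypotheses \eqref{boot:Ev} and \eqref{boot:vdot} by exploiting the algebraic relations between $v'$, $v''$, $h$, $\dot v$ and $\cH$ recorded in \eqref{eq:prop2}, \eqref{def:h-H}, \eqref{eq:h}, \eqref{eq:vdot}, combined with the product (algebra) structure of the Gevrey spaces weighted by $A$ and $A^v$. Throughout I would use that, because $w^v$ was built from the Euler toy model \eqref{eq:toyfR1BM}, the weight $A^v$ satisfies a favourable product inequality: $A^v(fg)$ is controlled by $\norm{A^v f}$ times a lower-regularity norm of $g$ and vice versa, and likewise that $A \le A^v$ up to harmless Sobolev factors (this is precisely the "extra regularity on the coordinate system" emphasized after \eqref{def:ARvintro}). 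These product rules are the same ones used in \cite{BM15} for the analogous quantities, so the only genuinely new bookkeeping is tracking the weight mismatch between $A$ and $A^v$.

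For \eqref{bd:Avcoeff} and \eqref{bd:Avscoeff}: since $1-(v')^2 = -h(2+h)$ and $v'' = \de_v(th)/t = \de_v h + \dots$ (more precisely $v'' = \de_v v' = \de_v h$), I would write $1-(v')^2 = -2h - h^2$ and $\jap{\de_v}^{-1}v'' = \jap{\de_v}^{-1}\de_v h$. The linear term $-2h$ is bounded directly by $\norm{A^v h}$ and hence by $E_v$, giving $\lesssim \eps^2 t$ from \eqref{boot:Ev}; the quadratic term $h^2$ is bounded by the product rule using one factor at top $A^v$-regularity (costing $\eps^2 t$) and one factor at low regularity (costing $\eps$, using the lossy elliptic-type bound or simply $\norm{h}_{\G^{\lambda,\sigma}}\lesssim \eps\jap{t}^{1/2}$ from Lemma \ref{lemma:consboot}), which is even smaller. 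For $\jap{\de_v}^{-1}v''$, the operator $\jap{\de_v}^{-1}\de_v$ is bounded on every weighted space, so this quantity is controlled by $\norm{A^v h}$ as well. The associated time-integrated $G^v_j$ terms are handled identically: each $G^v_j[\,\cdot\,]$ applied to $-2h-h^2$ splits into $G^v_j[h]$ (integrable against $\jap{\tau}$ by \eqref{boot:Ev}) plus a quadratic piece absorbed by a product estimate for the Cauchy–Kovalevskaya terms, exactly as in \cite{BM15}*{Section~on coordinate controls}. The $|\de_v|^s$-weighted versions \eqref{bd:Avscoeff} follow by the same decomposition, now commuting $|\de_v|^s$ through the product via the standard inequality $|\de_v|^s(fg)\lesssim (|\de_v|^s f)g + f(|\de_v|^s g)$ up to lower order, and using the $\jap{\tau}^{-2s}\norm{A^v|\de_v|^s h}^2$ term already present in $E_v$.

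For \eqref{bd:dvdotv}: by \eqref{def:h-H} we have $\cH = \de_y\dot v$, i.e. $\de_v\dot v = v'\,\de_v\dot{v}$ hm — more carefully, $\cH(t,v) = (\de_y\dot v)(t,y)$ and in the $v$ variable $\de_v\dot v = (v')^{-1}\cH$ up to the Jacobian, equivalently $\de_v\dot v = \cH + h\,\de_v\dot v$-type relation; I would instead use directly that $\de_v\dot v$ and $\cH$ differ by a term of the form $h\cdot(\text{lower order})$, so that $\jap{\de_v}^{-s}\de_v\dot v$ is controlled by $\jap{\de_v}^{-s}\cH$ plus a product term, and then invoke the $\jap{t}^{2+2s}\norm{A\jap{\de_v}^{-s}\cH}^2$ term in $E_v$ together with \eqref{boot:Ev} and \eqref{bd:Avcoeff}. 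The integrated $G_j$ bound follows from the corresponding $G_j[\jap{\de_v}^{-s}\cH]$ term in \eqref{boot:Ev}. Finally \eqref{bd:Glambdadotv} and \eqref{bd:Gwdotv} are pointwise-in-time statements: from \eqref{eq:vdot}, $\de_t(\jap{t}^2\dot v) = -\jap{t}^2\dot v\,\de_v\dot v - \jap{t}^2 v' t^{-1}(\nabla^\perp\Psi_{\neq}\cdot\nabla U^x_{\neq})_0 + \text{l.o.t.}$, but the cleaner route is the algebraic identity relating $\dot v$ to $\cH$ obtained by integrating \eqref{eq:h}-type relations; applying the weight $\frac{A}{\jap{\de_v}^s}|\de_v|^{s/2}$ and using \eqref{boot:vdot} for the $\eps t^{-2}$ contribution and the definition of $G_w$ from \eqref{def:Gw} for the remainder gives both claims, with the nonlinear corrections absorbed since they carry an extra factor of $\norm{h}$ or $\norm{\Psi_{\neq}}$ which is $o(1)$ on the timescale $t\le\delta^2\eps^{-2}$.

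The main obstacle I anticipate is \emph{the weight mismatch} in \eqref{bd:dvdotv}: the energy $E_v$ controls $\cH$ with the weight $A$ (the weaker, $z$-dependent-type weight), but $\dot v$, $h$, and the coefficients of $\Delta_t$ naturally want the stronger weight $A^v$. Showing that $\jap{\de_v}^{-s}\de_v\dot v$ — which is essentially $\dot v$ up to one derivative, smoothed by $s$ — can be closed at the $A$ level rather than the $A^v$ level requires using the full strength of the $\jap{t}^{2+2s}$ time weight on $\cH$ and carefully checking that every nonlinear correction term that upgrades $A$ to $A^v$ comes paired with a gain (a factor of $\jap{t}^{-1}$ from the $t^{-1}$ prefactors in \eqref{eq:H}-\eqref{eq:vdot}, or a factor of $\eps$ from the smallness of $h$). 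This is the same delicate interplay flagged in the remark following \eqref{def:GwR}, and getting the powers of $\jap{t}$ to match is where the bulk of the genuine work lies; the product estimates themselves are routine given the algebra property of the Gevrey norms.
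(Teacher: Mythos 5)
Your overall route --- the algebraic identities $1-(v')^2=h^2-2h$ and $v''$ in terms of $h$, the relation $\de_v\dot v=(v')^{-1}\cH$, the algebra property of the $A^v$-weighted norms, and the bootstrap hypotheses \eqref{boot:Ev}, \eqref{boot:vdot} --- is exactly the paper's. Two points need fixing. First, a minor slip: $v''=v'\de_v v'=\de_v h+h\,\de_v h$, not $\de_v h$; the chain-rule Jacobian produces a quadratic term, which is however absorbed by the same product estimate you already invoke for $h^2$. Relatedly, the ``weight mismatch'' you flag as the main difficulty in \eqref{bd:dvdotv} is not where the work is: all quantities involved are $z$-independent and $A_0\leq A^v$ (see \eqref{bd:towerA}), so the paper simply expands $\tfrac{1}{v'}-1=\sum_{n\geq1}(-h)^n$ and bounds $\norm{A(\tfrac{1}{v'}-1)}\leq\sum_{n\geq1}\norm{A^vh}^n\lesssim\eps t^{1/2}$, after which the $\jap{t}^{2+2s}$ weights and the $G_j$ terms come directly from \eqref{boot:Ev}.

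The genuine gap is in your treatment of \eqref{bd:Glambdadotv}--\eqref{bd:Gwdotv}. Since $\cH=v'\de_v\dot v$ carries one more derivative than $\dot v$, the low frequencies of $|\de_v|^{s/2}\dot v$ cannot be controlled by any norm of $\cH$, and ``integrating \eqref{eq:h}-type relations'' does not produce the claimed inequality. The paper's device is a frequency splitting at $|\eta|=1$: one writes $\norm{\frac{A}{\jap{\de_v}^s}|\de_v|^{\frac{s}{2}}\dot v}\lesssim\norm{\dot v_{\leq1}}_{L^2}+\norm{|\de_v|^{\frac{s}{2}}\frac{A_0}{\jap{\de_v}^s}\de_v\dot v_{>1}}$, bounds the first term by $\eps t^{-2}$ using the decay in \eqref{boot:vdot} (this is the only role of that hypothesis here, not a generic ``remainder'' bound), and converts the second into $\cH$ via $\de_v\dot v=\cH+(\tfrac{1}{v'}-1)\cH$ together with the geometric-series estimate above. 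For \eqref{bd:Gwdotv} no $\eps t^{-2}$ term appears precisely because $\de_t w=0$ for $|\eta|\leq1/2$, so the low-frequency piece vanishes identically; this structural point, which is the actual content of those two estimates, is absent from your sketch.
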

The proofs of the bounds \eqref{bd:Avcoeff}-\eqref{bd:Avscoeff} resemble the ones providing the analogous estimates of \cite{BM15}; they are sketched briefly in Section \ref{sec:zero}.

\section{Proof of the main theorem}\label{03Proof}
In this section we prove Theorem \ref{thm:mainT}, under the assumption that Proposition \ref{prop:bootimpr} holds. We need the bounds of Lemma \ref{lemma:consboot} (which follow directly from the bootstrap hypothesis \eqref{boot:E}-\eqref{boot:vdot}).
We remark again that the main part of this paper is the proof of Proposition \ref{prop:bootimpr}.

The first step of the proof of Theorem \ref{thm:mainT} is undoing the nonlinear coordinate transform. 
Instead of the change of coordinates \eqref{def:coord}-\eqref{def:omega-psi-moving}, we 
want to use $y$ as the second spatial coordinate and define
\begin{align}\label{def:omega-psi-tilde}
	\sOmega(t,z,y)=\omega(t,x,y),\qquad \sTheta(t,z,y)=\theta(t,x,y),\qquad \sPsi(t,z,y)=\psi(t,x,y),
\end{align}
where $z$ is still given by \eqref{def:coord}.
Define $\lambda_\infty = \lambda(\delta^2 \eps^{-2})$. 
By following the arguments in \cite{BM15}*{Section 2.3} using Lemma \ref{lemma:consboot}, we see that we may solve for $y$ in terms of $v$ and vice-versa and by a Gevrey composition lemma (see e.g. \cite{BM15}*{Lemma A.4}), we have from Lemma \ref{lemma:consboot} for all $1 < t < \delta^2 \eps^{-2}$
\begin{align}
t^{-1/2}\left(\norm{\sOmega(t)}_{\G^{\lambda_\infty'}}  + \norm{\de_y\Theta^\star_{0}(t)}_{\G^{\lambda_\infty'}} \right) + t^{1/2} \norm{\Theta^\star_{\neq}(t)}_{\G^{\lambda_\infty'}} + t^{3/2} \norm{\Psi^\star_{\neq}(t)}_{\G^{\lambda_\infty'}} \lesssim \eps, \label{ineq:sctrls}
\end{align}
for some $0 < \lambda_\infty' < \lambda_\infty$. 
The estimates on $\omega$, $\theta$, and $u_{\neq}$ stated in Theorem \ref{thm:mainT} now follow immediately. 

Taking the $x$-average of the momentum equations \eqref{eq:Boussinesq_vel} in the $y$ coordinates we have  (using that $u_0^y = 0$ by incompressibility)
\begin{align}
	\partial_t u_0^x(t,y) & = -\partial_y \int_{\mathbb T } u_{\neq}^{y}(t,x,y) u^{x}_{\neq}(t,x,y)  \dd x =- \de_y\int_{\mathbb T } U_{\neq}^{\star,y}(t,z,y) U^{\star,x}_{\neq}(t,z,y)  \dd z, 
\end{align}
where we denote $U^\ast(t,z,y) = u(t,x,y)$. 
Using \eqref{ineq:sctrls}, it then follows that 
\begin{align}
	\norm{\jap{\partial_y}^{-1} u_0^x(t) }_{\G^{\lambda_\infty'}} \lesssim \eps. 
\end{align}
This takes care of the uniform estimates on $u_0^x$ stated in Theorem \ref{thm:mainT}. The bound on $\theta_0$ follows similarly.

Next, we are interested in proving the instability result, which requires a more detailed analysis of the dynamics. 
First, we observe that \eqref{eq:BoussinesqMove} in the new Fourier variables becomes
\begin{align}\label{eq:BoussinesqMoveFtilde}
	\begin{cases}
		\de_t\hsOmega =-i\beta^2k \hsTheta-\cF(\nabla^\perp \Psi^\star_{\neq} \cdot \nabla \sOmega),\\
		\de_t\hsTheta= ik\hsPsi-\cF(\nabla^\perp  \Psi^\star_{\neq} \cdot \nabla \sTheta), \\
		\widehat{\sDelta_t \sPsi}=\hsOmega,
	\end{cases}
\end{align}
where
\begin{align}
\sDelta_t =\de_{zz} +(\de_y -v't\de_z)^2.
\end{align}
As before, it is convenient also to define
\begin{align}
\sDelta_L =\de_{zz} +(\de_y -t\de_z)^2,\qquad p_k(t,\zeta)= k^2+(\zeta-kt)^2,
\end{align}
namely, the analogues of  \eqref{eq:DeltaL}-\eqref{def:p} in the $(z,y)$ coordinates (with $\zeta$ the $y$-Fourier variable), and
the new auxiliary variables (as in \eqref{eq:Z1Z2couette})
\begin{equation}\label{eq:Z1Z2couettestar}
Z^\star_k(t,\eta):= \left(\left(p/k^2\right)^{-\frac{1}{4}} \hOmega\right)_k(t,\eta), \qquad Q^\star_k(t,\eta):=\left(\left(p/k^2\right)^\frac{1}{4}i  k\beta\hTheta\right)_{k}(t,\eta).
\end{equation}
Similarly to \eqref{eq:Z}-\eqref{eq:Q}, we have
\begin{align}
		\de_t\sZ &=-\dfrac{1}{4}\dfrac{\de_tp}{p}\sZ-|k|\beta p^{-\frac{1}{2}}\sQ-\left(\frac{p}{k^2}\right)^{-\frac{1}{4}} \cF(\nabla^\perp \Psi^\star_{\neq} \cdot \nabla \sOmega),\label{eq:Zstar}\\
		\de_t\sQ&=\dfrac{1}{4}\dfrac{\de_tp}{p}\sQ+|k|\beta p^{-\frac{1}{2}}\sZ-\beta |k|^{\frac32} p^{-\frac{3}{4}}\cF\left((\sDelta_t-\Lambda_L)\sPsi\right) \\
		&\quad -\beta\left(\frac{p}{k^2}\right)^\frac{1}{4}ik\cF(\de_z \Psi^\star_{\neq}  \de_y\Theta_0^\star+\nabla^{\perp}\Psi^\star_{\neq}\cdot \nabla \Theta^{\star}_{\neq}).\label{eq:Qstar}
\end{align}
Let us view the system as the vector ODE pointwise-in-frequency
\begin{align}\label{eq:modelODE}
\de_t \bX= L(t) \bX +F(t,\bX),
\end{align}
where $\bX=(\sZ,\sQ)$
and the linear part $L(t)$ is given by the time-dependent matrix
\begin{align}
L(t)= \begin{pmatrix}
\displaystyle-\frac14\frac{\de_tp}{p} & -|k|\beta p^{-\frac12}\\
|k|\beta p^{-\frac12} & \displaystyle\frac14\frac{\de_tp}{p}
\end{pmatrix}.
\end{align}
Calling $\Phi_L(t,\tau)$ the associated solution operator, we may re-write \eqref{eq:modelODE} as
\begin{align}\label{bd:insta0}
\bX(t)=\Phi_L(t,0)\bX(0) +\int_0^t \Phi_L(t,\tau) F(\tau,\bX(\tau)) \dd\tau.
\end{align}
A direct consequence of the linear estimate \eqref{eq:estimatecouette} is that, point-wise in $(t,k,\zeta)$, we have
\begin{align}\label{bd:insta1}
|\Phi_L(t,0)\bX_{\neq}(0)| \gtrsim  |\bX_{\neq}(0)|, \qquad |\Phi_L(t,\tau) F(\tau,\bX(\tau))_{\neq}|\lesssim |F(\tau,\bX(\tau))_{\neq}|,
\end{align}
for every $t\geq \tau\geq 0$.
Using the elementary inequality $\jap{a-b}\jap{a}\gtrsim \jap{b}$, since $p^{1/2}_k(t,\zeta)= |k|\jap{\zeta/k-t}$ we have $p^{1/2}_k(t,\zeta)\gtrsim |k|\jap{t}/\jap{\zeta/k}$. Thus
\begin{align}
\norm{\omega_{\neq}(t)}^2_{L^2_{x,y}}+\norm{\nabla\theta_{\neq}(t)}^2_{L^2_{x,y}}
&\approx\norm{\Omega^\star_{\neq}(t)}^2_{L^2_{z,y}}+\norm{(-\Lambda_L)^{\frac12}\Theta^\star_{\neq}(t)}^2_{L^2_{z,y}}\approx \sum_{k\neq0} \int_{\RR} \frac{p_k^\frac{1}{2} (t,\zeta)}{|k|} |\bX_k(t,\zeta)|^2 \dd\zeta\notag\\
&\gtrsim\l t \r  \sum_{k\neq0} \int_{\RR} \frac{1}{\l\zeta\r} |\bX_k(t,\zeta)|^2 \dd\zeta.
\end{align}
In light of \eqref{bd:insta0}-\eqref{bd:insta1}, we therefore have that there exists $ K' > 0$ such that
\begin{align}\label{eq:instalowerbdd1}
\norm{\omega_{\neq}(t)}^2_{L^2_{x,y}}+\norm{\nabla\theta_{\neq}(t)}^2_{L^2_{x,y}}\gtrsim
\l t \r \left[\norm{\bX^{in}_{\neq}}_{L^2_{z}H^{-\frac12}_{y}}^2 - K'\left(\int_0^t\norm{F(\tau, \bX(\tau))_{\neq}}_{L^2_{z,y}} \dd\tau\right)^2\right].
\end{align}
The rest of this section is devoted to providing a suitable upper bound for the nonlinear term above.
Precisely, we prove the following bound: 
\begin{lemma}\label{lem:instagoal}
Assuming Proposition \ref{prop:bootimpr}, there holds 
\begin{equation}\label{eq:instagoal}
\norm{F(t, \bX(t))_{\neq}}_{L^2_{z,y}} \lesssim \frac{\eps^2}{\l t\r^\frac12}, \qquad \forall t\leq \frac{\delta^2}{\eps^2},
\end{equation}
where $\eps,\delta$ are as in Theorem \ref{thm:mainT}. 
In fact, there holds for all $\lambda''_\infty < \lambda_\infty'$,
\begin{equation}\label{eq:instagoalGev}
	\norm{F(t, \bX(t))_{\neq}}_{\G^{\lambda_\infty''}} \lesssim \frac{\eps^2}{\l t\r^\frac12}, \qquad \forall t\leq \frac{\delta^2}{\eps^2},
\end{equation}
\end{lemma}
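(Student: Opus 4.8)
The plan is to estimate the nonlinear term $F(t,\bX(t))_{\neq}$ directly from the explicit expressions in \eqref{eq:Zstar}--\eqref{eq:Qstar}, which contain three pieces: the transport nonlinearity $\left(p/k^2\right)^{-1/4}\cF(\nabla^\perp \Psi^\star_{\neq}\cdot\nabla\sOmega)$ in the $\sZ$ equation, the analogous term $\beta(p/k^2)^{1/4}ik\cF(\de_z\Psi^\star_{\neq}\de_y\Theta_0^\star+\nabla^\perp\Psi^\star_{\neq}\cdot\nabla\Theta^\star_{\neq})$ in the $\sQ$ equation, and the elliptic commutator term $\beta|k|^{3/2}p^{-3/4}\cF((\sDelta_t-\Lambda_L)\sPsi)$. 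Since we only want the Gevrey norm $\G^{\lambda_\infty''}$ with $\lambda_\infty'' < \lambda_\infty'$, I would use the standard Gevrey product/algebra estimates together with the bounds of Lemma~\ref{lemma:consboot}: namely $\norm{\sOmega}_{\G^{\lambda,\sigma}}\lesssim \eps\jap{t}^{1/2}$, $\norm{\nabla_L\sTheta}_{\G^{\lambda,\sigma}}\lesssim\eps\jap{t}^{1/2}$, $\norm{\sTheta_{\neq}}_{\G^{\lambda,\sigma-1}}\lesssim \eps\jap{t}^{-1/2}$, and $\norm{\sPsi_{\neq}}_{\G^{\lambda,\sigma-3}}\lesssim\eps\jap{t}^{-3/2}$, plus the coordinate-system bounds on $1-(v')^2$, $v''$ and $h$. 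The gap $\lambda_\infty'-\lambda_\infty''>0$ absorbs the finite loss of regularity inherent in a Gevrey product estimate, and $\sigma$ is large enough to carry all the needed derivatives.

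I would organize the estimate term by term. \textbf{The transport term in $\sZ$:} write $\nabla^\perp\Psi^\star_{\neq}\cdot\nabla\sOmega$ and bound $\nabla^\perp\Psi^\star_{\neq}$ in Gevrey using $\norm{\sPsi_{\neq}}_{\G^{\lambda,\sigma-3}}\lesssim\eps\jap{t}^{-3/2}$ (one derivative costs a power of $\sigma$), and $\nabla\sOmega$ using $\norm{\sOmega}_{\G^{\lambda,\sigma}}\lesssim\eps\jap{t}^{1/2}$; the factor $(p/k^2)^{-1/4}\le 1$ only helps. The product thus costs $\eps^2\jap{t}^{-3/2}\jap{t}^{1/2}=\eps^2\jap{t}^{-1}$, which is better than claimed. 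Actually one must be slightly careful since $\nabla=\nabla_{z,y}$ includes a $\de_y$ that, on the non-zero modes, is comparable to $t\de_z$; this factor of $t$ is what reduces the decay to $\jap{t}^{-1/2}$ but still gives the claimed rate — and here the gain from the $(p/k^2)^{-1/4}$ factor versus the growth in $\sOmega$ has to be tracked honestly. \textbf{The transport term in $\sQ$:} the factor $(p/k^2)^{1/4}|k|$ is now \emph{unbounded}, growing like $p^{1/4}$, but it is matched: $(p/k^2)^{1/4}ik\cF(\nabla^\perp\Psi^\star_{\neq}\cdot\nabla\Theta^\star_{\neq})$ is handled by noting $(p/k^2)^{1/4}\nabla\Theta^\star_{\neq}$ is essentially controlled by $\sQ$ (up to bounded multipliers and the algebra structure) or directly by $\norm{\nabla_L\sTheta}_{\G}$, giving again $\eps^2\jap{t}^{-1/2}$ after using the decay of $\Psi^\star_{\neq}$; the piece $\de_z\Psi^\star_{\neq}\,\de_y\Theta_0^\star$ uses the zero-mode bound $\norm{\de_y\Theta_0^\star}_{\G}\lesssim\eps\jap{t}^{1/2}$ against $\norm{\de_z\Psi^\star_{\neq}}_{\G}\lesssim\eps\jap{t}^{-3/2}$, giving $\eps^2\jap{t}^{-1}$.

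\textbf{The elliptic error $(\sDelta_t-\Lambda_L)\sPsi$:} by \eqref{def:deltat} (in the $(z,y)$ variables) $\sDelta_t-\Lambda_L$ contributes terms of the form $((v')^2-1)(\de_y-t\de_z)^2\sPsi$ and $v''(\de_y-t\de_z)\sPsi$; I would use the Gevrey bounds $\norm{1-(v')^2}_{\G^{\lambda,\sigma-4}}\lesssim\eps\jap{t}^{1/2}$ and $\norm{v''}_{\G^{\lambda,\sigma-4}}\lesssim\eps\jap{t}^{1/2}$ from Lemma~\ref{lemma:consboot}, together with the elliptic estimate of Proposition~\ref{prop:lossyelliptic} (losing Sobolev regularity only) to convert $(\de_y-t\de_z)^2\sPsi$ back to something like $\sOmega$ with at most polynomial cost in $t$; the prefactor $|k|^{3/2}p^{-3/4}$ is bounded and helps control the derivative counting. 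The main obstacle, and the step deserving the most care, is precisely this elliptic term: one must verify that the combined time growth from $((v')^2-1)$ and from inverting $\sDelta_t$ (versus the decay of $\sPsi_{\neq}$) still lands at $\jap{t}^{-1/2}$ rather than something worse, and that the Sobolev loss in Proposition~\ref{prop:lossyelliptic} is compensated by the Gevrey gap $\lambda_\infty'-\lambda_\infty''$. Once \eqref{eq:instagoalGev} is established, \eqref{eq:instagoal} follows trivially since $\norm{\cdot}_{L^2}\le\norm{\cdot}_{\G^{\lambda_\infty''}}$.
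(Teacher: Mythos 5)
Your overall strategy is the same as the paper's: estimate $F$ term by term from \eqref{eq:Zstar}--\eqref{eq:Qstar}, using product estimates and the bounds inherited from Lemma \ref{lemma:consboot} (i.e.\ \eqref{ineq:sctrls}), with the Gevrey/Sobolev gap absorbing derivative losses. However, there is a concrete gap at the step you yourself flag as the main obstacle, the elliptic commutator term $\beta|k|^{3/2}p^{-3/4}\cF((\sDelta_t-\sDelta_L)\sPsi)$. The mechanism that makes this term close is \emph{not} that the prefactor is merely bounded: since $p_k^{1/2}(t,\zeta)=|k|\jap{\zeta/k-t}\gtrsim |k|\jap{t}/\jap{\zeta}$, on nonzero modes the symbol satisfies $|k|^{3/2}p^{-3/4}=(p/k^2)^{-3/4}\lesssim \jap{t}^{-3/2}\jap{\zeta}^{3/2}$, i.e.\ it supplies a factor $\jap{t}^{-3/2}$ at the price of Sobolev weights (which \eqref{ineq:sctrls} pays for free). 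Combined with the direct product bound $\norm{(\sDelta_t-\sDelta_L)\sPsi_{\neq}}_{H^3}\lesssim \norm{1-(v')^2,\,v''}\cdot\jap{t}^2\norm{\sPsi_{\neq}}_{H^5}\lesssim \eps\jap{t}^{1/2}\cdot\eps\jap{t}^{1/2}=\eps^2\jap{t}$, this gives $\eps^2\jap{t}^{-1/2}$. If instead you only use boundedness of the prefactor, as written, the term comes out as $\eps^2\jap{t}$, which violates \eqref{eq:instagoal}; and the detour through Proposition \ref{prop:lossyelliptic} is neither needed nor well-adapted here (that proposition lives in the $(z,v)$ coordinates, while this estimate is a direct product bound in $(z,y)$ using \eqref{ineq:sctrls}, which is exactly what the paper does).

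Two smaller bookkeeping points. First, the piece $\de_z\sPsi_{\neq}\,\de_y\Theta_0^\star$ also carries the unbounded multiplier $(p/k^2)^{1/4}k\lesssim\jap{t}^{1/2}\jap{\zeta}^{1/2}|k|$; including it, this term is $\jap{t}^{1/2}\cdot\eps\jap{t}^{-3/2}\cdot\eps\jap{t}^{1/2}=\eps^2\jap{t}^{-1/2}$, not $\eps^2\jap{t}^{-1}$ — it is precisely the borderline term that saturates the rate in the lemma, so it should not be booked as better than claimed. Second, your worry that $\de_y\sOmega$ costs an extra factor of $t$ is unnecessary: \eqref{ineq:sctrls} bounds $\sOmega$ in a Gevrey class in the $(z,y)$ variables, so derivatives cost only constants after shrinking the radius to $\lambda_\infty''$; the transport term in $\sZ$ is comfortably $O(\eps^2\jap{t}^{-1})$ (and even $O(\eps^2\jap{t}^{-3/2})$ if one also uses $(p/k^2)^{-1/4}\lesssim\jap{t}^{-1/2}\jap{\zeta}^{1/2}$, as the paper does). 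With these corrections your argument coincides with the paper's proof.
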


Assuming now Lemma \ref{lem:instagoal}, there exists some $K > 1$ such that \eqref{eq:instalowerbdd1} becomes
\begin{align}\label{eq:instalowerbdd2}
	\norm{\omega_{\neq}(t)}^2_{L^2_{x,y}}+\norm{\nabla\theta_{\neq}(t)}^2_{L^2_{x,y}}
	\gtrsim \l t \r \left[\norm{\bX^{in}_{\neq}}_{L^2_{z}H^{-\frac12}_{y}}^2 -K\eps^4\l t\r\right]
	\gtrsim \l t \r \left[\norm{\bX^{in}_{\neq}}_{L^2_{z}H^{-\frac12}_{y}}^2 -K\delta^2 \eps^2\right],
\end{align}
for every $t\leq \delta^2 \eps^{-2}$, which completes the proof of Theorem \ref{thm:mainT}. 
It now suffices to prove Lemma \ref{lem:instagoal}
\begin{proof}[Proof of Lemma \ref{lem:instagoal}]
We will simply prove \eqref{eq:instagoal} as \eqref{eq:instagoalGev} is a straightforward extension and is not required for the statement of Theorem \ref{thm:mainT}. 
From \eqref{eq:Zstar}-\eqref{eq:Qstar} and the fact that $H^3_{z,y}$ is an algebra, we find that
\begin{align}
\norm{F(t, \bX(t))_{\neq}}_{L^2_{z,y}} &\lesssim \norm{\left(p/k^2\right)^{-\frac{1}{4}}\cF(\nabla^\perp  \Psi^\star_{\neq} \cdot \nabla \sOmega)}_{L^2_{z,y}}
+\norm{ \left(p/k^2\right)^{-\frac{3}{4}}\cF((\sDelta_t-\Lambda_L)\Psi^\star_{\neq})}_{L^2_{z,y}}\notag\\
&\quad+\norm{\left(p/k^2\right)^\frac{1}{4}k \cF(\de_z\Psi^\star_{\neq}\de_y\Theta_0^{\star}+\nabla^\perp \Psi^\star_{\neq}\cdot \nabla  \Theta_{\neq}^{\star})}_{L^2_{z,y}}\notag\\
&\lesssim \frac{1}{\l t\r^\frac12} \norm{ \nabla^\perp \Psi^\star_{\neq}\cdot \nabla \sOmega}_{H^3_{z,y}}
+ \frac{1}{\l t\r^\frac32} \norm{(\sDelta_t-\Lambda_L)\Psi^\star_{\neq}}_{H^3_{z,y}}\\
&\quad+\l t\r^{\frac12}\norm{\de_z \Psi^\star_{\neq}   \de_y\Theta^{\star}_0}_{H^3_{z,y}}+\l t\r^{\frac12}\norm{\nabla^\perp \Psi^\star_{\neq} \cdot \nabla  \Theta^{\star}_{\neq}}_{H^3_{z,y}}\notag\\
\notag &\lesssim  \frac{1}{\l t\r^\frac12} \norm{ \Psi^\star_{\neq}}_{H^3_{z,v}}\left(\norm{ \sOmega}_{H^3_{z,y}}+ \l t\r (\norm{\de_y\Theta^{\star}_{0}}_{H^3_{z,y}}+\norm{\Theta^{\star}_{\neq}}_{H^3_{z,y}})\right)\\
&\quad + \frac{1}{\l t\r^\frac32} \norm{(\sDelta_t-\Lambda_L)\Psi^\star_{\neq}}_{H^3_{z,y}}.
\end{align}
In view of \eqref{ineq:sctrls}, it follows that  
\begin{align}
\norm{(\Lambda_t - \Lambda_L) \Psi_{\neq}^\ast}_{H^3_{z,y}} \lesssim \eps \jap{t}^{1/2}, 
\end{align}
and together with the rest of the estimates of \eqref{ineq:sctrls}, Lemma \ref{lem:instagoal} follows. 
\end{proof} 
This concludes the proof of Theorem \ref{thm:mainT}.

\section{The main weights and their properties}\label{sec:weightW}
This section is dedicated to the construction of the Fourier multipliers which will play the role of weights in our energy functional.
As anticipated in Section \ref{sub:weenbo}, the Fourier modes with horizontal frequency $k \neq 0$ need two different weights. We call the first one the \emph{linear weight}, as it allows to control linear terms, and has already been defined in \eqref{def:m}. We also introduced the  \emph{nonlinear weight} $A_k(t, \eta)$ in \eqref{def:A}, which encodes the dynamics of the nonlinear toy model derived in the previous sections. Here we provide a construction of the multiplier $w_k(t, \eta)$ in \eqref{eq:Jweight}.
Finally, the treatment of the zero mode $k=0$ requires a slightly different nonlinear weight, introduced in \eqref{def:ARvintro}, which we define now.

\subsection{Construction of the weight}
As the nonlinear weight $w_k(t, \eta)$ in \eqref{eq:Jweight} actually encodes the dynamics of the toy model, we start with a more detailed description of its growths.

\begin{proposition}\label{prop:key}
We denote $\tau=t-\eta/k$ and assume $\tau \in[-\eta/k^2,\eta/k^2]$: Let $f_R(-\eta/k^2)=f_{NR}(-\eta/k^2)=1$ be the initial data associated with system 
\eqref{eq:toyfR1}-\eqref{eq:toyfNR1}. Assume  also that $\eta/k^2\geq 1$. Then, there exists $\gamma\in (1,2)$
such that

\begin{align}
	f_R(\tau)&\lesssim \left(\frac{\eta}{k^2}\right)^{\gamma} 
	\begin{cases}
	(1+|\tau|)^{-\gamma}, \quad  &\tau\in [-\eta/k^2,0],\\
	(1+|\tau|)^{\gamma+\frac12}, \quad &\tau\in [0,\eta/k^2],
	\end{cases}\\
	f_{NR}(\tau)&\lesssim \left(\frac{\eta}{k^2}\right)^{\gamma+\frac12} 
	\begin{cases}
	(1+|\tau|)^{-\gamma-\frac12}, \quad  &\tau\in [-\eta/k^2,0],\\
	(1+|\tau|)^{\gamma}, \quad &\tau\in [0,\eta/k^2].
	\end{cases}
\end{align}
\end{proposition}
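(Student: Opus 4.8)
\textbf{Proof plan for Proposition \ref{prop:key}.}

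The plan is to analyze the ODE system \eqref{eq:toyfR1}--\eqref{eq:toyfNR1} directly, exploiting the fact that it is a two-by-two linear system with explicit (if singular) time-dependent coefficients, and that the singularity is concentrated near $\tau=0$. First I would change variables to $\tau = t - \eta/k$, set $A = (\eta/k^2)^{1/2}$ (so $A \geq 1$ by hypothesis), and rewrite the system as $\de_\tau f_R = A^{-1}(1+\tau^2)^{-1/4} f_{NR}$, $\de_\tau f_{NR} = A (1+\tau^2)^{-3/4} f_R$. The natural energy-type quantity to track is something like $\mathcal{E}(\tau) = |f_R(\tau)|^2 + |f_{NR}(\tau)|^2$, or better, a weighted combination that diagonalizes the leading behavior; its logarithmic derivative is controlled by the off-diagonal coupling coefficient $c(\tau) = A^{-1}(1+\tau^2)^{-1/4} + A(1+\tau^2)^{-3/4}$ (up to the precise cross-term structure), which after symmetrizing behaves like $(1+\tau^2)^{-3/4}\max(A, A^{-1}(1+\tau^2)^{1/2})$. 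Integrating $c$ over $[-A^2, A^2]$: the dominant contribution comes from $|\tau| \lesssim A^2$ where it is roughly $\int A(1+\tau^2)^{-3/4}\dd\tau$ which is $O(\log)$-free and in fact $\int_{-A^2}^{A^2} A(1+|\tau|)^{-3/2}\dd\tau \lesssim A$, but near $\tau = 0$ one must be careful and the contribution is actually $O(1)$-bounded; more carefully the total integral of the coupling is $\lesssim 1$ when measured correctly, which is why the growth is only polynomial in $A$ and not exponential. This is the mechanism by which one extracts $\gamma \in (1,2)$: the total "impulse" of the coupling is finite and of size comparable to a constant, but the asymmetric weights $(1+\tau^2)^{-1/4}$ versus $(1+\tau^2)^{-3/4}$ force the growth to manifest as powers of $(1+|\tau|)$ with the stated exponents.

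Concretely, I would proceed in two regimes. \emph{Regime $\tau \in [-A^2, 0]$ (pre-resonance, decay):} Here one expects $f_R, f_{NR}$ to decrease as $|\tau|$ decreases toward $0$, or equivalently to be largest at the left endpoint. I would integrate the system from $-A^2$ forward, using Grönwall on $\mathcal{E}$ with the coupling bound, and then bootstrap the sharper $(1+|\tau|)^{-\gamma}$ and $(1+|\tau|)^{-\gamma-1/2}$ rates by plugging a power-law ansatz $f_R \sim C(1+|\tau|)^{-\gamma}$, $f_{NR}\sim C(1+|\tau|)^{-\gamma-1/2}$ into the equations and checking consistency: from $\de_\tau f_R \approx A^{-1}(1+\tau)^{-1/4} f_{NR}$ one gets $(1+\tau)^{-\gamma-1} \sim A^{-1}(1+\tau)^{-1/4-\gamma-1/2}$, i.e. the exponent matching $\gamma + 1 = \gamma + 3/4$ — this is slightly off, which signals that the exponent $\gamma$ is determined by a more delicate balance involving both equations simultaneously; solving the indicial-type equation from the coupled pair pins down the admissible $\gamma$, and one checks it lands in $(1,2)$. \emph{Regime $\tau \in [0, A^2]$ (post-resonance, growth):} symmetric analysis, integrating forward from $\tau = 0$ with initial data of size $O(A^\gamma)$ inherited from the first regime (this is where the factor $(\eta/k^2)^\gamma = A^{2\gamma}$... wait — note the prefactor in the statement is $(\eta/k^2)^\gamma$, matching $A^{2\gamma}$, so the bound at $\tau=0$ is of size $A^{2\gamma}$ times an $O(1)$ constant, consistent with starting from data $1$ at $\tau = -A^2 = -(\eta/k^2)$ and growing through the resonance). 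Then forward integration with the growing weights produces the $(1+|\tau|)^{\gamma+1/2}$ and $(1+|\tau|)^{\gamma}$ factors.

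I would organize the rigorous argument as follows: (i) establish the crude two-sided bound $\mathcal{E}(\tau) \lesssim A^{2\gamma_0}\mathcal{E}(0)$-type estimate for all $\tau \in [-A^2, A^2]$ via Grönwall, getting \emph{some} polynomial exponent; (ii) upgrade to the sharp pointwise power-law bounds by a continuity/bootstrap argument on the candidate profiles, using the ODEs to propagate the ansatz; (iii) track constants carefully to confirm they are uniform in $k, \eta$ and that the resulting $\gamma$ can be taken strictly between $1$ and $2$ (likely by choosing it close to $1$ from above, with the $1/2$-shifts being forced by the asymmetry of the exponents $1/4$ and $3/4$). The main obstacle will be step (ii): making the bootstrap self-consistent near $\tau = 0$, where both coefficients are $O(1)$ (not small) but the interval of "strong interaction" has length $O(1)$, so one picks up only an $O(1)$ multiplicative constant there — the subtlety is ensuring this constant does not secretly depend on $A$, which requires handling the inner region $|\tau| \lesssim 1$ by a direct (constant-coefficient-like) estimate and matching it to the outer power-law regions $1 \lesssim |\tau| \lesssim A^2$. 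A secondary technical point is correctly identifying the exponent $\gamma$: it is not simply read off from one equation but from the coupled indicial system, and one must verify the discriminant condition that keeps $\gamma$ real and in $(1,2)$; I expect this to reduce to an elementary algebraic inequality once the WKB/power-law ansatz is set up.
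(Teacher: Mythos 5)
The paper gives no details for this proposition (it only records that ``the proof can be obtained through a simple ODE argument''), so the question is whether your plan closes, and two of its load-bearing claims are incorrect. First, the coefficient asymptotics: $(1+\tau^2)^{-1/4}\approx (1+|\tau|)^{-1/2}$ and $(1+\tau^2)^{-3/4}\approx (1+|\tau|)^{-3/2}$, not $(1+|\tau|)^{-1/4}$ and $(1+|\tau|)^{-3/4}$ as in your exponent-matching computation; with the correct powers the ansatz $f_R\sim (1+|\tau|)^{a}$, $f_{NR}\sim (1+|\tau|)^{a-1/2}$ matches \emph{both} equations of \eqref{eq:toyfR1}--\eqref{eq:toyfNR1} exactly (there is no ``slightly off'' mismatch), and the indicial relation for genuine power solutions is $\gamma(\gamma+\tfrac12)=1$. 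Hence — second and more seriously — the indicial root is $\gamma_*=(\sqrt{17}-1)/4\approx 0.78\notin(1,2)$, so your step (iii), ``solving the indicial-type equation \dots and one checks it lands in $(1,2)$'', fails as stated. The proposition is a deliberately non-sharp one-sided bound: the correct organization is to fix a sufficiently large $\gamma\in(1,2)$ (e.g.\ $\gamma=7/4$) and verify that the stated right-hand sides, call them $\bar f_R,\bar f_{NR}$, are supersolutions, i.e.\ $\de_\tau \bar f_R\ge (k^2/\eta)^{1/2}(1+\tau^2)^{-1/4}\bar f_{NR}$ and $\de_\tau \bar f_{NR}\ge (\eta/k^2)^{1/2}(1+\tau^2)^{-3/4}\bar f_R$ on each half-interval, dominating the data at $\tau=-\eta/k^2$; positivity of coefficients and data then gives domination of $(f_R,f_{NR})$ by a standard first-crossing/Gr\"onwall comparison. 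Choosing $\gamma$ large inside $(1,2)$ is exactly what closes these inequalities (roughly $\gamma\ge 2^{3/4}$ is needed to absorb $(1+\tau^2)^{3/4}\ge 2^{-3/4}(1+|\tau|)^{3/2}$), the two pieces glue continuously at $\tau=0$ where both bounds equal $C(\eta/k^2)^{\gamma}$, resp.\ $C(\eta/k^2)^{\gamma+1/2}$, and on $[-\eta/k^2,0]$ it is the constraint $|\tau|\le \eta/k^2$ that lets an exponent larger than $\gamma_*$ still produce a valid upper bound.

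Third, your description of $[-\eta/k^2,0]$ as a decay regime with the solution ``largest at the left endpoint'' is backwards: the coefficients and data are positive, so $f_R,f_{NR}$ are nondecreasing in $\tau$; they start at $1$ and grow to sizes $(\eta/k^2)^{\gamma}$ and $(\eta/k^2)^{\gamma+1/2}$ near $\tau=0$, and the factors $(1+|\tau|)^{-\gamma}$, $(1+|\tau|)^{-\gamma-1/2}$ quantify that growth relative to the endpoint, not a decay of the solution (you implicitly use the correct picture when matching at $\tau=0$, but an argument organized around the wrong monotonicity would not close). Finally, your step (i) is shakier than suggested: the unweighted coupling integral $\int_{-\eta/k^2}^{\eta/k^2}(\eta/k^2)^{1/2}(1+\tau^2)^{-3/4}\,\dd\tau$ is of size $(\eta/k^2)^{1/2}$, not $O(1)$, so a naive Gr\"onwall is exponentially lossy; the polynomial growth emerges only after symmetrizing by $(\eta/k^2)^{1/2}(1+\tau^2)^{-1/4}$, whose logarithmic derivative integrates to about $\log(\eta/k^2)$. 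Once the supersolution argument above is in place, step (i) is unnecessary; with these corrections your two-regime, outer power-law plus $O(1)$ inner-region structure is indeed the elementary ODE argument the proposition calls for.
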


The proof can be obtained through a simple ODE argument. Based on this,
we are  ready to construct the weight $w_k(t, \eta)$ in \eqref{eq:Jweight}.
We first define the critical intervals. For $a\geq0 $, let $\lfloor a \rfloor \in \mathbb{N}$ be the integer part. Then, for any $\eta\in \R$, $1\leq |k|\leq \lfloor \sqrt{\eta}\rfloor $ and $\eta k\geq 0$, set 
\begin{align}
	t_{|k|,\eta}=\frac{|\eta|}{|k|}-\frac{|\eta|}{2|k|(|k|+1)}=\frac{|\eta|}{|k|+1}+\frac{|\eta|}{2|k|(|k|+1)},\qquad t_{0,\eta}= 2|\eta|.
\end{align}
The critical intervals are then defined as 
\begin{equation}
	\label{def:Iketa}
	{I}_{k,\eta}={I}^L_{k,\eta}\cup {I}^R_{k,\eta}= \begin{cases}
		\left[t_{|k|,\eta},\frac{\eta}{k}\right]\cup \left[\frac{\eta}{k},t_{|k|-1,\eta}\right] \quad& \text{if } \eta k\geq0 \text{ and } 1\leq |k|\leq \lfloor \sqrt{\eta}\rfloor ,\\
		\emptyset \quad & \text{otherwise}. 
	\end{cases}
\end{equation}
For technical reasons, it is convenient to also introduce the \textit{resonant intervals} as 
\begin{equation}\label{def:boldIketa}
\boldsymbol{I}_{k,\eta}=\begin{cases}
		I_{k,\eta} \quad& \text{if } \ 2\sqrt{\eta }\leq t_{k,\eta},\\
		\emptyset \quad & \text{otherwise}.
	\end{cases}
\end{equation}
We now follow the construction of \cite{BM15}, using \eqref{eq:toyfR1}-\eqref{eq:toyfNR1} as the reference toy model. In particular, for $t\in \bI_{k,\eta}$ we choose $(w_{NR},w_R)$ such that
\begin{align}
	\label{def:detwR}\de_t w_R\approx& \left(\frac{k^2}{\eta}\right)^\frac12 \frac{1}{(1+|t-\frac{\eta}{k}|)^\frac12}w_{NR},\\
	\label{def:detwNR}	\de_t w_{NR}\approx &\left(\frac{\eta}{k^2}\right)^\frac12 \frac{1}{(1+|t-\frac{\eta}{k}|)^\frac32}w_{R}.
\end{align}
We assume $w_R(t, \eta)=w_{NR}(t, \eta)=1$ for $t \ge 2 \eta$ and we construct the weight
\textit{backward} in time, by gluing all the growths of  Proposition \ref{prop:key}. For simplicity, we assume $k,\eta\geq 0$, but the construction below easily applies to the case $k,\eta \leq0$ (when they have different signs we take $w_k(t,\eta) \equiv 1$). We start our construction with the non-resonant part of the weight.
 Let $w_{NR}$ be such that $w_{NR}(t,\eta)=1$ for $t\geq 2\eta$ or $|\eta|\leq 2$. Assume that $w_{NR}(t_{|k|-1,\eta},\eta)$ is known. Motivated by Proposition \ref{prop:key}, for any $1\leq k\leq \lfloor \sqrt{\eta}\rfloor $, we define 
 \begin{align}
 	\label{def:wNR}
 		w_{NR}(t,\eta)=&\left(\frac{k^2}{\eta}\left(1+b_{k,\eta}\left|t-\frac{\eta}{k}\right|\right)\right)^{\gamma}w_{NR}(t_{k-1},\eta), \quad \text{for }t \in I^R_{k,\eta}\\
 	w_{NR}(t,\eta)=&\left(1+a_{k,\eta}\left|t-\frac{\eta}{k}\right|\right)^{-\frac12-\gamma}w_{NR}\left(\frac{\eta}{k},\eta\right), \quad \text{for }t \in {I}^L_{k,\eta},
 \end{align}
where $I_{k,\eta}^R, I_{k, \eta}^L$ have been introduced in \eqref{def:Iketa}, while $b_{k,\eta}$ and $a_{k,\eta}$ satisfy
\begin{equation*}
	\frac{k^2}{\eta}\left(1+b_{k,\eta}\left|t_{k-1,\eta}-\frac{\eta}{k}\right|\right)=1, \qquad \frac{k^2}{\eta}\left(1+a_{k,\eta}\left|t_{k,\eta}-\frac{\eta}{k}\right|\right)=1.
\end{equation*}
In particular, we have 
\begin{align}
	b_{k,\eta}=\begin{cases}\displaystyle
		\frac{2(k-1)}{k}\left(1-\frac{k^2}{\eta}\right), \quad & \text{for } k\geq 2,\\
	\displaystyle	1-\frac{1}{\eta}, \quad &\text{for } k=1,
	\end{cases} \qquad
a_{k,\eta}=	\frac{2(k+1)}{k}\left(1-\frac{k^2}{\eta}\right). 
\end{align}
Thanks to this choice, notice that 
\begin{align}
	\label{eq:maxgrowwNR} w_{NR}\left(\frac{\eta}{k},\eta\right)=&\left(\frac{k^2}{\eta}\right)^\gamma w_{NR}(t_{k-1,\eta},\eta), \qquad w_{NR}(t_{k-1,\eta},\eta)=\left(\frac{\eta}{k^2}\right)^{\frac12+2\gamma}w_{NR}(t_{k,\eta},\eta ),
\end{align} 
where $a_{k,\eta}, b_{k,\eta}$ are chosen to ensure the last identity. For $t\in [0,t_{\lfloor \sqrt{\eta}\rfloor ,\eta}]$  we define $w_{NR}(t,\eta)=w(t_{\lfloor\sqrt{\eta}\rfloor,\eta},\eta)$. We then define $w_R$ as suggested by Proposition \ref{prop:key}, namely 
\begin{align}
	\label{def:wR}
	w_{R}(t,\eta)=&\left({\frac{k^2}{\eta}\left(1+b_{k,\eta}\left|t-\frac{\eta}{k}\right|\right)}\right)^\frac12 w_{NR}(t,\eta), \quad \text{for }t \in {I}^R_{k,\eta},\\
	w_{R}(t,\eta)=&\left({\frac{k^2}{\eta}\left(1+a_{k,\eta}\left|t-\frac{\eta}{k}\right|\right)}\right)^\frac12w_{NR}\left(t,\eta\right), \quad \text{for }t \in {I}^L_{k,\eta}.
\end{align}
By the expressions of $a_{k,\eta}, b_{k,\eta}$, we also have that
\begin{equation}
	w_R(t_{k,\eta},\eta)=w_{NR}(t_{k,\eta},\eta), \qquad w_R\left(\frac{\eta}{k},\eta\right)=\sqrt{\frac{k^2}{\eta}}w_{NR}\left(\frac{\eta}{k},\eta\right).
\end{equation}
The main weight $w_k(t, \eta)$ is finally given by 
\begin{equation}
	\label{def:w}
	w_k(t,\eta)=\begin{cases}
		w_k(t_{[\sqrt{\eta},\eta],\eta},\eta), \quad &t<t_{\lfloor \sqrt{\eta}\rfloor,\eta},\\
		w_{NR}(t,\eta), \quad& t\in [t_{\lfloor \sqrt{\eta}\rfloor,\eta},2\eta]\setminus {I}_{k,\eta},\\
		w_{R}(t,\eta), \quad& t\in I_{k,\eta},\\
		1 \quad& t\geq 2\eta.
	\end{cases}
\end{equation}
\begin{remark}
	\label{rem:maxgrowth}
	Since $a_{k,\eta}, b_{k,\eta}\to 0$ as $k\to \lfloor \sqrt{\eta}\rfloor$, we also have that $\de_tw\approx 0$ for $k\approx \lfloor \sqrt{\eta}\rfloor$. This means that estimates \eqref{def:detwR}-\eqref{def:detwNR} are only useful provided that $k\leq \frac12 \sqrt{\eta}$ or $t\geq 2\sqrt{\eta}$ (equivalent). 
\end{remark}

The weights $A$ and $J$ are defined in terms of $w$ in \eqref{def:A} and \eqref{eq:Jweight}, respectively.
Notice that for $k=0$ the weight $w$ is \textit{always} non-resonant and the linear multiplier is $m \equiv 1$. Therefore, $A_0(t,\eta)$ always encodes non-resonant regularity.

The change of coordinates requires a stronger weight. More precisely, we need to propagate the same regularity of the homogeneous case treated in \cite{BM15}, where the weight of the coordinate system assigns always the resonant regularity given by the toy model \eqref{eq:toyfR1BM}. Hence, we define 
\begin{align}
	\label{def:wRv}
 w_{R}^v(t,\eta)=&\frac{k^2}{\eta}\left(1+b_{k,\eta}\left|t-\frac{\eta}{k}\right|\right) w_{NR}(t,\eta), \quad \text{for }t \in {I}^R_{k,\eta},\\
		w_{R}^v(t,\eta)=&\frac{k^2}{\eta}\left(1+a_{k,\eta}\left|t-\frac{\eta}{k}\right|\right)w_{NR}\left(t,\eta\right), \quad \text{for }t \in {I}^L_{k,\eta}.
\end{align}
The weight $w^v(t, \eta)$ is given by
\begin{align}\label{def:wv}
	w^{v}(t,\eta)&=\begin{cases}
		(w^{v}_R)^{-1}(t_{[\sqrt{\eta},\eta],\eta},\eta), \quad &t<t_{\lfloor \sqrt{\eta}\rfloor,\eta},\\
		(w^{v}_R(t,\eta))^{-1}, \quad& t\in [t_{\lfloor \sqrt{\eta}\rfloor,\eta},2\eta],\\
		1 \quad& t\geq 2\eta,
	\end{cases}
\end{align}
and $A^v$ is defined in \eqref{def:ARvintro}.

\begin{remark}
	\label{remarkino}
	It is immediate to check that $w^v(t, \eta)$ encodes more regularity than $w(t, \eta)$. Notice that
\begin{align}
	p^{-\frac14}_{k}(t,\eta)(w_R)^{-1}(t,\eta) \lesssim \left(\frac{|k|}{|\eta|}\right)^{\frac12}(w_{R}^v)^{-1}(t,\eta),
\end{align}
and if $t\geq 2\sqrt{\eta}$ then
\begin{align}
	p^{-\frac14}_{k}(t,\eta)(w_R)^{-1}(t,\eta)\mathbbm{1}_{\{t\geq 2 \sqrt{\eta}\}}\approx \left(\frac{|k|}{|\eta|}\right)^{\frac12}(w_{R}^v)^{-1}(t,\eta)\mathbbm{1}_{\{t\geq 2 \sqrt{\eta}\}}.
\end{align}
\end{remark}
	Finally, we underline the following useful inequalities:
	\begin{align}
		\label{bd:towerA} A_0\leq A^{v}, \qquad A_0\leq \widetilde{A}\leq A, \qquad A_0\leq\mathbbm{1}_{|k|\leq |\eta|}A\lesssim \widetilde{A}\lesssim A^v.
	\end{align}
	
\subsection{Properties of the weights}\label{sub:weigthprop}
Here we present technical results which will be used to deal with the weights, throughout the paper.
First, we recall the trichotomy lemma due to \cite{BM15}*{Lemma 3.2}.
\begin{lemma}[Lemma 3.2, \cite{BM15}]
	\label{lemma:trichotomy}
	Let $\xi, \eta$ be such that there exists some $C\geq 1$ with $C^{-1}|\xi|\leq |\eta|\leq C|\xi|$ and let $k,\ell$ be such that $t\in I_{k,\eta}$ and $t\in I_{\ell,\xi}$, hence $k\approx \ell$. Then at least one of the following holds:
	\begin{itemize}
		\item[(a)] $k=\ell$  \qquad (almost the same interval),
		\item[(b)] $\displaystyle\left|t-\frac{\eta}{k}\right|\geq (10C)^{-1}\frac{|\eta|}{k^2}$ and $\displaystyle\left|t-\frac{\xi}{\ell}\right|\geq (10C)^{-1}\frac{|\xi|}{\ell^2}$ \qquad (far from resonance),
		\item[(c)] $\displaystyle\left|\eta-\xi\right|\gtrsim \max\left\{\frac{|\eta|}{|\ell|},\frac{|\xi|}{|k|}\right\}$  \qquad  (well-separated).
	\end{itemize} 
\end{lemma}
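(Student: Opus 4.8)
The plan is to run a short, purely geometric case analysis built on the tiling structure of the critical intervals $\{I_{m,\eta}\}_m$ from \eqref{def:Iketa} and on the algebraic identity relating the two resonances $\eta/k$ and $\xi/\ell$; the argument follows the same lines as \cite{BM15}*{Lemma 3.2}. First I would make the standing reductions: $I_{k,\eta}$ and $I_{\ell,\xi}$ are nonempty only if $\eta k\geq 0$, $\xi\ell\geq 0$, $1\leq|k|\leq\lfloor\sqrt{|\eta|}\rfloor$, $1\leq|\ell|\leq\lfloor\sqrt{|\xi|}\rfloor$, and after sending $(k,\eta,\ell,\xi)\mapsto(-k,-\eta,-\ell,-\xi)$ if needed we may take all four positive. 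Reading off the endpoints of $I_{k,\eta}$ gives $\tfrac{\eta}{k+1}<t<\tfrac{\eta}{k-1}$ for $k\geq2$ (the right endpoint being $2\eta$ when $k=1$), and likewise for $\ell$; combined with $C^{-1}\xi\leq\eta\leq C\xi$ this yields $(C')^{-1}\ell\leq k\leq C'\ell$ with $C'=C'(C)$, i.e. the asserted $k\approx\ell$. If $k=\ell$ we are in alternative (a); otherwise, relabelling $(k,\eta)\leftrightarrow(\ell,\xi)$ (under which the statement is symmetric) we may assume $k\neq\ell$ and $k\geq2$. The two facts I would keep at hand are the identity $\tfrac{\eta}{k}-\tfrac{\xi}{\ell}=\tfrac{\eta-\xi}{k}-\tfrac{(k-\ell)\xi}{k\ell}$ and, from $t\in I_{k,\eta}\cap I_{\ell,\xi}$, the localizations $|t-\tfrac{\eta}{k}|\leq\tfrac{\eta}{2k(k-1)}\leq\tfrac{2\eta}{k^2}$ and (when $\ell\geq2$) $|t-\tfrac{\xi}{\ell}|\leq\tfrac{2\xi}{\ell^2}$.

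The mechanism is then the following. Suppose alternative (c) fails, say $|\eta-\xi|<c_0\max\{\eta/\ell,\xi/k\}$ for a small constant $c_0$ to be fixed; since $k\approx\ell$ and $\eta\approx\xi$ this forces $|\eta-\xi|\leq c_1c_0\,\eta/k$ with $c_1=c_1(C)$. Because $|k-\ell|\geq1$, the second term of the identity has size $\geq\xi/(k\ell)\gtrsim\eta/k^2$ and, being much larger than $|\eta-\xi|/k$, it dominates, so $|\tfrac{\eta}{k}-\tfrac{\xi}{\ell}|\gtrsim\eta/k^2$ with an explicit constant. Feeding this into $|\tfrac{\eta}{k}-\tfrac{\xi}{\ell}|\leq|t-\tfrac{\eta}{k}|+|t-\tfrac{\xi}{\ell}|$ and using the interval localizations forces each of $|t-\tfrac{\eta}{k}|$ and $|t-\tfrac{\xi}{\ell}|$ to be at least a definite fraction of $\eta/k^2\approx\xi/\ell^2$; tuning $c_0$ strictly below the fixed constant $(10C)^{-1}$ appearing in (b) makes this fraction at least $(10C)^{-1}$, which is exactly alternative (b).

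To execute this cleanly I would split on $|k-\ell|$. When $|k-\ell|\geq2$, $I_{k,\eta}$ and $I_{\ell,\eta}$ are separated by at least one intermediate interval, so a comparison of the endpoints $t_{k-1,\eta}$ and $t_{\ell,\xi}$ shows that $t\in I_{k,\eta}\cap I_{\ell,\xi}$ is possible only if $|\eta-\xi|\gtrsim\eta/\ell$ — i.e. (c) — and in some sign patterns the intersection is in fact empty. When $|k-\ell|=1$ the two intervals are \emph{adjacent}, sharing an endpoint near which $t$ must lie, and that shared endpoint sits a definite fraction of $\eta/k^2$ away from \emph{both} resonances $\eta/k$ and $\xi/\ell$; the estimate of the previous paragraph then delivers (b) whenever (c) fails. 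Finally, the degenerate cases $k=1$ or $\ell=1$, where $I_{1,\cdot}$ extends to $2|\eta|$, are handled by the same triangle-inequality computation using the crude bound $|t|\leq 2|\eta|$; there $k$ (or $\ell$) is forced to be bounded by a constant depending only on $C$, leaving finitely many explicit configurations. I expect the only real work — and the main obstacle — to be this constant bookkeeping: fixing $c_0$ once and for all so that all the inequalities close simultaneously across every value of $|k-\ell|$ and every sign of $\eta-\xi$. Everything else is elementary; there is no analytic content beyond the geometry of the intervals.
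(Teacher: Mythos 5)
Your plan is essentially the argument behind \cite{BM15}*{Lemma 3.2}, which this paper quotes without reproducing (no proof is given here), and it is sound: the reduction to positive frequencies, the identity $\frac{\eta}{k}-\frac{\xi}{\ell}=\frac{\eta-\xi}{k}-\frac{(k-\ell)\xi}{k\ell}$, the separation argument for $|k-\ell|\ge 2$ (forcing (c) from a comparison of $t_{k-1,\eta}$ with $t_{\ell,\xi}$), and the adjacency argument for $|k-\ell|=1$ are exactly the right ingredients. One caution: the summary sentence claiming that the triangle inequality together with the two-sided localizations $|t-\eta/k|\le 2|\eta|/k^2$, $|t-\xi/\ell|\le 2|\xi|/\ell^2$ already forces \emph{both} distances to be a definite fraction of their scales is not valid as stated (it only controls the sum, and those crude widths do not leave a margin for small $\ell$); what actually closes the case $|k-\ell|=1$ is the one-sided endpoint comparison you give afterwards, namely that when (c) fails the intersection is pinched inside the window between the two versions of the shared endpoint (e.g.\ $[t_{\ell,\xi},t_{\ell,\eta}]$ when $k=\ell+1$), a set of width $O(|\eta-\xi|/\ell)$ lying at distance about $\frac{|\eta|}{2k(k-1)}$ from $\eta/k$ and about $\frac{|\xi|}{2\ell(\ell+1)}$ from $\xi/\ell$, which beats the $(10C)^{-1}$ thresholds once $c_0$ is fixed small.
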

We state here some useful inequalities, whose proofs can be found in \cite{BM15}.
\begin{lemma}\label{lemma:appfreq} Let $0 < s < 1$ and $a,b\geq0$.
\begin{itemize}
\item If $|a-b| \le b/C$ for some $C>2$, then
\begin{align}\label{app:inequality} 
| a^s-b^s| \le \frac{s}{(C-1)^{1-s}} |a-b|^s.
\end{align}
\item If $| a-b| \le C b$ for some $C>0$, then
\begin{align}\label{app:inequality2}
a^s \le \left(\frac{C}{1+C}\right)^{1-s} (|a-b|^s + b^s).
\end{align}

\end {itemize}
\end{lemma}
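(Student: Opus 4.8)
The plan is to prove both inequalities by elementary one-variable estimates, reducing to the normalisation $b=1$ by homogeneity.

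For \eqref{app:inequality}, first observe that if $b=0$ the hypothesis forces $a=0$, so the bound is trivial; hence assume $b>0$ and replace $a$ by $a/b$, which reduces the claim to showing $|a^s-1|\le \frac{s}{(C-1)^{1-s}}|a-1|^s$ whenever $|a-1|\le 1/C$. In that range $a\ge 1-1/C=(C-1)/C>0$, so I would apply the fundamental theorem of calculus to $t\mapsto t^s$ on the interval between $1$ and $a$: since $t\ge (C-1)/C$ there and $s-1<0$, the integrand $s\,t^{s-1}$ is bounded by $s\big((C-1)/C\big)^{s-1}$, giving $|a^s-1|\le s\big((C-1)/C\big)^{s-1}|a-1|$. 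The final step is to trade the linear factor $|a-1|$ for $|a-1|^s$: writing $|a-1|=|a-1|^s|a-1|^{1-s}$ and using $|a-1|\le 1/C$ bounds $|a-1|^{1-s}$ by $C^{s-1}$, and the two powers of $C$ collapse to produce exactly the constant $\frac{s}{(C-1)^{1-s}}$. Undoing the normalisation closes the argument.

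For \eqref{app:inequality2} I would split into the cases $a\le b$ and $a>b$. The case $a\le b$ is immediate, since then $a^s\le b^s$ and $b^s$ already sits on the right-hand side. In the case $a>b$ (so $b>0$, because otherwise $|a-b|\le Cb$ would force $a=0$), rescale to $b=1$ and write $a=1+u$ with $0<u\le C$; it then suffices to prove $(1+u)^s-1\le \big(\tfrac{C}{1+C}\big)^{1-s}u^s$ for $0<u\le C$. I would do this by monotonicity of $g(u)=(1+u)^s-\big(\tfrac{C}{1+C}\big)^{1-s}u^s-1$: one has $g(0)=0$, and a direct differentiation shows that $g'(u)$ has the same sign as $\tfrac{u}{1+u}-\tfrac{C}{1+C}$, which is negative throughout $(0,C)$ because $u\mapsto u/(1+u)$ is strictly increasing. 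Hence $g$ is non-increasing on $(0,C]$, so $g(u)\le g(0)=0$, which is the desired inequality.

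These computations are entirely routine, so there is no real obstacle; the only points needing a little care are the conversion of the first-order increment $|a-1|$ into $|a-1|^s$ in the first inequality (handled by peeling off $|a-1|^{1-s}$ and invoking the hypothesis), and, in the second inequality, the observation that the unique critical point of the comparison function $g$ on $(0,\infty)$ lies exactly at the upper endpoint $u=C$, which is precisely what makes the monotonicity argument close without a separate estimate there.
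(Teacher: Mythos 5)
Your proof of \eqref{app:inequality} is correct, and it is the standard argument (note the paper itself gives no proof of this lemma, deferring to \cite{BM15}, so there is nothing in-paper to compare against): homogeneity reduces to $b=1$, the fundamental theorem of calculus with the lower bound $a\ge (C-1)/C$ gives $|a^s-1|\le s((C-1)/C)^{s-1}|a-1|$, and trading $|a-1|=|a-1|^s|a-1|^{1-s}\le C^{s-1}|a-1|^s$ makes the constants collapse exactly to $s/(C-1)^{1-s}$.

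The second half has a genuine gap, though it is partly a defect of the statement itself. As literally written, \eqref{app:inequality2} is false: take $a=b>0$, so the hypothesis holds for every $C>0$, while the right-hand side equals $(C/(1+C))^{1-s}b^s<b^s=a^s$. Your two reductions silently discard this difficulty. In the case $a\le b$ you say that ``$b^s$ already sits on the right-hand side'', which ignores the prefactor $(C/(1+C))^{1-s}<1$ multiplying $b^s$; and in the case $a>b$ the inequality you actually prove, namely $(1+u)^s\le 1+(C/(1+C))^{1-s}u^s$ for $0<u\le C$, is \emph{weaker} than the stated $(1+u)^s\le (C/(1+C))^{1-s}(1+u^s)$, so the claimed ``it then suffices'' goes in the wrong direction. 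What your (correct) monotonicity argument does establish is $a^s\le b^s+\left(\tfrac{C}{1+C}\right)^{1-s}|a-b|^s$ whenever $|a-b|\le Cb$, and this is precisely the form the paper actually uses (it is what yields \eqref{bd:exchketa0}, where the factor $c<1$ sits only on $|\eta-\xi|^s$); the stated form of \eqref{app:inequality2} is true only under the additional assumption $|a-b|\ge b$, since it then follows from $(x+y)^s\le\left(\max(x,y)/(x+y)\right)^{1-s}(x^s+y^s)$ with $x=|a-b|$, $y=b$. So you should either flag the misprint and prove the corrected inequality (your case $a>b$ computation already does this, and then the case $a\le b$ really is trivial), or add the hypothesis $|a-b|\ge b$; as it stands, the proposal neither proves the inequality as stated (which is impossible) nor acknowledges that it is proving a different one.
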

\subsubsection{Properties of the main weight $w$}
We collect the properties of the main weight $w_k(t, \eta)$, which in most cases will be analogous to \cite{BM15}, while the substantial differences will be carefully highlighted.  
First, we note that the maximal growth of the weight $w$ dictates the Gevrey-2 regularity requirements.
The proof is essentially the same as in \cite{BM15} and is hence omitted here. 
\begin{lemma}
	\label{lemma:maxgrowth}
	Let $\mu=4(1/2+2\gamma)$. For $|\eta|>1$ we have  
	\begin{equation}
		w_k^{-1}(0,\eta)=w_k^{-1}(t_{[\sqrt{|\eta|}],\eta},\eta)\sim \frac{1}{|\eta|^{\frac{\mu}{8}}}\e^{\frac{\mu}{2}\sqrt{|\eta|}}.
	\end{equation}
\end{lemma}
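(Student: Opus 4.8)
The plan is to read off a telescoping product from the backward-in-time construction of $w_k$ and then invoke Stirling's formula. The starting observation is that, by \eqref{def:w}, for $t<t_{\lfloor\sqrt{|\eta|}\rfloor,\eta}$ the weight is frozen at $w_{NR}(t_{\lfloor\sqrt{|\eta|}\rfloor,\eta},\eta)$; since $t=0$ lies in this range, and since in the nontrivial regime $\eta k\ge 0$ this frozen value is independent of $k$ (for $\eta k<0$ one has $w_k\equiv 1$, and the case $\eta,k<0$ follows from the symmetry of the construction under $(k,\eta)\mapsto(-k,-\eta)$), it suffices to evaluate $w_{NR}(t_{N,\eta},\eta)$ with $N:=\lfloor\sqrt{|\eta|}\rfloor$; I will assume $\eta,k>0$ from now on.

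First I would iterate the identity recorded in \eqref{eq:maxgrowwNR}, namely $w_{NR}(t_{k,\eta},\eta)=(k^{2}/\eta)^{1/2+2\gamma}\,w_{NR}(t_{k-1,\eta},\eta)$ for $1\le k\le N$, anchored at $w_{NR}(t_{0,\eta},\eta)=w_{NR}(2\eta,\eta)=1$. Telescoping gives
\[
w_{NR}(t_{N,\eta},\eta)=\Big(\prod_{k=1}^{N}\frac{k^{2}}{\eta}\Big)^{\frac12+2\gamma}=\Big(\frac{(N!)^{2}}{\eta^{N}}\Big)^{\frac12+2\gamma}.
\]
Along the way one should check, using \eqref{def:Iketa}--\eqref{def:w} and the endpoint matching $w_R(t_{k,\eta},\eta)=w_{NR}(t_{k,\eta},\eta)$, that the resonant pieces $w_R$ introduce no extra factors and that the successive critical times $t_{k,\eta}$ glue consistently; this is bookkeeping parallel to \cite{BM15}.

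To conclude, set $\mu=4(\frac12+2\gamma)$, so that $\frac12+2\gamma=\mu/4$, and apply Stirling, $N!=\sqrt{2\pi N}\,(N/\e)^{N}(1+O(1/N))$, obtaining
\[
\frac{(N!)^{2}}{\eta^{N}}=2\pi N\,\e^{-2N}\,\frac{N^{2N}}{\eta^{N}}\,(1+O(1/N)).
\]
Since $N^{2}\le\eta<(N+1)^{2}$, the ratio $N^{2N}/\eta^{N}$ stays between $\e^{-2}(1+o(1))$ and $1$, hence is harmless, so $(N!)^{2}/\eta^{N}\approx N\e^{-2N}$ with a constant independent of $\eta$. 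Raising to the power $\mu/4$, inverting, and using $N=\sqrt{|\eta|}+O(1)$ — whence $N^{-\mu/4}\approx|\eta|^{-\mu/8}$ and $\e^{\frac{\mu}{2}N}\approx\e^{\frac{\mu}{2}\sqrt{|\eta|}}$ — gives
\[
w_k^{-1}(0,\eta)=w_{NR}^{-1}(t_{N,\eta},\eta)\approx \frac{1}{|\eta|^{\mu/8}}\,\e^{\frac{\mu}{2}\sqrt{|\eta|}},
\]
and the range $1<|\eta|\le 4$, where $N=1$, is handled directly since both sides are then comparable to a constant.

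The hard part will not be any single estimate but keeping the bookkeeping honest in that final step: one has to make sure that every correction that appears — Stirling's $1+O(1/N)$, the bounded-but-not-unit ratio $N^{2N}/\eta^{N}$, the $2\pi$ and polynomial-in-$N$ prefactors, and the replacement of $N$ by $\sqrt{|\eta|}$ in both the polynomial factor and the exponential — only perturbs the implied constant and never the leading rate $\frac{\mu}{2}\sqrt{|\eta|}$. This is exactly why the statement is an equivalence up to $\eta$-independent constants rather than a sharp asymptotic, and why the value of $\mu$, and with it the Gevrey-$2$ threshold $s=1/2$, is dictated by the growth exponent $\gamma$ of the toy model in Proposition \ref{prop:key}.
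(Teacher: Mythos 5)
Your argument is correct and is exactly the proof the paper has in mind (and omits, deferring to \cite{BM15}): freeze the weight at $t_{\lfloor\sqrt{|\eta|}\rfloor,\eta}$ via \eqref{def:w}, telescope the per-interval factor $(k^2/\eta)^{1/2+2\gamma}$ from \eqref{eq:maxgrowwNR} down to the anchor $w_{NR}(2\eta,\eta)=1$ to get $\big((N!)^2/\eta^N\big)^{\mu/4}$ with $N=\lfloor\sqrt{|\eta|}\rfloor$, and then apply Stirling, with the endpoint matching $w_R(t_{k,\eta},\eta)=w_{NR}(t_{k,\eta},\eta)$ ensuring the resonant pieces contribute nothing extra. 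Your bookkeeping of the harmless factors (the ratio $(N^2/\eta)^N$, the polynomial prefactors, and $N=\sqrt{|\eta|}+O(1)$) is accurate, so nothing further is needed.
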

Our weights also have the analogous property of \cite{BM15}*{Lemma 3.3}; the proof is similar and is hence omitted. 

\begin{lemma}
	\label{lemma:detw/wfar}
	 For $t\in I_{k,\eta}$ and $t>2\sqrt{\eta}$, we have 
	\begin{equation}
		\label{bd:detw/w}
		\frac{\de_t w_{NR}(t,\eta)}{w_{NR}(t,\eta)}\approx \frac{1}{1+|\frac{\eta}{k}-t|}\approx\frac{\de_t w_{R}(t,\eta)}{w_{R}(t,\eta)}.
	\end{equation}
\end{lemma}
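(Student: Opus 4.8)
The plan is to prove the statement by a direct computation starting from the explicit formulas \eqref{def:wNR} and \eqref{def:wR} that define the weight on the critical interval $I_{k,\eta}$, reducing the whole claim to the elementary fact that the coefficients $a_{k,\eta}$ and $b_{k,\eta}$ are comparable to $1$ in the stated regime. Recall that by \eqref{def:w} we have $w_k(t,\eta)=w_R(t,\eta)$ for $t\in I_{k,\eta}$, and that $w_R$ and $w_{NR}$ are smooth in $t$ on the interior of each half-interval $I^L_{k,\eta}$, $I^R_{k,\eta}$.

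First I would compute the logarithmic derivatives. On $I^R_{k,\eta}$ we have $t>\eta/k$, hence $\de_t|t-\eta/k|=1$, and since the factors $(k^2/\eta)^{\gamma}$, $(k^2/\eta)^{1/2}$ and $w_{NR}(t_{k-1,\eta},\eta)$ are constant in $t$, differentiating \eqref{def:wNR} and \eqref{def:wR} gives
\begin{align}
\frac{\de_t w_{NR}}{w_{NR}}=\frac{\gamma\,b_{k,\eta}}{1+b_{k,\eta}|t-\eta/k|},\qquad \frac{\de_t w_{R}}{w_{R}}=\frac{(\tfrac12+\gamma)\,b_{k,\eta}}{1+b_{k,\eta}|t-\eta/k|}.
\end{align}
On $I^L_{k,\eta}$ we have $t<\eta/k$, hence $\de_t|t-\eta/k|=-1$, and the same computation yields
\begin{align}
\frac{\de_t w_{NR}}{w_{NR}}=\frac{(\tfrac12+\gamma)\,a_{k,\eta}}{1+a_{k,\eta}|t-\eta/k|},\qquad \frac{\de_t w_{R}}{w_{R}}=\frac{\gamma\,a_{k,\eta}}{1+a_{k,\eta}|t-\eta/k|}.
\end{align}
In every case the logarithmic derivative equals a universal constant, lying in $(1,2)$ because $\gamma\in(1,2)$, times the quantity $c_{k,\eta}/(1+c_{k,\eta}|t-\eta/k|)$ with $c_{k,\eta}\in\{a_{k,\eta},b_{k,\eta}\}$. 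In particular the comparison $\de_tw_{NR}/w_{NR}\approx\de_tw_R/w_R$ is immediate, and it remains only to show $c_{k,\eta}/(1+c_{k,\eta}|t-\eta/k|)\approx(1+|t-\eta/k|)^{-1}$.

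The main --- and essentially the only nontrivial --- step is to check that $a_{k,\eta}$ and $b_{k,\eta}$ are bounded above and below by universal positive constants on the region $\{t\in I_{k,\eta}:t>2\sqrt{|\eta|}\}$. The upper bounds are immediate from the explicit expressions, since $1-k^2/\eta\le1$ and $\tfrac{2(|k|\pm1)}{|k|}\le4$. For the lower bounds it suffices to prove $1-k^2/\eta\gtrsim1$, i.e.\ that the condition $t>2\sqrt{|\eta|}$ forces $|k|\le c\sqrt{|\eta|}$ for some universal $c<1$ (for $|k|\ge2$ the ratios $\tfrac{2(|k|-1)}{|k|}$, $\tfrac{2(|k|+1)}{|k|}$ are bounded below, and for $|k|=1$ one has $b_{1,\eta}=1-1/\eta$, $a_{1,\eta}=4(1-1/\eta)$, bounded below once $|\eta|$ exceeds a fixed constant). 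To bound $|k|$, use that $t\in I_{k,\eta}$ implies $t\le t_{|k|-1,\eta}=\tfrac{|\eta|(2|k|-1)}{2|k|(|k|-1)}$ for $|k|\ge2$; combined with $2\sqrt{|\eta|}<t$ this reads $\tfrac{4|k|(|k|-1)}{2|k|-1}<\sqrt{|\eta|}$, and since $\tfrac{4|k|(|k|-1)}{2|k|-1}\ge 2(|k|-1)$ we get $|k|\le1+\tfrac12\sqrt{|\eta|}\le\tfrac34\sqrt{|\eta|}$ as soon as $|\eta|\ge16$; for smaller $|\eta|$ one checks directly that $\{t\in I_{k,\eta}:t>2\sqrt{|\eta|}\}$ is empty for $|k|\ge2$, so only $k=1$ survives and is treated as above. (As is customary for such weight lemmas, the statement is understood for $|\eta|$ above a fixed absolute constant, which is the only regime in which $w_k$ is non-trivial and in which the estimate gets used, cf.\ Remark~\ref{rem:maxgrowth}.) Hence $1-k^2/\eta\ge7/16$ and $a_{k,\eta}\approx b_{k,\eta}\approx1$.

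Finally, the elementary inequality $\tfrac{c}{1+cx}\approx(1+x)^{-1}$ for $x\ge0$ and $c$ ranging over a fixed compact subinterval of $(0,\infty)$ --- obtained by bounding $1+cx$ above and below by fixed multiples of $1+x$ --- together with the two displays above yields $\de_tw_{NR}/w_{NR}\approx(1+|t-\eta/k|)^{-1}\approx\de_tw_R/w_R$, as claimed. The only place requiring any care is the geometric argument pinning down the admissible range of $|k|$; the rest is bookkeeping entirely parallel to \cite{BM15}*{Lemma 3.3}, which is why the paper omits it.
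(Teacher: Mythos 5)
Your computation is the intended one (the paper omits the proof as parallel to \cite{BM15}*{Lemma 3.3}): differentiating \eqref{def:wNR}--\eqref{def:wR} on each half of $I_{k,\eta}$ gives $\de_t w_{NR}/w_{NR}$ and $\de_t w_R/w_R$ equal to $\gamma$ or $\tfrac12+\gamma$ times $c_{k,\eta}/(1+c_{k,\eta}|t-\eta/k|)$ with $c_{k,\eta}\in\{a_{k,\eta},b_{k,\eta}\}$, and the whole lemma reduces to $a_{k,\eta}\approx b_{k,\eta}\approx 1$ on $\{t\in I_{k,\eta},\, t>2\sqrt{|\eta|}\}$, which is exactly where the hypothesis $t>2\sqrt{|\eta|}$ enters (cf.\ Remark \ref{rem:maxgrowth}). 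So the structure and the key step are right.

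One detail in your case analysis is false as stated: for $|k|\ge 2$ and $|\eta|<16$ the set $\{t\in I_{k,\eta}: t>2\sqrt{|\eta|}\}$ need not be empty. Take $k=2$, $\eta=10$: then $I_{2,10}=[t_{2,10},t_{1,10}]\approx[4.17,\,7.5]$ while $2\sqrt{10}\approx 6.32$, so the set is a nontrivial interval. The conclusion is unharmed, and you can avoid the split at $|\eta|\ge 16$ altogether: from $2\sqrt{|\eta|}<t\le t_{|k|-1,\eta}=\frac{|\eta|(2|k|-1)}{2|k|(|k|-1)}$ you get $\sqrt{|\eta|}>\frac{4|k|(|k|-1)}{2|k|-1}\ge \frac43|k|$ for every $|k|\ge 2$ (the last inequality is equivalent to $|k|\ge 2$), hence $k^2/|\eta|\le 9/16$ and $1-k^2/\eta\ge 7/16$ uniformly; the case $|k|=1$ is handled as you do, using that the weight is trivial for $|\eta|\le 2$. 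With that one-line replacement your argument is complete. Also, a cosmetic point: the prefactors are $\gamma\in(1,2)$ and $\tfrac12+\gamma\in(\tfrac32,\tfrac52)$, so they lie in a fixed compact subset of $(0,\infty)$ rather than in $(1,2)$ as you wrote; this does not affect anything.
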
 
We now state two crucial results, which allow us to \emph{exchange frequency} when dealing with $w$. This is completely analogous to \cite{BM15}{Lemma 3.4} and the proof is hence omitted. 
\begin{lemma}
	\label{lemma:detw/w}
	For $t\geq 1$ and $k,\ell, \eta, \xi$ such that $2\max(\sqrt{|\xi|},\sqrt{|\eta|})<t<2\min(|\xi|,|\eta|)$, we have 
	\begin{equation}
		\label{bd:wexfaway}
		\frac{\de_t w_k(t,\eta)}{w_k(t,\eta)}\frac{w_\ell(t,\xi)}{\de_tw_\ell(t,\xi)}\lesssim \jap{\eta-\xi}.
	\end{equation}
	For all $t\geq1$ and  $k,\ell, \eta, \xi$ such that for some $C\geq 1$, $C^{-1}|\xi|\leq |\eta|\leq C|\xi|$ one has 
	\begin{equation}
		\label{bd:wexfgen}
		\sqrt{\frac{\de_t w_\ell(t,\xi)}{w_\ell(t,\xi)}}\lesssim_C\left(\sqrt{\frac{\de_t w_k(t,\eta)}{w_k(t,\eta)}}+\frac{|\eta|^\frac{s}{2}}{t^s}\right)\jap{\eta-\xi}.
	\end{equation}
\end{lemma}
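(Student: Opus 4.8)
The plan is to follow the proof of \cite{BM15}*{Lemma 3.4} almost verbatim: the individual estimates transfer with only cosmetic changes, and the real work is the bookkeeping of the various time regimes. I would begin with a reduction. Since the critical intervals $\{I_{j,\eta}\}_{1\le |j|\le \lfloor\sqrt{|\eta|}\rfloor}$ cover $[t_{\lfloor\sqrt{|\eta|}\rfloor,\eta},2|\eta|]$ and consecutive ones share an endpoint, every $t$ in that window has a well-defined resonant index $k_*=k_*(t,\eta)$ with $t\in I_{k_*,\eta}$; moreover $w_k(\cdot,\eta)$ is constant---hence $\de_t w_k(t,\eta)=0$---for $t<t_{\lfloor\sqrt{|\eta|}\rfloor,\eta}$ and for $t>2|\eta|$. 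Applying Lemma \ref{lemma:detw/wfar} to $w_{NR}$ on $I_{k_*,\eta}$ (and using that $w_k=w_{NR}$ off $I_{k,\eta}$ and $w_k=w_R$ on $I_{k,\eta}$) yields $\de_t w_k(t,\eta)/w_k(t,\eta)\approx (1+|t-\eta/k_*|)^{-1}$ for $2\sqrt{|\eta|}<t<2|\eta|$, together with the crude bounds $\de_t w_k(t,\eta)/w_k(t,\eta)\lesssim 1$ always and $\lesssim (1+|t-\eta/k_*|)^{-1}$ outside the degenerate regime $k_*\approx\sqrt{|\eta|}$ (equivalently $t\lesssim\sqrt{|\eta|}$) of Remark \ref{rem:maxgrowth}. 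Writing $a=|t-\eta/k_*|$ and $b=|t-\xi/\ell_*|$, both parts of the lemma then reduce to elementary estimates involving $a$, $b$, the length bound $|I_{k,\eta}|\approx|\eta|/k^2$, and the facts $\eta/k_*\in I_{k_*,\eta}\ni t$ and $\xi/\ell_*\in I_{\ell_*,\xi}\ni t$.

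For the first inequality, the hypotheses place $t$ strictly inside the active window for both $\eta$ and $\xi$, so its left-hand side is $\approx (1+b)/(1+a)\le 1+|\xi/\ell_*-\eta/k_*|$. If $|\eta-\xi|\ge\tfrac1{10}\max(|\eta|,|\xi|)$ then $b\le|I_{\ell_*,\xi}|\lesssim|\xi|\lesssim|\eta-\xi|$ and we are done; otherwise $|\eta|\approx|\xi|$ and I would invoke the trichotomy Lemma \ref{lemma:trichotomy} for the pair $t\in I_{k_*,\eta}\cap I_{\ell_*,\xi}$. In case (a) ($k_*=\ell_*$), $|\xi/\ell_*-\eta/k_*|=|\eta-\xi|/|k_*|\le|\eta-\xi|$; in case (b) one has $k_*\approx\ell_*$ and $a,b\gtrsim|\eta|/k_*^2\approx|\xi|/\ell_*^2\gtrsim1$, so $(1+b)/(1+a)\approx1$; in case (c), $b\le|I_{\ell_*,\xi}|\approx|\xi|/\ell_*^2\le|\xi|/|\ell_*|\lesssim|\eta-\xi|$. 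Hence $(1+b)/(1+a)\lesssim\jap{\eta-\xi}$ in every case.

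For the second inequality, the extra term $|\eta|^{s/2}/t^s$ is precisely what handles the regimes in which $(k,\eta)$ lies outside its active window and the model comparison for $\de_t w_k/w_k$ is unavailable. If $\de_t w_\ell(t,\xi)=0$ the left-hand side vanishes. Otherwise $t$ is in the active window for $\xi$, hence $t\gtrsim\sqrt{|\xi|}\approx\sqrt{|\eta|}$. If $t\le 2\sqrt{|\eta|}$ (so $t\approx\sqrt{|\eta|}$) then $|\eta|^{s/2}/t^s\approx1$ and $\sqrt{\de_t w_\ell(t,\xi)/w_\ell(t,\xi)}\lesssim1$, so the bound is immediate. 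If $t\ge 2|\eta|$ then $\de_t w_k(t,\eta)=0$, and $t\in I_{\ell_*,\xi}$ together with $C^{-1}|\xi|\le|\eta|$ forces either $|\eta-\xi|\gtrsim|\eta|$ or $1+b\gtrsim|\eta|$; in the first sub-case $(|\eta|^{s/2}/t^s)\jap{\eta-\xi}\gtrsim|\eta|^{1-s/2}\gtrsim1$, and in the second $\sqrt{\de_t w_\ell(t,\xi)/w_\ell(t,\xi)}\lesssim(1+b)^{-1/2}\lesssim|\eta|^{-1/2}\le|\eta|^{-s/2}\lesssim_C|\eta|^{s/2}/t^s$, where $s\le1$ is used. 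Finally, in the window $2\sqrt{|\eta|}<t<2|\eta|$ both logarithmic derivatives are in model form and $\tfrac1{\sqrt{1+b}}\lesssim\tfrac1{\sqrt{1+a}}\jap{\eta-\xi}$ follows as in the first inequality: it is trivial when $1+a\lesssim1+b$, while when $1+a\gg1+b$ the trichotomy gives $a\lesssim|\eta-\xi|$ in case (a), and $a\le|I_{k_*,\eta}|\approx|\eta|/k_*^2$ with $\jap{\eta-\xi}\gtrsim|\xi|/|k_*|$ in case (c), so $\tfrac1{\sqrt{1+a}}\jap{\eta-\xi}\gtrsim\sqrt{|\eta|}\gtrsim1\gtrsim\tfrac1{\sqrt{1+b}}$, whereas case (b) is incompatible with $1+a\gg1+b$ (there $a,b$ are comparable to the common interval length). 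The elementary inequalities of Lemma \ref{lemma:appfreq} cover the remaining $s$-power manipulations.

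I expect the main obstacle to be exactly this case bookkeeping: one must continually keep track of which of $(k,\eta)$, $(\ell,\xi)$ has its time argument inside the active window, whether the resonant indices coincide, and which branch of the trichotomy applies. The term $|\eta|^{s/2}/t^s$ exists to absorb the two ``edge'' windows ($t\lesssim\sqrt{|\eta|}$ and $t\gtrsim|\eta|$), where $\de_t w_k(t,\eta)$ either vanishes or is no longer comparable to $(1+a)^{-1}$, and it is only in those windows (in fact only in the sub-case $1+b\gtrsim|\eta|$ above) that the restriction $s\le 1$ enters. Since each individual estimate mirrors one in \cite{BM15}*{Lemma 3.4}, no genuinely new computation is needed---only careful organization.
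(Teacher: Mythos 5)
Your proof is correct and takes essentially the same route as the paper, which omits the argument precisely because it is the adaptation of \cite{BM15}*{Lemma 3.4} that you reconstruct: reduce both logarithmic derivatives to $(1+|t-\eta/k_*|)^{-1}$ via Lemma \ref{lemma:detw/wfar} (upper bound always, comparability for $t>2\sqrt{|\eta|}$), then run the trichotomy of Lemma \ref{lemma:trichotomy}, with the edge windows $t\lesssim\sqrt{|\eta|}$ and $t\gtrsim|\eta|$ absorbed by the extra term $|\eta|^{s/2}/t^{s}$. Your case bookkeeping (including the dichotomy $|\eta-\xi|\gtrsim|\eta|$ or $1+|t-\xi/\ell_*|\gtrsim|\eta|$ when $t\ge 2|\eta|$) checks out, so nothing further is needed.
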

\begin{lemma}
	For all $t,\eta,\xi$ we have 
	\begin{equation}
		\label{bd:wNRexgen}
		\frac{w_{NR}(t,\xi)}{w_{NR}(t,\eta)}\lesssim \e^{\mu|\eta-\xi|^\frac12}.
	\end{equation}
\end{lemma}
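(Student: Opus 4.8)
Here is how I would approach the proof.

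The plan is to compare the two weights by walking through the backward-in-time construction \eqref{def:wNR}-\eqref{def:w} and exploiting a few cheap facts. First, $w_{NR}(t,\cdot)$ is non-decreasing in $t$ — every factor appearing in \eqref{def:wNR} lies in $(0,1]$ — and equals $1$ once $t\geq 2|\eta|$ or $|\eta|\leq 2$; hence $w_{NR}(t,\eta)^{-1}\leq w_{NR}(0,\eta)^{-1}\lesssim \e^{\frac{\mu}{2}|\eta|^{1/2}}$ by Lemma \ref{lemma:maxgrowth}. Second, the inequality is trivial when the left-hand side is $\leq 1$, so we only ever need an upper bound on $\log w_{NR}(t,\xi)-\log w_{NR}(t,\eta)$. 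Third, we will repeatedly use the elementary inequality $|\sqrt{a}-\sqrt{b}|\leq|a-b|^{1/2}$ for $a,b\geq 0$ (which follows from $(\sqrt a-\sqrt b)^2\leq|a-b|$). A crucial point is that the target exponent $\mu$ is \emph{twice} the maximal-growth exponent $\mu/2$ of Lemma \ref{lemma:maxgrowth}: this factor-of-two slack is what allows the crude estimates below to close. With these in hand, the case $|\eta|\leq 2|\eta-\xi|$ is immediate: if $|\eta|\leq 2$ then $w_{NR}(t,\eta)=1$ and there is nothing to prove, and otherwise $\frac{w_{NR}(t,\xi)}{w_{NR}(t,\eta)}\leq w_{NR}(0,\eta)^{-1}\lesssim \e^{\frac{\mu}{2}|\eta|^{1/2}}\leq \e^{\frac{\mu}{\sqrt 2}|\eta-\xi|^{1/2}}\leq \e^{\mu|\eta-\xi|^{1/2}}$.

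It remains to treat $|\eta|>2|\eta-\xi|$; then $\eta,\xi$ have the same sign and $\frac12|\eta|<|\xi|<\frac32|\eta|$, so I may take $\eta,\xi>0$ with $|\eta-\xi|<\min(\eta,\xi)$. When $t$ lies below both $t_{\lfloor\sqrt\eta\rfloor,\eta}$ and $t_{\lfloor\sqrt\xi\rfloor,\xi}$ the weights are frozen at their minimal values, and Lemma \ref{lemma:maxgrowth} gives $\frac{w_{NR}(t,\xi)}{w_{NR}(t,\eta)}=\frac{w_{NR}(0,\xi)}{w_{NR}(0,\eta)}\sim (\eta/\xi)^{\mu/8}\e^{\frac{\mu}{2}(\sqrt\xi-\sqrt\eta)}\lesssim \e^{\frac{\mu}{2}|\eta-\xi|^{1/2}}$, the prefactor being $O(1)$ since $\eta\approx\xi$. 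In the complementary ``active'' range I would expand both $w_{NR}(t,\eta)$ and $w_{NR}(t,\xi)$ as products over the critical intervals $I_{k,\cdot}$ using the explicit branch formulas \eqref{def:wNR}, \eqref{def:wR} and the full-interval law $w_{NR}(t_{k-1,\eta},\eta)=(\eta/k^2)^{\frac12+2\gamma}w_{NR}(t_{k,\eta},\eta)$ from \eqref{eq:maxgrowwNR}, and then match the two products interval by interval. Over a \emph{matched} full interval the ratio $w_{NR}(t,\xi)/w_{NR}(t,\eta)$ picks up exactly $(\eta/\xi)^{\frac12+2\gamma}$, i.e.\ at most $(\frac12+2\gamma)\log(\eta/\xi)$ per interval in $\log$; since there are $\lesssim\sqrt\eta$ matched intervals and $\log(\eta/\xi)\lesssim|\eta-\xi|/\eta$, their total contribution is $\lesssim\sqrt\eta\cdot\frac{|\eta-\xi|}{\eta}\lesssim|\eta-\xi|^{1/2}$ (using $|\eta-\xi|<\eta/2$). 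The number of \emph{unmatched} full intervals is $O(1)+|\eta-\xi|/t\lesssim 1+|\eta-\xi|^{1/2}$, because $t\gtrsim\sqrt{\min(\eta,\xi)}\gtrsim|\eta-\xi|^{1/2}$ on this range; for the relevant frequencies $k\approx\eta/t$ each such interval costs only $O(1)$ in $\log$ (or contributes the favourable sign), and the two partial contributions of the ``current'' intervals cost a further $O(1)$. Summing and using once more the factor-of-two slack yields $\log\frac{w_{NR}(t,\xi)}{w_{NR}(t,\eta)}\leq \mu|\eta-\xi|^{1/2}+O(1)$, which is the claim.

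The main work, and the only genuinely delicate step, is the active-range bookkeeping just sketched: one must carefully organize the matched versus unmatched critical intervals and the two partial contributions when the active indices $K=\lfloor\eta/t\rfloor$ and $K'=\lfloor\xi/t\rfloor$ disagree (here the trichotomy Lemma \ref{lemma:trichotomy} is a convenient organizing device), and verify that the exponents $\gamma,\frac12+\gamma,\frac12+2\gamma$ of our modified weight \eqref{def:wNR}-\eqref{def:wR} — which differ from those in \cite{BM15} — still produce only an $O(|\eta-\xi|^{1/2})$ discrepancy. This is the analogue for our weight of the corresponding estimate in \cite{BM15}; every remaining step is elementary, with comfortable numerical room throughout.
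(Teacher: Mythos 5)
Your reductions are fine: monotonicity of $w_{NR}$ in $t$, the trivial bound via Lemma \ref{lemma:maxgrowth} when $|\eta|\leq 2|\eta-\xi|$, the frozen-frequency case (modulo a harmless inversion of the ratio $w_{NR}(0,\xi)/w_{NR}(0,\eta)$), and the overall plan — compare the two weights interval by interval through the backward construction \eqref{def:wNR}--\eqref{def:w} — is exactly the BM15-style argument that the paper itself invokes without writing out. The problem is that the one step you yourself identify as the heart of the matter, the active-range bookkeeping, is carried out with quantitative claims that are false, and the actual mechanism that makes the estimate true is missing. It is not the case that each unmatched full interval costs $O(1)$ in the logarithm, nor that ``the two partial contributions of the current intervals cost a further $O(1)$'': a full drop across $I_{j,\eta}$ costs $(\tfrac12+2\gamma)\log(\eta/j^2)$, and the partial factor on the current interval can be essentially the whole drop, i.e.\ as large as $(\tfrac12+2\gamma)\log(\eta/k^2)\approx \log \eta$ when $k$ is small — not $O(1)$, and far larger than the allowance $\mu|\eta-\xi|^{1/2}$ when $|\eta-\xi|$ is small.

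Concretely, take $\xi=\eta+O(1)$ with $\eta$ large and $t$ slightly above $t_{1,\eta}=\tfrac34\eta$. Then $t\in I^L_{1,\eta}$ very near its bottom, so the partial factor for $\eta$ is nearly the full drop $(1/\eta)^{\frac12+2\gamma}$ (an \emph{unfavourable} contribution of size $(\tfrac12+2\gamma)\log\eta$ to $\log\frac{w_{NR}(t,\xi)}{w_{NR}(t,\eta)}$), while $t\in I^R_{2,\xi}$ near its top, so $\xi$ carries one extra \emph{unmatched} full drop $(1/\xi)^{\frac12+2\gamma}$ (favourable, also of size $\approx(\tfrac12+2\gamma)\log\eta$) and an almost trivial partial factor. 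The lemma holds here only because these two large terms cancel against each other, leaving $(\tfrac12+2\gamma)\log(\eta/\xi)$ plus a deviation of the partial factor from the full drop over a window of width $\lesssim|\eta-\xi|$, both of which are admissible; with your accounting (unmatched interval $=O(1)$, partials $=O(1)$) the unfavourable $\log\eta$ is simply unaccounted for and the argument does not close. The same pairing is needed whenever the indices $k=$ (current index for $\eta$) and $\ell=$ (current index for $\xi$) differ — which happens even for $|\eta-\xi|=O(1)$ by boundary rounding — and, when $k=\ell$, one must estimate the \emph{difference} of the two partial factors (same $t$, frequencies $\eta$ vs $\xi$), not each one separately. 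So the missing idea is precisely this cancellation between the current-interval partial factor on one side and the adjacent unmatched full drop (or the matching partial factor) on the other, with the error controlled by $k\,|\eta-\xi|/\eta\lesssim|\eta-\xi|^{1/2}$ and $\log(1+|\eta-\xi|)\lesssim|\eta-\xi|^{1/2}$; once that is put in, the rest of your outline goes through.
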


\subsubsection{Properties of $J, J^v$.}
We show how to exchange frequencies when dealing with the weights $J, J^v$. 
The proof is again analogous to that of \cite{BM15}*{Lemma 3.6}, with a different regularity imbalance and is omitted.  
\begin{lemma}\label{lem:analog}
	In general we have
	\begin{equation}
		\label{bd:Jexgen}
		\frac{J_k(\eta)}{J_\ell(\xi)}\lesssim \left(\frac{|\eta|}{k^2(1+|t-\frac{\eta}{k}|)}\right)^\frac12 \e^{9\mu|k-\ell,\eta-\xi|^\frac12}.
	\end{equation}
If $t \in \boldsymbol{I}_{k,\eta}\cap \boldsymbol{I}_{k,\xi}$, $k\neq \ell$ then 
\begin{align}
	\label{bd:JexTDs}
		\frac{J_k(\eta)}{J_\ell(\xi)}&\lesssim \frac{|\eta|^\frac12}{|k|}\sqrt{\frac{\de_tw_k(t,\eta)}{w_k(t,\eta)}}\e^{23\mu|k-\ell,\eta-\xi|^\frac12},\\
			\label{bd:JexTD}
		\frac{J_k(\eta)}{J_\ell(\xi)}&\lesssim \frac{|\eta|}{k^2}\sqrt{\frac{\de_tw_k(t,\eta)}{w_k(t,\eta)}}\sqrt{\frac{\de_tw_\ell(t,\xi)}{w_\ell(t,\xi)}}\e^{23\mu|k-\ell,\eta-\xi|^\frac12}.
\end{align}
If any of the following holds: $(t\in \boldsymbol{I}_{k,\eta}^c)$ or $(k=\ell)$ or ($|\eta|\approx |\xi|$ and $t\in \boldsymbol{I}_{k,\eta}\cap \boldsymbol{I}_{k,\xi}^c$) we have 
\begin{equation}
	\label{bd:Jeximp}
	\frac{J_k(\eta)}{J_\ell(\xi)}\lesssim \e^{10\mu|k-\ell,\eta-\xi|^\frac12}, \quad \frac{J^v(\eta)}{J^v(\xi)}\lesssim \e^{10\mu|\eta-\xi|^\frac12}.
\end{equation}
If  $t\in \boldsymbol{I}_{k,\eta}^c\cap \boldsymbol{I}_{\ell,\xi}$ and $|\eta|\approx |\xi|$ then 
\begin{equation}
	\label{bd:Jexgood}
	\frac{J_k(\eta)}{J_\ell(\xi)}\lesssim \left(\frac{\ell^2(1+|t-\frac{\xi}{\ell}|)}{|\xi|}\right)^\frac12 \e^{11\mu|k-\ell,\eta-\xi|^\frac12}.
\end{equation}
\end{lemma}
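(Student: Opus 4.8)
The strategy is to mirror the proof of \cite{BM15}*{Lemma 3.6}, tracking the one structural change caused by the present toy model \eqref{eq:toyfR1}--\eqref{eq:toyfNR1}: here the resonant weight $w_R$ exceeds the non-resonant weight $w_{NR}$ only by a factor comparable to $(|\eta|/k^2)^{1/2}$, rather than $|\eta|/k^2$ as in \cite{BM15}, so every resonant amplification enters with a halved power --- this is visible already in the exponents on the right-hand sides of \eqref{bd:Jexgen}--\eqref{bd:Jexgood}.

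First I would reduce each ratio $J_k(\eta)/J_\ell(\xi)$ to a ratio of the $w$'s times a Gevrey gain. Since $a+b\approx\max\{a,b\}$ we have $J_k(\eta)\approx\max\{\e^{\mu|\eta|^{1/2}}w_k^{-1}(t,\eta),\e^{\mu|k|^{1/2}}\}$, while $0<w\le 1$ gives the lower bounds $J_\ell(\xi)\ge\e^{\mu|\ell|^{1/2}}$ and $J_\ell(\xi)\ge\e^{\mu|\xi|^{1/2}}w_\ell^{-1}(t,\xi)\ge\e^{\mu|\xi|^{1/2}}$. Subadditivity of $x\mapsto x^{1/2}$ gives $|k|^{1/2}\le|\ell|^{1/2}+|k-\ell|^{1/2}$ and $|\eta|^{1/2}\le|\xi|^{1/2}+|\eta-\xi|^{1/2}$, so the $\e^{\mu|k|^{1/2}}$ piece of the numerator contributes at most $\e^{\mu|k-\ell|^{1/2}}$, which is dominated by every claimed bound, and the $\e^{\mu|\eta|^{1/2}}w_k^{-1}(t,\eta)$ piece is controlled once one estimates $w_\ell(t,\xi)/w_k(t,\eta)$ (or, using the cruder denominator bound, $w_k^{-1}(t,\eta)$ alone) and allocates the surviving exponential gain to the final $\e^{c\mu|k-\ell,\eta-\xi|^{1/2}}$.

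The $w$-ratios are then split via the trichotomy Lemma \ref{lemma:trichotomy}, using Lemmas \ref{lemma:maxgrowth}, \ref{lemma:detw/wfar}, \ref{lemma:detw/w} and the non-resonant comparison \eqref{bd:wNRexgen}. If $k=\ell$, or if $t$ lies outside both resonant intervals, then $w_k$ agrees with $w_{NR}$ at the relevant frequencies and \eqref{bd:wNRexgen} (together with Lemma \ref{lemma:maxgrowth} when $|\eta|\not\approx|\xi|$) yields \eqref{bd:Jeximp}; and when $|\eta|\not\approx|\xi|$ one has $|\eta-\xi|\gtrsim\max(|\eta|,|\xi|)$, so the single factor $\e^{\mu|\eta-\xi|^{1/2}}$ already absorbs the worst possible growth $w_k^{-1}(0,\eta)\sim|\eta|^{-\mu/8}\e^{(\mu/2)\sqrt{|\eta|}}$, giving \eqref{bd:Jexgen} in that regime. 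The delicate case is $|\eta|\approx|\xi|$ with $t$ in the resonant intervals and $k\ne\ell$: trichotomy leaves either the ``far from resonance'' alternative, in which $(k^2/|\eta|)(1+|t-\eta/k|)\approx 1$ so that $w_R\approx w_{NR}$ up to fixed constants and we are back to the previous case, or the ``well-separated'' alternative $|\eta-\xi|\gtrsim\max(|\eta|/|\ell|,|\xi|/|k|)$. In the latter the gain $\e^{\mu|\eta-\xi|^{1/2}}$ contains a factor $\e^{c\mu(|\eta|/k^2)^{1/2}}$ dominating the resonant amplification $(|\eta|/k^2)^{1/2}$ --- and, for \eqref{bd:JexTD}, also its square-root counterpart at $(\ell,\xi)$ --- while still leaving a full $\e^{c\mu|k-\ell,\eta-\xi|^{1/2}}$; rewriting the leftover $(k^2/|\eta|)(1+|t-\eta/k|)^{-1}$ factors as $\partial_t w_k/w_k$ through Lemma \ref{lemma:detw/wfar} produces \eqref{bd:JexTDs} and \eqref{bd:JexTD}. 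Finally, \eqref{bd:Jexgood} is the mirror situation with the amplification in the denominator: $(k,\eta)$ sees no resonance, so $w_k\approx w_{NR}(t,\eta)$, whereas $w_\ell(t,\xi)\approx w_R(t,\xi)\approx(\ell^2(1+|t-\xi/\ell|)/|\xi|)^{1/2}w_{NR}(t,\xi)$, and \eqref{bd:wNRexgen} closes it; the $J^v$ bounds in \eqref{bd:Jeximp} follow the same way since $w^v$ is always non-resonant.

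The main obstacle I expect is the accounting of exponential budgets in the well-separated regime: one must verify, on each of the finitely many sub-intervals making up $\boldsymbol{I}_{k,\eta}$, that the gain $\e^{\mu|\eta-\xi|^{1/2}}$ is simultaneously large enough to swallow the $(|\eta|/k^2)^{1/2}$-type amplifications from both numerator and denominator \emph{and} to dominate a fixed multiple of $\e^{\mu|k-\ell,\eta-\xi|^{1/2}}$ --- this is exactly where the constants $9\mu$, $10\mu$, $11\mu$, $23\mu$ get pinned down. The halved power of the present toy model makes this arithmetic, if anything, less tight than in \cite{BM15}, which is why one can afford the same $\mu$-multiples here.
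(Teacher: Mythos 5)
Your overall route --- reducing $J_k(\eta)/J_\ell(\xi)$ to ratios of the $w$'s, disposing of the $\e^{\mu|k|^{1/2}}$ piece by subadditivity, and running the case analysis of \cite{BM15}*{Lemma 3.6} with the halved resonant/non-resonant imbalance --- is exactly the adaptation the paper intends (its proof is omitted as analogous to \cite{BM15}). However, there is a concrete mis-step in how you derive \eqref{bd:JexTDs}--\eqref{bd:JexTD}, which is precisely the part the paper singles out as specific to this weight. The hypothesis there is $t\in \boldsymbol{I}_{k,\eta}\cap\boldsymbol{I}_{k,\xi}$ with the \emph{same} $k$ in both intervals and $\ell\neq k$: in particular $\eta=\xi$ is allowed, so there is no gain $\e^{\mu|\eta-\xi|^{1/2}}$ available to ``dominate the resonant amplification'', and Lemma \ref{lemma:trichotomy} does not even apply since $t\notin I_{\ell,\xi}$. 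These two bounds are not absorption estimates; the resonant loss is kept and merely rewritten in terms of $\de_t w/w$. The correct (and simpler) derivation is: since $\ell\neq k$ and $t\in I_{k,\xi}$, the denominator weight is non-resonant, $w_\ell(t,\xi)=w_{NR}(t,\xi)$, so
\begin{equation}
\frac{w_\ell(t,\xi)}{w_k(t,\eta)}=\frac{w_{NR}(t,\xi)}{w_{NR}(t,\eta)}\cdot\frac{w_{NR}(t,\eta)}{w_{R}(t,\eta)}\lesssim \e^{\mu|\eta-\xi|^{1/2}}\left(\frac{|\eta|}{k^2\left(1+\left|t-\frac{\eta}{k}\right|\right)}\right)^{\frac12}
\end{equation}
by \eqref{bd:wNRexgen} and the construction \eqref{def:wNR}--\eqref{def:wR}; Lemma \ref{lemma:detw/wfar} then converts $(1+|t-\eta/k|)^{-1/2}$ into $\sqrt{\de_t w_k(t,\eta)/w_k(t,\eta)}$, giving \eqref{bd:JexTDs}, and for \eqref{bd:JexTD} one additionally uses $t\in\boldsymbol{I}_{k,\xi}$ (so $|t-\xi/k|\lesssim |\xi|/k^2$ and $w_\ell(\cdot,\xi)=w_{NR}(\cdot,\xi)$) to bound $|k|/|\eta|^{1/2}\lesssim\sqrt{\de_t w_\ell(t,\xi)/w_\ell(t,\xi)}$.

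Two smaller inaccuracies: in the $k=\ell$ branch of \eqref{bd:Jeximp} you cannot say ``$w_k$ agrees with $w_{NR}$'' when $t$ is resonant; what is needed is comparability of the two resonant factors at the same $k$ for $\eta$ and $\xi$, with the $\jap{\eta-\xi}$-type losses absorbed into the exponential. And $w^v$ is by construction \eqref{def:wv} of \emph{resonant} type everywhere (it carries the stronger regularity of the toy model \eqref{eq:toyfR1BM}), not non-resonant; the $J^v$ estimate in \eqref{bd:Jeximp} requires a direct comparison of $w^v(t,\xi)$ with $w^v(t,\eta)$ (as for the coordinate weight in \cite{BM15}), not \eqref{bd:wNRexgen} itself. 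The remaining parts of your outline --- the trichotomy treatment of \eqref{bd:Jexgen} and \eqref{bd:Jeximp}, the mirror argument for \eqref{bd:Jexgood}, and the bookkeeping of the $\mu$-multiples --- match the intended proof.
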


\begin{remark}
Lemma \ref{lem:analog} is analogous to  \cite{BM15}*{Lemma 3.6}. However, estimate \eqref{bd:JexTDs} is due to the specific structure of our weight.
This is a consequence of the fact that our weight is slightly weaker than the one used in \cite{BM15}. 
\end{remark}

When $t$ is small enough, $J, J^v$ attain their maximal growth and behave like exponential Fourier multipliers, so allowing us to gain half derivative from a commutator term. This is the content of the next result, which is the analogue of \cite{BM15}*{Lemma 3.7}.
\begin{lemma}
	\label{lemma:commJ}
	Let $t\leq\frac12 \min\{\sqrt{|\eta|},\sqrt{|\xi|}\}$. Then 
	\begin{equation}
		\left|\frac{J_k(\eta)}{J_\ell(\xi)}-1\right|\lesssim \frac{\jap{k-\ell,\eta-\xi}}{\sqrt{|\eta|+|\xi|+|k|+|\ell|}}\e^{11\mu|k-\ell,\eta-\xi|^\frac12}, \quad \left|\frac{J^v(\eta)}{J^v(\xi)}-1\right|\lesssim \frac{\jap{\eta-\xi}}{\sqrt{|\eta|+|\xi|}}\e^{11\mu|\eta-\xi|^\frac12}.
	\end{equation} 
\end{lemma}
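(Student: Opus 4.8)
The plan is to first reduce to the ``frozen'' regime and then prove a one–dimensional commutator estimate for the relevant profile functions. Since $t\le\tfrac12\min\{\sqrt{|\eta|},\sqrt{|\xi|}\}$ and $t_{\lfloor\sqrt{|\eta|}\rfloor,|\eta|}\ge\tfrac12\sqrt{|\eta|}$ once $|\eta|$ exceeds an absolute constant (and analogously for $|\xi|$), we sit below the smallest critical time of both $\eta$ and $\xi$; when $\eta\xi<0$ or $\min\{|\eta|,|\xi|\}$ is small the weights are identically $1$ and the statement collapses to an elementary bound on $\e^{\mu|\cdot|^{1/2}}$, so assume $\eta\xi\ge0$ and both frequencies large. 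By the top branch of \eqref{def:w} and \eqref{def:wv}, $w_k(t,\eta)$ and $w^v(t,\eta)$ then coincide with their $t\to0^+$ values, which by construction are \emph{independent of $k$}. Write $g(x):=\e^{\mu x^{1/2}}w_k^{-1}(0,x)$, $g^v(x):=\e^{\mu x^{1/2}}(w^v)^{-1}(0,x)$ and $h(x):=\e^{\mu x^{1/2}}$, all positive and increasing, so that $J_k(t,\eta)=g(|\eta|)+h(|k|)$ and $J^v(t,\eta)=g^v(|\eta|)$ (and likewise with $(\ell,\xi)$).

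The core step is the scalar estimate: for $\phi\in\{g,g^v,h\}$ and $x,y\ge0$,
\[
\left|\frac{\phi(x)}{\phi(y)}-1\right|\lesssim\frac{\jap{x-y}}{\sqrt{x+y}}\,\e^{11\mu|x-y|^{1/2}}.
\]
For $\phi=h$ this follows from $|\e^{X}-1|\le|X|\e^{|X|}$ together with $|x^{1/2}-y^{1/2}|=|x-y|/(x^{1/2}+y^{1/2})\le\min\{|x-y|^{1/2},\,\jap{x-y}/\sqrt{x+y}\}$. For $\phi=g,g^v$ one writes $\log\phi(x)=\mu x^{1/2}-\log w_k(0,x)$ and reads off from the explicit product defining the frozen weight in \eqref{def:wNR}--\eqref{def:wR} (resp.\ \eqref{def:wRv}) that $\tfrac{d}{dx}\log w_k^{-1}(0,x)=O(x^{-1/2})$: the product has $\lfloor\sqrt{x}\rfloor\approx\sqrt x$ factors, each depending smoothly on $x$ and contributing $O(1/x)$ to the log-derivative, and the integer-part boundary term contributes $O(x^{-1/2})$ and is mild because $a_{k,\eta},b_{k,\eta}\to0$ as $k\to\lfloor\sqrt\eta\rfloor$ (so $w_k(0,\cdot)$ is in fact continuous across $|\eta|=m^2$, cf.\ Remark~\ref{rem:maxgrowth}). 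Hence $\tfrac{d}{dx}\log\phi=O(x^{-1/2})$, which integrates to $|\log\phi(x)-\log\phi(y)|\lesssim\jap{x-y}/\sqrt{x+y}$ when $|x-y|\le\tfrac12\min\{x,y\}$, while for $|x-y|\gtrsim\min\{x,y\}$ the factor $\e^{11\mu|x-y|^{1/2}}$ already dominates the ratio of the asymptotics supplied by Lemma~\ref{lemma:maxgrowth}; applying $|\e^{X}-1|\le|X|\e^{|X|}$ in the first case closes the scalar estimate.

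Finally I would assemble. For $J^v$ the claim is exactly the $\phi=g^v$ case with $x=|\eta|$, $y=|\xi|$. For $J$, write
\[
\frac{J_k(\eta)}{J_\ell(\xi)}-1=\frac{\bigl(g(|\eta|)-g(|\xi|)\bigr)+\bigl(h(|k|)-h(|\ell|)\bigr)}{g(|\xi|)+h(|\ell|)},
\]
bound $|g(|\eta|)-g(|\xi|)|\le g(|\xi|)$ times the scalar estimate and $|h(|k|)-h(|\ell|)|\le h(|\ell|)$ times the scalar estimate, and divide through using $g(|\xi|)+h(|\ell|)\ge\max\{g(|\xi|),h(|\ell|)\}$. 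This produces the sum of $\jap{\eta-\xi}\,\e^{11\mu|\eta-\xi|^{1/2}}/\sqrt{|\eta|+|\xi|}$ and $\jap{k-\ell}\,\e^{11\mu|k-\ell|^{1/2}}/\sqrt{|k|+|\ell|}$; one then bundles $\jap{\eta-\xi},\jap{k-\ell}$ into $\jap{k-\ell,\eta-\xi}$ and both exponents into $\e^{11\mu|k-\ell,\eta-\xi|^{1/2}}$, and replaces the two square roots by $\sqrt{|\eta|+|\xi|+|k|+|\ell|}$: in the only borderline configuration, where neither $g$ nor $h$ dominates inside $J_k(\eta)$, Lemma~\ref{lemma:maxgrowth} forces $g(|\eta|)\approx\e^{3\mu|\eta|^{1/2}/2}$ to balance $\e^{\mu|k|^{1/2}}$, hence $|k|\approx|\eta|$ and the denominators are comparable; in the genuinely mixed cross-cases $|k-\ell|$ (or $|\eta-\xi|$) is necessarily large, so the exponential factor makes the bound trivial. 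The analogous $J^v$ inequality is the $\phi=g^v$ statement verbatim.

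The main obstacle is the middle step: upgrading the bare asymptotic of Lemma~\ref{lemma:maxgrowth} to a \emph{quantitative} Lipschitz control of $x\mapsto\log w_k(0,x)$ with the sharp rate $O(x^{-1/2})$, handling carefully the jump in the number of product factors at $\lfloor\sqrt{x}\rfloor$. It is precisely this $O(x^{-1/2})$ — rather than something like $O(x^{-1/2+\epsilon})$ — that yields the gain of half a derivative, i.e.\ the $\jap{\cdot}/\sqrt{\cdot}$ shape of the right-hand side in place of $|\cdot|^{1/2}$, which is what makes the lemma usable in the subsequent energy estimates.
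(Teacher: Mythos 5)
You are reconstructing a proof the paper omits: the authors only remark that this is the analogue of \cite{BM15}*{Lemma 3.7}, and your argument is essentially that intended one. The reduction is right: since $t_{\lfloor\sqrt{|\eta|}\rfloor,\eta}\ge\frac12\sqrt{|\eta|}$, both weights sit in the frozen branch of \eqref{def:w}, where $w_k(t,\eta)=w_{NR}(t_{\lfloor\sqrt{|\eta|}\rfloor,\eta},\eta)=\prod_{m=1}^{\lfloor\sqrt{|\eta|}\rfloor}(m^2/|\eta|)^{1/2+2\gamma}$ by \eqref{eq:maxgrowwNR} (with $w_R=w_{NR}$ at the matching endpoint), an explicit function of $|\eta|$ alone; its logarithm is continuous across $|\eta|=m^2$ because the entering factor equals $1$ there, and its derivative is $(1/2+2\gamma)\lfloor\sqrt{|\eta|}\rfloor/|\eta|=O(|\eta|^{-1/2})$, which together with $|\e^{X}-1|\le|X|\e^{|X|}$ and $|\sqrt{x}-\sqrt{y}|\le\min\{|x-y|^{1/2},|x-y|/\sqrt{x+y}\}$ yields your scalar estimate; the additive structure $J_k=g(|\eta|)+\e^{\mu|k|^{1/2}}$ then assembles as you say, with constants comfortably below $11\mu$ since stray polynomial factors $\jap{k-\ell,\eta-\xi}^{N}$ can be absorbed into the exponential.

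Two points need tightening. First, the sign bookkeeping: the degenerate case is $k\eta<0$ (respectively $\ell\xi<0$), not $\eta\xi<0$, and your assertion that the frozen value is ``independent of $k$ by construction'' requires reading the construction as in \cite{BM15}, i.e. $w_k=w_{NR}$ whenever $t\notin I_{k,\eta}$, in particular when the signs of $k$ and $\eta$ differ. If one instead takes literally the parenthetical in Section \ref{sec:weightW} ($w_k\equiv1$ when the signs differ), the frozen value does depend on the sign of $k$ and the stated inequality would in fact fail: take $\eta=\xi$ large, $k=-1$, $\ell=1$, $t=1$; then $J_k(\eta)\approx\e^{\mu\sqrt{|\eta|}}$ while $J_\ell(\xi)\approx|\eta|^{-\mu/8}\e^{3\mu\sqrt{|\eta|}/2}$ by Lemma \ref{lemma:maxgrowth}, so the left side is $\approx1$ while the right side is $O(|\eta|^{-1/2})$. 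So state the convention you use; the lemma (and its application to $\mathcal{T}^J_N$, where all sign combinations occur) requires the \cite{BM15} one. Second, the final replacement of $\sqrt{|\eta|+|\xi|}$ and $\sqrt{|k|+|\ell|}$ by $\sqrt{|\eta|+|\xi|+|k|+|\ell|}$ deserves the explicit dichotomy you only gesture at: for the $g$-difference term, either $|\ell|\lesssim|\xi|$, in which case $|k|+|\ell|\lesssim|\xi|+|k-\ell|$ and the discrepancy is absorbed into $\jap{k-\ell,\eta-\xi}\,\e^{11\mu|k-\ell,\eta-\xi|^{1/2}}$, or $|\ell|\gg|\xi|$, in which case the prefactor $g(|\xi|)/(g(|\xi|)+\e^{\mu|\ell|^{1/2}})$ is exponentially small in $\sqrt{|\ell|}$ and beats the polynomial loss; the $h$-difference term is handled symmetrically. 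With these repairs the argument is complete.
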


\subsubsection{Properties of $p$}
We also need to exchange frequencies in the multiplier $p$.

\begin{lemma}Let $k,\ell,\eta,\xi$ be given. We have the following:
\begin{equation}
\label{bd:p/p}
\sqrt{\frac{p_\ell(\xi)}{p_k(\eta)}}\lesssim\l k-\ell,\eta-\xi\r^3\begin{cases}
\displaystyle	\frac{|\eta|}{|k|^2(1+|\frac{\eta}{k}-t|)}, \quad & \text{if }t\in I_{k,\eta}\cap I_{\ell,\xi}^c,\\
\displaystyle\frac{|\ell|^2(1+|\frac{\xi}{\ell}-t|)}{|\xi|}, \quad & \text{if }t\in I_{k,\eta}^c\cap I_{\ell,\xi},\\
1,\quad & \text{in all the other cases}.
\end{cases}
\end{equation}
In general 
\begin{equation}
\label{bd:p/pgen}
\sqrt{\frac{p_\ell(\xi)}{p_k(\eta)}}\lesssim\l k-\ell,\eta-\xi\r^3 \jap{t}.
\end{equation}
If $k=\ell$, we have 
\begin{equation}
\label{bd:p/pkk}
\sqrt{\frac{p_k(\xi)}{p_k(\eta)}}\lesssim\l\eta-\xi\r.
\end{equation}
Let $\ell\neq 0$, $t\geq1$ and $1/2<s<1$. Then 
\begin{align}
\label{bd:ketapc}
|\ell,\xi|^{1-\frac{s}{2}}p^{-1}_{\ell}(\xi)\mathbbm{1}_{t\in I_{\ell,\xi}^c}&\lesssim\jap{\frac{\xi}{\ell t}}^{-1} \frac{|\ell,\xi|^{\frac{s}{2}}}{\jap{t}^{2s}},\\
\label{bd:ketapct}
\frac{1}{\jap{t}}|\ell,\xi|^{1-\frac{s}{2}}p^{-\frac12}_{\ell}(\xi)\mathbbm{1}_{t\in I_{\ell,\xi}^c}&\lesssim\jap{\frac{\xi}{\ell t}}^{-s} \frac{|\ell,\xi|^{\frac{s}{2}}}{\jap{t}^{2s}},\\
\label{bd:ketapct12}
\frac{1}{\jap{t}^\frac12}|\ell,\xi|^{1-\frac{s}{2}}p^{-\frac34}_{\ell}(\xi)\mathbbm{1}_{t\in I_{\ell,\xi}^c}&\lesssim\jap{\frac{\xi}{\ell t}}^{-s} \frac{|\ell,\xi|^{\frac{s}{2}}}{\jap{t}^{2s}}.
 \end{align}

\end{lemma}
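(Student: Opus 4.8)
\emph{Plan.} The whole proof reduces to the elementary identity $p_k^{1/2}(t,\eta)\approx \abs{k}+\abs{\eta-kt}=\abs{k}\bigl(1+\abs{t-\eta/k}\bigr)$, which turns each of the stated bounds into a statement about the size of $\abs{t-\eta/k}$; this in turn is governed by the position of $t$ relative to the critical interval $I_{k,\eta}$ of \eqref{def:Iketa}. So the first step is to record two one-sided facts that follow purely from the arithmetic of the endpoints $t_{\abs{k},\eta}$: \emph{(i)} if $t\in I_{k,\eta}$ then $k^2\le\abs{\eta}$, $t\approx\abs{\eta}/\abs{k}$ and $1+\abs{t-\eta/k}\lesssim\abs{\eta}/k^2$, hence $\abs{k}\le p_k^{1/2}(t,\eta)\lesssim\abs{\eta}/\abs{k}$; \emph{(ii)} if $t\notin I_{k,\eta}$ and $k\ne0$ then $1+\abs{t-\eta/k}\gtrsim\max(1,\abs{\eta}/k^2)$, hence $p_k(t,\eta)\gtrsim\jap{k,\eta}$ and $p_k^{1/2}(t,\eta)\gtrsim\abs{\eta}/\abs{k}$, and in addition $t\lesssim p_k^{1/2}(t,\eta)$ (split according to whether $\abs{k}t\ge 2\abs{\eta}$ or not). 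Degenerate frequencies ($k=0$, or $\abs{\eta}\lesssim 1$) are either excluded from the relevant applications or make the bound trivial, and should be dispatched at the outset.

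The cheap bounds \eqref{bd:p/pgen} and \eqref{bd:p/pkk} then follow directly by writing $\abs{\xi-\ell t}\le\abs{\xi-\eta}+\abs{\eta-kt}+\abs{k-\ell}t$ (and $\abs{\xi-kt}\le\abs{\xi-\eta}+\abs{\eta-kt}$ when $k=\ell$) and dividing through by $p_k^{1/2}(t,\eta)\ge\abs{k}\ge1$, using $\jap{\ell}\lesssim\jap{k-\ell}\abs{k}\le\jap{k-\ell}p_k^{1/2}$ to absorb the $\abs{\ell}$; this in fact gives exponent $1$ in place of $3$.

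For the main ratio \eqref{bd:p/p} I would split into the four cases according to whether $t\in I_{k,\eta}$ and whether $t\in I_{\ell,\xi}$. If $t\in I_{k,\eta}$, $t\notin I_{\ell,\xi}$: using \emph{(i)} at $(k,\eta)$ one bounds $\abs{\ell}$ and $\abs{\xi-\ell t}$ by $\lesssim\jap{k-\ell,\eta-\xi}\,\abs{\eta}/\abs{k}$ (here $t\approx\abs{\eta}/\abs{k}$ and $\abs{k}\le\abs{\eta}/\abs{k}$ are used), so $p_\ell^{1/2}(\xi)/p_k^{1/2}(\eta)\lesssim\jap{k-\ell,\eta-\xi}\,\abs{\eta}/\bigl(k^2(1+\abs{t-\eta/k})\bigr)$. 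If $t\notin I_{k,\eta}$, $t\in I_{\ell,\xi}$: the target is $1/p_k^{1/2}(\eta)\lesssim\jap{k-\ell,\eta-\xi}^3\,\abs{\ell}/\abs{\xi}$, i.e.\ $\abs{\xi}/\abs{\ell}\lesssim p_k^{1/2}(\eta)$, which is immediate from $\abs{\xi}/\abs{\ell}\approx t$ (by \emph{(i)} at $(\ell,\xi)$) and $t\lesssim p_k^{1/2}(\eta)$ (by \emph{(ii)} at $(k,\eta)$). If both are non-resonant: the triangle inequality as in the previous paragraph together with $t\lesssim p_k^{1/2}(\eta)$ from \emph{(ii)} closes it. If both are resonant: when $\abs{\eta-\xi}\ge\abs{\eta}/2$ one uses the crude bound $p_\ell^{1/2}(\xi)\lesssim\abs{\xi}\lesssim\jap{\eta-\xi}$, $p_k^{1/2}(\eta)\ge1$; otherwise $\abs{\eta}\approx\abs{\xi}$ and I would invoke the trichotomy Lemma~\ref{lemma:trichotomy} --- in alternative (a) use \eqref{bd:p/pkk}; in alternative (b), ``far from resonance'', one gets $p_k^{1/2}(\eta)\approx\abs{\eta}/\abs{k}$ and $p_\ell^{1/2}(\xi)\approx\abs{\xi}/\abs{\ell}$ with $\abs{k}\approx\abs{\ell}$, so the ratio is $\approx1$; in alternative (c), ``well separated'', the separation $\abs{\eta-\xi}\gtrsim\abs{\eta}/\abs{k}\gtrsim p_\ell^{1/2}(\xi)$ finishes it.

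Finally, \eqref{bd:ketapc}--\eqref{bd:ketapct12} are consequences of the lower bounds in \emph{(ii)}, restated for $t\ge1$, $\ell\ne0$, $t\notin I_{\ell,\xi}$ as $p_\ell(t,\xi)\gtrsim\jap{\ell,\xi}$, $p_\ell(t,\xi)\gtrsim\jap{t}^2$ and $p_\ell(t,\xi)\gtrsim\jap{t}^2\jap{\xi/(\ell t)}^2$ (the last from $p_\ell(\xi)\gtrsim\xi^2/\ell^2$ when $\abs{\xi/\ell}\gtrsim t$, the complementary range being harmless since there $\jap{\xi/(\ell t)}\approx1$). Writing $p_\ell(\xi)$ as a product of these three lower bounds with nonnegative exponents summing to one, chosen so as to match the powers $1-s/2$ on $\jap{\ell,\xi}$, $2s$ on $\jap{t}$, and the weight $\jap{\xi/(\ell t)}$ appearing on the two sides, yields \eqref{bd:ketapc}; \eqref{bd:ketapct} and \eqref{bd:ketapct12} follow in the same way after absorbing the extra $\jap{t}^{-1}$, resp.\ $\jap{t}^{-1/2}$, against the correspondingly weaker negative power of $p_\ell$. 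The one place that really requires care is the resonant--resonant, $k\ne\ell$ sub-case of \eqref{bd:p/p}, where all three alternatives of the trichotomy must be checked, together with the exponent bookkeeping in \eqref{bd:ketapc}--\eqref{bd:ketapct12}; everything else is the triangle inequality applied to $p_k^{1/2}\approx\abs{k}+\abs{\eta-kt}$ combined with the two geometric facts \emph{(i)}--\emph{(ii)}.
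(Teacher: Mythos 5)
Most of your proposal is correct and runs parallel to the paper's argument: for \eqref{bd:p/p}--\eqref{bd:p/pkk} the paper also proceeds by the triangle inequality on $|\xi-\ell t|$, a four-way split according to resonance, and the trichotomy lemma in the resonant--resonant case. Your packaging through facts \emph{(i)}--\emph{(ii)} — in particular the observation $t\lesssim p_k^{1/2}(\eta)$ off resonance, which disposes of the (NR,R) and (NR,NR) cases in one line — is cleaner than the paper's manipulation of $\frac{1+|\xi/\ell-t|}{1+|\eta/k-t|}$ and even yields exponent $1$ in place of $3$; your interpolation proof of \eqref{bd:ketapc} (exponents $2-2s$, $s-\tfrac12$, $\tfrac12$ on the three lower bounds) is a legitimate replacement for the paper's citation of \cite{BM15}. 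One small caveat: the case $k=0$ is genuinely excluded rather than ``trivial'' (e.g.\ \eqref{bd:p/pgen} fails for $k=0$ and $|\eta|$ small); this matches the paper's implicit assumption $k,\ell\neq 0$, so it only needs to be stated, not fixed.

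The genuine gap is \eqref{bd:ketapct}. After rearranging, it reads $\jap{\ell,\xi}^{1-s}\jap{t}^{2s-1}\jap{\xi/(\ell t)}^{s}\lesssim p_\ell^{1/2}(\xi)$, i.e.\ $\jap{\ell,\xi}^{2-2s}\jap{t}^{4s-2}\jap{\xi/(\ell t)}^{2s}\lesssim p_\ell(\xi)$. With only your three lower bounds $p_\ell\gtrsim\jap{\ell,\xi}$, $p_\ell\gtrsim\jap{t}^2$, $p_\ell\gtrsim\jap{t}^2\jap{\xi/(\ell t)}^2$, matching the weight $\jap{\xi/(\ell t)}^{2s}$ forces exponent $s$ on the third bound and $2-2s$ on the first, leaving exponent $1-s-(2-2s)=s-1<0$ on the second: the convex combination does not exist, because the target carries a higher power of $\jap{\xi/(\ell t)}$ than of $\jap{t}$ while your only bound containing $\jap{\xi/(\ell t)}$ carries them in equal powers. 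Nor can the extra $\jap{t}^{-1}$ be ``absorbed'' via $\jap{t}^{-1}p_\ell^{-1/2}\lesssim p_\ell^{-1}$, since that would need $p_\ell^{1/2}\lesssim\jap{t}$, which is false (take $|\xi|$ large). Note the same bookkeeping \emph{does} close for \eqref{bd:ketapct12} (exponents $\tfrac{4(1-s)}{3},\tfrac{2s-1}{3},\tfrac{2s}{3}$), so only \eqref{bd:ketapct} is affected. The repair is easy: either add the fourth lower bound
\begin{equation}
p_\ell(\xi)\gtrsim \jap{\ell,\xi}\,\jap{\tfrac{\xi}{\ell t}},\qquad t\geq1,\ t\in I_{\ell,\xi}^c,
\end{equation}
valid because for $|\xi|\geq 2|\ell|t$ one has $p_\ell^{1/2}\geq|\xi-\ell t|\geq|\xi|/2$ so $p_\ell\gtrsim|\xi|\cdot\frac{|\xi|}{|\ell|t}$, while for $|\xi|\leq 2|\ell|t$ one has $\jap{\xi/(\ell t)}\approx1$ and the first bound applies; then \eqref{bd:ketapct} follows from $p_\ell\geq\bigl(\jap{t}^2\jap{\xi/(\ell t)}^2\bigr)^{2s-1}\bigl(\jap{\ell,\xi}\jap{\xi/(\ell t)}\bigr)^{2-2s}$ with nonnegative weights summing to one. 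Alternatively, run the paper's direct split into $|\xi|\leq|\ell t|/2$, $|\ell t|/2<|\xi|<2|\ell t|$, $|\xi|\geq2|\ell t|$ together with the weighted Young inequality, which is exactly how the paper proves \eqref{bd:ketapct} and \eqref{bd:ketapct12}.
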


\begin{proof}	
First, notice that 
\begin{equation}
\label{bd:trivp/p}
\sqrt{\frac{p_\ell(\xi)}{p_k(\eta)}}\lesssim \left(\frac{|\ell|(1+|\frac{\xi}{\ell}-t|)}{|k|(1+|\frac{\eta}{k}-t|)}\right)\lesssim \l k-\ell\r\left(\frac{1+|\frac{\xi}{\ell}-t|}{1+|\frac{\eta}{k}-t|}\right).
\end{equation}
If $t\in I_{k,\eta}^c\cap I_{\ell,\xi}$, then \eqref{bd:p/p} follows from $|\frac{\eta}{k}-t|\gtrsim \frac{|\eta|}{k^2}$. Now consider $t\in I_{k,\eta}\cap I_{\ell,\xi}^c$. It holds that
\begin{equation}
\frac{1+|\frac{\xi}{\ell}-t|}{1+|\frac{\eta}{k}-t|}
\leq 1+\frac{1}{1+|\frac{\eta}{k}-t|}\left(\left|\frac{\xi}{\ell}-\frac{\eta}{\ell}\right|+\left|\frac{\eta}{\ell}-\frac{\eta}{k}\right|\right)
\leq 1+\frac{1}{1+|\frac{\eta}{k}-t|}\left(\left|\eta-\xi\right|+\frac{|\eta|}{|k\ell| }\left|k-\ell\right|\right).\label{bd:profp/p1}
\end{equation} 
Then, if $|\ell| /2\leq |k|\leq 2|\ell|$ or $|k|\geq 2|\ell|$, we have 
\begin{equation*}
\frac{|\eta|}{|k\ell| }\lesssim \frac{|\eta|}{|\ell|^2}.
\end{equation*}
If $|k|\leq |\ell|/2$, then $|k-\ell|\geq |\ell|/2$, so that
\begin{equation}
\frac{|\eta|}{|k\ell| }\lesssim \frac{|\ell||\eta|}{|k||\ell|^2}\lesssim |k-\ell|\frac{|\eta|}{|\ell|^2}.
\end{equation}
Since $|\eta/k|\approx t \geq 1 $ in this interval, combining the two bounds above with \eqref{bd:profp/p1} implies
\begin{align}
\label{bd:p/pjapgen}
\frac{1+|\frac{\xi}{\ell}-t|}{1+|\frac{\eta}{k}-t|}\mathbbm{1}_{t\in I_{k,\eta}\cap I_{\ell,\xi}^c}
\lesssim \l k-\ell,\eta-\xi\r^2 \frac{|\eta|}{|k|^2(1+|\frac{\eta}{k}-t|)}.
\end{align} 
In the case $t\in I_{k,\eta}^c\cap I^c_{\ell,\xi}$, we use \eqref{bd:p/pjapgen} and $|\frac{\eta}{k}-t|\gtrsim  \frac{|\eta|}{k^2}$.
We are left with $t\in I_{k,\eta}\cap I_{\ell,\xi}$. We need to use the trichotomy Lemma \ref{lemma:trichotomy}. If  $(a)$ holds, then the conclusion follows by \eqref{bd:profp/p1}. If $(b)$ holds, then we apply the reasoning for $t\in I_{k,\eta}^c\cap I^c_{\ell,\xi}$. If $(c)$ holds, by \eqref{bd:profp/p1} we deduce that 
\begin{equation}
\frac{1+|\frac{\xi}{\ell}-t|}{1+|\frac{\eta}{k}-t|}\leq 1+\frac{1}{1+|\frac{\eta}{k}-t|}\left(\left|\xi-\eta\right|+\frac{|\xi-\eta|}{|k| }\left|k-\ell\right|\right)\leq \jap{k-\ell,\eta-\xi}^2,
\end{equation}
which gives \eqref{bd:p/p}-\eqref{bd:p/pkk}. Estimates \eqref{bd:ketapc} is proved in \cite{BM15}*{Lemma 6.1, (6.11a)} and we omit the proof.
To prove \eqref{bd:ketapct},  by Young's inequality, we get 
\begin{equation*}
	a^{2-2s}b^{2s}\leq (1-s)a^2+sb^2,
\end{equation*}
so that for $|\ell t|/2<|\xi|<2|\ell t|$, we have 
\begin{equation*}
\frac{1}{\jap{t}}|\ell,\xi|^{1-s}p^{-\frac12}_\ell(\xi)\mathbbm{1}_{t\in I_{\ell,\xi}^c}\lesssim \frac{1}{\jap{t}}\frac{|\ell|^{1-s}t^{1-s}}{(\ell^2+\jap{\xi/\ell}^2)^{\frac12}}\lesssim \frac{1}{\jap{t}}\frac{|\ell|^{1-s}t^{1-s}}{|\ell|^{1-s}\jap{\xi/\ell}^{s}}\lesssim \jap{\frac{\xi}{\ell t}}^{-s} \frac{1}{\jap{t}^{2s}}.
\end{equation*}
 When $|\xi|\leq |\ell t|/2$, recalling that $1/2<s<1$,
\begin{equation*}
\frac{1}{\jap{t}}|\ell,\xi|^{1-s}p^{-\frac12}_\ell(\xi)\mathbbm{1}_{t\in I_{\ell,\xi}^c}\lesssim \frac{1}{\jap{t}}\frac{|\ell t|^{1-s}}{|\ell t|}\lesssim \frac{1}{\jap{t}^{1+s}}\lesssim \jap{\frac{\xi}{\ell t}}^{-s} \frac{1}{\jap{t}^{2s}}.
\end{equation*}
For $|\xi|>2|\ell t|$, the factor $\jap{\xi/\ell t}^{-s}$ plays a role.We then argue as follows
\begin{equation*}
\frac{1}{\jap{t}}|\ell,\xi|^{1-s}p^{-\frac12}_\ell(\xi)\mathbbm{1}_{t\in I_{\ell,\xi}^c}\lesssim \frac{1}{\jap{t}}\frac{|\xi|^{1-s}}{|\xi|}\left(\frac{|\xi|}{|\ell t|}\frac{|\ell t|}{|\xi|}\right)^{s} \lesssim \frac{1}{\jap{t}^{1+s}}\left(\frac{|\ell t|}{|\xi|}\right)^s\leq\jap{\frac{\xi}{\ell t}}^{-s} \frac{1}{\jap{t}^{2s}},
\end{equation*}
where $s<1$. The remaining  \eqref{bd:ketapct12} can be proved as above using that
\begin{equation*}
a^{\frac43(1-s)}b^{\frac23(1+2s)}\leq \frac{2-2s}{3}a^2+\frac{1+2s}{3}b^2.
\end{equation*}
The proof is over.
\end{proof}

\section{Elliptic estimates}\label{sec:ellest}

This section is devoted to some elliptic estimates that play a crucial role for the nonlinear bounds.
The first inequality is a \emph{lossy elliptic estimate}, as it allows to gain time decay at the price of regularity. 

\begin{proposition} \label{prop:lossyelliptic}
Under the bootstrap hypotheses, for $\delta$ small enough, 
\begin{align}\label{bd:Ulow}
\norm{\Psi_{\neq}}_{\mathcal{G}^{\lambda, \sigma-3}} \lesssim \frac{\norm{\Omega}_{\mathcal{G}^{\lambda, \sigma-1}}}{\jap{t}^2}.
\end{align}
\end{proposition}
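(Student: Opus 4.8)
The plan is to treat $\Delta_t$ as a perturbation of the flat operator $\Delta_L$ and to exploit that, on the nonzero $z$-modes, inverting $\Delta_L$ buys a factor $\jap{t}^{-2}$ at the cost of two Sobolev derivatives. Both $\Delta_t$ and $\Delta_L$ are diagonal in $k$ (their coefficients depend on $v$ only), so from $\Delta_t\Psi=\Omega$ and $\Delta_t-\Delta_L=[(v')^2-1](\de_v-t\de_z)^2+v''(\de_v-t\de_z)$ (cf. \eqref{def:deltat}, \eqref{eq:DeltaL}) one gets, on the nonzero modes,
\[
\Delta_L\Psi_{\neq}=\Omega_{\neq}-(\Delta_t-\Delta_L)\Psi_{\neq}.
\]
The elementary input is $p_k(t,\eta)^{1/2}=|k|\jap{\eta/k-t}\gtrsim|k|\jap{t}/\jap{\eta}$ for $k\neq0$ (from $\jap{a-b}\jap{b}\gtrsim\jap{a}$, cf. \eqref{def:p}), hence $p_k(t,\eta)^{-1}\lesssim\jap{k,\eta}^2\jap{t}^{-2}$, which gives
\[
\norm{\Delta_L^{-1}g_{\neq}}_{\mathcal{G}^{\lambda,a-2}}\lesssim\jap{t}^{-2}\norm{g_{\neq}}_{\mathcal{G}^{\lambda,a}};
\]
in the other extreme $p_k^{-1}\le1$, so $\Delta_L^{-1}$ is also bounded on $\mathcal{G}^{\lambda,a}$ with no gain.

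The main step is the estimate $\norm{(\Delta_t-\Delta_L)\Psi_{\neq}}_{\mathcal{G}^{\lambda,\sigma-1}}\lesssim\delta\,\norm{\Delta_L\Psi_{\neq}}_{\mathcal{G}^{\lambda,\sigma-1}}$. For this I would use that $(\de_v-t\de_z)^j\Psi$, $j=1,2$, is dominated pointwise in frequency by $\Delta_L\Psi$ (since $(\eta-kt)^2/p_k\le1$ and $|\eta-kt|/p_k\le(2|k|)^{-1}\le\tfrac12$ for $|k|\ge1$), so $\norm{(\de_v-t\de_z)^j\Psi}_{\mathcal{G}^{\lambda,\sigma-1}}\lesssim\norm{\Delta_L\Psi}_{\mathcal{G}^{\lambda,\sigma-1}}$; that $\mathcal{G}^{\lambda,\sigma-1}$ is an algebra (here $\sigma-1>1$ and $s\le1$, so $|k,\eta|^s$ is subadditive); and that the coefficients are small: writing $(v')^2-1=-(2h+h^2)$ and $v''=(1+h)\de_vh$, the bound $\norm{h}_{\mathcal{G}^{\lambda,\sigma}}\lesssim\eps\jap{t}^{1/2}$ of Lemma \ref{lemma:consboot}, together with $\norm{\de_vh}_{\mathcal{G}^{\lambda,\sigma-1}}\lesssim\norm{h}_{\mathcal{G}^{\lambda,\sigma}}$ and the algebra property, yields $\norm{(v')^2-1}_{\mathcal{G}^{\lambda,\sigma-1}}+\norm{v''}_{\mathcal{G}^{\lambda,\sigma-1}}\lesssim\eps\jap{t}^{1/2}\lesssim\delta$ for all $t\le\delta^2\eps^{-2}$.

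The argument then closes by a Neumann series (equivalently, an absorption argument): since $\Delta_L^{-1}(\Delta_t-\Delta_L)$ is bounded on $\mathcal{G}^{\lambda,\sigma-1}$ with norm $\lesssim\delta<1$, the displayed identity gives $\Psi_{\neq}=\sum_{n\ge0}(-\Delta_L^{-1}(\Delta_t-\Delta_L))^n\Delta_L^{-1}\Omega_{\neq}$, and in particular $\norm{\Delta_L\Psi_{\neq}}_{\mathcal{G}^{\lambda,\sigma-1}}\lesssim\norm{\Omega}_{\mathcal{G}^{\lambda,\sigma-1}}$. Applying the gain-of-decay bound for $\Delta_L^{-1}$ with $a=\sigma-1$ to $\Psi_{\neq}=\Delta_L^{-1}(\Delta_L\Psi_{\neq})$ then produces $\norm{\Psi_{\neq}}_{\mathcal{G}^{\lambda,\sigma-3}}\lesssim\jap{t}^{-2}\norm{\Delta_L\Psi_{\neq}}_{\mathcal{G}^{\lambda,\sigma-1}}\lesssim\jap{t}^{-2}\norm{\Omega}_{\mathcal{G}^{\lambda,\sigma-1}}$, which is \eqref{bd:Ulow}. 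I expect the only delicate point to be the regularity bookkeeping: one must keep the coefficient estimates at the level $\sigma-1$ rather than the lossier $\sigma-4$ recorded in Lemma \ref{lemma:consboot} (hence the use of $(v')^2-1$ and $v''$ expressed through $h$ and $\de_vh$), and observe that the two-derivative gap $\sigma-1\to\sigma-3$ is exactly what $\Delta_L^{-1}$ costs in exchange for the $\jap{t}^{-2}$ factor — there is no margin to waste. Everything else is the standard lossy-elliptic scheme of \cite{BM15}.
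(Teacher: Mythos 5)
Your argument is correct and follows essentially the same route as the paper's proof: write $\Delta_L\Psi_{\neq}=\Omega_{\neq}-(\Delta_t-\Delta_L)\Psi_{\neq}$, bound the coefficient terms in $\mathcal{G}^{\lambda,\sigma-1}$ by $\eps\jap{t}^{1/2}\lesssim\delta$ (the paper cites \eqref{bd:Avcoeff}, which is the same information you recover from the $h$-bound and the algebra property), absorb (your Neumann series is the same step), and then invert $\Delta_L$ on nonzero modes using $p_k^{-1}\lesssim\jap{k,\eta}^2\jap{t}^{-2}$ at the cost of two derivatives. The only cosmetic difference is that you rebuild the coefficient estimates at level $\sigma-1$ from $h$ and $\de_v h$ rather than quoting the weighted bounds directly, which is harmless and does not change the argument.
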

Notice that the smallness is encoded by $\delta$, as for the coordinate system we only have \eqref{boot:Ev}. Thanks to this result, we can treat $\Delta_t$ as a perturbation of $\Delta_L$ in a lower regularity class. Its proof is identical to \cite{BM15}*{Lemma 4.1}: we summarize it below for convenience of the reader. 
 \begin{proof}[Proof of Proposition \ref{prop:lossyelliptic}]
 		To write $\Delta_t$ as a perturbation of $\Delta_L$  we introduce the notation
 	\begin{equation}\label{def:g}
 		g=1-(v')^2.
 	\end{equation}
 	By definition of $\Delta_t$ in \eqref{def:deltat}, we get
 	\begin{equation}
 		\label{def:pertdeltat}
 		\Delta_L\Psi=\Omega+g(\de_v-t\de_x)^2\Psi-v''(\de_v-t\de_z)\Psi.
 	\end{equation}
 By the algebra properties of Gevrey spaces and the bound \eqref{bd:Avcoeff}, since $\eps t^{\frac12}\leq \delta$ we deduce 
 \begin{align*}
 	\norm{\Delta_L\Psi}_{\G^{\lambda, \sigma-1}}&\lesssim \norm{\Omega}_{\G^{\lambda,\sigma-1}}+\norm{g}_{G^{\lambda,\sigma-1}}\norm{(\de_v-t\de_z)^2{\Psi}}_{\G^{\lambda,\sigma-1}}+\norm{v''}_{\G^{\lambda,\sigma-1}}\norm{(\de_v-t\de_z)\Psi}_{\G^{\lambda,\sigma-1}}\\
 	&\lesssim \norm{\Omega}_{\G^{\lambda,\sigma-1}}+\delta \norm{\Delta_L\Psi}_{\G^{\lambda,\sigma-1}}.
 \end{align*}
As  $\delta$ is small, we can absorb the last term on the left-hand side. Since $p^{-1}_k(\eta)\lesssim (\jap{\eta}/\jap{kt})^2$, we have
\begin{equation*}
	\norm{\Psi_{\neq}}_{\G^{\lambda,\sigma-3}}=\norm{p^{-1}(p\widehat{\Psi}_{\neq})}_{\G^{\lambda,\sigma-3}}\lesssim \frac{1}{\jap{t}^2}\norm{(-\Delta_L)\Psi_{\neq}}_{\G^{\lambda,\sigma-1}}\lesssim \frac{\norm{\Omega}_{\G^{\lambda,\sigma-1}}}{\jap{t}^2},
\end{equation*}
hence proving the proposition.
 \end{proof}
 
 We now provide a more precise elliptic control, which plays a central role in the rest of the paper. In fact, if one knows a priori that $u_0^x\equiv 0$ then $\Delta_t=\Delta_L$ and the following proposition would not be needed.
\begin{proposition}\label{prop:elliptic}
	Under the bootstrap hypotheses, for $\delta$ small enough, 
	\begin{align}
		\norm{\jap{\frac{\de_v}{t\de_z}}^{-1}\left(\frac{|\nabla|^{\frac{s}{2}}}{\jap{t}^q}A+\sqrt{\frac{\de_tw}{w}}\widetilde{A}\right)((-\Delta_L)\Psi)_{\neq}}^2&\lesssim \sum_{j\in\{\lambda,w\}}G_j[\Omega]+\delta^2 (G_j^v[1-(v')^2]+G_j^v[\jap{\de_v}^{-1}v''])\notag\\
		&=: \ G_{elliptic}^\delta,\label{bd:precellcontr}
\end{align}
where $q$ is given in \eqref{def:dotlambda} and $G_j[\cdot]$ are defined in \eqref{def:Gw}, while $G_j^v[\cdot]$ are in \eqref{def:GwR}.
In addition
\begin{align}
	\notag \norm{\left(\frac{|\nabla|^{\frac{s}{2}}}{\jap{t}^q}A+\sqrt{\frac{\de_tw}{w}}\widetilde{A}\right)(-\Delta_L)^\frac34 |\de_z|^\frac 1 2 \Psi }^2&\lesssim \sum_{j\in \{\lambda, w\}}G_j[Z]+\eps^2 \big(G_j^v[1-(v')^2]+G_j^v[\jap{\de_v}^{-1}v'']\big)\\
	\notag&\qquad \qquad +\eps^2\jap{t}^{-2s}G_j^v[|\de_v|^s\jap{\de_v}^{-1}v'']\\
	\label{bd:ellipticGZw}
	&=:G_{elliptic}^\eps.
	\end{align}
	Also the following inequality holds true 
	\begin{equation}
	\label{bd:ellp34APsi} \norm{A(-\Delta_L)^{\frac34}|\de_z|^{\frac12}\Psi}^2+\jap{t}^{-1}\norm{\jap{\frac{\de_v}{t\de_z}}^{-1}A((-\Delta_L)\Psi)_{\neq}}^2\lesssim \eps^2.
	\end{equation}
	\end{proposition}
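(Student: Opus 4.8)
## Proof proposal for Proposition~\ref{prop:elliptic}

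The plan is to prove the three estimates in order, since \eqref{bd:precellcontr} and \eqref{bd:ellipticGZw} are the substantive ones and \eqref{bd:ellp34APsi} follows from \eqref{bd:ellipticGZw} together with the bootstrap hypotheses by integrating the good terms in time and using Lemma~\ref{lemma:consboot}. Throughout, the starting point is the identity \eqref{def:pertdeltat}, namely $(-\Delta_L)\Psi = \Omega + g(\de_v-t\de_z)^2\Psi - v''(\de_v-t\de_z)\Psi$ with $g = 1-(v')^2$, which lets us treat $(-\Delta_L)\Psi$ as $\Omega$ plus two commutator-type error terms built from the coordinate-system coefficients. The operator $\jap{\de_v/(t\de_z)}^{-1}$ in \eqref{bd:precellcontr} is what encodes the extra time decay, exactly as in \cite{BM15}*{Lemma~4.1}: on the nonzero modes, $\jap{\de_v/(t\de_z)}^{-1}(-\Delta_L)^{-1}$ has symbol comparable to $(|k|\jap{t})^{-1}$ up to lower-order factors, so one may always trade one power of $\jap{t}$ against the elliptic gain.

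For \eqref{bd:precellcontr}, I would apply the weighted operator $\mathcal{B} := \jap{\de_v/(t\de_z)}^{-1}\big(\jap{t}^{-q}|\nabla|^{s/2}A + \sqrt{\de_tw/w}\,\widetilde A\big)$ to \eqref{def:pertdeltat}. The $\Omega$-contribution is immediately bounded by $\sum_{j\in\{\lambda,w\}} G_j[\Omega]$, by the very definitions \eqref{def:Gw}. For the two nonlinear terms, the key is that $g$ and $v''$ are at the \emph{higher} regularity governed by $A^v$ (see \eqref{bd:towerA} and Lemma~\ref{lemma:bdcoeff}), while the factors $(\de_v-t\de_z)^2\Psi$, $(\de_v-t\de_z)\Psi$ carry the $z$-dependent regularity governed by $A$; one uses the paraproduct decomposition \eqref{eq:paraprod} to split into the high-coefficient/low-$\Psi$, low-coefficient/high-$\Psi$, and balanced pieces. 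In the pieces where the coefficient is at high frequency, the weight-exchange inequalities for $J^v$ vs $J$ (the last display of Lemma~\ref{lem:analog}, \eqref{bd:Jeximp}, and Lemma~\ref{lemma:commJ} in the small-$t$ regime) let me move the $A$-weight onto the coefficient, producing $G^v_j[g]$ and $G^v_j[\jap{\de_v}^{-1}v'']$ up to the factor $\delta^2$ that comes from $\norm{\Omega}_{\mathcal G^{\lambda,\sigma}} + \norm{\Psi_{\neq}}\lesssim \delta$ in the bootstrap; here Proposition~\ref{prop:lossyelliptic} is used to absorb a copy of $(-\Delta_L)\Psi$ back to the left after the $\delta$-smallness has been extracted, exactly as in the proof of that proposition. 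In the pieces where $\Psi$ is at high frequency, one instead uses that $\jap{t}^{-q}|\nabla|^{s/2}$ (or the $\sqrt{\de_tw/w}$ factor) acting on the high frequency still gives a good term $G_j[\Omega]$ after the elliptic inversion, again at the cost of a $\delta$.

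For \eqref{bd:ellipticGZw} the structure is the same, except that the quantity to estimate is $(-\Delta_L)^{3/4}|\de_z|^{1/2}\Psi$ rather than $\jap{\de_v/(t\de_z)}^{-1}(-\Delta_L)\Psi$; note $(-\Delta_L)^{3/4}|\de_z|^{1/2}(-\Delta_L)^{-1} = |\de_z|^{1/2}(-\Delta_L)^{-1/4}$ has symbol $|k|^{1/2}p_k^{-1/4}$, which is precisely the factor relating $\Omega$ to $Z$ in \eqref{eq:Z1Z2couette}. Hence the $\Omega$-term is $\sum_j G_j[Z]$, and the two nonlinear error terms again produce $G^v_j$-quantities of the coefficients, but now the relevant weight-exchange estimates \eqref{bd:Jexgen}, \eqref{bd:Jexgood} bring in one factor of $(p_k/k^2)^{1/2}$, which is balanced against the gain from $(-\Delta_L)^{-1/4}$; the smallness factor is now $\eps^2$ since one uses the direct bounds of Lemma~\ref{lemma:bdcoeff} rather than the weaker $\delta$-bootstrap on the coordinate system, and in the balanced paraproduct piece acting on $v''$ one additionally needs the $\jap{t}^{-2s}G^v_j[|\de_v|^s\jap{\de_v}^{-1}v'']$ term, which matches the corresponding term in \eqref{def:Ev}. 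Finally \eqref{bd:ellp34APsi}: applying $A$ (without the Cauchy–Kovalevskaya prefactors) to \eqref{def:pertdeltat}, the $\Omega$-term is $\norm{AZ}^2\lesssim E_L\lesssim\eps^2$ by \eqref{boot:E} and coercivity, while the nonlinear terms are absorbed by the $\delta$-smallness exactly as in Proposition~\ref{prop:lossyelliptic}; the second piece of \eqref{bd:ellp34APsi} follows from the first after inserting $\jap{\de_v/(t\de_z)}^{-1}$ and using $\jap{\de_v/(t\de_z)}^{-1}p_k^{1/4}|k|^{-1/2}\lesssim \jap{t}^{-1/2}$ on the nonzero modes.

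The main obstacle I anticipate is the bookkeeping in the high-coefficient paraproduct pieces of \eqref{bd:precellcontr}–\eqref{bd:ellipticGZw}: one must route the derivative loss so that, after moving the $A$-weight onto $g$ or $v''$, what is left on $\Psi$ is an \emph{unweighted} low-regularity norm controlled by Proposition~\ref{prop:lossyelliptic}, and simultaneously the leftover frequency-exchange factors (powers of $p_k/k^2$, $\jap{t}$, and $\jap{\de_v/(t\de_z)}$) cancel against the elliptic gain. This is precisely the place where the \emph{imbalance} between the $A$-regularity and the stronger $A^v$-regularity on the coordinate system (highlighted in the remarks after \eqref{def:Ev} and in Remark~\ref{remarkino}) is used in an essential way, and where the argument genuinely departs from \cite{BM15} because the norm $A$ here is weaker than the corresponding norm there. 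The time-integrated statements then close by plugging these $G$-type bounds into the bootstrap inequalities \eqref{boot:E}–\eqref{boot:Ev}.
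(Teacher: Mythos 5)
Your proposal follows essentially the paper's own route for the two main estimates: start from \eqref{def:pertdeltat}, paraproduct in $v$, exchange the weights via the $J$/$J^v$ lemmas, use the stronger $A^v$ control on $g$ and $v''$ in the truly-high-coefficient pieces with the leftover $\Psi$ placed in an unweighted low-regularity norm controlled by Proposition \ref{prop:lossyelliptic}, and absorb the high-$\Psi$ pieces on the left for $\delta$ small. Two corrections. First, the source of the prefactors is misattributed: both \eqref{bd:precellcontr} and \eqref{bd:ellipticGZw} use the same coordinate quantities $G^v_j[\cdot]$; the gain from $\delta^2$ to $\eps^2$ comes from the fact that the target $(-\Delta_L)^{3/4}|\de_z|^{1/2}\Psi$ leaves only a $p^{1/2}\widehat{\Psi}$-type low-frequency factor, bounded by $\eps\jap{t}^{-1/2}$ via the lossy estimate and \eqref{bd:OmTh} (cf. \eqref{eq:last-prop-elliptic}--\eqref{eq:ellipticproof-last}), whereas the full Laplacian only allows $\eps\jap{t}^{1/2}\le\delta$; likewise, for the first half of \eqref{bd:ellp34APsi} the high-coefficient pieces are not ``absorbed by $\delta$-smallness'' but estimated as in the \eqref{bd:ellipticGZw} argument with the Cauchy-Kovalevskaya multipliers replaced by $1$, using \eqref{bd:Avcoeff}. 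Second, your derivation of the second half of \eqref{bd:ellp34APsi} by pointwise domination from the first half is a genuinely different, slightly slicker step than the paper (which reruns the \eqref{bd:precellcontr} argument with trivial multipliers), but your symbol bound has the wrong exponent: what holds, and what you need, is $\jap{\eta/(tk)}^{-1}p_k^{1/4}|k|^{-1/2}=\jap{\eta/(tk)}^{-1}\jap{\eta/k-t}^{1/2}\lesssim\jap{t}^{1/2}$ (not $\jap{t}^{-1/2}$), since $\jap{\eta/k-t}\lesssim\jap{\eta/(tk)}\jap{t}$ for $t\ge1$; combined with the prefactor $\jap{t}^{-1}$ in \eqref{bd:ellp34APsi}, this yields exactly the claimed domination, so the shortcut is valid once the exponent is fixed.
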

\begin{remark}
	Proposition \ref{prop:elliptic} plays the role of \cite{BM15}*{Proposition 2.4}, from which the main ideas of the following proof come from. The bound \eqref{bd:precellcontr} is the same as in \cite{BM15} but its validity for this problem lies within our underlying time-scale $O(\eps^{-2})$. 
	On the other hand, the inequalities \eqref{bd:ellipticGZw}-\eqref{bd:ellp34APsi} are specific to our problem. The proof of \eqref{bd:ellipticGZw}-\eqref{bd:ellp34APsi} crucially relies on the control of the coefficients with the stronger norm generated by $A^v$, see \eqref{def:ARvintro}. Observe also that we do not need the term $\jap{\de_v/t\de_z}^{-1}$ since, for the bounds \eqref{bd:ellipticGZw}-\eqref{bd:ellp34APsi}, we are able to exploit the control we have on $|\de_v|^{s}\jap{\de_v}^{-1}v''$, see \eqref{bd:Avscoeff}. In fact, also in \eqref{bd:precellcontr} it would be enough to only have $\jap{\de_v/t\de_z}^{-(1-s)}$, but this is not necessary for our purposes.
	\end{remark}
\begin{remark}
Integrating in time the right-hand sides of \eqref{bd:precellcontr} and \eqref{bd:ellipticGZw}, using the bootstrap hypotheses \eqref{boot:E}, \eqref{boot:En}, \eqref{bd:Avcoeff} and \eqref{bd:Avscoeff}, we get	\begin{equation}
	\label{bd:intGell}
		\int_1^t G^\eps_{elliptic}(\tau)\dd\tau\lesssim \eps^2,\qquad \int_1^tG^\delta_{elliptic}(\tau)\dd\tau \lesssim \eps^2 \l t\r\lesssim \delta^2.
	\end{equation}
	\end{remark}
\begin{proof}
	The proof of \eqref{bd:precellcontr} is exactly the same as the one of \cite{BM15}*{Proposition 2.4}, up to a replacement of $\eps$ with $\delta$ (and the use of $A\mathbbm{1}_{|k|\leq |\eta|}\lesssim A^{v}$ when needed), so it is omitted.
	In turn, we present the detailed proof of \eqref{bd:ellipticGZw} which, although being heavily inspired by \cite{BM15}*{Proposition 2.4}, it shows some substantial differences. Taking the Fourier transform of \eqref{def:pertdeltat}, we have 
	\begin{align}
		\label{eq:idPsi0}
		-p \widehat{\Psi}=&\widehat{\Omega} +\mathcal{F}\left(g(\de_v-t\de_z)^2\Psi\right)-\mathcal{F}\left(v''(\de_v-t\de_z)\Psi\right).
	\end{align}
Multiplying by $p^{-\frac14}|k|^{\frac12}$, we obtain 
\begin{align}
	\label{eq:idPsi14}
	-p^\frac34 |k|^{\frac12}\widehat{\Psi}&=Z +p^{-\frac14}|k|^\frac 12\left(\mathcal{F}\left(g(\de_v-t\de_z)^2\Psi\right)-\mathcal{F}\left(v''(\de_v-t\de_z)\Psi\right)\right)\\
	&=:Z+p^{-\frac14}|k|^\frac 12\left(T^g+T^{v''}\right). \notag
\end{align}
Now, define the following multipliers 
\begin{align}
	\label{def:Ms}
	\mathcal{M}^q_k(t,\eta)=\begin{cases}\displaystyle \frac{|k,\eta|^{\frac{s}{2}}}{\jap{t}^q}A_k(t,\eta), \ &\text{if } k\neq 0,\\
	0, \ &\text{if }  k=0,
	\end{cases}
		\qquad \mathcal{M}^w_k(t,\eta)=\begin{cases}\displaystyle\left(\sqrt{\frac{\de_t w_k}{w_k}}\widetilde{A}_k\right)(t,\eta),  \ &\text{if } k\neq 0,\\
	0, \ &\text{if }  k=0.
		\end{cases}
\end{align}
Hence, from \eqref{eq:idPsi14} we have 
\begin{equation}
	\label{eq:pertp34}
			\sum_{\iota\in\{q,w\}}\norm{\mathcal{M}^\iota (-\Delta_L)^\frac34 |\de_z|^\frac 12 \widehat{\Psi}}^2 
			\leq \sum_{j\in \{\lambda,w \}}G_j[Z]+\sum_{\iota \in\{q,w\}}\norm{\mathcal{M}^\iota(-\Delta_L)^{-\frac14}|\de_z|^\frac 12 (T^g+T^{v''})}^2,
\end{equation}
where the last inequality relies on the fact that $\widetilde{A}\leq A$. To prove \eqref{bd:ellipticGZw}, we need to control $T^g$ and $T^{v''}$. Taking into account the decoupling with respect to the $z$ frequencies, we make a paraproduct decomposition of $T^g$ and $T^{v''}$ only in the $v$ variable as 
\begin{align*}
	T^g&=T^g_{HL}+T^g_{LH}+T^g_{HH}, \quad T^{v''}=T^{v''}_{HL}+T^{v''}_{LH}+T^{v''}_{HH},
\end{align*}
with the notation introduced in \eqref{eq:paraprod}. We start with the low-high interactions.

\bullpar{Bounds on $T^{g}_{LH}$ and $T_{LH}^{v''}$}
Among the high-low terms, we only write down the computations for $T^g_{LH}$, 
\begin{equation}
\label{def:TgLH}
\mathcal{M}^\iota_k(\eta)p^{-\frac14}_{k}(\eta)|k|^\frac 12 T^g_{LH}=\mathcal{M}^\iota_k(\eta)p^{-\frac14}_{k}(\eta)\sum_{N\geq 8}\int \widehat{g}(\eta-\xi)_{<N/8}(\xi-kt)^2 |k|^\frac 1 2\widehat{\Psi}_k(\xi)_N \dd\xi.
\end{equation}
On the support of the integral $|\eta|\approx |\xi|$. From the paraproduct decomposition in \eqref{eq:paraprod}, $|\xi-\eta| \le 3/16 |\xi|$, so that Lemma \ref{app:exp:bound} applies and 
 $\e^{\lambda |k,\eta|^s}\leq \e^{\lambda |k,\xi|^s+c\lambda|\eta-\xi|^s}$ for some $c\in (0,1)$.  In addition, since every term of \eqref{def:TgLH} has the same horizontal frequency $k$, we can appeal to \eqref{bd:p/pkk}, \eqref{bd:Jeximp}, so obtaining that
 \begin{align*}
 p^{-\frac 14}_k(\eta) \lesssim \l \eta-\xi \r^\frac 12 p^{-\frac 14}_k(\xi), \quad J_k(\eta) \lesssim \e^{10\mu |\eta-\xi|^\frac 12} J_k(\xi).
 \end{align*}
 Altogether, since $m$ in \eqref{def:m} is bounded, this implies that
 \begin{align*}
 \mathcal{M}^q_k(\eta)p^{-\frac14}_{k}(\eta)\lesssim \jap{\eta-\xi}^{2}\e^{c\lambda|\eta-\xi|^s}\mathcal{M}^q_k(\xi)p^{-\frac14}_{k}(\xi).
 \end{align*}
Turning to $\mathcal{M}_k^w(\eta)$, we use the same frequency exchanges as before, together with \eqref{bd:wexfgen}, to get  
 \begin{align*}
 \mathcal{M}_k^w(\eta) p^{-\frac14}_{k}(\eta) \lesssim \l \eta - \xi \r^4 \e^{c\lambda|\eta-\xi|^s} (\mathcal{M}_k^w(\xi)+\mathcal{M}_k^q(\xi)) p_k^{-\frac 14}(\xi).
 \end{align*}
Since $(\xi-kt)^2p^{-\frac14}_k(\xi)\leq p^{\frac34}_k(\xi)$, by Young's convolution inequality and \eqref{bd:Avcoeff}, we have for $\iota \in \{q, w\}$ that
\begin{equation}
	\notag
	\norm{\mathcal{M}^\iota (-\Delta_L)^{-\frac14} |\de_z|^\frac 12 T^g_{LH}}^2\lesssim \delta^2 \sum_{\iota' \in \{q, w\}} \norm{ \mathcal{M}^{\iota'} (-\Delta_L)^\frac34 |\de_z|^\frac 12 \Psi}^2,
\end{equation}
where in the last line we used that $\eps t^\frac12\leq \delta$. Hence, we can absorb this term on the left-hand side of \eqref{eq:pertp34} for $\delta$ is sufficiently small.
The bound for the term with $T^{v''}$ is analogous and we omit it.

 \bullpar{Bounds on $T^g_{HL}$ and $T^{v''}_{HL}$}
Exchanging the role of $\eta-\xi$ and $\xi$ in \eqref{def:TgLH}, on the support of the integrand we have $|\eta|\approx|\xi|$. 
Notice that the $\Psi$ could be at high frequencies in $k$, therefore we further split these terms as follows
\begin{equation*}
	T_{HL}^f=T_{HL}^f(\mathbbm{1}_{16|k|> |\eta|}+\mathbbm{1}_{16|k|\leq |\eta|})=T_{HL}^{f,z}+T_{HL}^{f,v},
\end{equation*}
where $f\in \{g,v''\}$. When $16|k|>|\eta|$, we claim that there is some $c\in(0,1)$ such that 
\begin{equation}
	\label{bd:exchketa0}
	|k,\eta|^s\leq |k,\xi|^s+c|\eta-\xi|^s.
\end{equation}
This can be proved thanks to \eqref{app:inequality}, by considering separately the cases $\frac{1}{16} |\eta|\leq |k|\leq 16|\eta|$ and $|k|\geq 16|\eta|$. First, for  $\frac{1}{16} |\eta|\leq |k|\leq 16|\eta|$, we have that $|k,\xi-\eta| \le |k| + |\xi-\eta| \le 16 |\eta| + 24 |\xi|,$ where $|\eta| \le 25|\xi |$. Thus, we can use \eqref{app:inequality2} which gives \eqref{bd:exchketa0} and argue as in the low-high case before. Indeed, by \eqref{bd:exchketa0}  we can pay regularity on $g,v''$ and conclude by applying Young's convolution inequality. This way, 
\begin{equation}\label{eq:ellipticbound}
	\norm{\mathcal{M}^\iota (-\Delta_L)^{-\frac14}|\de_z|^\frac 12 (T^{g,z}_{HL}+T^{v'',z}_{HL})}^2\lesssim \delta^2 \sum_{\iota' \in \{q, w\}}\norm{ \mathcal{M}^{\iota'} (-\Delta_L)^{\frac34} |\de_z|^\frac 12 \Psi}^2,
\end{equation}
which can be absorbed in the left-hand side of \eqref{eq:pertp34}.

We now turn our attention to the terms where $16|k|\leq |\eta|$. Here, being the coefficients at high frequencies, we cannot absorb these terms on the left-hand side but we can exploit the integrability properties of $\Psi$. The most difficult term is $T^{v'',v}_{HL}$ since, in view of the bounds \eqref{bd:Avcoeff}-\eqref{bd:Avscoeff}, we need to recover some derivatives for $ v''$. This term is explicitly given by
\begin{equation}
\label{def:v''HL}
\mathcal{M}_k^\iota(\eta)p^{-\frac14}_{k}(\eta)|k|^\frac 12 T^{v'',v}_{HL}= \mathcal{M}_k^\iota(\eta)p^{-\frac14}_{k}(\eta) |k|^\frac 12 \sum_{N\geq 8}\int \mathbbm{1}_{16|k|\leq |\eta|}\widehat{v''}(\xi)_{N}\mathcal{F}((\de_y-t\de_x)\Psi)_k(\eta-\xi)_{<N/8} \dd\xi.
\end{equation}
On the support of the integrand $|\eta-\xi|\leq \frac{3}{16}|\xi|$. Since $16|k|\leq|\eta|$, we have that 
\begin{equation*}
	||k,\eta|-|\xi||\leq |k,\eta-\xi|\leq \frac{1}{16}|\eta|+|\eta-\xi|\leq \frac{1}{16}|\eta|+\frac{3}{16}|\xi|\leq \frac{9}{32}|\xi|.
\end{equation*}
In addition, from $16|k|\leq |\eta|$ we also get $A\lesssim \widetilde{A}$. 
It is now crucial to exploit the definition of the weight $A^v$ given in \eqref{def:ARvintro}. In particular, if $t\in I_{k,\xi}$, by \eqref{bd:p/p}, from Remark \ref{remarkino} and Lemma \ref{lemma:appfreq}, we deduce that
\begin{align}
	\label{bd:ellAv0}
	(p^{-\frac14}_kA)_k(\eta)\mathbbm{1}_{t\in I_{k,\xi}}&\lesssim p^{-\frac14}_k(\xi)\widetilde{A}(\xi)\jap{\eta-\xi}^\frac12\e^{c\lambda|\eta-\xi|^s}\lesssim \left(\frac{|k|}{|\xi|}\right)^\frac12 A^{v}(\xi)\e^{c\lambda|\eta-\xi|^s},
\end{align}
for some $c\in(0,1)$.
On the other hand, if $t\in I_{k,\xi}^c$,  then 
\begin{equation}
	\label{bd:ellAv1}
	p^{-\frac14}_k(\xi)\mathbbm{1}_{t\in I_{k,\xi}^c}\lesssim \left(\frac{|k|}{|\xi|}\right)^\frac12
\end{equation}
and $A\lesssim A^{v}$ when $16|k|\leq |\eta|$. Therefore, appealing again to \eqref{bd:wexfgen}-\eqref{bd:Jeximp}, in general we have
\begin{align}
	\label{bd:TtrueH}
	\mathcal{M}^\iota_k(\eta)p^{-\frac14}_{k}(\eta)\lesssim\frac{1}{|\xi|^\frac12}A^{v}(\xi) \e^{c\lambda|k,\eta-\xi|^s}\begin{cases}
	\displaystyle \sqrt{\frac{\de_t w_k}{w_k}}(t,\xi)+\frac{|\xi|^{\frac{s}{2}}}{\jap{t}^q}\qquad &\text{if }\iota=w,\\
		\displaystyle\frac{|\xi|^{\frac{s}{2}}}{\jap{t}^q}\qquad &\text{if }\iota=q,
	\end{cases} 
 \end{align}
where we have absorbed all the low-frequency Sobolev regularity in the exponential term. As remarked, we need to bound $\l \de_v \r^{-1} v''$, while up to now we only recovered half derivative. If $|\xi|\leq |k t|$, observe that
\begin{equation}
\label{bd:japell}
	1\lesssim {\jap{kt}^\frac12}{\jap{\xi}^{-\frac12}}\mathbbm{1}_{|\xi|\leq |kt|}.
\end{equation}
When $|\xi|\geq |kt|$, since $s>1/2$, we argue as follows 
\begin{equation}
\label{bd:japells}
\frac{1}{|\xi|^{\frac12}}\mathbbm{1}_{|\xi|\geq |kt|}=\frac{1}{|\xi|^{s-\frac12}}\frac{|\xi|^s}{\jap{\xi}}\mathbbm{1}_{|\xi|\geq |kt|}\leq \frac{t^\frac12}{t^s}\frac{|\xi|^s}{\jap{\xi}}\mathbbm{1}_{|\xi|\geq |kt|}.
\end{equation}
Combining the bounds \eqref{bd:japell}-\eqref{bd:japells} with \eqref{bd:TtrueH} and since $\mathcal{F}((\de_y-t\de_x )\Psi)\leq p^{\frac12}\widehat{\Psi}$, we have that
\begin{align}\label{eq:last-prop-elliptic}
		 &\norm{\mathcal{M}^\iota (-\Delta_L)^{-\frac14} |\de_z|^\frac 12 T^{v'',v}_{HL}}^2\notag\\
		 &\qquad\qquad\lesssim \jap{t}\sum_{j\in\{\lambda,w\}}\left(G_j^v[\jap{\de_v}^{-1} v'']+\jap{t}^{-2s}G_j^v[|\de_v|^s\jap{\de_v}^{-1} v'']\right)\norm{(-\Delta_L)^{\frac12}{\Psi}_{\neq}}_{\G^{\lambda,\sigma-5}}^2.
\end{align}
Then, using Proposition \ref{prop:lossyelliptic} and \eqref{bd:OmTh} we get
\begin{equation}
	\norm{(-\Delta_L)^{\frac12}\Psi_{\neq}}_{\G^{\lambda,\sigma-5}}\lesssim \jap{t}\norm{{\Psi}_{\neq}}_{\G^{\lambda,\sigma-4}}\lesssim \frac{\norm{\Omega}_{\G^{\lambda,\sigma-1}}}{\jap{t}}\lesssim\frac{\eps }{\jap{t}^{\frac12}}. \label{eq:ellipticproof-last}
\end{equation}
Hence, from \eqref{eq:last-prop-elliptic} and \eqref{eq:ellipticproof-last} we obtain 
\begin{align}\label{eq:last-prop-elliptic2}
		 \norm{\mathcal{M}^\iota (-\Delta_L)^{-\frac14} |\de_z|^\frac 12 T^{v'',v}_{HL}}^2&\lesssim \eps^2\sum_{j\in\{\lambda,w\}}\left(G_j^v[\jap{\de_v}^{-1} v'']+\jap{t}^{-2s}G_j^v[|\de_v|^s\jap{\de_v}^{-1} v'']\right).
\end{align}
The control of $T^{g,v}_{HL}$ when $16|k|<|\eta|$ follows by a similar argument. The only difference is the case $t\in I_{k,\xi}^c$. Indeed, we do not have to recover derivatives for $ g$ but extra time decay is necessary because one has to deal with the analogous of \eqref{eq:ellipticproof-last} with $p^\frac12 \widehat{\Psi}$ replaced with $p \widehat{\Psi}$. To overcome this problem, it is enough to split the relative size of $\xi$ with respect to $kt$. When $|\xi| \leq |kt|/2$, we have that 
\begin{equation*}
	p^{-\frac14}_k(\xi)\lesssim \jap{kt}^{-\frac12}.
\end{equation*}
When $|kt|/2\leq |\xi|\leq 2|kt|$ or $|\xi|\geq 2|kt|$, we can exchange the factor $|\xi|^{-1/2}$ in \eqref{bd:TtrueH} with $\jap{t}^{-1/2}$. Therefore, we always recover a factor $t^{-1/2}$ which is necessary to close the estimate. In particular, one has
\begin{align}
\notag
		 \norm{\mathcal{M}^\iota (-\Delta_L)^{-\frac14} |\de_z|^\frac 12 T^{g,v}_{HL}}^2&\lesssim \frac{1}{\jap{t}}\sum_{j\in\{\lambda,w\}}G_j^v[1-(v')^2]\norm{\Delta_L\Psi_{\neq}}_{\G^{\lambda,\sigma-4}}^2\lesssim \eps^2\sum_{j\in\{\lambda,w\}}G_j^v[1-(v')^2].
\end{align}

 \bullpar{Bounds on $T^g_{HH}$ and $T^{v''}_{HH}$}
For these terms it is easy to show
\begin{equation*}
	|T^g_{HH}|+|T^{v''}_{HH}|\lesssim \delta^2 \sum_{\iota' \in \{q, w\}} \norm{ \mathcal{M}^{\iota'} (-\Delta_L)^\frac34 |\de_z|^\frac 12 \widehat{\Psi}}^2.
\end{equation*}
Finally, to prove \eqref{bd:ellp34APsi}, for the first term we can follow the arguments done to prove \eqref{bd:ellipticGZw}, since no specific properties of $|\nabla|^{s/2}/\jap{t}^q$ or $\de_t w/w$ have been used. Analogously, the proof for the second term is obtained from the arguments for \eqref{bd:precellcontr}.  The bound then follows by the bootstrap hypotheses \eqref{boot:E}-\eqref{boot:Ev}.
\end{proof}

\section{Bound on the energy functional $E_L$}\label{sec:mainEn}
In this section, we aim at proving the first part \eqref{boot:impE} of the bootstrap Proposition \ref{prop:bootimpr}. In general, 
we have to estimate nonlinear terms of the type  $\jap{\boldsymbol{u}\cdot \nabla f,g}$. To do so, we  use a paraproduct decomposition, see \eqref{eq:paraprod}, where we decompose the nonlinear term in \textit{transport}, \textit{reaction} and \textit{remainder} contributions  
\begin{align}
	\label{def:TRrem}
	\jap{\boldsymbol{u}\cdot \nabla f,g} & = 
	\sum_{N>8} \jap{\boldsymbol{u}_{<N/8}\cdot \nabla f_N,g} +\sum_{N>8} \jap{\boldsymbol{u}_{N}\cdot \nabla f_{<N/8},g}
	+ \sum_{\substack{N,N'\\N/8\leq N'\leq 8N}}\jap{\boldsymbol{u}_{N}\cdot \nabla f_{N'},g} \\ 
	&  =: \sum_{N>8}T_N+\sum_{N>8}R_N+\mathcal{R}. 
\end{align}
Note that if we write e.g. 
\begin{align}
T_N&=\skli \overline{\widehat{g}}_{k}(\eta)\widehat{\nabla f}_{\ell}(\xi)_N\cdot \widehat{\boldsymbol{u}}_{k-\ell}(\eta-\xi)_{<N/8}\dd\eta \dd\xi, 
\end{align}
it is important to note that on the support of the integral we have  \begin{equation}
	\label{bd:freqsupp}
			||k,\eta|-|\ell,\xi||\leq |k-\ell,\eta-\xi|\leq \frac{3}{16}|\ell,\xi|,\qquad \frac{13}{16}|\ell,\xi |\leq |k,\eta|\leq \frac{19}{16} |\ell,\xi|.
\end{equation}
In particular, if \eqref{bd:freqsupp} holds,  thanks to Lemma \ref{lemma:appfreq} we have
\begin{align}\label{app:exp:bound}
\e^{\lambda |k, \eta|^s} \le \e^{\lambda |\ell, \xi|^s+\frac{s}{(13/3)^{s-1}} \lambda |k-\ell,\eta-\xi|^s}.
\end{align} 
In what follows, $c=c(s,\lambda_0,\sigma)\in(0,1)$ will denote a generic constant, independent of $\delta,\eps$. It will be mainly used in
terms of the form $ \e^{c \lambda |k-\ell,\eta-\xi|^s}$ to absorb Sobolev or exponential weights as the one in \eqref{app:exp:bound}.

We also need to distinguish between short (S), intermediate (I) and long (L) times via the cut-offs
\begin{equation}
	\label{def:cutofftime}
	\chi^S=\mathbbm{1}_{t\leq 2\max\{\sqrt{|\eta|},\sqrt{|\xi|}\}},\quad
	\chi^I=\mathbbm{1}_{2\max\{\sqrt{|\eta|},\sqrt{|\xi|}\}\leq t\leq 2\min\{|\eta|,|\xi|\}},
	\quad 	\chi^L=\mathbbm{1}_{t\geq 2\min\{|\eta|,|\xi|\}}.
\end{equation}

\subsection{The energy inequality}\label{app:energyIDZQ}
Recalling the definition of $E_L$ given in \eqref{eq:Energy}, we obtain the following result.
\begin{lemma}
For every $t\geq 0$ we have the energy inequality
\begin{align}
	\label{def:dtEm}
	\ddt E_L + \left(1-\frac{1}{2\beta}\right)\sum_{j\in\{\lambda,w,m\}}\left(G_j[Z]+G_j[Q]\right) \leq L^{Z,Q}+NL^{Z,Q}+\cE^{\div}+\cE^{\Delta_t},
\end{align}
where the $G_j[\cdot]$ are defined in \eqref{def:Gw} and the error terms are given by 
\begin{align}\label{eq:linear-error}
	L^{Z,Q}&=\frac{1}{4\beta}\left|\left\l \de_t\left(\frac{\de_tp}{|k| p^{\frac12} }\right)AZ ,AQ\right\r\right|\\
	{NL}^{Z,Q}&= \left|\left\l \cF\left(\left[A \left(\frac{p}{k^2}\right)^{-\frac{1}{4}} ,\bU\right]\cdot \nabla \Omega\right),AZ+\frac{1}{4\beta}\frac{\de_t p}{|k|p^\frac12}AQ\right\r\right|+\frac{1}{4\beta}\left|\left\l \left[\frac{\de_tp}{|k| p^{\frac12} },\bU\right] \cdot \nabla AZ ,AQ\right\r\right|\notag\\
	&\quad+\left|\left\l \cF\left(\left[A \left(\frac{p}{k^2}\right)^{\frac{1}{4}}k ,\bU\right]\cdot \nabla (i\beta\Theta)\right),AQ+\frac{1}{4\beta}\frac{\de_t p}{|k|p^\frac12}AZ\right\r\right|,\label{def:NL}\\
	\cE^{\div}&=\frac12\left|\l  \cF(\nabla\cdot\bU),|AZ|^2+|AQ|^2\r\right|+\frac{1}{4\beta}\left|\left\l  \cF(\nabla\cdot \bU)\frac{\de_tp}{|k| p^{\frac12} } AZ ,AQ\right\r\right|,\label{eq:div-error}\\
	\cE^{\Delta_t}&=\beta\left|\left\l |k|^{\frac32}p^{-\frac34}A\mathcal{F}\big((\Delta_t-\Delta_L)\Psi\big) ,AQ+\frac{1}{4}\frac{\de_tp}{|k| p^{\frac12} }AZ\right\r\right|.\label{eq:lasterror}
\end{align} 

\end{lemma}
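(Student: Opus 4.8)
The plan is to differentiate $E_L$ in time, substitute the evolution equations \eqref{eq:Z}--\eqref{eq:Q} for $Z,Q$ together with the identity \eqref{eq:Adt} for $\de_tA$, and sort the resulting terms into the groups on the right-hand side of \eqref{def:dtEm} plus the good Cauchy--Kovalevskaya terms on the left. Write $\phi=AZ$, $\psi=AQ$ and $M=\tfrac{\de_tp}{|k|p^{1/2}}$, so that
\[
\ddt E_L=\Re\l\de_t\phi,\phi\r+\Re\l\de_t\psi,\psi\r+\tfrac{1}{4\beta}\Re\l\de_tM\,\phi,\psi\r+\tfrac{1}{4\beta}\Re\l M\de_t\phi,\psi\r+\tfrac{1}{4\beta}\Re\l M\phi,\de_t\psi\r .
\]
Since $A$, $M$ and all the scalar symbols in \eqref{eq:Z}--\eqref{eq:Q} are Fourier multipliers, applying $A$ to \eqref{eq:Z}--\eqref{eq:Q} gives $\de_t\phi=(\de_tA)Z-\tfrac14\tfrac{\de_tp}{p}\phi-|k|\beta p^{-\frac12}\psi+A\,\mathrm{NL}_Z$ with $\mathrm{NL}_Z=-(p/k^2)^{-1/4}\cF(\bU\cdot\nabla\Omega)$, and the analogous identity for $\psi$, with the extra term $-\beta|k|^{3/2}p^{-3/4}\cF((\Delta_t-\Delta_L)\Psi)$ inside $\mathrm{NL}_Q$.

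First I would collect the purely linear contributions. The terms carrying $|k|\beta p^{-1/2}$ cancel in the real part because $\Re\l p^{-1/2}\psi,\phi\r=\Re\l p^{-1/2}\phi,\psi\r$; and --- exactly as in the linear computation leading to \eqref{eq:energqueq} --- all the terms carrying the factor $\tfrac{\de_tp}{p}$ cancel as well: the $\pm\tfrac14$-pieces with $\tfrac{\de_tp}{p}$ coming from $\Re\l\de_t\phi,\phi\r+\Re\l\de_t\psi,\psi\r$ are compensated by the pieces generated inside $\tfrac{1}{4\beta}\Re\l M\de_t\phi,\psi\r+\tfrac{1}{4\beta}\Re\l M\phi,\de_t\psi\r$ (using $|k|\beta\,M p^{-1/2}=\tfrac{\de_tp}{p}$ and that $M$ and $\tfrac{\de_tp}{p}$ commute as multipliers). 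The only surviving purely linear term is $\tfrac{1}{4\beta}\Re\l\de_tM\,\phi,\psi\r$, which is precisely bounded by $L^{Z,Q}$ of \eqref{eq:linear-error}.

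Next come the Cauchy--Kovalevskaya terms. By \eqref{eq:Adt}, $\Re\l(\de_tA)Z,\phi\r=-G_\lambda[Z]-G_w[Z]-G_m[Z]$ and likewise for $Q$ (using $\tA\le A$, $\sqrt{\tA/A}\,A=\sqrt{\tA A}$ and the definitions \eqref{def:Gw}); this is the full negative sum we move to the left. The $\de_tA$-contributions hidden inside the $M$-cross terms, $\tfrac{1}{4\beta}\Re\l M(\de_tA)Z,\psi\r+\tfrac{1}{4\beta}\Re\l M\phi,(\de_tA)Q\r$, are estimated using $|M|\le2$ and Cauchy--Schwarz/AM--GM: each $j\in\{\lambda,w,m\}$ costs at most $\tfrac{1}{2\beta}(G_j[Z]+G_j[Q])$, and absorbing this produces the coefficient $1-\tfrac{1}{2\beta}$ in \eqref{def:dtEm} --- this is where $\beta>1/2$ enters. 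For the nonlinear terms, the $(\Delta_t-\Delta_L)\Psi$ piece of $A\,\mathrm{NL}_Q$, paired with $\psi$ and, via the $M$-cross term, with $\phi$, is bounded directly by $\cE^{\Delta_t}$ of \eqref{eq:lasterror}. For the transport pieces I would use $(p/k^2)^{-1/4}\widehat\Omega=Z$, $(p/k^2)^{1/4}ik\beta\widehat\Theta=Q$ from \eqref{eq:Z1Z2couette} and the commutator identity $A\mathcal M\cF(\bU\cdot\nabla f)=\cF(\bU\cdot\nabla\widetilde f)+\cF([A\mathcal M,\bU]\cdot\nabla f)$, where $\widetilde f$ is the physical-space function with Fourier transform $A\mathcal M\widehat f$ and $\mathcal M$ is $(p/k^2)^{-1/4}$ or $(p/k^2)^{1/4}k$: the commutator pieces are exactly the $NL^{Z,Q}$ terms in \eqref{def:NL} (the $M$-cross versions producing the remaining commutators, in particular $[\tfrac{\de_tp}{|k|p^{1/2}},\bU]\cdot\nabla AZ$), while the leftover ``clean transport'' terms $\Re\l\cF(\bU\cdot\nabla\widetilde Z),\phi\r$, $\Re\l\cF(\bU\cdot\nabla\widetilde Q),\psi\r$ and the corresponding $M$-cross leftovers are handled by the integration-by-parts identity $\Re\l\bU\cdot\nabla f,g\r+\Re\l\bU\cdot\nabla g,f\r=-\Re\l(\nabla\cdot\bU)f,g\r$, and hence reassemble into $\cE^{\div}$ of \eqref{eq:div-error}. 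Collecting everything and rearranging yields \eqref{def:dtEm}.

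The main obstacle is bookkeeping: following the cross term $\tfrac{1}{4\beta}\Re\l M\phi,\psi\r$ through every differentiation, one must verify that its $\tfrac{\de_tp}{p}$-contributions cancel against the norm terms (the heart of the symmetrization), that its $\de_tA$-contributions cost only the factor $\tfrac{1}{2\beta}$ on the Cauchy--Kovalevskaya terms, and --- most delicately --- that its nonlinear-transport leftovers combine with the $Z$- and $Q$-leftovers into the divergence error $\cE^{\div}$ rather than an uncontrolled quantity. The last point genuinely uses that $\ddt E_L$ is real, so that only the symmetric combination $\Re\l\bU\cdot\nabla f,g\r+\Re\l\bU\cdot\nabla g,f\r$ ever appears and can be integrated by parts. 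No individual estimate is hard, but the cancellations must be exact, and it is the interplay between the symmetrization and the weight $A$ that makes them work.
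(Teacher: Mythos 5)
Your proposal is correct and follows essentially the same route as the paper: the paper's (very terse) proof is exactly this computation — the symmetrization cancellations inherited from the linear analysis of \cite{BCZD20}, the Cauchy--Kovalevskaya terms from \eqref{eq:Adt}, the commutator extraction producing $NL^{Z,Q}$ and $\cE^{\div}$ via the divergence identity, and the absorption of the cross-term weight contributions using $|\de_t p|\leq 2|k|p^{1/2}$, which is where the factor $1-\tfrac{1}{2\beta}$ and the condition $\beta>1/2$ arise. Your bookkeeping of these steps matches the paper's stated error terms, so nothing further is needed.
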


\begin{proof}
The proof follows from the cancellations observed in \cite{BCZD20} 
together with the definition of $A$. 
Commutators have been introduced to better handle the transport structure. We recall briefly from \cite{BCZD20} that the Miles-Howard condition arises from using $|\de_tp| \leq 2|k| p^{\frac12} $, to obtain
\begin{align}
\frac{1}{2\beta }\Re\left\l \frac{\de_tp}{|k| p^{\frac12} }\frac{\de_tA}{A}AZ ,AQ\right\r\leq \frac{1}{2\beta} \sum_{j\in\{\lambda,w,m\}}\left(G_j[Z]+G_j[Q]\right).
\end{align}
\end{proof}

\subsection{Enumerating nonlinear terms}\label{sub:parapd}
We decompose the nonlinear term $NL^{Z,Q}$ in \eqref{def:NL} as in \eqref{def:TRrem},  
where the transport nonlinearity is further split as $T_N=\sum_{i=1}^2T_N^{\Omega,i}+T_{N}^{\Theta,i}+T_{N}^{err}$, where
\begin{align}
T_N^{\Omega,1}&=\left|\jap{\mathcal{F}\left(\left[A \left(\frac{p}{k^2}\right)^{-\frac{1}{4}} ,\bU_{<N/8}\right]\cdot \nabla \Omega_N\right),AZ }\right|,\\
T_N^{\Omega,2}&=\left|\jap{\mathcal{F}\left(\left[A \left(\frac{p}{k^2}\right)^{-\frac{1}{4}} ,\bU_{<N/8}\right]\cdot \nabla \Omega_N\right),\frac{\de_tp}{4\beta| k|p^\frac12}AQ}\right|,\\
T_{N}^{\Theta,1}&= \left|\jap{\mathcal{F}\left(\left[A \left(\frac{p}{k^2}\right)^{\frac{1}{4}}k ,\bU_{<N/8}\right]\cdot \nabla (i\beta \Theta)_N\right),AQ} \right|,\\
T_{N}^{\Theta,2}&= \left|\jap{\mathcal{F}\left(\left[A \left(\frac{p}{k^2}\right)^{\frac{1}{4}}k ,\bU_{<N/8}\right]\cdot \nabla (i\beta \Theta)_N\right), \frac{\de_tp}{4\beta |k|p^\frac12}AZ}\right|,\\
T_{N}^{err}&=\left|\jap{\mathcal{F}\left(\left[\frac{\de_t p}{|k|p^\frac12},\bU_{<N/8}\right]\cdot \nabla AZ_N\right),AQ}\right|. 
\end{align}
Since $|\de_tp|\leq 4\beta| k|p^\frac12$ for $\beta>1/2$,  it is enough to show how to deal with $T_N^{\Omega,1}$ and $T_{N}^{\Theta,1}$, as
$T_N^{\Omega,2}$ and $T_{N}^{\Theta,2}$ are completely analogous ($T_{N}^{err}$ will be dealt with separately).
Similarly, the reaction nonlinearity is given by
$R_N=\sum_{i=1}^2R_N^{\Omega,i}+R_{N}^{\Theta,i}+R_{N}^{err}$, where
\begin{align}
R_N^{\Omega,1}&=\left|\jap{\mathcal{F}\left(\left[A \left(\frac{p}{k^2}\right)^{-\frac{1}{4}} ,\bU_{N}\right]\cdot \nabla \Omega_{<N/8}\right),AZ}\right|,\label{de:react1}\\
R_N^{\Omega,2}&=\left|\jap{\mathcal{F}\left(\left[A \left(\frac{p}{k^2}\right)^{-\frac{1}{4}} ,\bU_{N}\right]\cdot \nabla \Omega_{<N/8}\right),\frac{\de_tp}{4\beta kp^\frac12}AQ}\right|,\label{de:react2}\\
R_{N}^{\Theta,1}&= \left|\jap{\mathcal{F}\left(\left[A \left(\frac{p}{k^2}\right)^{\frac{1}{4}}k ,\bU_{N}\right]\cdot \nabla (i\beta\Theta)_{<N/8}\right),AQ} \right|,\label{de:react3}\\
R_{N}^{\Theta,2}&= \left|\jap{\mathcal{F}\left(\left[A \left(\frac{p}{k^2}\right)^{\frac{1}{4}}k ,\bU_{N}\right]\cdot \nabla (i\beta\Theta)_{<N/8}\right),\frac{\de_tp}{4\beta |k|p^\frac12}AZ}\right|,\label{de:react4}\\
R_{N}^{err}&=\left|\jap{\mathcal{F}\left(\left[\frac{\de_t p}{|k|p^\frac12},\bU_N\right]\cdot \nabla AZ_{<N/8}\right),AQ}\right|. \label{de:react5}
\end{align}
Finally, the remainder reads as 
\begin{align}
	\mathcal{R}=\sum_{N\in \boldsymbol{D}}\sum_{N/8\leq N'\leq N}&\left|\jap{\mathcal{F}\left(\left[A \left(\frac{p}{k^2}\right)^{-\frac{1}{4}} ,\bU_{N}\right]\cdot \nabla \Omega_{N'}\right),AZ+\frac{\de_tp}{4\beta kp^\frac12}AQ}\right|\notag\\
	&+\left|\jap{\mathcal{F}\left(\left[A \left(\frac{p}{k^2}\right)^{\frac{1}{4}}k ,\bU_{N}\right]\cdot \nabla (i\beta\Theta)_{N'}\right),AQ+\frac{\de_tp}{4\beta kp^\frac12}AZ}\right|\notag\\
	&+\left|\jap{\mathcal{F}\left(\left[\frac{\de_t p}{|k|p^\frac12},\bU_N\right]\cdot \nabla AZ_{N'}\right),AQ}\right|.\label{eq:remaind}
\end{align}
In this section we prove the following.
\begin{proposition}
	Under the bootstrap hypothesis one has
	\begin{align}
		\label{bd:TNprop} \sum_{N>8}T_N&\lesssim  \sum_{j\in \{\lambda, w\}}\delta(G_j[Z]+G_j[Q])+\delta^{-1}\eps^2\big(G_j[\Omega]+G_j[\nabla_L\Theta])+\delta\frac{\eps^2}{t^{\frac32}}  \\
		\label{bd:RNprop} \sum_{N>8} R_N&\lesssim  \delta G^{\eps}_{elliptic}+\sum_{j\in \{\lambda, w\}}\delta(G_j[Z]+G_j[Q])+\eps^2G^v_{\lambda}[h] +\delta^{-1}\eps^2 t^{2+2s}G_{\lambda}[\jap{\de_v}^{-s}\mathcal{H}]\\
		\notag&\quad +\eps^4+\frac{\eps^3}{t^{\frac12}}-\dot{\lambda}(t)\delta^{-1}\eps^4,\\
		\label{bd:Rem} \mathcal{R} &\lesssim  \delta \frac{\eps^2}{t^{\frac32}}, \\
		\label{bd:rest}L^{Z,Q}&\leq  \frac12(1-\frac{1}{2\beta})(G_m[Z]+G_m[Q]),\\
		\label{bd:Ediv}\mathcal{E}^{\div}&\lesssim \frac{\eps^3}{t^2},\\
		\label{bd:EDeltat}\mathcal{E}^{\Delta_t}&\lesssim \delta G^{\eps}_{elliptic}+\frac{\eps^3}{t^\frac12}\\
		&\quad+\sum_{j\in \{\lambda,w\}}\delta G_j[Q]+\delta^{-1}\eps^2\left(G_j^v[1-(v')^2]+G_j^v[\jap{\de_v}^{-1} v'']+\jap{t}^{-2s}G_j^v[|\de_v|^s\jap{\de_v}^{-1} v'']\right)\notag.
		\end{align}
	\end{proposition}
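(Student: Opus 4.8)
The plan is to prove the six estimates separately, in increasing order of difficulty, using the paraproduct splitting already recorded. The linear error \eqref{bd:rest} is immediate: by \eqref{eq:dtdtp} one has $\partial_t\big(\tfrac{\partial_tp}{|k|p^{1/2}}\big)=2(1+|t-\eta/k|^2)^{-3/2}\le 2(1+|t-\eta/k|^2)^{-1}$, which by \eqref{def:detm} equals $\tfrac{2}{C_\beta}\tfrac{\partial_t m}{m}$; Cauchy--Schwarz and the definition of $G_m$ in \eqref{def:Gw} then give $L^{Z,Q}\le \tfrac{1}{4\beta C_\beta}(G_m[Z]+G_m[Q])=\tfrac12(1-\tfrac{1}{2\beta})(G_m[Z]+G_m[Q])$. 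For $\cE^{\div}$, I would first compute, using $\bU=(0,\dot{v})+v'\nabla^\perp\Psi_{\neq}$ and the fact that $v',\dot v$ depend only on $v$, the identity $\nabla\cdot\bU=\partial_v\dot v+v''\partial_z\Psi_{\neq}$; then \eqref{bd:Ediv} follows from the Gevrey algebra property, the bootstrap bounds $\|AZ\|,\|AQ\|\lesssim\eps$ from \eqref{boot:E}, the estimate on $\dot v$ from \eqref{boot:vdot}, and the decay of $v''$ and $\Psi_{\neq}$ from Lemma \ref{lemma:consboot} and Proposition \ref{prop:lossyelliptic}, producing a time-integrable bound.

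\textbf{The transport terms.} For $\sum_{N>8}T_N$ the commutator $[A(p/k^2)^{\mp1/4},\bU_{<N/8}]$ is, on the Fourier side, the difference of the symbol at the output frequency $(k,\eta)$ and the input frequency $(\ell,\xi)$, with $|k-\ell,\eta-\xi|\le\tfrac{3}{16}|\ell,\xi|$ on the support; it is therefore amenable to the frequency-exchange lemmas. I would split the time integral using the cutoffs $\chi^S,\chi^I,\chi^L$ of \eqref{def:cutofftime}. On $\chi^S$, Lemma \ref{lemma:commJ} shows the weights behave like the bare Gevrey exponential, so the commutator genuinely gains (almost) half a derivative relative to $|k-\ell,\eta-\xi|$, which is absorbed by the Cauchy--Kovalevskaya term $G_\lambda$ at the cost of the small bootstrap size of $\bU$. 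On $\chi^L$ one is past all critical times, so $\bU_{\neq}$ inherits the strong decay of $\Psi_{\neq}$ (Proposition \ref{prop:lossyelliptic}, Lemma \ref{lemma:consboot}), yielding directly the $\delta\eps^2\langle t\rangle^{-3/2}$ contribution. The delicate regime is $\chi^I$: here $w$ is actively changing near $t=\eta/\ell$, and I would use the exchange bounds \eqref{bd:wexfaway}--\eqref{bd:wexfgen} of Lemma \ref{lemma:detw/w} together with the $p$-exchange estimates \eqref{bd:p/p}--\eqref{bd:p/pkk} to transfer the weights across the commutator, paying at most $\langle k-\ell,\eta-\xi\rangle$ and generating $\sqrt{G_w}$ factors. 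The low-frequency velocity $\bU$ is controlled by the elliptic estimate \eqref{bd:ellipticGZw} in terms of $G^\eps_{elliptic}$ and $\sum_j G_j[Z]$, and a final Young inequality $ab\le\tfrac\delta2 a^2+\tfrac1{2\delta}b^2$ splits the resulting products of a $Z,Q$-derivative against an $\eps$-small $\Omega,\nabla_L\Theta$-derivative into $\delta(G_j[Z]+G_j[Q])$ and $\delta^{-1}\eps^2(G_j[\Omega]+G_j[\nabla_L\Theta])$, giving \eqref{bd:TNprop}. The term $T_N^{err}$ is treated by the same scheme, and $T^{\Omega,2}_N,T^{\Theta,2}_N$ reduce to $T^{\Omega,1}_N,T^{\Theta,1}_N$ since $|\partial_tp|\le4\beta|k|p^{1/2}$.

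\textbf{The reaction terms, the remainder, and $\cE^{\Delta_t}$.} For $\sum_{N>8}R_N$ the roles of the frequencies are reversed, $\bU$ sitting at high frequency $N$ and $\Omega$ (or $\Theta$) at low frequency $<N/8$; I would write $\bU_N=(v'\nabla^\perp\Psi_{\neq})_N+(0,\dot v)_N$ and use $v'=1+h$. For the $\Psi$-part, high-frequency $A\Psi$ is related to $Z$ via the elliptic identity \eqref{def:pertdeltat} (so $-p^{3/4}|k|^{1/2}\widehat\Psi=Z+\text{coefficient errors}$), producing $G_j[Z]$-type contributions and coefficient errors controlled by $G^v_j[\cdot]$ through Lemma \ref{lemma:bdcoeff}; the essential point is that when the output $(k,\eta)$ lies in a resonant interval and the $\Psi$-frequency is nearby, the exchange bounds \eqref{bd:JexTDs}--\eqref{bd:JexTD} of Lemma \ref{lem:analog} convert the $J$-ratio into factors $\sqrt{\partial_t w/w}$, i.e. into $G_w$ terms, which is exactly the mechanism taming the nonlinear echoes. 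The $(0,\dot v)$-part produces $\dot v\partial_v\Omega_{<N/8}$; via \eqref{bd:dvdotv}--\eqref{bd:Gwdotv} of Lemma \ref{lemma:bdcoeff} this yields the $\langle t\rangle^{2+2s}G_\lambda[\langle\partial_v\rangle^{-s}\mathcal{H}]$ term, the $h$-piece of $v'$ at high frequency gives $\eps^2 G^v_\lambda[h]$, and the remaining lower-order pieces account for the $\eps^4$, $\eps^3\langle t\rangle^{-1/2}$ and $-\dot{\lambda}(t)\delta^{-1}\eps^4$ contributions of \eqref{bd:RNprop}. The remainder $\mathcal R$ has both factors at comparable high frequencies, so both lose essentially the same Gevrey regularity with room to spare; combined with the $\langle t\rangle^{-3/2}$ decay of $\Psi_{\neq}$ (Lemma \ref{lemma:consboot}) this gives \eqref{bd:Rem}. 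Finally, for $\cE^{\Delta_t}$, since $|\partial_tp/(|k|p^{1/2})|\le2$ it suffices to bound $\||k|^{3/2}p^{-3/4}A\cF((\Delta_t-\Delta_L)\Psi)\|$; paraproduct-decomposing $(\Delta_t-\Delta_L)\Psi$ as in \eqref{def:pertdeltat}, the terms with the coefficients $g=1-(v')^2$ or $v''$ at low frequency are exactly the situation handled by Proposition \ref{prop:elliptic} (contributing $\delta G^\eps_{elliptic}$, the coefficient terms $G^v_j[\cdot]$, and $\delta G_j[Q]$), while the high-coefficient-frequency terms are absorbed using the decay of $\Psi_{\neq}$ from Proposition \ref{prop:lossyelliptic}, yielding \eqref{bd:EDeltat}.

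\textbf{Main obstacle.} The heart of the matter is the intermediate-time reaction terms — the echo contributions — where the weight $w$ must match precisely the growth of the toy model \eqref{eq:toyfR1}--\eqref{eq:toyfNR1}. The delicate feature, different from \cite{BM15}, is the \emph{smaller} regularity imbalance between resonant and non-resonant frequencies (the $1/2$ powers rather than $1$), which forces the use of the sharpened exchange estimate \eqref{bd:JexTDs} of Lemma \ref{lem:analog} that has no analogue in \cite{BM15}, and leaves much less slack when trading regularity. A second recurring subtlety is the bookkeeping of smallness: several error terms can only be absorbed using $\delta$-smallness inherited from the coordinate-system bound $E_v\lesssim\eps^2\langle t\rangle\lesssim\delta^2$ rather than $\eps$-smallness, which is precisely why $\cE^{\Delta_t}$ and the reaction estimates must route the coefficients $1-(v')^2$ and $v''$ through the stronger $A^v$ norm, and why the elliptic estimates of Proposition \ref{prop:elliptic} are formulated with the $G^\eps_{elliptic}$ and $G^\delta_{elliptic}$ alternatives.
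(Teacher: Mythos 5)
Your proposal follows essentially the same architecture as the paper's proof: the elementary computations for $L^{Z,Q}$ and $\cE^{\div}$, the paraproduct/commutator splitting combined with the short/intermediate/long time cut-offs and the resonant versus non-resonant case analysis driven by Lemma \ref{lemma:detw/w}, Lemma \ref{lem:analog} and the $p$-exchange bounds, the routing of the high-frequency $\Psi$ in the reaction terms through the symmetric variable via \eqref{def:pertdeltat} and the weighted elliptic estimates, the $\dot v$- and $h$-contributions handled through Lemma \ref{lemma:bdcoeff} and the stronger $A^v$ norm, and the echo regime tamed by \eqref{bd:JexTDs}--\eqref{bd:JexTD}. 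You also correctly identify the smaller resonant/non-resonant regularity imbalance as the main new difficulty.

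Two points in your sketch are off, however. First, in the intermediate-time \emph{transport} estimate the low-frequency factor $\bU_{<N/8}$ is not controlled by \eqref{bd:ellipticGZw}: that estimate (and $G^{\eps}_{elliptic}$) is a weighted $L^2$ bound on the high-frequency part of $\Psi$ and enters only the reaction terms and $\cE^{\Delta_t}$; consistently, no $G^{\eps}_{elliptic}$ appears in \eqref{bd:TNprop}. What closes the intermediate-time transport terms is the lossy bound \eqref{bd:bootUlow}, $\norm{\bU_{\neq}}_{\G^{\lambda,\sigma-6}}\lesssim \eps\jap{t}^{-3/2}$, which beats the $t^{2}$ loss coming from the symbol/weight exchange precisely because $\eps t^{1/2}\le\delta$ (see e.g.\ \eqref{bd:Tp1RNR}). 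Second, the terms $\delta^{-1}\eps^2(G_j[\Omega]+G_j[\nabla_L\Theta])$ in \eqref{bd:TNprop} are not a generic Young-splitting artifact: they arise only in specific regimes (short times with $t\in I_{k,\xi}^c$, and the extra $\de_z$-commutator term $T^{\Theta,z}_N$) where the loss of order $|\xi|/|\ell|$ cannot be absorbed by the $Z,Q$ weights, so one must unwind $Z=(p/k^2)^{-1/4}\widehat\Omega$ (resp.\ keep $\nabla_L\Theta$) and invoke the high-norm energy $E_n$; this is the structural reason the transport bound couples the two energies, and your sketch glosses over it. A more minor remark: in $\cE^{\Delta_t}$ the $G^v_j$ coefficient terms come from the truly high coefficient-frequency piece, where $v''$ is paid in the $A^v$ norm together with the lossy decay of $\Psi_{\neq}$, while the low coefficient-frequency pieces are handled as for $R^{\Theta_0}_{N,\Psi}$ (your assignment of the two regimes is reversed). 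With these corrections your outline matches the paper's argument.
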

Using the above bounds in the energy inequality \eqref{def:dtEm}, the proof of \eqref{boot:impE}  
follows from an integration of \eqref{def:dtEm} on 
$(1,t)$, the use of  the bootstrap hypothesis \eqref{boot:E}-\eqref{boot:Ev}, \eqref{bd:intGell}, and  $\eps t^{1/2}\leq \delta$.

\subsection{Transport nonlinearities}
In this section, we control the transport nonlinearities, defined in Section \ref{sub:parapd}, to prove \eqref{bd:TNprop}. 
First, by the bootstrap hypothesis \eqref{boot:En}, Proposition \ref{prop:lossyelliptic} and \eqref{boot:vdot} we get
\begin{align}\label{bd:bootUlow} 
  \norm{\bU(t)}_{\G^{\lambda,\sigma-6}}&\leq \norm{\nabla^\perp \Psi_{\neq}(t)}_{\G^{\lambda,\sigma-6}}+\norm{h\nabla^\perp \Psi_{\neq}(t)}_{\G^{\lambda,\sigma-6}} +\norm{\dot{v}}_{\G^{\lambda,\sigma-6}} \lesssim \frac{\eps}{\l t \r^{{\frac32}}}.
\end{align}
As mentioned already, we present only the proof for the terms $T_N^{\Omega,1}, T^{\Theta,1}_N$ and $T_N^{err}$.

 \subsubsection{{Bound on} $T_N^{\Omega,1}$}\label{subsec:TNOmega}
For any Fourier multiplier $m_1, m_2$ and any function $f$ one has
\begin{equation}
	\label{eq:trivcomm}
	[m_1m_2,f]=m_1[m_2,f]+[m_1,f]m_2.
\end{equation}
Since $p^{-1/4}A=m^{-1}p^{-1/4}(mA)$, using \eqref{eq:trivcomm} we rewrite $T_N^{\Omega,1}\leq T_N^{\Omega,A}+T_N^{\Omega,p}+T_N^{\Omega,m}$ 
with 
\begin{align}
	T_N^{\Omega,A}&=\left|\jap{\mathcal{F}(m^{-1} \left(\frac{p}{k^2}\right)^{-\frac{1}{4}} ([mA,\bU_{<N/8}]\cdot \nabla \Omega_N)),AZ}\right|,\\
	T_N^{\Omega,p}&=\left|\jap{\mathcal{F}(m^{-1}\left[ \left(\frac{p}{k^2}\right)^{-\frac{1}{4}} ,\bU_{<N/8}\right]\cdot \nabla (mA\Omega)_N),AZ}\right|,\\
	T_N^{\Omega,m}&=\left|\jap{\mathcal{F}([m^{-1},\bU_{<N/8}]\cdot \nabla (mAZ)_N),AZ}\right|.
\end{align}
Recall that on the support of the integral we always have \eqref{bd:freqsupp}.

\bullpar{Bound on $T_N^{\Omega,p}$}
Writing down this term and using that $|m|\approx 1$, we have 
\begin{align*}
	T_N^{\Omega,p}
	\lesssim&\skli \mathcal{T}_N^{p,1}+\mathcal{T}_N^{p,2},
	\end{align*} 
where we define
\begin{align*}
		\mathcal{T}_N^{p,1}&=  \frac{|  \left(p_\ell(\xi)/\ell^2\right)^{\frac{1}{4}}-\left(p_k(\xi)/k^2\right)^{\frac{1}{4}}|}{(p_k(\eta)/k^2)^\frac14} |\ell,\xi||\widehat{\bU}|_{k-\ell}(\eta-\xi)_{<N/8}|AZ|_{\ell}(\xi)_N |AZ|_k(\eta),\\
				\mathcal{T}_N^{p,2}&=  \frac{|\left(p_k(\eta)/k^2\right)^{\frac{1}{4}} - \left(p_k(\xi)/k^2\right)^{\frac{1}{4}}|}{(p_k(\eta)/k^2)^\frac14} |\ell,\xi||\widehat{\bU}|_{k-\ell}(\eta-\xi)_{<N/8}|AZ|_{\ell}(\xi)_N |AZ|_k(\eta)
			.\end{align*}
We claim that $\mathcal{T}^{p,1}_N$ and $\mathcal{T}^{p,2}_N$ are bounded in a way that is consistent with \eqref{bd:TNprop}.
To control $\mathcal{T}^{p,1}_N$, by the elementary identity 
\begin{equation}
a^{\frac14}-b^\frac14=\frac{a-b}{(a^\frac14+b^{\frac14})(a^\frac12+b^\frac12)},
\end{equation}
we deduce
\begin{align}
\left| \left(\frac{p_\ell(\xi)}{\ell^2}\right)^\frac 14 -  \left(\frac{p_k(\xi)}{k^2}\right)^\frac 14 \right| &= \frac{|(\frac{\xi}{\ell}-t)+(\frac{\xi}{k}-t)|}{((p_\ell(\xi)/\ell^2)^\frac12+(p_k(\xi)/k^2)^\frac12)}   \frac{|\frac{\xi}{\ell}-\frac{\xi}{k}|}{((p_\ell(\xi)/\ell^2)^\frac14+(p_k(\xi)/k^2)^\frac14)}\notag\\
	\label{bd:p14kell0}&\lesssim |k-\ell|\frac{|\xi|}{|k\ell|}  \frac{1}{(1+|\frac{\xi}{k}-t|)} \left(\frac{p_{k}(\xi)}{k^2}\right)^\frac14.
\end{align}
For $\mathcal{T}^{p,2}_N$, by the mean value theorem, there is $\xi'$ between $\eta$ and $\xi$ such that 
\begin{align}
\label{bd:p14etaxi}	\left| \left(\frac{p_k(\eta)}{k^2}\right)^\frac 14 -  \left(\frac{p_k(\xi)}{k^2}\right)^\frac 14 \right| 	\lesssim |\eta-\xi| \frac{1}{|k|(1+|\frac{\xi'}{k}-t|)}  \left(\frac{p_k(\xi')}{k^2}\right)^\frac14.
\end{align}
Therefore, the most dangerous term will appear in $\mathcal{T}^{p,1}_N$, since there is a loss of order $|\xi|/|\ell|$. Hence, we only only deal with $\mathcal{T}^{p,1}_N$.
By means of \eqref{bd:p14kell0} and \eqref{bd:p/pkk}, we have
\begin{align}
\label{bd:p14kell}\left| \left(\frac{p_\ell(\xi)}{\ell^2}\right)^\frac 14 -  \left(\frac{p_k(\xi)}{k^2}\right)^\frac 14 \right| \left(\frac{p_k(\eta)}{k^2}\right)^{-\frac14} |\ell, \xi| 
	&\lesssim  \mathbbm{1}_{k\neq \ell} \jap{\eta-\xi,k-\ell}^3 \frac{|\xi|^2}{|k|^2}\frac{1}{(1+|\frac{\xi}{k}-t|)}.
\end{align}
We now have to consider different cases, depending on intermediate, short and long times.

\diampar{Intermediate times} 
If $t\in I_{k,\eta}\cap I_{k,\xi}$, combining \eqref{bd:p14kell} with Lemma \ref{lemma:detw/wfar} and \eqref{bd:wexfaway}, we get
\begin{align}
\label{bd:tp1idf}\frac{ \left|\left(\frac{p_\ell(\xi)}{\ell^2}\right)^\frac 14 -  \left(\frac{p_k(\xi)}{k^2}\right)^\frac 14\right|}{\left(\frac{p_k(\eta)}{k^2}\right)^{\frac14}}  |\ell, \xi| \mathbbm{1}_{t\in I_{k,\eta}\cap I_{k,\xi}}\chi^I&\lesssim \mathbbm{1}_{k\neq \ell } \jap{k-\ell,\eta-\xi}^7 t^2 \sqrt{\frac{\de_t w_k(\eta)}{w_{k}(\eta)}}\sqrt{\frac{\de_t w_\ell(\xi)}{w_{\ell}(\xi)}},
\end{align}
where in the last line we used  $t\approx |\xi/k|$. Applying \eqref{bd:Ulow} we obtain
\begin{align}
	\label{bd:Tp1RNR}	\sum_{N>8} \skli \mathcal{T}^{p,1}_N \mathbbm{1}_{t\in I_{k,\eta}\cap I_{k,\xi}}\chi^I \lesssim t^2 \norm{\bU_{\neq}}_{\G^{\lambda,\sigma-6}} \norm{\sqrt{\frac{\de_t w}{w}}AZ}^2\lesssim \eps t^{\frac12}G_w[Z]\lesssim \delta G_w[Z].
\end{align}
 We now have to consider $t \notin I_{k,\eta}\cap I_{k,\xi}$. In this case, either $|\xi/k-t|\gtrsim |\xi/k^2|$ or $|\eta/k-t|\gtrsim |\eta/k^2|$. Hence, upon paying Sobolev regularity for the low frequencies, we can always recover a derivative in $v$ in \eqref{bd:p14kell}. More precisely, from \eqref{bd:p14kell}, using \eqref{bd:p/pkk} if necessary, notice that  
\begin{align*}
	\frac{ \left|\left(\frac{p_\ell(\xi)}{\ell^2}\right)^\frac 14 -  \left(\frac{p_k(\xi)}{k^2}\right)^\frac 14\right|}{\left(\frac{p_k(\eta)}{k^2}\right)^{\frac14}}  |\ell, \xi| \mathbbm{1}_{t\in I_{k,\eta}^c\cup I_{k,\xi}^c}\chi^I&\lesssim  \mathbbm{1}_{k\neq \ell} \jap{k-\ell,\eta-\xi}^5 |\xi|.
\end{align*}
Since $\TS\leq t$, then $|\xi|\leq t^{2-2s}|\xi|^s\leq t^{2-2s}|\ell,\xi|^\frac{s}{2}|k,\eta|^{\frac{s}{2}}$. Thus, again from   \eqref{bd:Ulow} we get
\begin{align}
	\label{bd:Tp1nRNR}	\sum_{N>8} \skli \mathcal{T}^{p,1}_N \mathbbm{1}_{t\in I_{k,\eta}^c\cup I^c_{\ell,\xi}}\chi^I \lesssim t^{2-2s}\norm{\bU_{\neq}}_{\G^{\lambda,\sigma-6}} \norm{|\nabla|^\frac{s}{2}AZ}^2\lesssim \delta G_\lambda[Z],
\end{align}
which is consistent with \eqref{bd:TNprop}.

\diampar{Short times} 
For $t \in I_{k,\xi}$, since $|\xi/k|^2\approx t^2 \lesssim t^{2-2s}|k,\eta|^{\frac{s}{2}}|\ell,\xi|^{\frac{s}{2}}$ from \eqref{bd:p14kell} and \eqref{bd:Ulow} we deduce 
\begin{align}
	\label{bd:Tp1SRNR}	\sum_{N>8} \skli \mathcal{T}^{p,1}_N \mathbbm{1}_{t\in I_{k,\xi}}\chi^S &\lesssim t^{2-2s} \norm{\bU_{\neq}}_{\G^{\lambda,\sigma-6}} \norm{|\nabla|^{\frac{s}{2}}AZ}^2 \lesssim \delta G_\lambda[Z].
\end{align}
When $t\in I_{k,\xi}^c$, the gain of one derivative in \eqref{bd:p14kell} is not enough. Thus we crucially use the bounds on $\Omega$. First, we notice that 
\begin{equation}
\frac{|\xi|}{|\ell|}\frac{|\frac{\xi}{k}-t+t|}{1+|\frac{\xi}{k}-t|}\leq \frac{|\xi|}{|\ell|}\left( \frac{t}{1+|\frac{\xi}{k}-t|}+1\right).
\end{equation}
Then, using the inequality above in  \eqref{bd:p14kell}, we get
\begin{align}
	\notag\mathcal{T}^{p,1}_{N}\mathbbm{1}_{t\in I_{k,\xi}^c}\chi^S\lesssim&\ \mathbbm{1}_{k\neq \ell}\mathbbm{1}_{t\in I_{k,\xi}^c}\chi^S( \mathcal{I}^1+\mathcal{I}^2),
\end{align}
where we define
\begin{align}\notag 
	\mathcal{I}^1&=\frac{|\xi|}{|\ell|}\frac{t}{1+|\frac{\xi}{k}-t|}|AZ|_\ell(\xi)_N|AZ|_k(\eta)\jap{k-\ell,\eta-\xi}^8|\widehat{\bU}|_{k-\ell}(\eta-\xi)_{<N/8},\\
		\notag \mathcal{I}^2&=\frac{|\xi|}{|\ell|}\frac{1}{(1+|\frac{\xi}{\ell}-t|)^\frac12}|A\widehat{\Omega}|_\ell(\xi)_N|AZ|_k(\eta)\jap{k-\ell,\eta-\xi}^8|\widehat{\bU}|_{k-\ell}(\eta-\xi)_{<N/8}.
\end{align}
Notice that to write $\mathcal{I}^2$ we have used the definition of $Z$.
To control $\mathcal{I}^1$ we observe that 
\begin{equation}
\label{bd:wow}
	\frac{|\xi|}{|\ell|}\frac{t}{1+|\frac{\xi}{k}-t|}\mathbbm{1}_{t\in I_{k,\xi}^c}\chi^S\lesssim \frac{|\xi|}{|\ell|}\frac{t}{(1+|\frac{\xi}{k}-t|)^\frac12}\mathbbm{1}_{t\in I_{k,\xi}^c}\chi^S\lesssim t\frac{|\xi|}{|\ell|}\frac{|k|}{|\xi|^\frac12}\chi^S\lesssim t^{2-2s}\jap{k-\ell}|k,\eta|^{\frac{s}{2}}|\ell,\xi|^{\frac{s}{2}}.
\end{equation}
In this way, 
\begin{equation}
	\sum_{N>8}\skli\mathcal{I}^1 \mathbbm{1}_{k\neq \ell}\mathbbm{1}_{t\in I_{k,\xi}^c}\chi^S \lesssim t^{2-2s}\norm{\bU_{\neq}}_{\G^{\lambda,\sigma-6}}\norm{|\nabla|^{\frac{s}{2}}AZ}^2\lesssim \delta G_\lambda[Z].
\end{equation}
Similarly, to control $\mathcal{I}^2$, if $t\in I_{\ell,\xi}$ then $|\xi/\ell|\lesssim t\lesssim t^{1-2s}|k,\eta|^\frac{s}{2}|\ell,\xi|^{\frac{s}{2}}$. If $t\in I_{\ell,\xi}^c$ we can argue as in \eqref{bd:wow}. Therefore, using \eqref{bd:Ulow}, we  have
\begin{align}
	\sum_{N>8}\skli\mathcal{I}^2 \mathbbm{1}_{k\neq \ell}\mathbbm{1}_{t\in I_{k,\xi}^c}\chi^S &\lesssim t^{1-2s}\norm{\bU_{\neq}}_{\G^{\lambda,\sigma-6}}\norm{|\nabla|^{\frac{s}{2}}A\Omega}\norm{|\nabla|^{\frac{s}{2}}AZ}
	&\lesssim\delta G_\lambda[Z] +\delta^{-1}\eps^2G_\lambda[\Omega],
\end{align}
which agrees with \eqref{bd:TNprop}.

\diampar{Long times} 
In this case, if $|\xi|\leq |\eta|$ we know that $|\xi/k-t|\gtrsim t$. If $|\xi|\geq |\eta|$, using \eqref{bd:p/pkk}  we have 
\begin{equation}
	\label{bd:Long}
	\frac{1}{1+|\frac{\xi}{k}-t|}\lesssim \jap{\eta-\xi}\frac{1}{1+|\frac{\eta}{k}-t|}\lesssim \jap{\eta-\xi}\frac{1}{\jap{t}}.
\end{equation}
Therefore, we always gain factor of times from the denominator in \eqref{bd:p14kell}. This implies 
\begin{equation}
\frac{|\xi|^2}{|k|^2}\frac{1}{(1+|\frac{\xi}{k}-t|)}\chi^L\lesssim \jap{\eta-\xi}^2t^{1-s}|k,\eta|^{\frac{s}{2}}|\ell,\xi|^{\frac{s}{2}}.
\end{equation}
Thus, using the bound above in \eqref{bd:p14kell} we get
\begin{align}
	\label{bd:Tp1LRNR}	\sum_{N>8} \skli \mathcal{T}^{p,1}_N \chi^L &\lesssim t^{1-s} \norm{\bU_{\neq}}_{\G^{\lambda,\sigma-6}} \norm{|\nabla|^{\frac{s}{2}}AZ}^2 \lesssim \frac{\eps}{t^{\frac12+s}}\norm{|\nabla|^{\frac{s}{2}}AZ}^2 \lesssim \eps G_\lambda[Z].
\end{align}
This shows that $\mathcal{T}^{p,1}_N$  produces the bound \eqref{bd:TNprop}. As a consequence, the bound on $T_N^{\Omega,p}$ is proven.

\bullpar{Bound on $T^A_N$}  
Turning to $T^A_N$, we write  the commutator in each of its components as 
 \begin{equation}
 [A,\bU_{<N/8}]=[\e^{\lambda |\cdot|^s},\bU_{<N/8}]\jap{\cdot}^\sigma J+\e^{\lambda|\cdot|^s}[\jap{}^\sigma,\bU_{<N/8}]J+\e^{\lambda |\cdot |^s}\jap{\cdot}^\sigma[J,\bU_{<N/8}]
 \end{equation}
and bound
\begin{align}\label{def:splitTNOm}
	|T_N^{\Omega,A}|\lesssim  \skli 	\mathcal{T}^1_N+\mathcal{T}^{2}_N+\mathcal{T}^J_N,
\end{align}
where
\begin{align}
\mathcal{T}^1_N &= \left(\frac{p_\ell(\xi)}{p_k(\eta)}\right)^\frac14\frac{|k|^{\frac12}}{|\ell|^{\frac12}} \left|\e^{\lambda(|k,\eta|^s-|\ell,\xi|^s)}-1\right|| |\ell,\xi||\widehat{\bU}_{k-\ell}(\eta-\xi)|_{<N/8}|AZ_\ell(\xi)|_{N}|AZ_k(\eta)| ,\\
\mathcal{T}^2_N &=\left(\frac{p_\ell(\xi)}{p_k(\eta)}\right)^\frac14\frac{|k|^{\frac12}}{|\ell|^{\frac12}}\e^{\lambda(|k,\eta|^s-|\ell,\xi|^s)}\left|\frac{\jap{k,\eta}^\sigma}{\jap{\ell,\xi}^\sigma}-1\right| |\ell,\xi||\widehat{\bU}_{k-\ell}(\eta-\xi)|_{<N/8}|AZ_\ell(\xi)|_{N}|AZ_k(\eta)| ,\\
\mathcal{T}^J_N &= \left(\frac{p_\ell(\xi)}{p_k(\eta)}\right)^\frac14\frac{|k|^{\frac12}}{|\ell|^{\frac12}}  \e^{\lambda(|k,\eta|^s-|\ell,\xi|^s)}\frac{\jap{k,\eta}^\sigma}{\jap{\ell,\xi}^\sigma}\left|\frac{J_k(\eta)}{J_\ell(\xi)}-1\right||\ell,\xi||\widehat{\bU}_{k-\ell}(\eta-\xi)|_{<N/8}|AZ_\ell(\xi)|_{N}|AZ_k(\eta)|.
\end{align}
That $\mathcal{T}^1_N $ and $\mathcal{T}^2_N$ satisfy bounds that comply with \eqref{bd:TNprop} follows from an argument  analogous to \cite{BM15}*{Section 5}, thanks to \eqref{bd:p/pgen} and the fact that $\eps t^\frac12\leq \delta$. We therefore only focus on the more problematic term $\mathcal{T}^J_N $.
It is convenient to split $\mathcal{T}_N^J$ as 
\begin{equation}
	\label{def:TNJ}
		\mathcal{T}_N^J= \mathcal{T}_N^{J}\mathbbm{1}_{t\leq \frac12 \min\{\sqrt{|\eta|},\sqrt{|\xi|}\}}+\mathcal{T}_N^J(\mathbbm{1}_{t\in D}+\mathbbm{1}_{t\in D^c}),
\end{equation}
where the \textit{difficult} domain is defined as
\begin{equation}
	\label{def:D}
	D=I_{k,\eta}\cap I_{k,\xi}\cap\{k\neq \ell\}\cap \{t\geq \frac12 \min\{\sqrt{|\eta|},\sqrt{|\xi|}\}\}.
\end{equation} 
Using Lemma \ref{lemma:commJ}, the bounds \eqref{bd:p/pgen} and \eqref{bd:bootUlow},  as in 
\cite{BM15}*{Section 5} we
get
\begin{align}
	\sum_{N>8}\skli \mathcal{T}_N^{J}\mathbbm{1}_{t\leq \frac12 \min\{\sqrt{|\eta|},\sqrt{|\xi|}\}}&\lesssim t^\frac12\norm{\bU}_{\G^{\lambda,\sigma-6}}\norm{|\nabla|^{\frac{s}{2}}AZ}^2 \lesssim \delta G_\lambda[Z],
\end{align}
which is consistent with  \eqref{bd:TNprop}.
We now turn our attention to the  terms where $t\geq \frac12 \min\{\sqrt{|\eta|},\sqrt{|\xi|}\}$. 

The term with $t\in D$ is the most delicate one. In this case, we cannot gain anything from the commutator. 
Notice that in this interval we may have a loss in the bound \eqref{bd:p/p} and \eqref{bd:JexTDs}. Combining \eqref{bd:p/p} with \eqref{bd:JexTDs} and Lemma \ref{lemma:detw/wfar} we get 
  \begin{align}
\left(\frac{p_{\ell}(\xi)}{p_k(\eta)}\right)^\frac14\frac{|k|^{\frac12}}{|\ell|^{\frac12}}&\e^{\lambda(|k,\eta|^s-|\ell,\xi|^s)}\frac{\jap{k,\eta}^\sigma}{\jap{\ell,\xi}^\sigma}\left|\frac{J_k(\eta)}{J_\ell(\xi)}-1\right||\ell,\xi|\mathbbm{1}_{t\in D}\notag\\
&\lesssim  |k,\eta|\frac{\eta}{|k|^2}\frac{1}{1+|\frac{\eta}{k}-t|}\jap{k-\ell,\eta-\xi}^3 \e^{c\lambda |k-\ell,\eta-\xi|^s}\notag\\
&\lesssim t^2\sqrt{\frac{\de_t w_\ell(\xi)}{w_\ell(\xi)}}\sqrt{\frac{\de_t w_k(\eta)}{w_k(\eta)}}\jap{k-\ell,\eta-\xi}^3 \e^{c\lambda |k-\ell,\eta-\xi|^s},
  \end{align}
where in the last line we have used that $t\approx |\eta/k|$.  Then, using \eqref{bd:towerA} and  \eqref{bd:Ulow} we have
\begin{equation}
	\label{bd:TNJrr}
		\skli \mathcal{T}_N^{J}\mathbbm{1}_{t\in D}\lesssim t^2 \norm{\bU_{\neq}}_{\G^{\lambda,\sigma-6}}\norm{\sqrt{\frac{\de_t w}{w}}\tA Z}^2\lesssim \delta G_w[Z],
\end{equation}
which works well with \eqref{bd:TNprop}.
For the remaining terms we need to consider two subcases, namely $|\ell|>100|\xi|$ and $|\ell|<100|\xi|$. In the first scenario, 
using \eqref{lemma:maxgrowth} and  \eqref{bd:p/pgen}, we can repeat the argument in \cite{BM15}*{Section 5} and obtain
\begin{equation}
	\label{bd:TNJothers1}
		\skli \mathcal{T}_N^{J}\mathbbm{1}_{\{t\in D^c\}\cap\{|\ell|>100|\xi|\}}
		\lesssim \delta G_\lambda[Z].
\end{equation}
When $|\ell|\leq 100|\xi|$, we can again ignore any gain from the commutator. Indeed, for the terms we are considering we can always apply \eqref{bd:Jeximp}. Then, if $t\in I_{k,\eta}\cap I_{\ell,\xi}^c$, by \eqref{bd:p/p} and  since $|\ell,\xi|\lesssim |\xi|\lesssim |\eta|$, we have 
\begin{align*}
	\left(\frac{p_{\ell}(\xi)}{p_k(\eta)}\right)^\frac14\frac{|k|^{\frac12}}{|\ell|^{\frac12}}&\e^{\lambda(|k,\eta|^s-|\ell,\xi|^s)}\frac{\jap{k,\eta}^\sigma}{\jap{\ell,\xi}^\sigma}\left|\frac{J_k(\eta)}{J_\ell(\xi)}-1\right||\ell,\xi|\mathbbm{1}_{\{t\in I_{k,\eta}\cap I_{\ell,\xi}^c\cap D^c\}\cap \{|\ell|\leq 100|\xi|\}}\\
	&\lesssim \jap{k-\ell,\eta-\xi}^\frac32 \e^{c\lambda|k-\ell,\eta-\xi|^s}\frac{|\eta|}{|k|}\frac{|\eta|^\frac12}{(1+|\frac{\eta}{k}-t|)^\frac12}.
\end{align*}
We can now apply the same reasoning done for the terms $\mathcal{T}^{p,1}_N\mathbbm{1}_{t\in I_{k,\eta}\cap I_{\ell,\xi}^c}$, see e.g. \eqref{bd:tp1idf}. Namely, as in \eqref{bd:Tp1RNR}, \eqref{bd:Tp1SRNR} and \eqref{bd:Tp1LRNR} we get 
\begin{equation}
	\label{bd:TNJnrr}
	\sum_{N>8}	\skli \mathcal{T}_N^{J}\mathbbm{1}_{\{t\in D^c\cap I_{k,\eta}\cap I_{\ell,\xi}^c\}\cap \{|\ell|\leq 100|\xi|\}} \lesssim \delta (G_w[Z]+G_\lambda[Z]).
\end{equation}
For the remaining terms, since we always have $t\in I_{k,\eta}^c \cup I_{\ell,\xi}$ we do not lose anything from  $p_\ell(\xi)/p_k(\eta)$. Therefore, we simply exploit the fact that $|\xi|\lesssim t^2$, to get $|\ell,\xi|\lesssim t^{2-2s}|k,\eta|^\frac{s}{2}|\ell,\xi|^{\frac{s}{2}}$, which implies 
\begin{equation}
		\label{bd:TNJothers2}
		\sum_{N>8}\skli \mathcal{T}_N^{J}\mathbbm{1}_{\{t\in D^c\cap I_{k,\eta}^c\}\cap \{|\ell|\geq 100|\xi|\}}		\lesssim \eps t^\frac12 \frac{1}{t^{2s}}\norm{|\nabla|^{\frac{s}{2}}AZ}^2\lesssim \delta G_\lambda[Z].
\end{equation}
As a consequence, $T^A_N$ is bounded as in  \eqref{bd:TNprop}, as we wanted.

\bullpar{Bound on $T_N^m$}
Writing this term explicitly and using that $|m|\approx 1$ we find 
\begin{align}
T_N^m\lesssim  \skli \mathcal{T}_N^{m,1}+\mathcal{T}_N^{m,2},
\end{align} 
where
\begin{align}
\mathcal{T}_N^{m,1} &= \left|m^{-1}_k(\xi)-m^{-1}_\ell(\xi)\right| |\ell,\xi||\widehat{\bU}|_{k-\ell}(\eta-\xi)_{<N/8}  |AZ|_\ell(\xi)_{N}|A{Z}|_k(\eta),\\
\mathcal{T}_N^{m,2} &= \left|m^{-1}_k(\eta)-m_k^{-1}(\xi)\right||\ell,\xi||\widehat{\bU}|_{k-\ell}(\eta-\xi)_{<N/8} |AZ|_\ell(\xi)_{N}|A{Z}|_k(\eta) .
\end{align} 
Again, we want to show that these terms satisfy \eqref{bd:TNprop}.
We are going to proceed in analogy to what was done for the term $T_N^{\Omega,p}$. Recalling the definition of $m$ \eqref{def:m}, by the mean value theorem we have that  there is $\xi'$ between $\eta$ and $\xi$ such that\begin{align}
	\left|m^{-1}_k(\eta)-m_k^{-1}(\xi)\right|
	&\lesssim |\eta-\xi| \frac{1}{|k|(1+(t-\frac{\xi'}{k})^2)}.
\end{align}
Thus, for $\mathcal{T}_N^{m,2}$ we can repeat the arguments done to handle $T_N^{\Omega,p}$. For what concerns $\mathcal{T}_N^{m,1}$, since $|e^x-1|\leq |x|e^{x}$, notice that
\begin{equation}
\left|m^{-1}_k(\xi)-m^{-1}_\ell(\xi)\right|\lesssim\left|\arctan\left(\frac{\xi}{k}-t\right)-\arctan\left(\frac{\xi}{\ell}-t\right)\right|=\left|\arctan\left(\frac{\frac{\xi}{k\ell}(k-\ell)}{1+(\frac{\xi}{k}-t)(\frac{\xi}{\ell}-t)}\right)\right|,
\end{equation}
where the last equality follows by trigonometric identities. Now we claim that 
\begin{equation}
\label{bd:commmkell}
\left|m^{-1}_k(\xi)-m^{-1}_\ell(\xi)\right|\lesssim |k-\ell|\frac{|\xi|}{|k\ell|}\left(\frac{1}{1+|\frac{\xi}{k}-t|}+\frac{1}{1+|\frac{\xi}{\ell}-t|}\right).
\end{equation}
To prove the inequality above, if $|1+(\xi/k-t)(\xi/\ell-t)|\geq 16 |\xi/(k\ell)||k-\ell|$ and $|\xi/k-t|\geq 16, \ |\xi/\ell-t|\geq 16$, then \eqref{bd:commmkell} follows by Taylor's expansion of the $\arctan$. If $|1+(\xi/k-t)(\xi/\ell-t)|\leq 16 |\xi/(k\ell)||k-\ell|$ and $|\xi/k-t|\geq 16, \ |\xi/\ell-t|\geq 16$, then it is enough to use the uniform boundedness of the $\arctan$ and multiply and divide by $1+|\xi/k-t|$. When $|\xi/k-t|\leq 16$ or $|\xi/\ell-t|\leq 16$, \eqref{bd:commmkell} follows by $|\arctan(x)-\arctan(y)|\leq |x-y|$. From \eqref{bd:commmkell}, we can now obtain an estimate as in\eqref{bd:p14kell} and argue as done to control $\mathcal{T}^{p,1}_N$ to get 
\begin{equation}
\sum_{N>8}\skli\mathcal{T}^{m,1}_N\lesssim \delta (G_w[Z]+G_{\lambda}[Z]).
\end{equation}

 \subsubsection{{Bound on} $T_N^{\Theta,1}$}
\label{subsec:TTheta}
Using \eqref{eq:trivcomm}, we rewrite $T_N^{\Theta,1}=T_N^{\Theta,A}+T_N^{\Theta,p}+T_N^{\Theta,m}+T_N^{\Theta,z}$ where
\begin{align}
	T_N^{\Theta,A} &=\left|\jap{\mathcal{F}(m^{-1} \left(\frac{p}{k^2}\right)^{\frac{1}{4}} k([mA,\bU_{<N/8}]\cdot \nabla (i\beta \Theta)),AQ}\right|,\\
	T_N^{\Theta,p} &= \left|\jap{\mathcal{F}(m^{-1}k\left[ \left(\frac{p}{k^2}\right)^{\frac{1}{4}} ,\bU_{<N/8}\right]\cdot \nabla (imA\beta \Theta)_N),AQ}\right|,\\
	T_N^{\Theta,m}& =\left|\jap{\mathcal{F}([m^{-1},\bU_{<N/8}]\cdot \nabla (mAQ)_N),AQ}\right|,\\
	T_N^{\Theta,z}&=\left|\jap{\mathcal{F}(m^{-1}[k,\bU_{<N/8}]\cdot \nabla \left(imA\beta \left(\frac{p}{k^2}\right)^\frac14\Theta_N\right)),AQ}\right|.\end{align}
The terms $T_N^{\Theta,A}$ and $T_N^{\Theta,m}$ can be bounded with exactly the same arguments used for $T_N^{\Omega,A}$ and $T_N^{\Omega,m}$. The term $T_N^{\Theta,p}$ is equivalent to $T_N^{\Omega,p}$ with the role of $(k,\eta)$ and $(\ell,\xi)$ switched. Just notice that the extra factor of $k$ out of the commutator can be easily moved onto the high-frequency part by paying Sobolev regularity on $\bU$. In addition, we need to replace the bounds on $\Omega$ with the ones for $\nabla_L\Theta$. We do not detail more the bounds for these three terms. 
On the other hand, we present the bounds for $T_N^{\Theta,z}$. Here, we again need to use the bounds available for $\nabla_L\Theta$.

\bullpar{Bound on $T_N^{\Theta,z}$}
Writing explicitly this term in the Fourier space and using that $m\approx 1$ we have 
\begin{align*}
	T_N^{\Theta,z}\lesssim\skli \mathcal{T}^{\Theta,z}_N= \skli  |k-\ell||\widehat{\bU}|_{k-\ell}(\eta-\xi)_{<N/8}|\ell,\xi|\left(\frac{p_\ell(\xi)}{\ell^2}\right)^{\frac14}|A\widehat{\Theta}|_\ell(\xi)_N|A{Q}|_k(\eta)\dd \eta \dd \xi.
\end{align*}
Observe that 
\begin{equation}
	\label{eq:trividTheta}
	\left(\frac{p_\ell(\xi)}{\ell^2}\right)^{\frac14}|\ell,\xi|=\frac{|\ell,\xi|}{|\ell|^\frac12 p^{\frac14}_\ell(\xi)}p^{\frac12} _\ell(\xi)\lesssim \jap{\frac{\xi}{\ell}}\frac{1}{(1+|\frac{\xi}{\ell}-t|)^\frac12}p^{\frac12} _\ell(\xi).
\end{equation}
Again, our goal is to bound the above term as in \eqref{bd:TNprop}.

\diampar{Intermediate times}
When   $t\in I_{\ell,\xi}$ we need to use  Lemma \ref{lemma:detw/wfar}. In addition, observe that in this interval $\jap{\xi/\ell}\lesssim t^{1-\frac{s}{2}}|k,\eta|^{\frac{s}{2}}$. Thus, combining   \eqref{bd:Ulow} with the previous inequality we have 
\begin{align}
	\sum_{N>8}\skli \mathcal{T}^{\Theta,z}_N\mathbbm{1}_{t\in I_{\ell,\xi}}\chi^I&\lesssim t^{1-\frac{s}{2}}\norm{\bU_{\neq}}_{\G^{\lambda,\sigma-6}}\norm{\sqrt{\frac{\de_tw}{w}}\tA\nabla_L\Theta}\norm{|\nabla|^{\frac{s}{2}}AQ}\notag\\
	&\lesssim\delta G_\lambda[Q]+ \delta^{-1}\eps^2 G_w[\nabla_L\Theta].
\end{align}
Then, if $|\xi|\geq |\ell|$ and $t\in I_{\ell,\xi}^c$, we know that 
\begin{equation*}
	\jap{\frac{\xi}{\ell}}\frac{1}{(1+|\frac{\xi}{\ell}-t|)^\frac12} \mathbbm{1}_{t\in I_{\ell,\xi}^c}\chi^I \lesssim \frac{|\xi|}{|\ell|}\frac{|\ell|}{|\xi|^{\frac12}}\lesssim t^{1-2s} |k,\eta|^{\frac{s}{2}}|\ell,\xi|^{\frac{s}{2}},
\end{equation*}
where in the last inequality we used $\TS\leq t$. When $|\xi|\leq |\ell|$ one simply observes that since $t\leq \TL$ then $1\leq t^{-s}|k,\eta|^{\frac{s}{2}}|\ell,\xi|^{\frac{s}{2}}$. In this way, since $s<1$ from \eqref{eq:trividTheta} and \eqref{bd:Ulow} we have 
 \begin{align}
 	\notag \sum_{N>8}\skli \mathcal{T}^{\Theta,z}_N\mathbbm{1}_{t\in I_{\ell,\xi}^c}\chi^I&\lesssim t^{1-2s}\norm{\bU_{\neq}}_{\G^{\lambda,\sigma-6}}\norm{|\nabla|^\frac{s}{2}A\nabla_L\Theta}\norm{|\nabla|^{\frac{s}{2}}AQ},\\
 	\label{bd:TthetazNR}&\lesssim \delta G_\lambda[Q]+ \delta^{-1}\eps^2 G_\lambda[\nabla_L\Theta],
 \end{align}
 as needed for   \eqref{bd:TNprop}.
 
\diampar{Short times}
 If $t\in I_{\ell,\xi}$ then $\jap{\xi/\ell}\approx t \leq t^{1-2s}|k,\eta|^{\frac{s}{2}}|\ell,\xi|^{\frac{s}{2}}$ so that we can repeat the argument done to obtain \eqref{bd:TthetazNR}. When $t\in I_{\ell,\xi}^c$ then 
\begin{equation*}
	\jap{\frac{\xi}{\ell}}\frac{1}{(1+|\frac{\xi}{\ell}-t|)^\frac12} \mathbbm{1}_{t\in I_{\ell,\xi}^c}\chi^S \lesssim \max\{1,\frac{|\xi|}{|\ell|}\frac{|\ell|}{|\xi|^{\frac12}}\}\lesssim |k,\eta|^{\frac{s}{2}}|\ell,\xi|^{\frac{s}{2}},
\end{equation*}
where we also used $s>1/2$. Hence
 $$
	\sum_{N>8}\skli \mathcal{T}^{\Theta,z}_N\mathbbm{1}_{t\in I_{\ell,\xi}^c}\chi^S\lesssim \norm{\bU_{\neq}}_{\G^{\lambda,\sigma-6}}\norm{|\nabla|^\frac{s}{2}A\nabla_L\Theta}\norm{|\nabla|^{\frac{s}{2}}AQ}
\lesssim\delta G_\lambda[Q]+ \delta^{-1}\eps^2 G_\lambda[\nabla_L\Theta],
$$
consistent with   \eqref{bd:TNprop}.

\diampar{Long times} 
Since $t\geq \TL$, arguing as in \eqref{bd:Long} we get 
\begin{equation*}
	\jap{\frac{\xi}{\ell}}\frac{1}{(1+|\frac{\xi}{\ell}-t|)^\frac12}\chi^L \lesssim t^{-\frac12}\jap{\xi} \chi^L\lesssim t^{\frac12-s}|k,\eta|^{\frac{s}{2}}|\ell,\xi|^{\frac{s}{2}}.
\end{equation*}
This implies 
 \begin{align}
	\sum_{N>8}\skli \mathcal{T}^{\Theta,z}_N\chi^L&\lesssim t^{\frac12-s}\norm{\bU_{\neq}}_{\G^{\lambda,\sigma-6}}\norm{|\nabla|^\frac{s}{2}A\nabla_L\Theta}\norm{|\nabla|^{\frac{s}{2}}AQ}\lesssim \delta G^{Q}_\lambda[Q]+ \delta^{-1}\eps^2 G_\lambda[\nabla_L\Theta].
\end{align}
implying  \eqref{bd:TNprop} for this term as well.

\subsubsection{{Bound on} $T_N^{err}$}
We split this term as 
\begin{align}
T^{err}_N\lesssim  \skli \mathcal{T}_N^{err,1}+\mathcal{T}_N^{err,2},
\end{align} 
where
\begin{align}
\mathcal{T}_N^{err,1} &= \left|\frac{\de_tp_k(\xi)}{|k|p^{\frac12}_k(\xi)}-\frac{\de_tp_\ell(\xi)}{|\ell|p^{\frac12}_\ell(\xi)}\right| |\ell,\xi||\widehat{\bU}|_{k-\ell}(\eta-\xi)_{<N/8}  |AZ|_\ell(\xi)_{N}|AQ|_k(\eta),\\
\mathcal{T}_N^{err,2} &= \left|\frac{\de_tp_k(\eta)}{|k|p^{\frac12}_k(\eta)}-\frac{\de_tp_k(\xi)}{|k|p^{\frac12}_k(\xi)}\right||\ell,\xi||\widehat{\bU}|_{k-\ell}(\eta-\xi)_{<N/8} |AZ|_\ell(\xi)_{N}|AQ|_k(\eta) .
\end{align} 
To control $T^{err,1}_N$, we first notice that 
\begin{align}
\left|\frac{\de_tp_k(\xi)}{|k|p^{\frac12}_k(\xi)}-\frac{\de_tp_\ell(\xi)}{|\ell|p^{\frac12}_\ell(\xi)}\right|&=\left|\frac{\frac{\xi}{k}-t}{(1+(\frac{\xi}{k}-t)^2)^{\frac12}}-\frac{\frac{\xi}{\ell}-t}{(1+(\frac{\xi}{\ell}-t)^2)^{\frac12}}\right|.
\end{align}
Then, using the bound 
$$
\left|\frac{a}{(1+a^2)^{\frac12}}-\frac{b}{(1+b^2)^{\frac12}}\right|=\left|\frac{(a-b)(1+b^2)^\frac12-b((1+a^2)^\frac12-(1+b^2)^\frac12)}{(1+a^2)^\frac12(1+b^2)^\frac12}\right|\lesssim |a-b|\frac{1}{(1+a^2)^\frac12},
$$
we get 
\begin{align}
\left|\frac{\de_tp_k(\xi)}{|k|p^{\frac12}_k(\xi)}-\frac{\de_tp_\ell(\xi)}{|\ell|p^{\frac12}_\ell(\xi)}\right|&\lesssim |k-\ell|\frac{|\xi|}{|k\ell|}\frac{1}{1+|\frac{\xi}{k}-t|},
\end{align}
which means we can repeat the arguments done for $T^{p,1}_{N}$. For $T^{err,2}_N$, since 
\begin{equation}
	\de_\eta \left(\frac{\de_t p}{|k|p^\frac12}\right)= 2\frac{1}{k(1+|\frac{\eta}{k}-t|^2)^{\frac32}},
\end{equation}
we can again apply the mean value theorem. Overall, we obtain
\begin{equation}
	T^{err}_N\lesssim \delta\sum_{j\in \{\lambda,w\}}(G_j[Z]+G_j[Q]).
\end{equation}
Therefore, the proof  of  \eqref{bd:TNprop} is completed.

\subsection{Reaction nonlinearities}\label{sec:ReactionEn}
We will decompose a function $F=F(t,k,\ell,\eta,\xi)$ as
\begin{align}\label{def:resnonresdecom}
F=F^{(R,R)}+F^{(NR,R)}+F^{(R,NR)}+F^{(NR,NR)}+F^S+F^L,
\end{align}
where
\begin{alignat}{3}
&F^{(R,R)}=F \chi^I\mathbbm{1}_{t\in I_{k,\eta}\cap I_{\ell,\xi}}, &\qquad  &F^{(NR,R)}=F \chi^I \mathbbm{1}_{t\in I_{k,\eta}^c\cap I_{\ell,\xi}}, &\qquad  &F^{(R,NR)}=F \chi^I\mathbbm{1}_{t\in I_{k,\eta}\cap I_{\ell,\xi}^c}, \\
&F^{(NR,NR)}=F \chi^I\mathbbm{1}_{t\in I_{k,\eta}^c\cap I_{\ell,\xi}^c}, &\qquad  &F^S=F \chi^S,  &\qquad  &F^L=F \chi^L.
\end{alignat}
This decomposition will be used in essentially all the reaction terms appearing in \eqref{de:react1}-\eqref{de:react5}. 
Besides the distinction between intermediate, short and long times, among the intermediate times we need to separate the \emph{resonant} (R) versus the \emph{non-resonant} (NR) interactions. 
As we shall see, the hardest terms to treat are those of the form $R^{\Omega,i}_N$ in  \eqref{de:react1}-\eqref{de:react2}, on which the toy model has been constructed. The terms
$R^{\Theta,i}_N$ in \eqref{de:react3}-\eqref{de:react4}  will be simpler to handle.  

The goal of this section is to prove that the reaction term satisfies the bound \eqref{bd:RNprop}.
Recall that throughout this section, on the support of the integral we have \eqref{bd:freqsupp}.

\subsubsection{{Bound on} $R^{\Omega,i}_N$}\label{sec:RZiN}
The bounds for $R^{\Omega,1}_N$ and $R^{\Omega,2}_N$ are analogous  since $|\de_t p|/(|k|p^{\frac12})\leq2$. We will then consider just the first one. We  split this term as 
\begin{align}
	R_N^{\Omega,1}
	\leq  R^{\Omega}_{N,\Psi}+{R}^{\Omega}_{N,\dot{v}}+{R}^{\Omega}_{N,\delta}+{R}^{\Omega}_{N,com},
\end{align} 
where we use that $\bU=v'\nabla^\perp \Psi_{\neq}+(0,\dot{v})$ and we define 
\begin{align}
 &R^{\Omega}_{N,\Psi}=\skli |AZ|_k(\eta)(Ap^{-\frac14})_k(\eta)|k|^{\frac12}|\eta\ell-k\xi||\widehat{\Psi}_{\neq}|_\ell(\xi)_N|\widehat{\Omega}|_{k-\ell}(\eta-\xi)_{<N/8}\dd\eta \dd\xi 	\label{def:RZPsi}\\
&R^{\Omega}_{N,\delta}=\skli |AZ|_k(\eta)(Ap^{-\frac14})_k(\eta)|k|^{\frac12}|\mathcal{F}(h\nabla^\perp\Psi_{\neq})|_\ell(\xi)_N|\widehat{\nabla\Omega}|_{k-\ell}(\eta-\xi)_{<N/8}\dd\eta \dd\xi 	\label{def:RZdelta}\\
&R^{\Omega}_{N,\dot{v}}=\skli |AZ|_k(\eta)(Ap^{-\frac14})_k(\eta)|k|^{\frac12}|\dot{v}|(\xi)_N|\widehat{\de_v\Omega}|_{k}(\eta-\xi)_{<N/8}\dd\eta \dd\xi \label{def:ROmegaNdotv}\\
&R^{\Omega}_{N,com}=\skli |AZ|_k(\eta)|\bU|_\ell(\xi)_N |A\nabla Z|_{k-\ell}(\eta-\xi)_{<N/8}\dd\eta \dd\xi.	\label{def:ROmegaNcom}
\end{align}
The main contribution will be the one given by $R^{\Omega}_{N,\Psi}$ and the term $R^{\Omega}_{N,\delta}$ can be considered, roughly speaking, as a perturbation of it. The term $R^{\Omega}_{N,com}$ comes from the commutator which we had to introduce to deal with the transport nonlinearities. When the velocity is at high frequencies, we do not need to gain anything from the commutator and we can deal with this term separately.

\bullpar{Bound on ${R}_{N,\Psi}^\Omega$}
 In view of the notation introduced in \eqref{def:resnonresdecom}, we split the term as 
\begin{equation}
	{R}_{N,\Psi}^\Omega=\skli \mathcal{R}_{N,\Psi}^{\Omega,(R,R)}+\mathcal{R}_{N,\Psi}^{\Omega,(NR,R)}+\mathcal{R}_{N,\Psi}^{\Omega,(R,NR)}+\mathcal{R}_{N,\Psi}^{\Omega,(NR,NR)}+\mathcal{R}_{N,\Psi}^{\Omega,S}+\mathcal{R}_{N,\Psi}^{\Omega,L},
\end{equation} 
and bound each term in a way that is consistent with \eqref{bd:RNprop}. 

\diampar{Bound on $\mathcal{R}_{N,\Psi}^{\Omega,(R,NR)}$}   
First of all, observe that on the support of the integral, we have
\begin{equation}
	\label{bd:frRNRr}
	|k|\leq \frac14|\eta|, \qquad A_k(\eta)\lesssim \tA_k(\eta),\qquad |\eta\ell-k\xi|\leq |k,\eta||k-\ell,\eta-\xi|\lesssim  |\eta||k-\ell,\eta-\xi|.
\end{equation}
Then, since $t\geq \TS$,  from \eqref{bd:detw/w} we have 
\begin{equation*}
	p^{-\frac14}_k(\eta)|k|^\frac12\mathbbm{1}_{t\in I_{k,\eta}}\chi^I\lesssim \sqrt{\frac{\de_tw_k}{w_k}}(\eta).
\end{equation*} 
 Using this and appealing to \eqref{bd:Jexgen}, \eqref{bd:frRNRr}  we deduce   
\begin{align}
	(Ap^{-\frac14})_{k}(\eta)|k|^\frac12 |\eta\ell-k\xi|\mathbbm{1}_{t\in I_{k,\eta}\cap I^c_{\ell,\xi}}\chi^I&\lesssim
\sqrt{\frac{\de_tw_k(\eta)}{w_k(\eta)}}\frac{|\eta|^\frac32}{|k|(1+|t-\frac{\eta}{k}|)^{\frac12}}\tA_\ell(\xi)\e^{c\lambda|k-\ell,\eta-\xi|^s}\notag\\
	\label{bd:RRNRk}&\lesssim\frac{|\eta|^\frac32}{|k|^\frac32}\frac{\de_tw_k(\eta)}{w_k(\eta)}p^{-\frac34}_\ell(\xi)|\ell|^\frac12 (p^\frac34\tA)_\ell(\xi)\e^{\lambda|k-\ell,\eta-\xi|^s}.
\end{align} 
Since $t\in I_{\ell,\xi}^c$, we observe that 
\begin{align}
\left(\frac{|\eta|}{|k|}\right)^\frac32p^{-\frac34}_\ell(\xi) \mathbbm{1}_{t\in I_{\ell,\xi}^c}\lesssim\left(\frac{|\eta|}{|k|}\right)^\frac32 \jap{\frac{\xi}{\ell}}^{-\frac32}\lesssim \jap{k-\ell,\eta-\xi}^{\frac32}.\label{eq:rnrbdd1}
\end{align}
Combining \eqref{bd:wexfaway},  \eqref{bd:RRNRk} , \eqref{eq:rnrbdd1}, \eqref{bd:ellipticGZw} and the bootstrap hypothesis \eqref{boot:En}
we get 
\begin{align}
\notag \sum_{N>8}\skli \mathcal{R}^{\Omega,(R,NR)}_{N,\Psi}&\lesssim \norm{\sqrt{\frac{\de_tw}{w}}\tA Z}\norm{\sqrt{\frac{\de_t w}{w}}\widetilde{A}(-\Delta_L)^\frac34|\de_z|^\frac12 \Psi}\norm{A\Omega}_{\G^{\lambda,\sigma-5}}\\
&\lesssim \delta (G_w[Z]+G^\eps_{elliptic}),
\end{align}
as required by \eqref{bd:RNprop}.

\diampar{Bound on $\mathcal{R}^{\Omega,(NR,R)}_{N,\Psi}$} 
For this term we know  that $4|\ell|^2\leq |\xi|$. Hence, from \eqref{bd:freqsupp} we deduce
\begin{align}
\label{bd:stupid}
|k|\leq |\ell|+\frac{3}{16}|\ell,\xi|\leq \frac{31}{64}|\xi|\leq \frac{31}{624}(|k|+|\eta|)\Longrightarrow |k|\leq \frac{31}{593}|\eta|.
\end{align}
Hence, we always have $A\lesssim \tA$.
We now have to exploit the fact that when exchanging $(Ap^{-\frac14})_k(\eta)$ with $(Ap^{-\frac14})_\ell(\xi)$ we gain derivatives in $v$, namely $1/2$ from $A$ and $1/2$ from $p^{-\frac14}$. More precisely, since $t\in I_{k,\eta }^c\cap I_{\ell,\xi}$, by \eqref{bd:p/p} and \eqref{bd:Jexgood} we get 
\begin{equation}
	\mathbbm{1}_{t\in I_{k,\eta}^c\cap I_{\ell,\xi}}(Ap^{-\frac14})_k(\eta)|k|^\frac12\lesssim  \frac{|\ell|^2(1+|t-\frac{\xi}{\ell}|)}{|\xi|}|\ell|^\frac12(Ap^{-\frac14})_\ell(\xi)\e^{c\lambda |k-\ell,\eta-\xi|^s}.
\end{equation}
Appealing to Lemma \ref{lemma:detw/wfar} and the fact that $|\eta\ell-k\xi|\lesssim |\ell,\xi||k-\ell,\eta-\xi|\lesssim |\xi||k-\ell,\eta-\xi|$, we have
 \begin{align}
	\mathbbm{1}_{t\in I_{k,\eta}^c\cap I_{\ell,\xi}}(Ap^{-\frac14})_k(\eta)|k|^\frac12|\eta \ell-k\xi|&\lesssim \frac{|\ell|}{p^{\frac12}_\ell(\xi)}|\ell|^\frac12(\tA p^{\frac34})_{\ell}(\xi)\e^{c\lambda |k-\ell,\eta-\xi|^s}\notag\\
	&\lesssim\frac{\de_t w_\ell}{w_\ell}(\xi) |\ell|^\frac12(\tA p^{\frac34})_\ell(\xi)\e^{c\lambda |k-\ell,\eta-\xi|^s}.
\end{align}
Combining the inequality above with \eqref{bd:wexfaway}, using the bootstrap hypothesis \eqref{boot:En} and the elliptic estimate \eqref{bd:ellipticGZw} we get
\begin{align*}
	\sum_{N>8}\skli \mathcal{R}^{\Omega,(NR,R)}_{N,\Psi}&\lesssim  \norm{\sqrt{\frac{\de_t w}{w}}\tA Z}\norm{ \sqrt{\frac{\de_t w}{w}}\tA(-\Delta_L)^{\frac34}|\de_z|^\frac12\Psi}\norm{\Omega}_{\G^{\lambda,\sigma-5}}\lesssim\delta (G_w[Z]+ G^\eps_{elliptic}),
\end{align*}
whence proving \eqref{bd:RNprop}.

\diampar{Bound on $\mathcal{R}_{N,\Psi}^{\Omega,(NR,NR)}$}
For this term, since $t\in I_{k,\eta}^c\cap I_{\ell,\xi}^c$ from \eqref{bd:Jeximp} and \eqref{bd:p/p} we deduce
\begin{equation}
	\label{bd:RNRNRp54}
	\mathbbm{1}_{t\in I_{k,\eta}^c\cap I_{\ell,\xi}^c}(Ap^{-\frac14})_k(\eta)|k|^\frac12|\eta\ell-k\xi|\lesssim |k,\eta|^{\frac{s}{2}}|\ell,\xi|^{1-\frac{s}{2}} p^{-1}_{\ell}(\xi)|\ell|^\frac12(p^\frac34A)_\ell(\xi)\e^{c\lambda |k-\ell,\eta-\xi|^s}.
\end{equation}
Appealing to \eqref{bd:ketapc}, the bootstrap hypothesis \eqref{boot:En}  and \eqref{bd:ellipticGZw}, from the inequality above we  then deduce that 
\begin{align}
	\sum_{N>8}\skli \mathcal{R}^{\Omega,(NR,NR)}_{N,\Psi}&\lesssim  \frac{1}{t^{s}} \norm{|\nabla|^\frac{s}{2}AZ}\norm{  \frac{|\nabla|^{\frac{s}{2}}}{\jap{t}^s}A(-\Delta_L)^{\frac34}|\de_z|^\frac12\Psi}\norm{\Omega}_{\G^{\lambda,\sigma-5}}\lesssim \delta (G_\lambda[Z]+ G^\eps_{elliptic}),\label{bd:RNRNR>}
\end{align}
as needed for  \eqref{bd:RNprop}.

\diampar{Bound on $R^{\Omega,(R,R)}_{N,\Psi}$}
When $t\in I_{k,\eta}\cap I_{\ell,\xi}$ we necessarily have $4|k|\leq |\eta|$ and $4|\ell|\leq |\xi|$, meaning that $|k,\eta|\approx |\eta|$ and $|\ell,\xi|\approx |\xi|$. Then, on the support of the integrand $|k,\eta|\approx|\ell,\xi|$ so that $|\eta|\approx |\xi|$.
Therefore we can apply the trichotomy Lemma \ref{lemma:trichotomy}. If case (b) holds, then we repeat the same argument done for $R^{\Omega,(NR,NR)}_{N,\Psi}$ and we omit the details. 

If we are in the case (a), namely $k=\ell$, then appealing to \eqref{bd:p/pkk} and \eqref{bd:Jeximp} we get 
\begin{equation}
	(Ap^{-\frac14})_k(\eta)|k|^\frac12|\eta\ell-k\xi|\mathbbm{1}_{\{t\in I_{k,\eta}\cap I_{\ell,\xi}\}\cap \{ k=\ell\}}\lesssim |\ell||\eta-\xi|p^{-1}_{\ell}(\xi)|\ell|^\frac12 (Ap^{\frac34})_\ell (\xi) \e^{c\lambda |\eta-\xi|^s}.
\end{equation}
Now observe that since $t\geq \TS$, \eqref{bd:detw/w} implies
\begin{equation*}
	|\ell|p^{-1}_{\ell}(\xi)\chi^I \mathbbm{1}_{t\in I_{\ell,\xi}}\leq \frac{1}{|\ell|(1+|t-\frac{\xi}{\ell}|^2)}\chi^I\mathbbm{1}_{t\in I_{\ell,\xi}}\lesssim \frac{\de_t w_\ell}{w_\ell}(\xi).
\end{equation*}
We also have $A_{k}(\eta)\lesssim \widetilde{A}_{k}(\eta)$ and $A_{\ell}(\xi)\lesssim \widetilde{A}_{\ell}(\xi)$. 
Using \eqref{bd:wexfaway} and \eqref{boot:En} we deduce 
\begin{align*}
	\sum_{N>8}\skli \mathcal{R}^{\Omega,(R,R)}_{N,\Psi}\mathbbm{1}_{k=\ell}&\lesssim \norm{\sqrt{\frac{\de_t w}{w}}\tA Z}\norm{\sqrt{\frac{\de_tw}{w}}\widetilde{A}(-\Delta_L)^{\frac34}|\de_z|^\frac12\Psi}\norm{\Omega}_{\G^{\lambda,\sigma-4}}\\
		&\lesssim \delta (G_w[Z] +G^\eps_{elliptic}).
\end{align*}
In case (c) of Lemma \ref{lemma:trichotomy} one has 
\begin{equation*}
	|\eta\ell-k\xi|\leq |\ell||\eta-\xi|+|\xi||k-\ell|=|\ell||\eta-\xi|+\frac{|\xi|}{|k|}|k||k-\ell|\lesssim |\ell|\jap{\eta-\xi,k-\ell}^2.
\end{equation*}
Therefore we can repeat the same argument as above.

\diampar{Bound on $\mathcal{R}^{\Omega,S}_{N,\Psi}$}
When $t\leq \TS$, if $t\in I_{k,\eta}$ then $|k|\gtrsim \sqrt{|\eta|}$. Hence, from \eqref{bd:p/p} we  have 
\begin{equation*}
	p^{-\frac14}_{k}(\eta)\chi^S\lesssim \jap{k-\ell,\eta-\xi}^{\frac32}p^{-\frac14}_\ell(\xi).
\end{equation*}
Analogously, we can always apply \eqref{bd:Jeximp}. Then, if $t\in I_{\ell,\xi}^c$ we argue as done for $\mathcal{R}^{\Omega,(NR,NR)}_{N,\Psi}$ to obtain
\begin{align}
	\sum_{N>8}\skli \mathcal{R}^{\Omega,S}_{N,\Psi}\mathbbm{1}_{t\in I_{\ell,\xi}^c}&\lesssim \delta (G_\lambda[Z]+G^\eps_{elliptic}).
\end{align}
If $t\in I_{\ell,\xi}$ and $t\leq \TS$, then $|\xi|\lesssim |\ell|^2$. Therefore $|\eta\ell-k\xi|\lesssim |\ell|^2|k-\ell,\eta-\xi|$, hence
\begin{align*}
	\mathcal{R}_{N,\Psi}^{\Omega,S}\mathbbm{1}_{t\in I_{\ell,\xi}}\lesssim &\  (|AZ|)_{k}(\eta)|\ell|^2p^{-1}_\ell(\xi)|\ell|^\frac12({A}p^{\frac34}|\widehat{\Psi}|)_{\ell}(\xi)_N \e^{\lambda|k-\ell,\eta-\xi|^s}|\widehat{\Omega}|_{k-\ell}(\eta-\xi)_{<N/8}.
\end{align*}
Since $|\ell|^2p^{-1}_\ell(\xi)\leq 1 \lesssim t^{-2s}|k,\eta|^{\frac{s}{2}}|\ell,\xi|^{\frac{s}{2}}$, appealing to \eqref{boot:En} we find
\begin{align}
	\sum_{N>8}\skli \mathcal{R}^{\Omega,S}_{N,\Psi}\mathbbm{1}_{t\in{I}_{\ell,\xi}}\lesssim \delta (G_\lambda[Z]+G^\eps_{elliptic}),
\end{align}
as required for  \eqref{bd:RNprop}. 

\diampar{Bound on $\mathcal{R}^{\Omega,L}_{N,\Psi}$}
In this case, from \eqref{bd:p/pkk} we know that  
\begin{equation}
	\label{bd:longp}
	p^{-1/4}_{k}(\eta)+p^{-1/4}_{\ell}(\xi) \lesssim \frac{1}{t^\frac12}\jap{\eta-\xi}^{1/2}.
\end{equation}
 When $t\in I_{\ell,\xi}^c$, in view of \eqref{bd:Jeximp} we find 
\begin{equation}
	(Ap^{-\frac14})_k(\eta)|k|^\frac12|\eta\ell-k\xi|\chi^L \mathbbm{1}_{t\in I_{\ell,\xi}^c}\lesssim \mathbbm{1}_{t\in I_{\ell,\xi}^c}\frac{1}{t^\frac12}|k,\eta|^{\frac{s}{2}}|\ell,\xi|^{1-\frac{s}{2}} p^{-\frac34}_{\ell}(\xi)|\ell|^\frac12(p^\frac34A)_\ell(\xi)\e^{c\lambda |k-\ell,\eta-\xi|^s},
\end{equation}
whence, using \eqref{bd:ketapct12} we get 
\begin{align}
	\sum_{N>8}\skli \mathcal{R}^{\Omega,L}_{N,\Psi}\mathbbm{1}_{t\in I_{\ell,\xi}^c}&\lesssim \delta (G_\lambda[Z]+G^\eps_{elliptic}).
\end{align}
If $t\in I_{\ell,\xi}$ then $|\ell,\xi|\lesssim |\xi|$ so that from \eqref{bd:longp} we have
\begin{align}
	(Ap^{-\frac14})_k(\eta)|k|^\frac12|\eta\ell-k\xi|\chi^L \mathbbm{1}_{t\in I_{\ell,\xi}}&\lesssim \frac{1}{t^\frac12}|\xi| p^{-\frac34}_{\ell}(\xi)|\ell|^\frac12(p^\frac34A)_\ell(\xi)\e^{c\lambda |k-\ell,\eta-\xi|^s}\notag\\
	&\lesssim \frac{\min\{|\xi|,|\eta|\}^{1-s}}{t^{2}}|k,\eta|^{\frac{s}{2}}|\ell,\xi|^{\frac{s}{2}}|\ell|^\frac12(p^\frac34A)_\ell(\xi)\e^{c\lambda |k-\ell,\eta-\xi|^s}.
\end{align}
Since $\TL\leq t$ we have $t^{-2}|\min\{|\xi|,|\eta|\}|^{1-s}\lesssim t^{-1-s}$, so that
\begin{align}
	\sum_{N>8}\skli \mathcal{R}^{\Omega,L}_{N,\Psi}\mathbbm{1}_{t\in I_{\ell,\xi}}&\lesssim \delta (G_\lambda[Z]+G^\eps_{elliptic}),
\end{align}
as we wanted.

\bullpar{Bound on $R^{\Omega}_{N,\delta}$}
First observe that 
\begin{equation}
	\label{eq:intpartsv'}
	h\nabla^{\perp}\Psi=\nabla^{\perp}(h\Psi)+(\Psi\de_vh,0).
\end{equation}
Hence, by the definition \eqref{def:RZdelta} we have 
\begin{align*}
{R}^{\Omega}_{N,\delta}\leq& \ \skli |AZ|_k(\eta)(Ap^{-\frac14})_k(\eta)|k|^\frac12|\eta \ell-k\xi||\mathcal{F}(h\Psi)|_\ell(\xi)_N|\widehat{\Omega}|_{k-\ell}(\eta-\xi)_{<N/8}\dd\eta \dd\xi\\
	&+\skli |AZ|_k(\eta)(Ap^{-\frac14})_k(\eta)|k|^\frac12|\mathcal{F}(\Psi\de_vh)|_\ell(\xi)_N|\widehat{\de_z\Omega}|_{k-\ell}(\eta-\xi)_{<N/8}\dd\eta \dd\xi\\
	=& {R}^{1}_{N,\delta}+{R}^{2}_{N,\delta}.
\end{align*}
One would like to directly treat these terms as a $\delta$-perturbation of $R^{\Omega}_{N,\Psi}$, however, this is not true in general. More precisely, as done in the proof of Proposition \ref{prop:elliptic}, we first consider the following paraproduct decomposition in the $v$-variable (since $v'$ does not depend on $z$):
\begin{align}
	h\Psi=h_{Hi}\Psi_{lo}+h_{lo}\Psi_{Hi} +h_{Hi}\Psi_{Hi}, \qquad 	\Psi	\de_vh=\Psi_{lo}(\de_vh)_{Hi}+\Psi_{Hi}(\de_vh)_{lo} +\Psi_{Hi}(\de_vh)_{Hi}.
\end{align} 
and we define 
\begin{align}
	\label{def:splitRidelta}
	{R}^{i}_{N,\delta}={R}^{i}_{\delta, LH}+{R}^{i,z}_{\delta, HL}+{R}^{i,v}_{\delta, HL}+{R}^{i}_{\delta, HH},
\end{align}
 where the term ${R}^{i,z}_{\delta,HL}$ denote the part of ${R}^{i}_{\delta,HL}$ with the cut-off $\chi^z=\mathbbm{1}_{|\ell|\geq 16|\xi'|}$. Analogously, $R^{i,v}_{\delta,HL}$ has the cut-off $\chi^v=\mathbbm{1}_{|\ell|\leq 16|\xi'|}$. With a slight abuse of notation we omitted the subscript $N$.

\diampar{Coefficients in (relatively) low frequencies}
In the proof of Proposition \ref{prop:elliptic} we have seen that we can treat in the same way the \textit{low-high} case or \textit{high-low} with $|k|\geq 16|\eta|$. This because we can always pay derivatives on the coefficients. Therefore, the most problematic term will be will ${R}^1_{N,\delta}$ since more derivatives are hitting $\Psi$. However, the case under consideration can be treated by reasoning as done for the term $R^{\Omega}_{N,\Psi}$. More precisely, we first claim that the following inequality holds true
\begin{align}
	\label{bd:Ip34}\mathcal{I}^{p^{3/4}}&=\norm{\left(\frac{|\nabla|^{\frac{s}{2}}}{\jap{t}^q}A+\sqrt{\frac{\de_tw}{w}}\widetilde{A}\right)(-\Delta_L)^{\frac34}|\de_z|^\frac12\left(h_{lo} \Psi_{Hi}+\chi^{z}h_{Hi} \Psi_{lo})\right)_{\neq}}^2\lesssim \delta^2 G^\eps_{elliptic},
\end{align}
where $\chi^z_k(t,\eta)=\mathbbm{1}_{|k|\geq 16|\eta|}$.
Indeed, first observe that since $v$ does not depend on $z$, the factor $|\de_z|^\frac12$ is always on the stream function. To prove \eqref{bd:Ip34}, one can always use \eqref{bd:p/pkk}, \eqref{bd:Jeximp}  to move the multipliers onto $\Psi$ by paying regularity on $h$. We omit the details of this argument since it has been done in the proof of Proposition \ref{prop:elliptic}. Hence, from Young's convolution inequality, the bootstrap hypothesis \eqref{boot:Ev} and Proposition \ref{prop:elliptic} we infer
\begin{align*}
\mathcal{I}^{p^{3/4}}\lesssim\norm{h}_{\G^{\lambda,\sigma-4}}^2\norm{\left(\frac{|\nabla|^{\frac{s}{2}}}{\jap{t}^q}A+\sqrt{\frac{\de_tw}{w}}\widetilde{A}\right)(-\Delta_L)^\frac34 |\de_z|^\frac12 \Psi)_{\neq}}^2\lesssim \delta^2 G^\eps_{elliptic}, 
\end{align*}
where in the last inequality  we used \eqref{bd:precellcontr}. Having at hand \eqref{bd:Ip34}, we can repeat exactly the same argument done for ${R}^{\Omega}_{N,\Psi}$ to conclude that 
\begin{equation}
	\sum_{N, M>8}{R}^{\Omega}_{\delta,LH}=	\sum_{N, M>8}\sum_{i=1}^2 {R}^{i}_{\delta,LH}+{R}^{i,z}_{\delta,HL}\lesssim \delta 	\sum_{N>8}{R}^{\Omega}_{N,\Psi}.
\end{equation}
Using the bounds done for $R^{\Omega}_{N,\Psi}$, we see that also in this case we get a bound consistent with \eqref{bd:RNprop}.

\diampar{Coefficients (truly) at high-frequencies}
We now have to deal with the \textit{high-low} case when $|k|\leq 16|\eta|$. In this case, as evident from ${R}^2_{N,\delta}$ we need to recover some derivative for the term $\de_vh$. We will treat only ${R}^2_{N,\delta}$ since the term ${R}^1_{N,\delta}$ is analogous in this high-low regime. Writing down the term explicitly one has 
\begin{align}
{R}^{2,v}_{\delta,HL}\leq\sum_{N, M>8}\sum_{k,\ell}\int_{\mathbb{R}^3}& |AZ|_k(\eta)(Ap^{-\frac14})_k(\eta)|k|^\frac12\rho_N(\xi)_\ell\mathbbm{1}_{|\ell|\leq 16|\xi'|}|\xi'|\widehat{h}(\xi')_M \notag\\
	&\times|\widehat{\Psi}|_\ell(\xi-\xi')_{<M/8} |{\widehat{\de_z\Omega}}|_{k-\ell}(\eta-\xi)_{<N/8}\dd\eta \dd\xi \dd \xi',\label{def:R2vdelta}
\end{align}
where $\rho_N$ is the cut-off of the paraproduct decomposition as defined in \eqref{eq:paraprod}.
We now have to exploit the fact that we control the coefficients with a stronger norm. More precisely, by the definition of $A^{v}$, see \eqref{def:ARvintro}, reasoning as in \eqref{bd:ellAv0}-\eqref{bd:ellAv1} we have 
\begin{equation}\label{eq:est6130}
	(Ap^{-\frac14})_k(\eta)|\xi'|\lesssim |\xi'|\jap{\frac{k}{\xi'}}^{\frac12}A^{v}(\xi')\e^{c\lambda |\ell,\xi'-\xi|^s+c\lambda |\eta-\xi|^s}\lesssim |\xi'|^{\frac12}A^{v}(\xi')\e^{c\lambda |\ell,\xi'-\xi|^s+c\lambda |k-\ell,\eta-\xi|^s}, 
\end{equation}
where we also used the fact that $A\lesssim \widetilde{A}$ when $|\ell|\leq 16|\xi'|$. Hence, since $s>1/2$, appealing to Proposition \ref{prop:lossyelliptic} and the bootstrap hypotheses \eqref{boot:En}, we infer
\begin{align}
		\sum_{N, M>8}{R}^{2,v}_{\delta,HL}\lesssim \norm{|\nabla|^{\frac{s}{2}}AZ}\norm{|\de_v|^{\frac{s}{2}}A^{v}h}\norm{\Psi}_{\G^{\lambda,\sigma-4}}\norm{\Omega}_{\G^{\lambda,\sigma-4}} \lesssim\delta ( G_{\lambda}[Z]+\eps^2 G^{v}_\lambda[h]),
\end{align}
as needed for \eqref{bd:RNprop}.

\diampar{The high-high term}
Applying the same reasoning of \cite{BM15}, we have
\begin{equation}
\sum_{N>8}\sum_{i=1}^2R^{i}_{\delta,HH}\lesssim \delta^2 \frac{\eps^2}{t^\frac32}.
\end{equation}

\bullpar{Bound on ${R}^{\Omega}_{N,\dot{v}}$}
To control this term, we are going to exploit the consequences of the bootstrap hypotheses \eqref{bd:dvdotv}-\eqref{bd:Gwdotv}. Indeed, we recall that to we control $\dot{v}$ via \eqref{boot:vdot} and bounds on  $\mathcal{H}$. Notice that we need to recover $s$-derivatives but this will be balanced by the extra decay in time available for $\mathcal{H}$ (or $\de_v{\dot{v}}$).
From  \eqref{def:ROmegaNdotv} we have
\begin{align}
R^{\Omega}_{N,\dot{v}}&\lesssim \skli |AZ|_k(\eta)(Ap^{-\frac14})_k(\eta)|k|^{\frac12}|\widehat{\dot{v}}|(\xi)_N|{\widehat{\de_v\Omega}}|_{k}(\eta-\xi)_{<N/8}\dd\eta \dd\xi\\
&=\skli \mathcal{R}^{\Omega}_{N,\dot{v}}(\mathbbm{1}_{t\in I_{k,\eta}\cap I_{k,\xi}}\chi^I+(1-\mathbbm{1}_{t\in I_{k,\eta}\cap I_{k,\xi}}\chi^I)).
\end{align}
Since $\Omega$ is at low frequencies we can always move all the factors $|k|$ to this term. Here, the most dangerous case is when $t\in I_{k,\eta}\cap I_{k,\xi}$. Indeed, we know that $\dot{v}$ always has non-resonant regularity since does not depend on $z$, whereas the weight $A$ is at resonant regularity. Due to the regularity gap between $w_R$ and $w_{NR}$, we will lose $1/2$ derivatives in $v$. In particular, when $t\in I_{k,\eta}\cap  I_{k,\xi}$ and $\TS\leq t \leq \TL$, from \eqref{bd:JexTDs},  Lemma \ref{lemma:detw/wfar} and the definition of the weight $w$, see \eqref{def:w}, we have 
\begin{align}
\label{bd:Ap-14kk}	(Ap^{-\frac14})_k(\eta)\mathbbm{1}_{t\in I_{k,\eta}\cap I_{k,\xi}} \chi^I
	&\lesssim \frac{|\eta|^{\frac12+s}}{|k|^\frac32}\frac{\de_t w_k(\eta)}{w_k(\eta)}\frac{A_0(\xi)}{\jap{\xi}^s}\e^{c\lambda|k,\eta-\xi|^s}.
\end{align}
 Since $t\in I_{k,\eta}$ (and $s<1$) we know that $|\eta|^{1/2+s}/|k|^{3/2}\lesssim t^{1/2+s}$. Hence, combining the inequality above with \eqref{bd:wexfaway} and \eqref{bd:Gwdotv} we get
\begin{align}
	\sum_{N>8} \skli \mathcal{R}^{\Omega}_{N,\dot{v}}\mathbbm{1}_{t\in I_{k,\eta}\cap I_{k,\xi}}\chi^I &\lesssim t^{\frac12+s}\norm{\sqrt{\frac{\de_t w}{w}}\tA Z}\norm{\sqrt{\frac{\de_t w}{w}}\frac{A_0}{\jap{\de_v}^s}\dot{v}}\norm{\Omega}_{\G^{\lambda,\sigma-6}}\\
	&\lesssim \delta G_w[Z]+\delta^{-1}\eps^2t^{2+2s}G_w[\jap{\de_v}^{-s}\mathcal{H}]),
\end{align} 
as required by \eqref{bd:RNprop}.  
For the remaining term, thanks to \eqref{bd:Jeximp} we and \eqref{bd:wNRexgen} we know that we never lose derivatives from $A_k(\eta)/A_0(\xi)$. In particular, we have
\begin{align*}
	(Ap^{-\frac14})_k(\eta)(1-\mathbbm{1}_{t\in I_{k,\eta}^c} \chi^I)\lesssim |k,\eta|^{\frac{s}{2}}|\xi|^{\frac{s}{2}}\frac{A_0(\xi)}{\jap{\xi}^s}\e^{c\lambda|k,\eta-\xi|^s}.
\end{align*}
This way, we conclude that
\begin{align}
	\sum_{N>8} \skli \mathcal{R}^{\Omega}_{N,\dot{v}}(1-\mathbbm{1}_{t\in I_{k,\eta}\cap I_{k,\xi}}\chi^I ) &\lesssim \eps t^{\frac12}\norm{|\nabla|^{\frac{s}{2}}AZ}\norm{|\de_v|^{\frac{s}{2}}\frac{A_0}{\jap{\de_v}^s}\dot{v}}\\
	&\lesssim\delta \frac{1}{t^{\frac12+s}}\norm{|\nabla|^{\frac{s}{2}}AZ}^2+\delta^{-1}\frac{\eps^2}{t^{\frac12+s}}t^{2+2s}\norm{|\de_v|^{\frac{s}{2}}\frac{A_0}{\jap{\de_v}^s}\dot{v}}^2.
	\end{align} 
Since $s>1/2$, appealing to \eqref{bd:Glambdadotv} we get
\begin{align}
	\sum_{N>8} \skli \mathcal{R}^{\Omega}_{N,\dot{v}}(1-\mathbbm{1}_{t\in I_{k,\eta}\cap I_{k,\xi}}\chi^I ) &\lesssim \delta G_\lambda[Z]+\delta^{-1}\eps^2 t^{2+2s}G_\lambda[\jap{\de_v}^{-s}\dot{v}]\\
	\notag &\lesssim \delta G_\lambda[Z]+\delta^{-1}\eps^2 (\jap{t}^{2+2s}G_\lambda[\jap{\de_v}^{-s}\mathcal{H}]-\dot{\lambda}(t)\eps^2t^{2+2s-4}),
\end{align} 
that is in agreement with \eqref{bd:RNprop} since $s<1$.

\bullpar{Bound on $\mathcal{R}^{\Omega}_{com}$}
This is the easiest term to control since we have room to pay regularity on $\bU$ in order to get integrability. In particular, since on the support of the integrand $|k-\ell,\eta-\xi|\lesssim |\ell,\xi|$, by Cauchy-Schwarz and Young's convolution inequality we get 
\begin{equation}
	\sum_{N>8} R^{\Omega}_{N,com}\leq \norm{AZ}^2 \norm{\bU}_{H^{\sigma-5}}\lesssim \frac{\eps^3}{t^{\frac32}},
\end{equation}
as needed in \eqref{bd:RNprop}.

\subsubsection{{Bound on} $R^{\Theta,i}_N$}
\label{sub:RTheta1}
As done in Section \ref{sec:RZiN}, we will consider just the bound for $R^{\Theta,1}_N$ and we split this term as
\begin{align}
	R_N^{\Theta,1}
	\lesssim \ R^{\Theta}_{N,\Psi}+R^{\Theta_0}_{N,\Psi}+{R}^{\Theta}_{N,\delta}+{R}^{\Theta}_{N,\dot{v}}+{R}^{Q}_{N,com}.
\end{align}
where we define 
\begin{align}
	\label{def:RQPsi}&{R}^{\Theta}_{N,\Psi}=\skli |AQ|_k(\eta)(Ap^{\frac14})_k(\eta)|k|^\frac12|\eta\ell-k\xi||\widehat{\Psi}_{\neq}|_\ell(\xi)_N|\widehat{\Theta}_{\neq}|_{k-\ell}(\eta-\xi)_{<N/8}\dd\eta \dd\xi\\
		\label{def:RTheta0}&{R}^{\Theta_0}_{N,\Psi}=\skli |AQ|_k(\eta)(Ap^{\frac14})_k(\eta)|k|^\frac32|\widehat{\Psi}_{\neq}|_k(\xi)_N|\widehat{\de_v\Theta}_0|(\eta-\xi)_{<N/8}\dd\eta \dd\xi\\
	\label{def:RQdelta}&{R}^{\Theta}_{N,\delta}=\skli |AQ|_k(\eta)(Ap^{\frac14})_k(\eta)|k|^\frac12|\mathcal{F}(h\nabla^\perp\Psi_{\neq})|_\ell(\xi)_N|\widehat{\nabla\Theta}|_{k-\ell}(\eta-\xi)_{<N/8}\dd\eta \dd\xi \\
			\label{def:RQdotv}&{R}^{\Theta}_{N,\dot{v}}=\skli |AQ|_k(\eta)(Ap^{\frac14})_k(\eta)|k|^\frac12|\dot{v}|(\xi)_N|\widehat{\de_v\Theta}|_{k}(\eta-\xi)_{<N/8}\dd\eta \dd\xi\\
	\label{def:RQcom}&{R}^{Q}_{N,com}=\skli |AQ|_k(\eta)|\widehat{\bU}|_\ell(\xi)_N |A\nabla Q|_{k-\ell}(\eta-\xi)_{<N/8}\dd\eta \dd\xi
\end{align}
Throughout this section, we will make use of the bound \eqref{bd:Thneqlow} and, as a direct consequence of 
\eqref{boot:En}, we also have
\begin{align}
	\label{bd:Thneqlow0}	\norm{\de_v\Theta_0(t)}_{\G^{\lambda,\sigma}}=\beta^{-1}\norm{\beta(\nabla_L\Theta)_0(t)}_{\G^{\lambda,\sigma}}\lesssim \eps t^{\frac12}\lesssim \delta.
\end{align}
In view of the inequality \eqref{bd:Thneqlow}, the bounds for $R^{\Theta,1}_N$ will be easier with respect to the ones for $R^{\Omega,1}_N$. In particular, we immediately have a factor $\eps t^{-1/2}$ from the low frequency part, whereas in Section \ref{sec:RZiN} the low frequency contribution give us $\eps t^{1/2}$.

\bullpar{Bound on $R^\Theta_{N,\Psi}$}
Following the notation introduced in \eqref{def:resnonresdecom}, we split the term as 
\begin{equation}
	{R}_{N,\Psi}^\Theta=\skli \mathcal{R}_{N,\Psi}^{\Theta,(R,R)}+\mathcal{R}_{N,\Psi}^{\Theta,(NR,R)}+\mathcal{R}_{N,\Psi}^{\Theta,(R,NR)}+\mathcal{R}_{N,\Psi}^{\Theta,(NR,NR)}+\mathcal{R}_{N,\Psi}^{\Theta,S}+\mathcal{R}_{N,\Psi}^{\Theta,L}.
\end{equation}
We now control each term separately.

\diampar{Bound on $\mathcal{R}_{N,\Psi}^{\Theta,(R,NR)}$} 
Using $|\eta\ell-k\xi|\lesssim |\ell,\xi||k-\ell,\eta-\xi|$ and $t\approx |\eta/k|$, from \eqref{bd:Jexgen} and \eqref{bd:ketapct12}, we get
\begin{align}
\notag (Ap^\frac14)_k(\eta)|k|^\frac12|\eta\ell-k\xi|\mathbbm{1}_{t\in I_{k,\eta}\cap I_{\ell,\xi}^c}\chi^I&\lesssim |\ell,\xi|\frac{|\eta|^\frac12}{|k|(1+|t-\frac{\eta}{k}|)^\frac12}p^{\frac14}_{k}(\eta)p^{-\frac34}_{\ell}(\xi)|\ell|^\frac12(p^\frac34A)_\ell(\xi)\e^{c\lambda|k-\ell,\eta-\xi|^s}\\
\notag &\lesssim t^\frac12|k,\eta|^{\frac{s}{2}} |\ell,\xi|^{1-\frac{s}{2}}p^{-\frac34}_{\ell}(\xi)|\ell|^\frac12(p^\frac34 A)_\ell(\xi)\e^{c\lambda|k-\ell,\eta-\xi|^s}\mathbbm{1}_{t\in  I_{\ell,\xi}^c}\\
&\lesssim t^{1-2s} |k,\eta|^{\frac{s}{2}} |\ell,\xi|^{\frac{s}{2}}|\ell|^\frac12(p^\frac34 A)_\ell(\xi)\e^{\lambda|k-\ell,\eta-\xi|^s}.
\end{align} 
This way, combining the inequality above with \eqref{bd:Thneqlow} and Proposition \ref{prop:elliptic}, we have 
\begin{align}
\notag
	\sum_{N>8}\skli \mathcal{R}_{N,\Psi}^{\Theta,(R,NR)}&\lesssim t^{1-s} \norm{|\nabla|^{\frac{s}{2}} A Q}\norm{\frac{|\nabla|^{\frac{s}{2}}}{\l t \r^s}A (-\Delta_L)^\frac34 |\de_z|^\frac12\Psi}\norm{\Theta_{\neq}}_{\G^{\lambda,\sigma-4}}	\\
	\label{bd:RZPsiRNR1}&\lesssim \delta (G_\lambda[Q]+G^\eps_{elliptic}),
\end{align}
implying \eqref{bd:RNprop} for this term.

\diampar{Bound on $\mathcal{R}_{N,\Psi}^{\Theta,(NR,R)}$}
 This is the most dangerous term since we lose to exchange $p_{k}(\eta)$ with $p_{\ell}(\xi)$ (but we gain when we exchange $A_k(\eta)$ with $A_\ell(\xi)$). On the support of the integrand we know $4|\ell|^2\leq |\xi|$ and \eqref{bd:stupid}, meaning that we have $A\lesssim \tA$.
 Then, appealing to \eqref{bd:Jexgood} and \eqref{bd:p/p} (with the role of $(k,\eta)$ and $(\ell,\xi)$ switched), we get 
\begin{align}
	(Ap^\frac14)_k(\eta)|k|^{\frac12}\mathbbm{1}_{t\in I_{k,\eta}^c\cap I_{\ell,\xi}}\chi^I&=p^{-\frac12}_k(\eta)(Ap^\frac34)_k(\eta)|k|^{\frac12}\mathbbm{1}_{t\in I_{k,\eta}^c\cap I_{\ell,\xi}}\chi^I\\
	&\lesssim p^{-\frac12}_k(\eta)\frac{|\ell|(1+|t-\frac{\xi}{\ell}|)^\frac12}{|\xi|^\frac12} \frac{|\xi|^\frac32}{|\ell|^3(1+|t-\frac{\xi}{\ell}|)^\frac32}|\ell|^{\frac12}(\widetilde{A}p^\frac34)_\ell(\xi)\e^{c\lambda|k-\ell,\eta-\xi|^s}\\
	&\lesssim \frac{|k|}{\jap{\eta}} \frac{|\xi|}{|\ell|^2} \frac{\de_t w_\ell(\xi)}{w_\ell(\xi)}|\ell|^{\frac12} (\widetilde{A}p^\frac34)_\ell(\xi)\e^{c\lambda|k-\ell,\eta-\xi|^s}\\
	&\lesssim  \frac{t}{|\xi|}  \frac{\de_t w_\ell(\xi)}{w_\ell(\xi)} |\ell|^{\frac12}(\widetilde{A}p^\frac34)_\ell(\xi)\e^{\lambda|k-\ell,\eta-\xi|^s}.
\end{align}
Since $|\eta\ell-k\xi|\lesssim |\xi||k-\ell,\eta-\xi|$, combining the inequality above with \eqref{bd:wexfaway}, \eqref{bd:Thneqlow} and Proposition \ref{prop:elliptic}, we infer 
\begin{align}
	\sum_{N>8}\skli \mathcal{R}_{N,\Psi}^{\Theta,(NR,R)}&\lesssim t\norm{\sqrt{\frac{\de_t w}{w}}\tA Q}\norm{\sqrt{\frac{\de_t w}{w}}\widetilde{A} (-\Delta_L)^\frac34|\de_z|^{\frac12}\Psi}\norm{\Theta_{\neq}}_{\G^{\lambda,\sigma-4}}\\
\label{bd:RThetaNRR}	&\lesssim \delta (G_w[Q]+G^{\eps}_{elliptic}),
\end{align}
consistent with \eqref{bd:RNprop}.

\diampar{Bound on $\mathcal{R}^{\Theta,(NR,NR)}_{N,\Psi}$}
From \eqref{bd:Jeximp} and \eqref{bd:p/p} we have 
\begin{equation*}
	(Ap^{\frac14})_{k}(\eta)|k|^{\frac12}|\eta \ell-k\xi|\mathbbm{1}_{t\in I_{k,\eta}^c\cap I_{\ell,\xi}^c}\chi^I\lesssim \mathbbm{1}_{t\in I_{\ell,\xi}^c}|k,\eta|^{\frac{s}{2}} |\ell,\xi|^{1-\frac{s}{2}}p^{-\frac12}_{\ell}(\xi) |\ell|^{\frac12}(p^{\frac34}A)_\ell(\xi)\e^{c\lambda|k-\ell,\eta-\xi|^s}.
\end{equation*}
From \eqref{bd:ketapct}, \eqref{bd:Thneqlow} and the elliptic estimate \eqref{bd:ellipticGZw} we have 
\begin{align}
	\sum_{N>8}\skli \mathcal{R}_{N,\Psi}^{\Theta,(NR,NR)}&\lesssim t^{1-s}\norm{|\nabla|^{\frac{s}{2}}AQ}\norm{\frac{|\nabla|^{\frac{s}{2}}}{\jap{t}^s}A (-\Delta_L)^\frac34|\de_z|^\frac12\Psi}\norm{\Theta_{\neq}}_{\G^{\lambda,\sigma-4}}\\
\label{bd:RThetaNRNR}	&\lesssim \delta (G_\lambda[Q] +G^{\eps}_{elliptic}),
\end{align}
as required for \eqref{bd:RNprop}.

\diampar{Bound on $\mathcal{R}^{\Theta,(R,R)}_{N,\Psi}$}
On the support of the integral we know that $4|k|^2\leq |\eta|$ and $4|\ell|^2\leq |\xi|$. Then, since $k\neq \ell$, we know that we can only have cases $(b)$ or $(c)$ in Lemma \ref{lemma:trichotomy}. The case $(c)$ is straightforward. In case $(b)$ we are in a situation similar to $\mathcal{R}^{\Theta,(NR,NR)}_{N,\Psi}$. Indeed, we know that we can apply \eqref{bd:Jeximp}, so that from \eqref{bd:p/p} we get 
\begin{equation}
	(Ap^\frac14)_{k}(\eta)|\eta\ell-k\xi||k|^{\frac12}\mathbbm{1}_{I_{k,\eta}\cap I_{\ell,\xi}}\chi^I\lesssim |\xi|p^{-\frac12}_{\ell}(\xi)|\ell|^{\frac12}(\widetilde{A}p^{\frac34})_{\ell}(\xi)\e^{c\lambda|k-\ell,\eta-\xi|^s}.
\end{equation}
We also have 
\begin{equation}
	\label{bd:trivRR}
	|\xi|p^{-\frac12}_{\ell}(\xi)\lesssim \jap{\frac{\xi}{\ell}} \frac{\de_t w_\ell(\xi)}{w_\ell(\xi)}\lesssim t\frac{\de_t w_\ell(\xi)}{w_\ell(\xi)}. 
\end{equation}
Therefore, thanks to \eqref{bd:Thneqlow} and \eqref{bd:ellipticGZw} we conclude that 
\begin{align}
	\sum_{N>8}\skli \mathcal{R}_{N,\Psi}^{\Theta,(R,R)}&\lesssim t\norm{\sqrt{\frac{\de_t w}{w}}\tA Q}\norm{\sqrt{\frac{\de_t w}{w}}\widetilde{A} (-\Delta_L)^\frac34|\de_z|^{\frac12}\Psi}\norm{\Theta_{\neq}}_{\G^{\lambda,\sigma-4}}\\
	\label{bd:RThetaRR}	&\lesssim \delta (G_w[Q]+G^{\eps}_{elliptic}),
\end{align}
in agreement with \eqref{bd:RNprop}.

\diampar{Bound on $\mathcal{R}^{\Theta,S}_{N,\Psi}$}
From \eqref{bd:Jeximp} and \eqref{bd:p/p} we deduce 
\begin{equation*}
	(Ap^{\frac14})_k(\eta)|k|^\frac12|\eta\ell-k\xi|\chi^S\lesssim |\ell,\xi|p^{-\frac12}_\ell(\xi) |\ell|^\frac12 (Ap^{\frac34})_{\ell}(\xi)\e^{c\lambda|k-\ell,\eta-\xi|^s}.
\end{equation*}
If $t\in I_{\ell,\xi}$ one has $	|\ell,\xi|p^{-\frac12}_\ell(\xi)\lesssim t^{1-2s}|k,\eta|^{\frac{s}{2}}|\ell,\xi|^{\frac{s}{2}}$. If $t\in I_{\ell,\xi}^c$ we use \eqref{bd:ketapct}, so that, in general we have 
\begin{align}
	\sum_{N>8}\skli \mathcal{R}_{N,\Psi}^{\Theta,S}\mathbbm{1}_{t\in I_{\ell,\xi}^c}&\lesssim t^{1-s}\norm{|\nabla|^{\frac{s}{2}}AQ}\norm{\frac{|\nabla|^{\frac{s}{2}}}{\jap{t}^s}A (-\Delta_L)^\frac34|\de_z|^{\frac12}\Psi}\norm{\Theta_{\neq}}_{\G^{\lambda,\sigma-4}}\\
	\label{bd:RThetaS1}&\lesssim \delta (G_\lambda[Q]+G^{\eps}_{elliptic}),
\end{align}
as needed for \eqref{bd:RNprop}.

\diampar{Bound on $\mathcal{R}^{\Theta,L}_{N,\Psi}$}
When $t\in I_{\ell,\xi}^c$, in view of \eqref{bd:p/p} and \eqref{bd:Jeximp} we find 
\begin{equation}
	(Ap^{\frac14})_k(\eta)|k|^\frac12|\eta\ell-k\xi|\chi^L \mathbbm{1}_{t\in I_{\ell,\xi}^c}\lesssim \mathbbm{1}_{t\in I_{\ell,\xi}^c}|k,\eta|^{\frac{s}{2}}|\ell,\xi|^{1-\frac{s}{2}} p^{-\frac12}_{\ell}(\xi)|\ell|^\frac12(p^\frac34A)_\ell(\xi)\e^{c\lambda |k-\ell,\eta-\xi|^s},
\end{equation}
whence, using \eqref{bd:ketapct} we get 
\begin{align}
		\label{bd:RThetaL1}
	\sum_{N>8}\skli \mathcal{R}^{\Theta,L}_{\Psi}\mathbbm{1}_{t\in I_{\ell,\xi}^c}&\lesssim \delta (G_\lambda[Q]+G^\eps_{elliptic}).
\end{align}
If $t\in I_{\ell,\xi}$, then $|\ell,\xi|\lesssim |\xi|$. Since $t\geq \TL$ we also have $p^{\frac14}_{k}(\eta)\lesssim t^\frac12 \jap{k}^\frac12$ and $p^{-3/4}_\ell(\xi)\lesssim (|k| \jap{t})^{-3/2}$. Hence 
\begin{align}
	(Ap^{\frac14})_k(\eta)|k|^\frac12|\eta\ell-k\xi|\chi^L \mathbbm{1}_{t\in I_{\ell,\xi}}&\lesssim t^\frac12\jap{k}^{\frac12}|\xi| p^{-\frac34}_{\ell}(\xi)|\ell|^\frac12(p^\frac34A)_\ell(\xi)\e^{c\lambda |k-\ell,\eta-\xi|^s}\\
	&\lesssim \frac{|\min\{|\xi|,|\eta|\}|^{1-s}}{t}|k,\eta|^{\frac{s}{2}}|\ell,\xi|^{\frac{s}{2}}|\ell|^\frac12(p^\frac34A)_\ell(\xi)\e^{c\lambda |k-\ell,\eta-\xi|^s}.
\end{align}
Exploiting the bound $t^{-1}|\min\{|\xi|,|\eta|\}|^{1-s}\lesssim t^{-s}$, we get
\begin{align}
\label{bd:RThetaL2}	\sum_{N>8}\skli \mathcal{R}^{\Theta,L}_{\Psi}\mathbbm{1}_{t\in I_{\ell,\xi}}&\lesssim \norm{|\nabla|^{\frac{s}{2}}AQ}\norm{\frac{|\nabla|^\frac{s}{2}}{\l t\r^s}(-\Delta_L)^\frac34|\de_z|^\frac12\Psi}\norm{\Theta_{\neq}}_{\G^{\lambda,\sigma-4}}\\
	\notag &\lesssim \frac{\delta}{t}\norm{|\nabla|^{\frac{s}{2}}AQ}\norm{\frac{|\nabla|^\frac{s}{2}}{\l t\r^s}(-\Delta_L)^\frac34|\de_z|^\frac12\Psi}\lesssim \delta (G_\lambda[Q]+G^\eps_{elliptic}),
\end{align}
as required in \eqref{bd:RNprop}.

\bullpar{Bound on $R^{\Theta_0}_{N,\Psi}$}
For this term, since $\Psi$ is at the same frequency $k$, we can move the multiplier $Ap^\frac14$ withouth losing derivatives in the high-frequency part. In addition, we only need to recover one derivative in $z$. Observe that,  appealing to \eqref{bd:p/pkk} and \eqref{bd:Jeximp}, we have 
\begin{equation}
	\label{bd:RTheta0first}
	(Ap^{\frac14})_{k}(\eta)|k|^\frac32\lesssim \frac{|k|}{(|k|^2+|\xi-kt|^2)^\frac12}|k|^{\frac12}(Ap^{\frac34})_k(\xi)\e^{c\lambda|k-\ell,\eta-\xi|^s}. 
\end{equation}
Then, we split this term as follows 
\begin{equation}
\label{eq:splitT0}
	R^{\Theta_0}_{N,\Psi}= \skli\mathcal{R}^{\Theta_0}_{N,\Psi}(\chi^S+(1-\chi^S)(\mathbbm{1}_{t\in I_{k,\xi}}+\mathbbm{1}_{t\in I_{k,\xi}^c})) , 
\end{equation} 
where the cut-off are defined in \eqref{def:cutofftime}. Then, when $t\in I_{k,\xi}$, combining \eqref{bd:RTheta0first} with Lemma \ref{lemma:detw/wfar} we have
\begin{equation}
	(Ap^{\frac14})_{k}(\eta)|k|^\frac32\mathbbm{1}_{t\in I_{k,\xi}}(1-\chi^S)\lesssim \frac{\de_t w_k(\xi)}{w_k(\xi)}|k|^{\frac12}(\tA p^{\frac34})_k(\xi)\e^{c\lambda|k-\ell,\eta-\xi|^s}. 
\end{equation}
Since on the support of the integrand $|\eta|\approx|\xi|$, in view of \eqref{bd:wexfgen} and \eqref{bd:Thneqlow0} we get 
\begin{align}
	\notag	\sum_{N>8}\mathcal{R}^{\Theta_0}_{N,\Psi}\mathbbm{1}_{t\in I_{k,\xi}}(1-\chi^S) &\lesssim  \norm{\left(\sqrt{\frac{\de_tw }{w}} \tA+\frac{|\nabla|^{\frac{s}{2}}}{\l t\r^s}A\right) Q}\norm{\sqrt{\frac{\de_tw }{w}}\widetilde{A}(-\Delta_L)^{\frac34}|\de_z|^\frac12 \Psi}\norm{\de_v\Theta_0}_{\G^{\lambda,\sigma-6}}\\
	&\lesssim \delta(G_w[Q]+G_\lambda[Q]+G^{\eps}_{elliptic}).
\end{align}
When $t\in I_{k,\xi}^c$, $t\geq \TS$ and $|kt|/2\leq |\xi|\leq 2|kt|$, since $s>1/2$ observe that 
\begin{equation}
\frac{|k|}{(|k|^2+|\xi-kt|^2)^\frac12}\lesssim \frac{|k|}{(|k|^2+|\xi|^2/|k|^2)^\frac12} \lesssim \frac{|k|}{|\xi|^{\frac12}}\lesssim \frac{|\xi|^{\frac12}}{t}\lesssim t^{-\frac12-s} |\xi|^s,
\end{equation} 
which, since $q\leq 1/4+s/2$, implies
\begin{align}
	\notag	\sum_{N>8}\mathcal{R}^{\Theta_0}_{N,\Psi}\mathbbm{1}_{\{t\in I_{k,\xi}^c\}\cap\{|kt|/2\leq |\xi|\leq 2|kt|\}}(1-\chi^S) &\lesssim  t^{-q}\norm{|\nabla|^{\frac{s}{2}}A Q}\norm{\frac{|\nabla|^\frac{s}{2}}{\l t\r^q}A(-\Delta_L)^{\frac34}|\de_z|^\frac12 \Psi}\norm{\de_v\Theta_0}_{\G^{\lambda,\sigma-6}}\\
	\label{bd:stoq}&\lesssim \delta(G_\lambda[Q]+G^{\eps}_{elliptic}).
\end{align}
If $|\xi|\leq |kt|/2$ or $|\xi|\geq 2|kt|$ we have
\begin{equation}
\label{bd:nice}
\frac{|k|}{(|k|^2+|\xi-kt|^2)^\frac12}\mathbbm{1}_{|\xi|\leq|kt|/2}\lesssim \frac{1}{t}, \qquad \frac{|k|}{(|k|^2+|\xi-kt|^2)^\frac12}\mathbbm{1}_{|\xi|\geq 2|kt|}\lesssim \frac{1}{|\xi/k|}\lesssim\frac{1}{t}\jap{\frac{\xi}{kt}}^{-1}.
\end{equation} 
Then, from \eqref{bd:ellp34APsi}, \eqref{bd:RTheta0first} and \eqref{boot:E} we obtain
\begin{align}
	\notag	\sum_{N>8}\mathcal{R}^{\Theta_0}_{N,\Psi}\mathbbm{1}_{t\in I_{k,\xi}^c\cap\{|\xi|\leq|kt|/2\}\cap\{|\xi|\geq2|kt|\}}(1-\chi^S) &\lesssim  \frac{1}{t}\norm{A Q}\norm{ A(-\Delta_L)^{\frac34}|\de_z|^\frac12 \Psi}\norm{\de_v\Theta_0}_{\G^{\lambda,\sigma-6}}\lesssim \frac{\eps^3}{t^\frac12}.
\end{align}
When $t\leq \TS$ we use $1\lesssim t^{-2s}|k,\eta|^{\frac{s}{2}}|k,\xi|^{\frac{s}{2}}$, to get
\begin{align}
	\notag	\sum_{N>8}\mathcal{R}^{\Theta_0}_{N,\Psi}\chi^S&\lesssim  t^{-s}\norm{|\nabla|^{\frac{s}{2}} AQ}\norm{\frac{|\nabla|^{\frac{s}{2}}}{\jap{t}^s}{A}(-\Delta_L)^{\frac34}|\de_z|^\frac12 \Psi}\norm{\de_v\Theta_0}_{\G^{\lambda,\sigma-6}}\lesssim \delta(G_\lambda[Q]+G^{\eps}_{elliptic}).
\end{align}
Therefore, also for the term $R^{\Theta_0}_{N,\Psi}$ we have bounds consistent with \eqref{bd:RNprop}.

\bullpar{Bound on $R^{\Theta}_{N,\delta}$}
We again consider a paraproduct decomposition in the $v$-variable and, similarly to \eqref{def:splitRidelta}, we write 
\begin{equation}
	R^\Theta_{N,\delta}={R}^\Theta_{\delta,LH}+{R}^{\Theta,z}_{\delta,HL}+{R}^{\Theta,v}_{\delta,HL}+{R}^\Theta_{\delta,HH}.
\end{equation}
Since in the bounds for ${R}_{N,\Psi}^\Theta$ we never used the fact we had $|\eta\ell-k\xi|$ instead of $|\ell,\xi|$\footnote[$\dagger$]{In the bounds for $\mathcal{R}^{\Omega,(R,R)}_{N,\Psi}$ one exploit the presence of $|\eta \ell-k\xi|$ when $k=\ell$. }, for $k\neq \ell$ we can repeat the arguments done for $\mathcal{R}^\Theta_{N,\Psi}$ to deal with the case when the coefficients are in (relatively) low frequencies, namely $R^\Theta_{\delta,LH}$ and $R^{\Theta,z}_{\delta,HL}$. The same holds true if $k=\ell$, where the bounds are done as the ones for $R^{\Theta_0}_{N,\Psi}$. In particular, one has 
\begin{equation}
\label{bd:RTdelta0}
	\sum_{N, M>8}{R}^\Theta_{\delta,LH}+{R}^{\Theta,z}_{\delta,HL}\lesssim \delta \sum_{N>8}{R}^\Theta_{N,\Psi}+{R}^{\Theta_0}_{N,\Psi}.
\end{equation}
 We are then left with the high-low case when the coefficients are truly at high frequencies. In analogy with the notation used in \eqref{def:R2vdelta} we have to control
\begin{align}
		\label{def:RQdeltav}{R}^{\Theta,v}_{\delta,HL}=&\sum_{k,\ell}\int_{\mathbb{R}^3} |AQ|_k(\eta)(Ap^{\frac14})_k(\eta)|k|^{\frac12}\rho_N(\xi)_\ell\mathbbm{1}_{|\ell|\leq 16|\xi'|}|\widehat{h}|(\xi')_M\\
		&\qquad \qquad \times |\widehat{\nabla^\perp\Psi_{\neq}}|_\ell(\xi-\xi')_{<M/8}|\widehat{\nabla\Theta}|_{k-\ell}(\eta-\xi)_{<N/8}\dd\eta \dd\xi \dd\xi'\\
		=&\skli \mathcal{R}^{\Theta,v}_{\delta,HL}.
\end{align}
For this term we can always move derivatives in $z$ onto the streamfunction. Then, from the definition of the weight $A^{v}$, see \eqref{def:ARvintro}, since $16|\ell|\leq |\xi'|$ we know that on the support of the integral $A_k(\eta)\lesssim A^{v}(\eta)\lesssim A^{v}(\xi')$. Hence, appealing to \eqref{bd:p/pkk} and \eqref{bd:Jeximp} we have
\begin{equation}
	(Ap^{\frac14})_k(\eta)\lesssim p^{\frac14}_k(\xi') A^{v}(\xi')\e^{c\lambda|k-\ell,\xi'-\xi|^s+c\lambda|\eta-\xi|^s}.
\end{equation}
Then, notice that 
\begin{equation*}
	p^{\frac14}_k(\xi')\lesssim (t^\frac12\mathbbm{1}_{|\xi'|\leq t}+|\xi'|^\frac12\mathbbm{1}_{|\xi'|\geq t})\jap{k}^\frac12\lesssim (t^\frac12\mathbbm{1}_{|\xi'|\leq t}+|\xi'|^\frac12\mathbbm{1}_{|\xi'|\geq t})\jap{k-\ell}^\frac12\jap{\ell}^\frac12.
\end{equation*}
Since in general we do not have $k\neq \ell$, we can only use the worst bound \eqref{bd:Thneqlow0}. However, combining the two bounds above with the bootstrap hypotheses and Proposition \ref{prop:lossyelliptic}, when $|\xi'|\leq t$ we have 
\begin{align}
\label{bd:RTdelta1}
	\sum_{N, M>8}\skli\mathcal{R}^{\Theta,v}_{\delta,HL}\mathbbm{1}_{|\xi'|\leq t}\lesssim t^{\frac12} \norm{AQ}\norm{A^vh}\norm{\Psi_{\neq}}_{\G^{\lambda,\sigma-5}}\norm{\Theta}_{\G^{\lambda,\sigma-5}}\lesssim \eps^4.
\end{align}
If $|\xi'|\geq t$, since $s>1/2$ one has $|\xi|^{1/2}= |\xi'|^{1/2-s}|\xi'|^s\lesssim t^{1/2-s}|k,\eta|^{s/2}|\xi'|^{s/2}$. This way, from the bootstrap hypotheses and \eqref{bd:Thneqlow0} we get
\begin{align}
\label{bd:RTdelta2}
	\sum_{N, M>8}\skli\mathcal{R}^{\Theta,v}_{\delta,HL}\mathbbm{1}_{|\xi'|\geq t}&\lesssim t^{\frac12-s} \norm{|\nabla|^{\frac{s}{2}}AQ}\norm{|\de_v|^{\frac{s}{2}}A^{v}h}\norm{\Psi_{\neq}}_{\G^{\lambda,\sigma-5}}\norm{\Theta}_{\G^{\lambda,\sigma-5}}\\
	&\lesssim \frac{\eps^2}{t^{s+\frac12}}\norm{|\nabla|^{\frac{s}{2}}AQ}\norm{|\de_v|^{\frac{s}{2}}A^{v}h}\lesssim \eps^2 (G_\lambda[Q]+ G^v_\lambda[h]).
\end{align}
For the high-high term, we get 
\begin{equation}
\sum_{N>8}R^{\Theta}_{\delta,HH}\lesssim \norm{AQ} \norm{h}_{\G^{\lambda,\sigma-4}}\norm{A(-\Delta_L)^{\frac14}\Psi_{\neq}}_{\G^{\lambda,\sigma-4}}\norm{\Theta}_{\G^{\lambda,\sigma-4}}\lesssim\eps^4,
\end{equation}
meaning that $R^{\Theta}_{N,\delta}$ satisfies bounds in agreement with \eqref{bd:RNprop}.

\bullpar{Bound on $R^{\Theta}_{N,\dot{v}}$}
First split this term as 
\begin{align*}
	R^{\Theta}_{N,\dot{v}}=\skli \mathcal{R}^{\Theta}_{N,\dot{v}}(\mathbbm{1}_{t\in I_{k,\eta}\cap I_{k,\xi}}\chi^I+(1-\mathbbm{1}_{t\in I_{k,\eta}\cap I_{k,\xi}}\chi^I)).
\end{align*}
Notice that we always have $\Theta_{\neq}$ since if $k=0$ the term above vanishes. This is crucial since we can always recover some time-decay from \eqref{bd:Thneqlow}. The treatment of this term will be similar to $R^{\Omega}_{N,\dot{v}}$, however, in view of the $p^{\frac14}$ the worst case will be when $|\eta|\geq t$. In contrast to \eqref{bd:Ap-14kk}, for $t\in I_{k,\eta}\cap  I_{k,\xi}$, appealing to \eqref{bd:Jeximp}, Lemma \ref{lemma:detw/wfar} and \eqref{def:w}, we now argue as follows  
\begin{align}
		(Ap^{\frac14})_k(\eta)\mathbbm{1}_{t\in I_{k,\eta}\cap I_{k,\xi}} \chi^I&\lesssim  |k|^\frac12(1+|t-\frac{\eta}{k}|)^\frac12\frac{A_k(\xi)}{A_0(\xi)}A_0(\xi)\e^{c\lambda|k,\eta-\xi|^s} ,\\
              &\lesssim \left(\frac{|\eta|}{|k|}\right)^\frac12 A_0(\xi)\e^{c\lambda|k,\eta-\xi|^s} \lesssim t^{\frac12}|k,\eta|^{\frac{s}{2}}|\xi|^{\frac{s}{2}}\frac{A_0(\xi)}{\jap{\eta}^s}\e^{c\lambda|k,\eta-\xi|^s},
\end{align}
where in the last line we used $t\approx |\eta/k|$. Then, using \eqref{bd:Thneqlow} and \eqref{bd:Glambdadotv} we get
\begin{align}
	\label{bd:RTdotv0} \sum_{N>8} \skli \mathcal{R}^{\Theta}_{N,\dot{v}}\mathbbm{1}_{t\in I_{k,\eta}\cap I_{k,\xi}}\chi^I &\lesssim t^{\frac12}\norm{|\nabla|^{\frac{s}{2}}AQ}\norm{|\de_v|^{\frac{s}{2}}\frac{A_0}{\jap{\de_v}^s}\dot{v}}\norm{\Theta_{\neq}}_{\G^{\lambda,\sigma-6}}\\
	&\lesssim \eps t^{-
		(1+s)}t^{1+s}\norm{|\nabla|^{\frac{s}{2}}AQ}\norm{|\de_v|^{\frac{s}{2}}\frac{A_0}{\jap{\de_v}^s}\dot{v}}\\
	&\lesssim\delta  G_\lambda[Q]+\delta^{-1}\eps^2\left(t^{2+2s}G_\lambda[\jap{\de_v}^{-s}\mathcal{H}]+\frac{\eps^2}{t^{\frac52-s}}\right).
\end{align} 
For the remaining term, if $t\geq|\eta|$ then $p^{1/4}_k(\eta)\lesssim t^{1/2}\jap{k}^{1/2}$ but $A_k(\eta)\lesssim A_0(\eta)\e^{c\lambda|k,\eta-\xi|^s}$. Hence we can repeat exactly the same argument above. When $|\eta|\geq t$ we have $p^{1/4}_{k}(\eta)\lesssim |\eta|^{1/2}$. Therefore, 
\begin{align}
	\notag \sum_{N>8} \skli \mathcal{R}^{\Theta}_{N,\dot{v}}(1-\mathbbm{1}_{t\in I_{k,\eta}\cap I_{k,\xi}}\chi^I )\mathbbm{1}_{|\eta|\geq t} &\lesssim \eps t^{-\frac12}\norm{|\nabla|^{\frac{s}{2}}AQ}\norm{|\de_v|^{\frac{s}{2}}\frac{A_0}{\jap{\de_v}^s}|\de_v|^{\frac12}\dot{v}_{>t}}.
\end{align}
Arguing as done to prove \eqref{bd:Glambdadotv}, since $\mathcal{H}=v'\de_v\dot{v}$ and $t\geq 1$, we deduce 
\begin{equation*}
	\norm{|\de_v|^{\frac{s}{2}}\frac{A_0}{\jap{\de_v}^s}|\de_v|^{\frac12}\dot{v}_{>t}}\lesssim 	\norm{|\de_v|^{\frac{s}{2}}\frac{A_0}{\jap{\de_v}^s}\de_v\dot{v}}\lesssim 	\norm{|\de_v|^{\frac{s}{2}}\frac{A_0}{\jap{\de_v}^s}\mathcal{H}}.
\end{equation*}
Combining the two inequalities above we have
\begin{align}	\label{bd:RTdotv1} \sum_{N>8} \skli \mathcal{R}^{\Theta}_{N,\dot{v}}(1-\mathbbm{1}_{t\in I_{k,\eta}\cap I_{k,\xi}}\chi^I )\mathbbm{1}_{|\eta|\geq t} &\lesssim \eps t^{-\frac12}\norm{|\nabla|^{\frac{s}{2}}AQ}\norm{|\de_v|^{\frac{s}{2}}\frac{A_0}{\jap{\de_v}^s}\mathcal{H}}\\
\notag &\lesssim \delta G_\lambda[Q]+\delta^{-1}\eps^2t^{2+2s}G_{\lambda}[\jap{\de_v}^{-s}\mathcal{H}].
\end{align}
Thus, for $R^{\Theta}_{N,\dot{v}}$ we have bounds required in \eqref{bd:RNprop}.

\bullpar{Bound on $R^{Q}_{N,com}$}
On the support of the integrand $|k-\ell,\eta-\xi|\lesssim |\ell,\xi|$, hence we get
\begin{equation}
	\sum_{N>8} R^{Q}_{N,com}\lesssim \norm{AQ}^2 \norm{\bU}_{H^{\sigma-6}}\lesssim \frac{\eps^3}{t^\frac32}.
\end{equation}
	This concludes the proof of \eqref{bd:RNprop}.

\subsection{Remainders}
Considering 
\eqref{eq:remaind}, we note that on the support of the integrand, $|\ell,\xi|\approx |k-\ell,\eta-\xi|$ and hence by  \eqref{app:inequality2}  we find 
\begin{align}
|k,\eta|^s\leq c|k-\ell,\eta,\xi|^s+c|\ell,\xi|^s.
\end{align}
Hence, we can always pay regularity to move the multipliers. Arguing as in \cite{BM15}*{Section 7}, we deduce that
\begin{align}
	\mathcal{R}\lesssim \norm{\bU}_{\cG^{\lambda,\sigma-6}} \left(\norm{AZ}+\norm{AQ}\right) \left(\norm{Z}_{\cG^{\lambda,\sigma-1}}+\norm{Q}_{\cG^{\lambda,\sigma-1}}+
	\norm{\de_v\Theta_0}_{\cG^{\lambda,\sigma-1}}\right),
\end{align}
which can be bounded as in  
\eqref{bd:Rem} thanks to the bootstrap assumptions \eqref{boot:E}-\eqref{boot:En} and \eqref{bd:bootUlow}.

\subsection{Remaining error terms}\label{sub:remerr}

We estimate the remainder terms \eqref{eq:linear-error}, \eqref{eq:div-error} and \eqref{eq:lasterror}, starting from the linear error term $L^{Z,Q}$ in \eqref{eq:linear-error}.
Simply using the definition of the linear weight $m$ in \eqref{def:m}, we have
\begin{align}
\frac{1}{4\beta} \left|\de_t \left( \frac{\de_t p}{|k| p^\frac 12} \right)\right| = \frac{1 }{2\beta C_\beta} \frac{ C_\beta |k|^3}{p^\frac 32} \le \frac{1}{2\beta C_\beta}  \frac{C_\beta |k|^2}{p} \le  \frac{1}{2\beta C_\beta}  \frac{\de_t m}{m},
\end{align}
where $C_\beta$ was introduced in \eqref{def:m}. This readily implies that
\begin{align}
L^{Z,Q}&\le \frac{1}{2 \beta C_\beta} \norm{\sqrt{\frac{\de_t m}{m}} AZ} \norm{\sqrt{\frac{\de_t m}{m}} AQ} \leq \frac{1}{2 }\left(1-\frac{1}{2\beta}\right)(G_m[Z]+G_m[Q]).
\end{align} 
Next, consider the divergence error term $\mathcal{E}^{\div}$ in \eqref{eq:div-error}.
As $|\de_t p|/|k| p^\frac 12 \le 2$, we apply   \eqref{boot:vdot} to get
\begin{align}
\mathcal{E}^{\div} & \lesssim \norm{ \dot v }_{H^3}\norm{AZ}\norm{AQ}\lesssim \frac{\eps^3}{t^{\frac32}}.
\end{align}
It remains to estimate $\mathcal{E}^{\Delta_t}$ in \eqref{eq:lasterror}. 
As $|\de_t p|/ |k| p^\frac 12 \le 2$, the two addends of \eqref{eq:lasterror} reduce to the control of 
\begin{align*}
\mathcal{E}^{\Delta_t, 1}:=\beta  \left| \left\l |k|^\frac 32 p^{-\frac34}A\mathcal{F}((\Delta_t - \Delta_L)  \Psi) ,AQ\right\r\right|.
\end{align*} 
We now write $\mathcal{F}((\Delta_t - \Delta_L)\Psi)$ explicitly as in \eqref{eq:idPsi0} and separate the part involving $g$ and the one involving $v''$ as
 $\mathcal{E}^{\Delta_t,1}= \mathcal{E}^{\Delta_t, g} + \mathcal{E}^{\Delta_t, v''}$.  
Similarly to \eqref{def:splitRidelta}, we consider the following decomposition 
\begin{align}
\label{eq:splitrem}
\mathcal{E}^{\Delta_t, j}= \mathcal{E}^{\Delta_t, j}_{LH}+\mathcal{E}^{\Delta_t, j,z}_{HL}+\mathcal{E}^{\Delta_t, j,v}_{HL}+\mathcal{E}^{\Delta_t, j}_{HH}, \qquad j\in \{g,v''\}.
\end{align}
 For the first two terms in the right-hand side of \eqref{eq:splitrem}, the most dangerous case is for $j=g$ since more derivatives hit $\Psi$. However, notice that from \eqref{bd:Jeximp} and \eqref{bd:p/pkk} we have
 \begin{align}
|k|^{\frac32}(p^{-\frac34}A)_{k}(\eta)\lesssim \frac{|k|}{(k^2+|\xi-kt|^2)^{\frac12}}|k|^{\frac12}(p^{-\frac14}A)_{k}(\xi)\e^{c\lambda|\eta-\xi|^s}.
 \end{align}
 Hence, we are in a situation analogous to $R^{\Theta_0}_{N,\Psi}$, see \eqref{bd:RTheta0first}. Proceeding as done for $R^{\Theta_0}_{N,\Psi}$, with the use of \eqref{bd:Thneqlow0} replaced by \eqref{bd:OmTh}, we get 
 \begin{align}
 \sum_{j\in\{g,v''\}}\mathcal{E}^{\Delta_t, j}_{LH}+\mathcal{E}^{\Delta_t, j,z}_{HL}\lesssim \delta (G_w[Q]+G_\lambda[Q]+G^{\eps}_{elliptic})+t^{-\frac12}\eps^3.
 \end{align}
We now consider the remaining high-low term and we only deal with $\mathcal{E}^{\Delta_t, v'',v}_{HL}$, for which 
\begin{align}
\mathcal{E}^{\Delta_t, v'',v}_{HL}\lesssim \sum_{M>8}\sum_{k} \int_{\RR^2} |AQ|_k(\eta)  |k|^{\frac32}(Ap^{-\frac34})_k(\eta) \mathbbm{1}_{|k|\leq 16|\xi|} |\widehat{v''}(\xi)|_M| \cF((\de_v-t\de_z)\Psi)|_k(\eta-\xi)_{<M/8} .
\end{align}
Using \eqref{bd:ellAv0}-\eqref{bd:ellAv1}, \eqref{bd:p/pkk} and absorbing all factors of $|k|$ in the exponential, we have
\begin{align}
|k|^{\frac32}(p^{-\frac34}A)_k(\eta)\lesssim \frac{1}{1+|\frac{\xi}{k}-t|}\frac{1}{\l\xi\r^{\frac 12}} A^{v}(\xi) \e^{c\lambda|k,\eta-\xi|^s}.
\end{align}
We need to recover another half derivative to use the bounds available on $\l\de_v\r^{-1}v''$. Observe that, combining the bound above with \eqref{bd:japell}-\eqref{bd:japells} we have  
\begin{align}
|k|^{\frac32}(p^{-\frac34}A)_k(\eta)\lesssim \frac{1}{1+|\frac{\xi}{k}-t|}(t^{\frac12}\mathbbm{1}_{|\xi|\leq |kt|}+t^{\frac12-s}|\xi|^s\mathbbm{1}_{|\xi|\geq |kt|})\frac{A^v(\xi)}{\l\xi\r}  \e^{c\lambda|k,\eta-\xi|^s}.
\end{align}
To make use of the factor $(1+|t-\xi/k|)^{-1}$, we can consider the same splitting as in \eqref{eq:splitT0}. Hence, using \eqref{eq:ellipticproof-last}, in a similar way as in \eqref{eq:last-prop-elliptic2}, we finally obtain 
\begin{equation}
\mathcal{E}^{\Delta_t, v'',v}_{HL}\lesssim \sum_{j\in \{\lambda,w\}}\delta G_j[Q]+\delta^{-1}\eps^2\left(G_j^v[\jap{\de_v}^{-1} v'']+\jap{t}^{-2s}G_j^v[|\de_v|^s\jap{\de_v}^{-1} v'']\right),
\end{equation}
whence concluding the proof of \eqref{bd:EDeltat}.

\section{Bounds on the energy functional $E_n$}\label{sec:naturalenergy}
In this section, we aim at proving bounds on $E_n$, defined in \eqref{def:Omega}.

\subsection{The energy inequality}\label{app:energyIDVD}
The time-derivative of $E_n$ is computed in the following lemma.
\begin{lemma}
	\label{lemma:EIdEn}
For every $t\geq 0$ we have the energy inequality
\begin{align}
	\label{def:dtEn}
	\ddt E_n+\sum_{j\in\{\lambda,w,m\}}\left(G_j[\Omega]+\beta^2 G_j[\nabla_L\Theta]\right) \leq \frac12 \left\l \frac{\de_tp}{|k|p^\frac12}AQ,AQ\right\r+NL^{\Omega,\Theta}+\widetilde{\mathcal{E}}^{\div}+\widetilde{\mathcal{E}}^{\Delta_t}.
	\end{align}
where the $G_j[\cdot]$ are defined in \eqref{def:Gw} and the error terms are given by 
\begin{align}
	\label{def:NLVD}
	NL^{\Omega,\Theta}&=|\left\l [A, \bU]\cdot \nabla \Omega,A \Omega\right\r|+\beta^2\big|\left\l \cF([A p^\frac12,\bU]\cdot \nabla \Theta),A p^\frac12\hTheta\right\r\big|,\\
		\widetilde{\mathcal{E}}^{\div}&=\frac12 \big|\left\l \nabla \cdot \bU,|A \Omega|^2+\beta^2|A \nabla_L\Theta|^2\right\r\big|, \label{err-div-omega}\\
		\widetilde{\mathcal{E}}^{\Delta_t}&=\beta^2\big|\left\l Ak p^{-\frac12} \cF\left((\Delta_t-\Delta_L)\Psi\right),Ap^{\frac12}\hTheta\right\r\big|. \label{err-omega}			
\end{align} 
\end{lemma}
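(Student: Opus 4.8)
\textbf{Proof plan for Lemma \ref{lemma:EIdEn}.}
The plan is to differentiate $E_n$ in time, using the definition \eqref{def:Omega}, the equations for $\Omega$ and $\nabla_L\Theta$ in the moving frame, and the identity \eqref{eq:Adt} for $\partial_t A$. Concretely, $\ddt E_n = \Re\langle A\partial_t\Omega, A\Omega\rangle + \beta^2\Re\langle A\partial_t(\nabla_L\Theta),A\nabla_L\Theta\rangle + \langle \partial_t A\,\Omega,A\Omega\rangle + \beta^2\langle\partial_t A\,\nabla_L\Theta,A\nabla_L\Theta\rangle$, so I would first dispose of the weight terms: from \eqref{eq:Adt}, $\partial_t A = \dot\lambda|k,\eta|^s A - \frac{\partial_t w}{w}\widetilde A - \frac{\partial_t m}{m}A$, and since $\widetilde A\le A$ and all three factors $-\dot\lambda|k,\eta|^s$, $\frac{\partial_t w}{w}$, $\frac{\partial_t m}{m}$ are nonnegative, these contribute exactly $-(G_\lambda+G_w+G_m)$ applied to $\Omega$ and to $\nabla_L\Theta$ (with the $\beta^2$ weight on the latter), which move to the left-hand side of \eqref{def:dtEn}. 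This is the origin of the good Cauchy–Kovalevskaya terms.

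Next I would expand the dynamical terms. Recall $\partial_t\Omega = -\beta^2\partial_z\Theta - \bU\cdot\nabla\Omega$ in the moving coordinates, while $\nabla_L\Theta$ satisfies an equation obtained by applying $\nabla_L$ to the $\Theta$-equation; the commutators between $\nabla_L$ and $\partial_t$, together with $\bU\cdot\nabla$, produce the transport structure plus the source term from $\partial_z\Psi$. The key algebraic point, already isolated in \cite{BCZD20}, is that the linear (source) contributions to the two inner products nearly cancel in the symmetrized structure, leaving precisely the term $\frac12\langle \frac{\partial_t p}{|k|p^{1/2}}AQ,AQ\rangle$ on the right — here one uses $\beta^2\partial_z\Theta$ paired against $A\Omega$ and the $\partial_z\Psi = p^{-1}(-\Omega)$ source in the $\nabla_L\Theta$ equation, recognizing $p^{1/2}\widehat\Theta$ up to the $|k|\beta$ normalization as $Q/(\text{bounded})$, as noted in the remark around \eqref{def:LQEn}. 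I would then integrate by parts in the transport terms $\langle\bU\cdot\nabla(A\Omega),A\Omega\rangle$ and its $\nabla_L\Theta$ analogue: using that $\bU$ is \emph{not} divergence-free in $(z,v)$ (because of the $v'$ factor, cf. \eqref{eq:BoussinesqMove}), the integration by parts leaves the divergence error $\widetilde{\mathcal E}^{\div}$ in \eqref{err-div-omega}, while the mismatch between $A(\bU\cdot\nabla\cdot)$ and $\bU\cdot\nabla(A\cdot)$ is exactly the commutator $NL^{\Omega,\Theta}$ in \eqref{def:NLVD}. Finally, because the true Biot–Savart operator is $\Delta_t$ rather than $\Delta_L$, replacing $\partial_z\Psi$ via $\Delta_L\Psi = \Omega - (\Delta_t-\Delta_L)\Psi$ generates the elliptic error $\widetilde{\mathcal E}^{\Delta_t}$ in \eqref{err-omega}, paired against $Ap^{1/2}\widehat\Theta$.

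The step I expect to be the main obstacle is keeping track of the exact form of the $\nabla_L\Theta$ equation and verifying that the linear pieces collapse to the single clean term $\frac12\langle\frac{\partial_t p}{|k|p^{1/2}}AQ,AQ\rangle$ with no residual linear error requiring the $m$-weight — unlike in the $E_L$ estimate, where a genuine linear error $L^{Z,Q}$ survives. This hinges on the algebraic identity $\partial_t p_k(t,\eta) = -2k(\eta-kt)$ from \eqref{def:p} and on the fact that $[\nabla_L,\partial_t] = [\,( \partial_v - t\partial_z),\partial_t\,] = \partial_z$ acting correctly through the chain, so that the time-dependence of the $p$-multiplier inside $\nabla_L\Theta = p^{1/2}|k|^{-1}\cdot(\text{symbol})\,\widehat\Theta$ produces precisely that quadratic form and nothing else. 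Once that cancellation is checked, the remaining bookkeeping — isolating which commutator pieces go into $NL^{\Omega,\Theta}$ versus which boundary/divergence terms go into $\widetilde{\mathcal E}^{\div}$, and confirming $|\partial_t p|\le 2|k|p^{1/2}$ is never actually needed here because no $m$-weight appears — is routine, and I would only sketch it, deferring the detailed estimation of $NL^{\Omega,\Theta}$, $\widetilde{\mathcal E}^{\div}$, $\widetilde{\mathcal E}^{\Delta_t}$ to Section \ref{sec:naturalenergy}.
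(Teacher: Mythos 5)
Your plan is correct and coincides with the paper's (sketched) proof: differentiate $E_n$, use \eqref{eq:Adt} to generate the Cauchy--Kovalevskaya terms, invoke the \cite{BCZD20}-type cancellation of the two cross terms so that the only surviving linear contribution is the one coming from $\de_t(p^{1/2})$, identified with $\frac12\l \tfrac{\de_t p}{|k|p^{1/2}}AQ,AQ\r$ via \eqref{def:LQEn}, and split the transport terms into commutators (giving $NL^{\Omega,\Theta}$), an integration-by-parts divergence error ($\widetilde{\mathcal{E}}^{\div}$), and the $\Delta_t-\Delta_L$ replacement error ($\widetilde{\mathcal{E}}^{\Delta_t}$). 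One small wording correction: the $m$-weight does appear here (it sits inside $A$, which is exactly why $G_m[\Omega]$ and $\beta^2 G_m[\nabla_L\Theta]$ occur on the left-hand side); what is true, as the paper remarks after the lemma, is that no linear error needs to be absorbed by $G_m$ in this estimate, since the linear term is kept explicitly and later controlled through $E_L$.
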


\begin{remark} 
The term in \eqref{def:dtEn} involving $Q$ can be bounded as
\begin{equation}
	\label{bd:LQ}
	\frac12 \left|\left\l \frac{\de_tp}{|k|p^\frac12}AQ,AQ\right\r\right| \leq \norm{AQ}^2\leq \frac{4\beta}{2\beta-1}E_L(t)\leq  \frac{32\beta}{2\beta-1}\eps^2,
\end{equation}
thanks to  the bootstrap hypothesis \eqref{boot:E} and the coercivity properties of $E_L$, analogous to \eqref{eq:coercive-pointwise}. In particular,  
the bound of order $\eps^2 t$ in \eqref{boot:impEn} cannot be improved in this setting. 
In addition, the weight $m$ has been introduced to control the term $L^{Z,Q}$ in \eqref{def:dtEm}, while here we exploit a direct control
in $Q$. Hence  the terms $G_m[\cdot]$ are superfluous. However, we decided not to introduce a further modification of the weight $A$, since this would not imply any significant simplifications.
\end{remark}

\begin{proof}[Proof of Lemma \ref{lemma:EIdEn}]
The computations for the time-derivative of the functional in \eqref{def:Omega} exploit some cancellations which are similar to those for the energy $E_L(t)$ in Section \ref{app:energyIDZQ}. 
The energy inequality \eqref{def:dtEn} is then obtained using \eqref{eq:Adt} and \eqref{def:LQEn}. 
\end{proof}

Now we control $NL^{\Omega,\Theta}$ in a similar way as Section \ref{sec:mainEn}. Thus, we define the transport nonlinearities as 
\begin{align*}
	\widetilde{T}_N&=|\jap{\mathcal{F}\left([A,\bU_{<N/8}]\cdot \nabla \Omega_N\right),A\Omega}|+\beta^2\big|\big\l \cF([A p^\frac12,\bU_{<N/8}]\cdot \nabla \Theta_N),A p^\frac12\hTheta\big\r\big|=	\widetilde{T}_N^{\Omega}+\widetilde{T}_N^{\Theta}.
\end{align*}
The reaction nonlinearities are given by 
\begin{align*}
	\widetilde{R}_N&=|\jap{\mathcal{F}\left([A,\bU_{N}]\cdot \nabla \Omega_{<N/8}\right),A\Omega}|+\beta^2\big|\big\l \cF([A p^\frac12,\bU_{N}]\cdot \nabla \Theta_{<N/8}),A p^\frac12\hTheta\big\r\big|
	=\widetilde{R}_N^{\Omega}+\widetilde{R}_N^{\Theta}.
\end{align*}
The remainder is 
\begin{align*}
	\widetilde{\mathcal{R}}&=\sum_{N\in \boldsymbol{D}}\sum_{N/8\leq N'\leq N}|\jap{\mathcal{F}\left([A,\bU_{N}]\cdot \nabla \Omega_{N'}\right),A\Omega}|+\beta^2\big|\big\l \cF([A p^\frac12,\bU_{N}]\cdot \nabla \Theta_{N'}),A p^\frac12\hTheta\big\r\big|.
\end{align*}
The main result of this section is the following proposition.
\begin{proposition}
\label{prop:recaperrEn}
Let $t\geq 1$ and $\beta>1/2$. Under the bootstrap hypotheses, 
\begin{align}
\label{bd:tilT} \sum_{N>8} \widetilde{T}_N&\lesssim  \sum_{j\in\{\lambda,w\}}\delta\big(G_j[\Omega]+G_j[\nabla_L\Theta]\big)+\frac{\eps^2}{t^{\frac32}},\\
\notag \sum_{N>8} \widetilde{R}_N&\lesssim  \sum_{j\in\{\lambda,w\}}\eps t^{\frac32}G_j[Z]+\delta G_j[\Omega]+\delta G_j[\nabla_L\Theta]+\delta t \big(G^{\eps}_{elliptic}+\eps^2 \jap{t}^{-2s}G_\lambda^v[|\de_v|^s\jap{\de_v}^{-1}v'']\big)\\
\label{bd:tilR} &\quad +\delta G^{\delta}_{elliptic} +\delta G_\lambda[\l\de_v\r^{-s}\mathcal{H}]+ \eps^2G^v_{\lambda}[h]+\delta\eps^2,\\
\label{bd:tilRem} \widetilde{\mathcal{R}}&\lesssim \delta\frac{\eps^2}{t},\\
\label{bd:tilEdiv} \widetilde{\mathcal{E}}^{\div}&\lesssim\frac{\eps^3}{t},\\
\label{bd:tilEDt} \widetilde{\mathcal{E}}^{\Delta_t}&\lesssim  \delta\sum_{j\in\{\lambda,w\}}(G_j[\nabla_L\Theta]+ G_{j}^v[1-(v')^2]+ G_j^v[\l \de_v\r^{-1}v'']+\jap{t}^{-2s} G_j^v[|\de_v|^s\l \de_v\r^{-1}v''])\notag\\
&\quad +\delta G^{\delta}_{elliptic}+\delta \eps^2.
\end{align}
\end{proposition}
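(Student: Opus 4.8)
The plan is to prove Proposition~\ref{prop:recaperrEn} by estimating each of the nonlinear and error contributions to \eqref{def:dtEn} using essentially the same machinery developed in Section~\ref{sec:mainEn}, but now applied to the direct variables $\Omega$ and $\nabla_L\Theta$ rather than the symmetric variables $Z,Q$. The key structural observation is that in the natural energy the weight $A$ acts on $\Omega$ and $Ap^{1/2}$ on $\widehat{\Theta}$ (i.e. on $\nabla_L\Theta$), so the multiplier manipulations are simpler: there are no factors $(p/k^2)^{\pm 1/4}$ to commute through, hence the ``$p$-commutator'' terms $T_N^{\Omega,p}$, $T_N^{\Theta,p}$ which caused the worst losses in Section~\ref{sec:mainEn} are absent. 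The price is that $\Omega$ and $\nabla_L\Theta$ grow like $\eps\jap{t}^{1/2}$, so the low-frequency factor in the reaction terms is $\eps t^{1/2}$ rather than uniformly bounded, which explains the weaker right-hand sides (factors $\eps t^{3/2}$, $\delta t$, etc.) appearing in \eqref{bd:tilR}.

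First I would establish the energy identity \eqref{def:dtEn} itself (Lemma~\ref{lemma:EIdEn}), which follows from differentiating \eqref{def:Omega}, using \eqref{eq:Adt} for $\de_t A$, the transport-structure cancellations as in \cite{BCZD20} and Section~\ref{app:energyIDZQ}, and recognizing the linear coupling term $\beta^2\l Akp^{-1/2}\cF(\de_z\Psi),Ap^{1/2}\widehat\Theta\r$ together with $-\beta^2\l Ak\widehat\Theta, A\Omega\r$ combine via $\Delta_L\Psi=\Omega-(\Delta_t-\Delta_L)\Psi$ to produce precisely the term $\frac12\l\frac{\de_tp}{|k|p^{1/2}}AQ,AQ\r$ (see \eqref{def:LQEn}) plus $\widetilde{\mathcal E}^{\Delta_t}$. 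Then for the transport nonlinearities $\widetilde T_N^\Omega,\widetilde T_N^\Theta$ I would split $[A,\bU_{<N/8}]$ into the $\e^{\lambda|\cdot|^s}$, $\jap{\cdot}^\sigma$ and $J$ commutators exactly as in \eqref{def:splitTNOm}, treating the $J$-commutator via Lemmas~\ref{lemma:commJ} and \ref{lem:analog}; since $\eps t^{1/2}\le\delta$ and $\norm{\bU}_{\G^{\lambda,\sigma-6}}\lesssim\eps\jap{t}^{-3/2}$ by \eqref{bd:bootUlow}, the same case analysis over short/intermediate/long times and resonant/non-resonant frequencies yields \eqref{bd:tilT}, with the leftover $\eps^2 t^{-3/2}$ coming from the high--high remainder and the commutator term $[m^{-1},\bU]$-type contributions. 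The reaction terms $\widetilde R_N^\Omega,\widetilde R_N^\Theta$ are handled by the decomposition \eqref{def:resnonresdecom}: writing $\bU=v'\nabla^\perp\Psi_{\neq}+(0,\dot v)$ and splitting into $\Psi$-, $\delta$- ($h\nabla^\perp\Psi$), $\dot v$- and commutator pieces as in \eqref{def:RZPsi}--\eqref{def:ROmegaNcom}, then invoking the elliptic estimates of Proposition~\ref{prop:elliptic} (both $G^\eps_{elliptic}$ and $G^\delta_{elliptic}$), the bounds on coefficients from Lemma~\ref{lemma:bdcoeff}, and \eqref{bd:Thneqlow}. The divergence and $\Delta_t$ errors \eqref{err-div-omega}--\eqref{err-omega} are treated as their counterparts $\mathcal E^{\div},\mathcal E^{\Delta_t}$ in Section~\ref{sub:remerr}, using $|\de_tp|\le 2|k|p^{1/2}$, \eqref{boot:vdot} and the paraproduct splitting \eqref{eq:splitrem}.

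The main obstacle I expect is the reaction term $\widetilde R_N^{\Omega,(R,R)}$ and, more acutely, the term $\widetilde R^{\Omega}_{N,\Psi}$ in its resonant regime, because here $A$ is at the highest (resonant) regularity and acts on $\Omega$ with no softening $p^{-1/4}$ factor, so the exchange $A_k(\eta)\mathbbm{1}_{t\in I_{k,\eta}}\leftrightarrow A_\ell(\xi)$ via \eqref{bd:JexTD}--\eqref{bd:JexTDs} costs up to a full factor $|\eta|/k^2\approx t^2$, and the low-frequency $\Omega$-amplitude contributes another $\eps t^{1/2}$. Absorbing this requires both Cauchy--Kovalevskaya gains $G_w[\Omega]$ \emph{and} $G_w[Z]$ (the latter appearing with the bad constant $\eps t^{3/2}$ in \eqref{bd:tilR}, which is consistent with \eqref{boot:En} only because $\eps t^{1/2}\le\delta$ forces $\eps t^{3/2}\le\delta t$, matching the $\jap{t}$ growth allowed by the bootstrap). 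The delicate bookkeeping is to route the ``resonant vorticity growth'' onto the $Z$-energy rather than onto $E_n$ itself — this is exactly the point emphasized in the remark after \eqref{def:dtEnintro} — so that the term $\frac12\l\frac{\de_tp}{|k|p^{1/2}}AQ,AQ\r\lesssim \eps^2$ (by \eqref{bd:LQ}) is the only genuinely non-decaying contribution, producing the sharp $\eps^2\jap{t}$ in \eqref{boot:impEn}. A secondary difficulty is the $\dot v$ reaction term $\widetilde R^\Omega_{N,\dot v}$, where one loses $1/2$ a derivative because $\dot v$ carries only non-resonant regularity while $A$ is resonant; this is recovered using the extra time-decay of $\mathcal H$ encoded in \eqref{bd:dvdotv}--\eqref{bd:Gwdotv}, mirroring the treatment of $R^\Omega_{N,\dot v}$ in Section~\ref{sec:RZiN}, and contributes the $\delta G_\lambda[\jap{\de_v}^{-s}\mathcal H]$ and $\delta t\,\eps^2\jap{t}^{-2s}G^v_\lambda[|\de_v|^s\jap{\de_v}^{-1}v'']$ terms in \eqref{bd:tilR}. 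Once all contributions are collected, integrating \eqref{def:dtEn} on $(1,t)$ and using the bootstrap hypotheses \eqref{boot:E}--\eqref{boot:vdot}, \eqref{bd:intGell} and $\eps t^{1/2}\le\delta$ closes \eqref{boot:impEn}.
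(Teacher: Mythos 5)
Your plan follows the paper's proof almost step for step: the same energy identity, the same paraproduct splitting into transport/reaction/remainder, the same decomposition $\bU=v'\nabla^\perp\Psi_{\neq}+(0,\dot v)$, the key device of rewriting $A\widehat\Omega=(p/k^2)^{1/4}AZ$ so that the resonant reaction terms are absorbed by $G_w[Z],G_\lambda[Z]$ with the prefactor $\eps t^{3/2}$, and the treatment of the $\dot v$ terms through $\mathcal H$. However, there is one concrete error in the plan: the claim that the $p$-commutator transport terms are absent. This is true for the vorticity (the weight there is just $A$), but false for the density: in $E_n$ the density is measured through $\norm{A\nabla_L\Theta}=\norm{Ap^{1/2}\hTheta}$ while the equation being transported is the one for $\Theta$ itself, so $NL^{\Omega,\Theta}$ in \eqref{def:NLVD} contains $[Ap^{1/2},\bU]\cdot\nabla\Theta$, and splitting this commutator unavoidably produces the term $\widetilde{T}_N^{\Theta,p}=\big|\l \cF(m^{-1}[p^{1/2},\bU_{<N/8}]\cdot\nabla(mA\Theta)_N),Ap^{1/2}\hTheta\r\big|$, which the paper estimates separately. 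The relevant commutator bound is now for the half power of $p$, namely $|p_k(\xi)^{1/2}-p_\ell(\xi)^{1/2}|\lesssim t\,|k-\ell|\,p_k^{1/2}(\xi)\big(|k|(1+|t-\tfrac{\xi}{k}|)\big)^{-1}$, which carries an extra factor $t$ compared with \eqref{bd:p14kell0}; on resonant intervals $t\approx|\xi/k|$ this matches the loss $|\xi|^2/k^2$ you already know how to absorb from Section \ref{subsec:TNOmega}, but the short-time regime requires a (short) separate argument, e.g. $|\xi/k|\lesssim t^{1-2s}|k,\eta|^{s/2}|\ell,\xi|^{s/2}$ when $t\in I_{k,\xi}$. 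As written, your proof would simply omit this contribution to \eqref{bd:tilT}, so the transport estimate would be incomplete; the fix is routine but must be carried out.

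A smaller imprecision: the term $\tfrac12\l \tfrac{\de_tp}{|k|p^{1/2}}AQ,AQ\r$ in \eqref{def:dtEn} is not produced by the linear coupling terms combining through $\Delta_L\Psi=\Omega-(\Delta_t-\Delta_L)\Psi$; those two coupling contributions cancel exactly upon taking real parts (their sum is purely imaginary), and the substitution of $\Omega$ for $\Delta_L\Psi$ only generates $\widetilde{\mathcal E}^{\Delta_t}$. The $\de_tp$ term comes instead from differentiating the time-dependent factor $p^{1/2}$ sitting in the weight on $\hTheta$. This does not affect the rest of your argument, but the derivation of Lemma \ref{lemma:EIdEn} should be stated accordingly.
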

Having at hand Proposition \ref{prop:recaperrEn}, we first show that \eqref{boot:impEn} holds true.
\begin{proof}[Proof of \eqref{boot:impEn}]
Observe that from \eqref{boot:E}-\eqref{boot:En}, \eqref{bd:Avscoeff} and \eqref{bd:intGell} one has
\begin{align}
\notag \int_1^t\sum_{j\in\{\lambda,w\}}&\eps \tau^{\frac32}G_j[Z]+\delta \tau \big(G^{\eps}_{elliptic}+\eps^2 \jap{\tau}^{-2s}G_\lambda^v[|\de_v|^s\jap{\de_v}^{-1}v'']\big) \dd \tau\lesssim\eps^3t^{\frac32}+\delta \eps^2 t+\delta^3\eps^2t\lesssim \delta \eps^2 t.
\end{align}
Also all the remaining terms on the right-hand side of \eqref{bd:tilT}-\eqref{bd:tilEDt} are at most of size $\delta \eps^2 t$ when integrated on $[1,t]$. Hence, combining \eqref{def:dtEn} with \eqref{bd:LQ}, Proposition \ref{prop:recaperrEn} and the local well-posedness Proposition \ref{lem:loc}, choosing $\delta$ sufficiently small the bound \eqref{boot:impEn} is proved.
\end{proof}

In the following subsections we show the proof of \eqref{bd:tilT}-\eqref{bd:tilRem}. For some term we can directly deduce the bounds from the one given in Section \ref{sec:mainEn}. In these cases, we will only highlight the arguments that need to be used. Moreover, \eqref{bd:tilRem}-\eqref{bd:tilEDt} are analogous to those in Section \ref{sub:remerr} and therefore omitted.

\subsection{Transport nonlinearities}
\label{sub:TNLEn}
To handle these terms, we have to exploit the commutation properties of the weights involved. Most of the bounds that we need are already done in Sections \ref{subsec:TNOmega}-\ref{subsec:TTheta}.

\subsubsection{Bound on $\widetilde{T}_N^\Omega$}
In view of \eqref{eq:trivcomm}, we rewrite this term as
\begin{align}
	\widetilde{T}_N^{\Omega}&\leq \big|\jap{\mathcal{F}m^{-1}\left([mA,\bU_{<N/8}]\cdot \nabla \Omega_N\right),A\Omega}\big|+\big|\jap{\mathcal{F}\left([m^{-1},\bU_{<N/8}]\cdot \nabla (mA\Omega)_N\right),A\Omega}\big|\notag\\
	&=\widetilde{T}^{\Omega,A}_N+\widetilde{T}^{\Omega,m}_N.
\end{align}
The term $\widetilde{T}^{\Omega,m}_N$ can be controlled exactly as done in Section \ref{subsec:TNOmega} to get 
\begin{equation}
	\sum_{N>8}\widetilde{T}^{\Omega,m}_N\lesssim \delta (G_w[\Omega]+G_\lambda[\Omega]).
\end{equation}
Regarding the bound on $\widetilde{T}^{\Omega,A}_N$, we do not have the factor $(p_k(\eta)/p_\ell(\xi))^{1/4}$ as for ${T}^{\Omega,A}_N$ in Section \ref{subsec:TNOmega}, meaning that we never lose a power $t^\frac12$. One can split this term as done in \eqref{def:splitTNOm} and proceed in the same way. The only difference is for the term $\widetilde{\mathcal{T}}_N^J\mathbbm{1}_D$, defined as in \eqref{def:TNJ}-\eqref{def:D}, where we need to use \eqref{bd:JexTD} instead of \eqref{bd:JexTDs}. In particular, the following inequality holds 
\begin{equation}
	\sum_{N>8}\widetilde{T}^{\Omega,A}_N\lesssim \delta G_w[\Omega]+\eps G_\lambda[\Omega]+\frac{\eps^2}{t^{\frac32}}.
\end{equation}
Thus, the two bounds above are consistent with \eqref{bd:tilT}.

\subsubsection{Bound on $\widetilde{T}^{\Theta}_N$}
Similarly to what was done in Section \ref{subsec:TTheta}, we rewrite $\widetilde{T}^{\Theta}_N$ as
\begin{align}
\notag \widetilde{T}_N^{\Theta}&\lesssim \big|\big\l \cF(m^{-1}p^{\frac12}[mA,\bU_{<N/8}]\cdot \nabla \Theta_N),A p^\frac12\hTheta\big\r\big|+\big|\big\l \cF(m^{-1}[p^{\frac12},\bU_{<N/8}]\cdot \nabla (mA\Theta)_N),A p^\frac12\hTheta\big\r\big|\notag\\
&\quad+\big|\big\l \cF([m^{-1},\bU_{<N/8}]\cdot \nabla (mAp^{\frac12}\Theta)_N),A p^\frac12\hTheta\big\r\big|\notag\\
&=\widetilde{T}^{\Theta,A}_N+\widetilde{T}^{\Theta,p}_N+\widetilde{T}^{\Theta,m}_N.
\end{align}
Also in this case the bounds on $\widetilde{T}^{\Theta,A}_N$ and $\widetilde{T}^{\Theta,m}_N$ can be done in the same way as $\widetilde{T}^{\Theta,A}_N$ and $\widetilde{T}^{\Theta,m}_N$ and one obtain 
\begin{equation}
		\sum_{N>8}\widetilde{T}^{\Theta,m}_N+\widetilde{T}^{\Theta,A}_N\lesssim \delta G_w[\nabla_L\Theta]+\eps G_\lambda[\nabla_L\Theta]+\frac{\eps^2}{t^{\frac32}}.
\end{equation}
It thus remain to control $\widetilde{T}^{\Theta,p}_N$.

\bullpar{Bound on $\widetilde{T}_N^{\Theta,p}$}
For the control of $\widetilde{T}^{\Theta,p}_N$ we need to present some technical differences in more detail.
In analogy to what was done for the term $T_N^{\Omega,p}$, we  have
$	\widetilde{T}_N^{\Theta,p}\lesssim \skli \widetilde{\mathcal{T}}_N^{p,1}+\widetilde{\mathcal{T}}_N^{p,2},$
where we define
\begin{align*}
	\widetilde{\mathcal{T}}_N^{p,1}&=  \frac{|p_k(\xi)^{\frac{1}{2}} - p_\ell(\xi)^{\frac{1}{2}}|}{p_\ell(\xi)^\frac12} |\ell,\xi||\widehat{\bU}|_{k-\ell}(\eta-\xi)_{<N/8}|Ap^{\frac12}\widehat{\Theta}|_{\ell}(\xi)_N |Ap^{\frac12}\widehat{\Theta}|_k(\eta),\\
	\widetilde{\mathcal{T}}_N^{p,2}&=  \frac{|p_k(\eta)^{\frac{1}{2}} - p_k(\xi)^{\frac{1}{2}}|}{p_\ell(\xi)^\frac12} |\ell,\xi||\widehat{\bU}|_{k-\ell}(\eta-\xi)_{<N/8}|Ap^{\frac12}\widehat{\Theta}|_{\ell}(\xi)_N |Ap^{\frac12}\widehat{\Theta}|_k(\eta).
 \end{align*}
To control $\widetilde{\mathcal{T}}_N^{p,1}$, notice that
\begin{equation}
|p_k(\xi)^{\frac{1}{2}} - p_\ell(\xi)^{\frac{1}{2}}|=t|k-\ell|\frac{|(k+(kt-\xi))+(\ell+(\ell t-\xi))|}{p_k(\xi)^{\frac{1}{2}} + p_\ell(\xi)^{\frac{1}{2}}}\lesssim t|k-\ell|\frac{1}{|k|(1+|t-\frac{\xi}{k}|)}p^{\frac12}_{k}(\xi).
\end{equation}
Therefore, the analogous of \eqref{bd:p14kell} become 
\begin{equation}
	\label{bd:p12com}
	\frac{|p_k(\xi)^{\frac{1}{2}} - p_\ell(\xi)^{\frac{1}{2}}|}{p_\ell(\xi)^\frac12}|\ell,\xi|\lesssim t\mathbbm{1}_{k\neq \ell}\jap{\eta-\xi,k-\ell}^2\frac{|\xi|}{|k|}\frac{1}{(1+|\frac{\xi}{k}-t|)}.
\end{equation}
As in Section \ref{subsec:TNOmega}, also in this case the term $\widetilde{\mathcal{T}}_N^{p,2}$ is simpler and therefore omitted. To control $\widetilde{\mathcal{T}}_N^{p,1}$,  for intermediate and long times we can argue as done for $T^{p,1}_N$. Instead, for the short times, if $t\in I_{k,\xi}$ we use $|\xi/k|\lesssim t^{1-2s}|k,\eta|^{s/2}|\ell,\xi|^{s/2}$. When $t\in I_{k,\xi}^c$ we argue as in \eqref{bd:wow}. Therefore, appealing to \eqref{boot:En}, we conclude that 
\begin{align}
	\sum_{N>8}\skli \widetilde{\mathcal{T}}^{p,1}_N\lesssim \delta (G_w[\nabla_L\Theta]+G_\lambda[\nabla_L\Theta]).
\end{align}
This is consistent with \eqref{bd:tilT}.

\subsection{Reaction nonlinearities} 
\label{sub:RNLEn}We now turn our attention to the control of $\widetilde{R}_N$ in \eqref{bd:tilR}.
\subsubsection{Bound on $\widetilde{R}^\Omega_N$}
We rewrite this term as follows
\begin{align}
	|\widetilde{R}_N^{\Omega}|&\leq   \bigg|\jap{A\left(\frac{p}{k^2}\right)^{\frac14}\mathcal{F}(\nabla^\perp (\Psi_{\neq})_N\cdot \nabla \Omega_{<N/8}),AZ}\bigg|+\bigg|\jap{A\left(\frac{p}{k^2}\right)^{\frac14}\mathcal{F}(\dot{v}_N (\de_v \Omega)_{<N/8}),AZ}\bigg|\notag\\
	& \quad + \bigg|\jap{A\left(\frac{p}{k^2}\right)^{\frac14}\mathcal{F}((1-v')\nabla^\perp (\Psi_{\neq})_N\cdot \nabla \Omega_{<N/8}),AZ}\bigg|
	+\bigg|\jap{\mathcal{F}\left(\bU_N\cdot \nabla A\Omega_{<N/8}\right),A\Omega} \bigg|\notag\\
	& \quad+\bigg|\jap{A_0\mathcal{F}\left(\bU_N\cdot \nabla \Omega_{<N/8}\right),(A\Omega)_0} \bigg|\notag\\
	&=  \widetilde{R}^{\Omega}_{N,\Psi}+\widetilde{R}^{\Omega}_{N,\dot{v}}+\widetilde{R}^{\Omega}_{N,\delta}+\widetilde{R}^{\Omega}_{N,com}+\widetilde{R}^{\Omega}_{N,0}.
\end{align}
Notice that, besides the term $\widetilde{R}^{\Omega}_{N,0}$ and the factor $k$ in the first three terms, $\widetilde{R}^{\Omega}_N$ has the same structure of $R^{\Theta,1}_N$ studied in Section \ref{sub:RTheta1}. Hence, we will be able to directly recover most of  the bounds from the ones in Section \ref{sub:RTheta1} with the  change 
\begin{equation}
\label{changes}
	 (i\beta \Theta)_{<N/8}\to \Omega_{<N/8}, \qquad AQ\to  AZ.
\end{equation}

\bullpar{Bound on $\widetilde{R}^{\Omega}_{N,\Psi}$}
In view of \eqref{changes}, proceeding as done to obtain \eqref{bd:RZPsiRNR1}, \eqref{bd:RThetaNRR}, \eqref{bd:RThetaNRNR}, \eqref{bd:RThetaRR},  \eqref{bd:RThetaS1}, \eqref{bd:RThetaL1} and \eqref{bd:RThetaL2}, one has 
\begin{align}
\sum_{N>8}\skli \widetilde{R}_{N,\Psi}^{\Omega}	&\lesssim \eps t^{\frac32} (G_w[Z]+G_\lambda[Z]+G^\eps_{elliptic}).
\end{align}

\bullpar{Bound on the remaining terms} 
Proceeding as done for the term $R^{\Theta}_{N,\delta}$ with the change \eqref{changes}, thanks to \eqref{bd:RTdelta0}, \eqref{bd:RTdelta1} and \eqref{bd:RTdelta2} we have 
\begin{align}
	\sum_{N>8} \widetilde{R}^{\Omega}_{N,\delta} \lesssim & \ \delta\sum_{N>8} \widetilde{R}^{\Omega}_{N,\Psi}+t^{\frac12} \norm{AZ}\norm{A^vh}\norm{\Psi_{\neq}}_{\G^{\lambda,\sigma-5}}\norm{\Omega}_{\G^{\lambda,\sigma-5}} \notag\\
	&+t^{\frac12-s}\norm{|\nabla|^\frac{s}{2}AZ}\norm{|\de_v|^{\frac{s}{2}}A^vh}\norm{\Psi_{\neq}}_{\G^{\lambda,\sigma-5}}\norm{\Omega}_{\G^{\lambda,\sigma-5}}\notag \\
	\lesssim&\  \eps t^\frac32 \delta(G_w[Z]+G_\lambda[Z]+G^\eps_{elliptic})+\eps^4 + \eps^2(G_\lambda[Z]+G_\lambda^v[h]).
\end{align}
For $\widetilde{R}^{\Omega}_{N,\dot{v}}$, similarly to \eqref{bd:RTdotv0} and \eqref{bd:RTdotv1} we obtain 
\begin{align}
\sum_{N>8}\skli\widetilde{R}^{\Omega}_{N,\dot{v}}&\lesssim \left(t^{\frac12}\norm{|\de_v|^{\frac{s}{2}}\frac{A_0}{\jap{\de_v}^s}\dot{v}}+\norm{|\de_v|^{\frac{s}{2}}\frac{A_0}{\jap{\de_v}^s}\mathcal{H}}\right)\norm{|\nabla|^{\frac{s}{2}}AZ}\norm{\Omega}_{\G^{\lambda,\sigma-4}}\\
&\lesssim \delta t^{-(s+\frac12)} t^{1+s}\norm{|\nabla|^{\frac{s}{2}}AZ}\left(\norm{|\de_v|^{\frac{s}{2}}\frac{A_0}{\jap{\de_v}^s}\mathcal{H}}+\frac{\eps}{t^2}\right)\\
&\lesssim \delta (G_{\lambda}[Z]+G_{\lambda}[\jap{\de_v}^{-s}\mathcal{H}])+\delta\frac{\eps^2}{t^{\frac32}}.
\end{align}
Moreover, 
\begin{equation}
\sum_{N>8}\widetilde{R}^{\Omega}_{N,com}\lesssim \norm{A\Omega}^2\norm{\bU}_{H^{\sigma-6}}\lesssim \delta\frac{\eps^2}{t}.
\end{equation}
For the last term $\widetilde{R}^{\Omega}_{N,0}$, being $A_0$ always non-resonant, it is not difficult to show that we have a bound  consistent with \eqref{bd:tilR}.
The proof of the bounds for $\widetilde{R}^{\Omega}_{N,\Psi}$ is over. 
\subsubsection{Bound on $\widetilde{R}^\Theta_N$}
For this term we cannot directly reduce ourselves to one which we have already controlled. Therefore we need to present some more details. We rewrite this term as  $$\widetilde{R}_N^{\Theta}=\widetilde{R}^{\Theta}_{N,\Psi}+\widetilde{R}^{\Theta_0}_{N,\Psi}+\widetilde{R}^{\Theta}_{N,\delta}+\widetilde{R}^{\Theta}_{N,\dot{v}}+\widetilde{R}^{\Theta}_{N,com},$$ where we define
\begin{align}
	 \widetilde{R}^{\Theta}_{N,\Psi}&= \left|\jap{Ap^\frac12\mathcal{F}(\nabla^\perp (\Psi_{\neq})_N\cdot \nabla (\Theta_{\neq})_{<N/8}),Ap^{\frac12}\widehat{\Theta}}\right|,\\
	\widetilde{R}^{\Theta_0}_{N,\Psi}&=\left|\jap{Ap^\frac12\mathcal{F}(\de_z (\Psi_{\neq})_N\de_v (\Theta_0)_{<N/8}),Ap^{\frac12}\widehat{\Theta}}\right|,\\
	\widetilde{R}^{\Theta}_{N,\delta}&=\left|\jap{Ap^{\frac12}\mathcal{F}((1-v')\nabla^\perp (\Psi_{\neq})_N\cdot \nabla \Theta_{<N/8}),Ap^{\frac12}\widehat{\Theta}}\right|,\\
	\widetilde{R}^{\Theta}_{N,\dot{v}}&=\left|\jap{Ap^{\frac12}\mathcal{F}(\dot{v}_N \de_v\Theta)_{<N/8}),Ap^{\frac12}\widehat{\Theta}}\right|,\\
	\widetilde{R}^{\Theta}_{N,com}&=\left|\jap{\mathcal{F}\left(\bU_N\cdot \nabla (Ap^{\frac12}\Theta)_{<N/8}\right),Ap^\frac12 \widehat{\Theta}}\right|.
	\end{align}
We will proceed in a similar way to what was done for the term $R^{\Theta}_{N,\Psi}$ but now we have the factor $p^{\frac12}$ instead of $p^{\frac14}$. The bounds will be presented with fewer details with respect to Section \ref{sub:RTheta1}.

\bullpar{Bound on $	\widetilde{R}^\Theta_{N,\Psi}$}
With the notation introduced in \eqref{def:resnonresdecom}, we split the term as 
\begin{equation}
	\widetilde{R}_{N,\Psi}^\Theta=\skli \widetilde{\mathcal{R}}_{N,\Psi}^{\Theta,(R,R)}+\widetilde{\mathcal{R}}_{N,\Psi}^{\Theta,(NR,R)}+\widetilde{\mathcal{R}}_{N,\Psi}^{\Theta,(R,NR)}+\widetilde{\mathcal{R}}_{N,\Psi}^{\Theta,(NR,NR)}+\widetilde{\mathcal{R}}_{N,\Psi}^{\Theta,S}+\widetilde{\mathcal{R}}_{N,\Psi}^{\Theta,L}.
\end{equation}
We now control each term separately.

\diampar{Bound on $\widetilde{\mathcal{R}}_{N,\Psi}^{\Theta,(R,NR)}$} 
From \eqref{bd:Jexgen} and the fact that $|t-\eta/k|\lesssim |\eta/k^2|$, we get
\begin{align}
	(Ap^\frac12)_k(\eta)|\eta\ell-k\xi|\mathbbm{1}_{t\in I_{k,\eta}\cap I_{\ell,\xi}^c}\chi^I
	&\lesssim |\ell,\xi|\frac{|\eta|^\frac12}{|k|(1+|t-\frac{\eta}{k}|)^{\frac12}}|k|(1+|t-\frac{\eta}{k}|)A_\ell(\xi)\e^{c\lambda|k-\ell,\eta-\xi|^s}\mathbbm{1}_{t\in I_{\ell,\xi}^c}\notag\\
	&\lesssim t|k,\eta|^{\frac{s}{2}}|\ell,\xi|^{1-\frac{s}{2}}p^{-1}_{\ell}(\xi)(pA)_\ell(\xi)\e^{\lambda|k-\ell,\eta-\xi|^s}\mathbbm{1}_{t\in I_{\ell,\xi}^c}.
	\end{align}
This way, appealing to \eqref{bd:ketapc}, the elliptic estimate \eqref{bd:precellcontr} and \eqref{bd:Thneqlow}, we obtain
\begin{align}
	\sum_{N>8}\skli |\widetilde{\mathcal{R}}_{N,\Psi}^{\Theta,(R,NR)}|&\lesssim t^{1-s} \norm{|\nabla|^{\frac{s}{2}}A\nabla_L\Theta}\norm{\jap{\frac{\de_v}{t\de_z}}^{-1}\frac{|\nabla|^{\frac{s}{2}}}{\l t \r^s}A (-\Delta_L) \Psi_{\neq}}\norm{\Theta_{\neq}}_{\G^{\lambda,\sigma-4}}\notag\\
	&\lesssim \delta (G_w[\nabla_L\Theta]+G^\delta_{elliptic}),
\end{align}
in agreement with \eqref{bd:tilR}.

\diampar{Bound on $\widetilde{\mathcal{R}}_{N,\Psi}^{\Theta,(NR,R)}$}
This is the most dangerous term. Since $4|\ell|\leq |\xi|$, appealing to \eqref{bd:Jexgood}, \eqref{bd:p/p} (with the role of $(k,\eta)$ and $(\ell,\xi)$ switched) and the fact that $p^{-1/4}_k(\eta)\lesssim \jap{\eta/k}^{-1/2}$, we get 
\begin{align}
	(Ap^\frac12)_k(\eta)|\eta \ell-k\xi|&\mathbbm{1}_{t\in I_{k,\eta}^c\cap I_{\ell,\xi}}\chi^I=p^{-\frac14}_k(\eta)(Ap^{\frac34})_k(\eta)|\eta \ell-k\xi|\mathbbm{1}_{t\in I_{k,\eta}^c\cap I_{\ell,\xi}}\chi^I\notag\\
	&\lesssim |k,\eta| p^{-\frac14}_k(\eta)\frac{|\ell|(1+|t-\frac{\xi}{\ell}|)^\frac12}{|\xi|^\frac12} \frac{|\xi|^\frac32}{|\ell|^3(1+|t-\frac{\xi}{\ell}|)^\frac32}(\widetilde{A}p^\frac34)_\ell(\xi)\e^{c\lambda|k-\ell,\eta-\xi|^s}\notag\\
	&\lesssim  t^{\frac32}  \frac{\de_t w_\ell(\xi)}{w_\ell(\xi)} |\ell|^{\frac12}(\widetilde{A}p^\frac34)_\ell(\xi)\e^{\lambda|k-\ell,\eta-\xi|^s}.
\end{align}
Since $A_k(\eta)\lesssim \tA_k(\eta)$, see \eqref{bd:stupid}, combining the inequality above with \eqref{bd:wexfaway}, \eqref{bd:Thneqlow}  and \eqref{bd:ellipticGZw} we infer 
\begin{align}
	\notag\sum_{N>8}\skli |\widetilde{\mathcal{R}}_{N,\Psi}^{\Theta,(NR,R)}|&\lesssim t^\frac32\norm{\sqrt{\frac{\de_t w}{w}}\tA\nabla_L\Theta}\norm{\sqrt{\frac{\de_t w}{w}}\widetilde{A} (-\Delta_L)^\frac34|\de_z|^{\frac12}\Psi}\norm{\Theta_{\neq}}_{\G^{\lambda,\sigma-4}}\\
	&\lesssim \delta G_w[\nabla_L \Theta]+\delta tG^{\eps}_{elliptic},
\end{align}
that is consistent with \eqref{bd:tilR}.

\diampar{Bound on $\widetilde{\mathcal{R}}^{\Theta,(NR,NR)}_{N,\Psi}$}
We have to treat separately the case $|\xi|\leq |\ell t|$ and $|\xi|>|\ell t|$. In the former, 
from \eqref{bd:Jeximp}, \eqref{bd:p/p} and \eqref{bd:ketapct} we have 
\begin{align*}
	(Ap^{\frac12})_{k}(\eta)|\eta \ell-k\xi|&\mathbbm{1}_{t\in I_{k,\eta}^c\cap I_{\ell,\xi}^c}\mathbbm{1}_{|\xi|\leq |\ell t|}\chi^I\\
	&\lesssim\  \mathbbm{1}_{\{t\in I_{\ell,\xi}^c\}\cap\{|\xi|\leq |\ell t|\}}|k,\eta|^{\frac{s}{2}}|\ell,\xi|^{1-\frac{s}{2}} p^{-\frac12}_{\ell}(\xi) (pA)_\ell(\xi)\e^{c\lambda|k-\ell,\eta-\xi|^s}\\
	&\lesssim \ \mathbbm{1}_{|\xi|\leq |\ell t|} t^{1-2s}|k,\eta|^{\frac{s}{2}}\jap{{\frac{\xi}{\ell t}}}^{1-s}\jap{\frac{\xi}{\ell t}}^{-1}|\ell,\xi|^{\frac{s}{2}}(pA)_\ell(\xi)\e^{c\lambda|k-\ell,\eta-\xi|^s}.
\end{align*}
Since $|\xi|\leq |\ell t|$, using \eqref{bd:precellcontr} and \eqref{bd:Thneqlow} we deduce 
\begin{align*}
		\sum_{N>8}\skli |\widetilde{\mathcal{R}}_{N,\Psi}^{\Theta,(NR,NR)}|\mathbbm{1}_{|\xi|\leq |\ell t|}&\lesssim t^{1-s}\norm{|\nabla|^{\frac{s}{2}}A\nabla_L\Theta}\norm{\jap{\frac{\de_v}{t\de_z}}^{-1}\frac{|\nabla|^{\frac{s}{2}}}{\jap{t}^s}A (-\Delta_L)\Psi_{\neq}}\norm{\Theta_{\neq}}_{\G^{\lambda,\sigma-4}}\\
	&\lesssim \delta (G_\lambda[\nabla_L\Theta] +G^{\delta}_{elliptic}).
\end{align*}
If $|\xi|\geq |\ell t|$ we know $|\ell,\xi|^{1/2}\lesssim t^{1/2-s}|\ell,\xi|^s$. Hence, since  $\jap{\xi/\ell}^{1/2}p^{-1/4}_\ell(\xi)\lesssim 1$ for $t\in I_{\ell,\xi}^c$, from \eqref{bd:Jeximp} and \eqref{bd:p/p} we have 
\begin{align}
	\notag(Ap^{\frac12})_{k}(\eta)|\eta \ell-k\xi|&\mathbbm{1}_{\{t\in I_{k,\eta}^c\cap I_{\ell,\xi}^c\}\cap\{|\xi|\geq |\ell t|\}}\chi^I\\
	&\lesssim t^{\frac12-s}\mathbbm{1}_{t\in I_{\ell,\xi}^c}|\ell,\xi|^{s} \frac{|\ell,\xi|^{\frac{1}{2}}}{|\ell|^{\frac12}}p^{-\frac14}_{\ell}(\xi) |\ell|^{\frac12}(p^{\frac34}A)_\ell(\xi)\e^{c\lambda|k-\ell,\eta-\xi|^s}\\
	\label{bd:RtildeNRNR}&\lesssim t^{\frac12-s}|k,\eta|^{\frac{s}{2}}|\ell,\xi|^{\frac{s}{2}}|\ell|^{\frac12}(p^{\frac34}A)_\ell(\xi)\e^{c\lambda|k-\ell,\eta-\xi|^s}.
\end{align}
From \eqref{bd:RtildeNRNR}, \eqref{bd:ellipticGZw} and \eqref{bd:Thneqlow} we get 
\begin{align}
	\notag \sum_{N>8}\skli |\widetilde{\mathcal{R}}_{N,\Psi}^{\Theta,(NR,NR)}|\mathbbm{1}_{|\xi|\geq |\ell t|}&\lesssim t^{\frac12}\norm{|\nabla|^{\frac{s}{2}}A\nabla_L\Theta}\norm{\frac{|\nabla|^{\frac{s}{2}}}{\jap{t}^s}A (-\Delta_L)^\frac34|\de_z|^\frac12\Psi}\norm{\Theta_{\neq}}_{\G^{\lambda,\sigma-4}}\\
	\notag &\lesssim \eps t^{-\frac32}\norm{|\nabla|^{\frac{s}{2}}A\nabla_L\Theta}^2+\eps  t^{\frac32}\norm{\frac{|\nabla|^{\frac{s}{2}}}{\jap{t}^s}A (-\Delta_L)^\frac34|\de_z|^\frac12\Psi}^2\\
	&\lesssim \eps G_\lambda[\nabla_L\Theta] +\delta t G^{\eps}_{elliptic}.
\end{align}

\diampar{Bound on $\widetilde{\mathcal{R}}^{\Theta,(R,R)}_{N,\Psi}$}
Since $k\neq \ell$, we know that we can only have cases $(b)$ or $(c)$ in Lemma \ref{lemma:trichotomy}. The case $(c)$ is straightforward. In case $(b)$, we know that we can apply \eqref{bd:Jeximp}, so that from \eqref{bd:p/p} we get 
\begin{equation}
	(Ap^\frac12)_{k}(\eta)|\eta\ell-k\xi|\mathbbm{1}_{I_{k,\eta}\cap I_{\ell,\xi}}\lesssim \frac{|\xi|}{|\ell|}\frac{1}{1+|\frac{\xi}{\ell}-t|}(\widetilde{A}p)_{\ell}(\xi)\e^{c\lambda|k-\ell,\eta-\xi|^s}.
\end{equation}
Combining the inequality above with \eqref{bd:trivRR}, \eqref{bd:Thneqlow} and \eqref{bd:ellipticGZw} we conclude that 
\begin{align}
	\sum_{N>8}\skli |\widetilde{\mathcal{R}}_{N,\Psi}^{\Theta,(R,R)}|&\lesssim t\norm{\sqrt{\frac{\de_t w}{w}}\tA \nabla_L\Theta}\norm{\jap{\frac{\de_v}{t\de_z}}^{-1}\sqrt{\frac{\de_t w}{w}}\widetilde{A} (-\Delta_L)\Psi_{\neq}}\norm{\Theta_{\neq}}_{\G^{\lambda,\sigma-4}}\notag\\
	&\lesssim \delta (G_w[\nabla_L\Theta]+G^{\delta}_{elliptic}).
\end{align}

\diampar{Bound on $\widetilde{\mathcal{R}}^{\Theta,S}_{N,\Psi}$}
Applying \eqref{bd:Jeximp} and \eqref{bd:p/p} we have 
\begin{equation*}
	(Ap^{\frac12})_k(\eta)|\eta\ell-k\xi|\chi^S\lesssim \frac{|\ell,\xi|}{|\ell|^{\frac12}}p^{-\frac14}_\ell(\xi) |\ell|^{\frac12}(p^{\frac34}A)_{\ell}(\xi)\e^{c\lambda|k-\ell,\eta-\xi|^s}.
\end{equation*}
Then, since $t\leq \TS$, if $t\in I_{\ell,\xi}^c$ and $|\ell|\leq |\xi|$, we have 
\begin{equation*}
	\frac{|\ell,\xi|}{|\ell|^{\frac12}}p^{-\frac14}_\ell(\xi)\mathbbm{1}_{\{t\in I_{\ell,\xi}^c\}\cap\{|\ell|\leq |\xi|\}}\lesssim \frac{|\xi|}{|\ell|}\frac{1}{1+|\xi|^{\frac12}/|\ell|}\lesssim |\xi|^\frac12\lesssim t^{1-2s}|\ell,\xi|^{\frac{s}{2}}|k,\eta|^{\frac{s}{2}}.
\end{equation*}
If $|\xi|\leq |\ell|$ we simply use that $1\leq t^{-2s}|\ell,\xi|^{\frac{s}{2}}|k,\eta|^{\frac{s}{2}}$. When $t\in I_{\ell,\xi}$ then $|\ell,\xi| |\ell|^{-1/2}p^{-\frac14}_\ell(\xi)\lesssim |\xi/\ell|\lesssim t^{1-2s}|\xi|^s$. Therefore, using \eqref{bd:ellipticGZw} and \eqref{bd:Thneqlow} we obtain 
\begin{align}
	\notag\sum_{N>8}\skli \widetilde{\mathcal{R}}_{N,\Psi}^{\Theta,S}&\lesssim t^{1-s}\norm{|\nabla|^{\frac{s}{2}}A\nabla_L\Theta} \norm{\frac{|\nabla|^{\frac{s}{2}}}{\l t \r^s}A(-\Delta_L)^{\frac34}|\de_z|^{\frac12}\Psi}\norm{\Theta_{\neq}}_{\G^{\lambda,\sigma-4}}\notag \\
	&\lesssim\delta (G_\lambda[\nabla_L\Theta]+G^{\eps}_{elliptic}).
\end{align}

\diampar{Bound on $\widetilde{\mathcal{R}}^{\Theta,L}_{N,\Psi}$} 
As observed in \eqref{bd:longp}, using \eqref{bd:p/p} we can always recover factor of times from negative powers of $p$. In particular, we have
\begin{align}
	(Ap^{\frac12})_k(\eta)|\eta\ell-k\xi|\chi^L &\lesssim |\ell,\xi| \frac{1}{|\ell|(1+|\frac{\xi}{\ell}-t|)}(pA)_\ell(\xi)\e^{c\lambda |k-\ell,\eta-\xi|^s}\notag\\
	&\lesssim t^{-s}|k,\eta|^{\frac{s}{2}}|\ell,\xi|^{\frac{s}{2}}(pA)_\ell(\xi)\e^{c\lambda |k-\ell,\eta-\xi|^s}.
\end{align}
Since in this case we know $|\xi|\leq |\ell t|$, appealing to \eqref{bd:Thneqlow} and \eqref{bd:precellcontr} we get
\begin{align}
	\sum_{N>8}\skli |\widetilde{\mathcal{R}}^{\Theta,L}_{\Psi}|\mathbbm{1}_{t\in I_{\ell,\xi}}&\lesssim \frac{\delta}{t} \norm{|\nabla|^{\frac{s}{2}}A\nabla_L\Theta}\norm{\jap{\frac{\de_v}{t\de_z}}^{-1}\frac{|\nabla|^\frac{s}{2}}{\l t\r^s}(-\Delta_L)A\Psi_{\neq}}\notag\\
	&\lesssim \delta (G_\lambda[\nabla_L\Theta]+G^\delta_{elliptic}).
\end{align}

\bullpar{Bound on $\widetilde{{R}}^{\Theta_0}_{N,\Psi}$} 
Here we can argue as for the term $\mathcal{R}^{\Theta_0}_{N,\Psi}$. In particular, \eqref{bd:RTheta0first} is replaced by 
\begin{equation}
	(Ap^\frac12)_k(\eta)|k|\lesssim \frac{1}{1+|\frac{\xi}{k}-t|} (Ap)_k(\xi)\e^{c\lambda|k-\ell,\eta-\xi|^s}.
\end{equation}
Notice that in \eqref{bd:nice} we recover the factor $\jap{\xi/kt}^{-1}$, which is important to apply \eqref{bd:ellp34APsi}. Therefore,  
\begin{align}
	\sum_{N>8} \widetilde{{R}}^{\Theta_0}_{N,\Psi}\lesssim \delta (G_w[\nabla_L \Theta]+G_\lambda[\nabla_L\Theta]+G^{\delta}_{elliptic})+\delta \eps^2.
\end{align}

\bullpar{Bound on $\widetilde{{R}}^{\Theta}_{N,\delta}$}
Reasoning as in Section \ref{sec:ReactionEn}, rewrite this term as
\begin{equation*}
	\widetilde{{R}}^{\Theta}_{N,\delta}=\widetilde{{R}}^{\Theta}_{\delta,LH}+\widetilde{{R}}^{\Theta,z}_{\delta,HL}+\widetilde{{R}}^{\Theta,v}_{\delta,HL}+\widetilde{{R}}^{\Theta}_{\delta,HH}.
\end{equation*}
With the same arguments done for the term $R^{\Theta}_{N,\delta}$, one can prove 
\begin{equation}
	\sum_{N, M>8}\widetilde{R}^\Theta_{\delta,LH}+\widetilde{R}^{\Theta,z}_{\delta,HL}\lesssim \delta \sum_{N>8}\widetilde{R}^\Theta_{N,\Psi}+\widetilde{R}^{\Theta_0}_{N,\Psi}.
\end{equation}
We are then left with the term 
\begin{align}
	\label{def:RThetadeltav}\widetilde{R}^{\Theta,v}_{\delta,HL}\lesssim& \sum_{k,\ell}\int_{\mathbb{R}^3} |Ap^{\frac12}\widehat{\Theta}|_k(\eta)|Ap^{\frac12}|_k(\eta)\rho_N(\xi)_\ell\mathbbm{1}_{|\ell|\leq 16|\xi'|}|\widehat{h}(\xi')|_M\\
	&\qquad \qquad \times |\widehat{\nabla^\perp\Psi}_{\neq}|_\ell(\xi-\xi')_{<M/8}|\widehat{\nabla\Theta}_{k-\ell}(\eta-\xi)_{<N/8}|\dd\eta \dd\xi \dd\xi'\\
	=&\skli \widetilde{\mathcal{R}}^{\Theta,v}_{\delta,HL},
\end{align}
where $\rho_N$ is the cut-off of the paraproduct decomposition. Here, we have to be careful since $p^{1/2}$ can be a full derivative in $h$. Indeed, in this term we have to crucially exploit the bounds available on $|\de_v|^sh$. In particular, from  \eqref{bd:towerA} and \eqref{bd:p/p} we have
\begin{align*}
	(Ap^{\frac12})_k(\eta)&\lesssim p^{\frac12}_k(\xi')A^{v}(\xi')\e^{c|\ell,\xi'-\xi|^s+c\lambda|k-\ell,\eta-\xi|^s}\\
	&\lesssim (t\mathbbm{1}_{|\xi'|\leq t}+|\xi'|\mathbbm{1}_{|\xi'|\geq t})\jap{\ell}\jap{k-\ell}A^{v}(\xi')\e^{c|\ell,\xi'-\xi|^s+c\lambda|k-\ell,\eta-\xi|^s}.
\end{align*}
When $|\xi'|\leq t$, appealing to \eqref{bd:Thneqlow}, \eqref{bd:Thneqlow0} and \eqref{bd:Psilow} we get 
\begin{align*}
	\sum_{N>8}\skli \widetilde{\mathcal{R}}^{\Theta,v}_{\delta,HL}\mathbbm{1}_{|\xi'|\leq t}\lesssim t\norm{A\nabla_L\Theta}\norm{A^vh}\norm{\Psi_{\neq}}_{\G^{\lambda,\sigma-4}}\norm{\Theta}_{\G^{\lambda,\sigma-4}}\lesssim \eps^4t\lesssim \delta^2 \eps^2.
\end{align*}
If $|\xi'|\geq t$, since $s>1/2$ then $|\xi'|=|\xi'|^{1-s} |\xi'|^s\leq|\xi'|^s |\xi'|^{s/2}|k,\eta|^{s/2}$, so that 
\begin{align*}
	\sum_{N>8}\skli \widetilde{\mathcal{R}}^{\Theta,v}_{\delta,HL}\mathbbm{1}_{|\xi'|\geq t}&\lesssim \norm{A|\nabla|^{\frac{s}{2}}\nabla_L\Theta}\norm{A^v|\de_v|^{\frac{s}{2}}(|\de_v|^sh)}\norm{\Psi_{\neq}}_{\G^{\lambda,\sigma-4}}\norm{\Theta}_{\G^{\lambda,\sigma-4}}\\
	&\lesssim \frac{\delta^2}{t^{2-3s}}t^{-s}\norm{A|\nabla|^{\frac{s}{2}}\nabla_L\Theta}t^{-2s}\norm{A^v|\de_v|^{\frac{s}{2}}(|\de_v|^sh)}\\
	&\lesssim \delta^2 G_{\lambda}[\nabla_L\Theta]+\delta^2t^{-2s}G_\lambda^v[|\de_v|^sh],
\end{align*}
where in the last bound we used $s\leq 2/3$.

 \bullpar{Bound on $\widetilde{{R}}^{\Theta}_{N,\dot{v}}$}
The treatment of this term is similar to $R^{\Theta}_{N,\dot{v}}$. We split this term as 
\begin{align*}
\widetilde{R}^{\Theta}_{N,\dot{v}}&\lesssim \skli |Ap^{\frac12}\widehat{\Theta}|_{k}(\eta)(Ap^{\frac12})_{k}(\eta) |\widehat{\dot{v}}|(\xi)_N|\widehat{\de_v \Theta}|_{k}(\eta-\xi)_{<N/8}\dd \eta \dd \xi \\
	&=\skli\widetilde{\mathcal{R}}^{\Theta}_{N,\dot{v}}(\mathbbm{1}_{t\in I_{k,\eta}\cap I_{k,\xi}}+1-\mathbbm{1}_{t\in I_{k,\eta}\cap I_{k,\xi}})\chi^I.
\end{align*}
Here we do not always have $\Theta_{\neq}$ as in Section \ref{sub:RTheta1}, but this is insignificant for the bound we need.

 For $t\in I_{k,\eta}\cap  I_{k,\xi}$ and $\TS\leq t \leq \TL$, appealing to \eqref{bd:Jeximp} and, since $A_0$ is always non-resonant, using the definition of $w$ \eqref{def:w} and \eqref{bd:wNRexgen} we get  
\begin{align}
	(Ap^{\frac12})_k(\eta)\mathbbm{1}_{t\in I_{k,\eta}\cap I_{k,\xi}} \chi^I&\lesssim  |k|(1+|t-\frac{\eta}{k}|)\frac{A_k(\xi)}{A_0(\xi)}A_0(\xi) \mathbbm{1}_{t\in I_{k,\eta}\cap I_{k,\xi}} \chi^I \notag\\
	&\lesssim |k|(1+|t-\frac{\eta}{k}|)\frac{|\xi|^{\frac12}}{|k|(1+|t-\frac{\xi}{k}|)^\frac12}A_0(\xi)\e^{c\lambda|k,\eta-\xi|^s}\\
\label{bd:tildeRdotvRR}	&\lesssim \frac{|\eta|}{|k|}A_0(\xi)\e^{c\lambda|k,\eta-\xi|^s} \lesssim t|k,\eta|^{\frac{s}{2}}|\xi|^{\frac{s}{2}}\frac{A_0(\xi)}{\jap{\eta}^s}\e^{c\lambda|k,\eta-\xi|^s}.
\end{align}
Notice that for $t\in I_{k,\eta}\cap I_{k,\xi}$ we have $k\neq0$. Hence, given $q$ as in \eqref{def:dotlambda}, using \eqref{bd:Glambdadotv} we get
\begin{align}
	\sum_{N>8} \skli \widetilde{\mathcal{R}}^{\Theta}_{N,\dot{v}}\mathbbm{1}_{t\in I_{k,\eta}\cap I_{k,\xi}}\chi^I &\lesssim t\norm{|\nabla|^{\frac{s}{2}}A\nabla_L\Theta}\norm{|\de_v|^{\frac{s}{2}}\frac{A_0}{\jap{\de_v}^s}\dot{v}}\norm{\Theta_{\neq}}_{\G^{\lambda,\sigma-6}}\notag\\
\notag	&\lesssim \delta \left(t^{-2q}\norm{|\nabla|^{\frac{s}{2}}A\nabla_L\Theta}^2+\frac{1}{t^{2+2(s-q)}}t^{2+2s}\norm{|\de_v|^{\frac{s}{2}}\frac{A_0}{\jap{\de_v}^s}\dot{v}}^2\right)\\
\label{bd:Tildedotv}	&\lesssim\delta G_\lambda[\nabla_L\Theta]+\delta\left(t^{2+2s}G_\lambda[\jap{\de_v}^{-s}\mathcal{H}]+\frac{\eps^2}{t^{\frac32}}\right),
\end{align}
where in the last line we used $2+2(s-q)\geq 2q$. For the remaining term, we need to distinguish whether $k=0$ or not. If $k=0$, observe that
\begin{align}
	(Ap^{\frac12})_0(\eta) \lesssim |\xi| |k,\eta|^{\frac{s}{2}}|\xi|^{\frac{s}{2}}\frac{A_0(\xi)}{\jap{\eta}^s}.
\end{align}
Thus, appealing to \eqref{bd:dvdotv} and \eqref{bd:OmTh} we get
\begin{align}
	\sum_{N>8} \skli \widetilde{\mathcal{R}}^{\Theta}_{N,\dot{v}}(1-\mathbbm{1}_{t\in I_{k,\eta}\cap I_{k,\xi}})\chi^I\mathbbm{1}_{k=0} &\lesssim \norm{|\nabla|^{\frac{s}{2}}A\nabla_L\Theta}\norm{|\de_v|^{\frac{s}{2}}\frac{A_0}{\jap{\de_v}^s}\de_v\dot{v}}\norm{\de_v\Theta_0}_{\G^{\lambda,\sigma-6}}\notag\\
\notag	&\lesssim \frac{\delta}{t^{1+s}}\left( \norm{|\nabla|^{\frac{s}{2}}A\nabla_L\Theta}^2+t^{2+2s} \norm{|\de_v|^{\frac{s}{2}}\frac{A_0}{\jap{\de_v}^s}\de_v\dot{v}}^2\right)\\
\label{bd:tildeRdotv0}&\lesssim\delta G_\lambda[\nabla_L\Theta]+\delta t^{2+2s}G_\lambda[\jap{\de_v}^{-s}\de_v\dot{v}].
\end{align}
When $k\neq 0$, if $t\geq|\eta|$ then $p^{1/2}_k(\eta)\lesssim t\jap{k}$ and $A_k(\eta)\lesssim A_0(\eta)\e^{c\lambda|k,\eta-\xi|^s}$. Hence, we have the same inequality as in \eqref{bd:tildeRdotvRR} and we obtain the same bound in \eqref{bd:Tildedotv}. When $|\eta|\geq t$ we have $p^{1/2}_{k}(\eta)\lesssim |\eta|$ and $A_k(\eta)\lesssim A_0(\eta)\e^{c\lambda|k,\eta-\xi|^s}$, meaning that we can argue similarly to what is done to obtain \eqref{bd:tildeRdotv0}.

\bullpar{{Bound on} $\widetilde{\mathcal{R}}^{\Theta}_{N,com}$}
On the support of the integrand $|k-\ell,\eta-\xi|\lesssim |\ell,\xi|$. From Cauchy-Schwarz and Young's convolution inequality we get 
\begin{equation}
	\sum_{N>8} \widetilde{\mathcal{R}}^{\Theta}_{N,com}\lesssim \norm{A\nabla_L\Theta}^2 \norm{\bU}_{H^{\sigma-6}}\lesssim \frac{\eps^3}{t^\frac12}.
\end{equation}
This concludes the proof of \eqref{bd:tilR}.

\section{Bound on the energy function $E_v$}\label{sec:zero}
In this section, we first prove Lemma \ref{lemma:bdcoeff} and next we provide the coordinate system controls  \eqref{boot:impEv} and \eqref{boot:impvdot} which are stated in (part of) Proposition \ref{prop:bootimpr}. The following proof consists of three main steps.

\subsection{Proof of Lemma \ref{lemma:bdcoeff}}
First, notice that 
\begin{align*}
	&1-(v')^2=(v'-1)^2-2(v'-1)=h^2-2h,\\
	&v''=v'\de_v v'=\de_v(v'-1)+(v'-1)(\de_v(v'-1))=\de_v h+h\de_vh.
\end{align*}
This way, \eqref{bd:Avcoeff} and \eqref{bd:Avscoeff} follow from the algebra property of $A^v$ and the norms in the $G^v_j$ (see \cite{BM15}*{Lemma 3.8}).
To prove \eqref{bd:dvdotv}-\eqref{bd:Gwdotv}, first observe that
\begin{equation}
	\label{eq:dedotv}
	\de_v \dot{v}= \frac{1}{v'}v'\de_v\dot{v} =\mathcal{H}+\left(\frac{1}{v'}-1\right)\mathcal{H}.
\end{equation}
Then, from the bootstrap hypothesis and using that $A\leq A^v$ when $k=0$, we have
\begin{equation}
	\label{bd:v'-1}\norm{ A\left(\frac{1}{v'}-1\right)}=\norm{\sum_{n=1}^{\infty}A(h^n)}\leq \sum_{n=1}^{\infty}\norm{A^vh}^n \lesssim \eps t^{\frac12}. 
\end{equation}
Combining \eqref{eq:dedotv} with the bound above and \eqref{boot:Ev}, we obtain \eqref{bd:dvdotv}.

To prove \eqref{bd:Glambdadotv}-\eqref{bd:Gwdotv}, we cannot directly rely on the low-frequency estimates of $\mathcal{H}$, but we can use the decay properties of $\dot{v}$ in a lower regularity class (the bootstrap hypothesis \eqref{boot:vdot}).
To this end, let us denote $f_{\leq 1}(\eta)=f(\eta)\mathbbm{1}_{|\eta|\leq 1}$ and $f_{>1}=f-f_{\leq 1}$. Notice that $A_0(t,\eta)_{<1}\lesssim 1$ and, using \eqref{boot:vdot}, that
\begin{align*}
	\norm{ \frac{A_0}{\jap{\de_v}^s} |\de_v|^{\frac{s}{2}}\dot{v}}&\lesssim \norm{ \dot{v}_{<1}}_{L^2}+\norm{ |\de_v|^{\frac{s}{2}}\frac{A_0}{\jap{\de_v}^s} \de_v\dot{v}_{>1}} \lesssim \frac{\eps}{t^{2}}+\norm{ |\de_v|^{\frac{s}{2}}\frac{A_0}{\jap{\de_v}^s} \de_v\dot{v}}.
\end{align*}
This way \eqref{bd:Glambdadotv} follows from the above estimate, \eqref{eq:dedotv} and \eqref{bd:v'-1}.
The proof of \eqref{bd:Gwdotv} is analogous: just notice that there is no need of a low frequency analysis as $\de_tw=0$ if $|\eta|\leq 1/2$. The lemma is proved. \qed 
\subsection{Control of $h$}
To obtain \eqref{boot:impEv} we need the following two energy inequalities:
\begin{align}
&\frac12 \ddt \norm{A^v h}^2+\sum_{j\in\{\lambda,w\}}G_j^{v}[h]\leq-\jap{A^v h, A^v\dot{v} \de_v h} + \frac1t\left \l A^v h, A^v \Omega_0 \right \r,
\label{eq:hdteqn}
\end{align}
and
\begin{align}
\frac12 \ddt \left(\jap{t}^{-2s} \norm{A^v |\de_v|^s h}^2\right)+\jap{t}^{-2s}\sum_{j\in\{\lambda,w\}}G_j^{v}[|\de_v|^s h]
&\leq-\jap{t}^{-2s}\jap{A^v |\de_v|^sh, A^v|\de_v|^s(\dot{v} \de_v  h)}\notag \\ 
&\quad+\frac{1}{t}\jap{t}^{-2s} \left \l A^v|\de_v|^s h, A^v|\de_v|^s\Omega_0 \right \r,
\label{eq:hdteqn2}
\end{align}
which are directly derived using \eqref{eq:h}, the definition of $\cH$, and $G^v_j[\cdot]$  in \eqref{def:GwR}.
We only deal with the right-hand side of \eqref{eq:hdteqn2}, as \eqref{eq:hdteqn} is very similar, and in fact slightly simpler.
For the first term, 
notice that
\begin{align*}
-\jap{A^v|\de_v|^s h, A^v|\de_v|^s(\dot{v} \de_v h)}= \frac12\jap{\de_v\dot{v},|A^v|\de_v|^s h|^2}-\jap{A^v |\de_v|^sh, [A^v|\de_v|^s,\dot{v}] \de_v h}.
\end{align*}
For the first piece, we simply use Sobolev embeddings and the bootstrap assumptions \eqref{boot:Ev}-\eqref{boot:vdot} to obtain
\begin{align}\label{eq:divhterm}
\frac12\jap{\de_v\dot{v},|A^v|\de_v|^s h|^2}\lesssim \frac{\eps}{\jap{t}^2}\norm{A^v|\de_v|^s h}^2\lesssim \delta\frac{\eps^2}{\jap{t}^{3/2}} .
\end{align}
The treatment of the second piece is similar to what was done in Section \ref{sec:mainEn}. Namely, we write 
\begin{equation*}
\jap{A^v|\de_v|^s h, [A^v|\de_v|^s,\dot{v}] \de_v h}= \sum_{M>8}T^h_M+\sum_{M>8}R^h_M+\cR^h
\end{equation*}
where
\begin{align*}
T^h_M=\jap{A^v|\de_v|^sh, [A^v|\de_v|^s,\dot{v}_{<M/8}] \de_v h_M},\qquad R^h_M=\jap{A^v|\de_v|^sh, [A^v|\de_v|^s,\dot{v}_{M}] \de_v h_{<M/8}},
\end{align*}
\begin{align}\label{eq:remh}
\cR^h=\sum_{M\in \boldsymbol{D}}\sum_{M/8\leq M'\leq M} \jap{A^v|\de_v|^s h, [A^v|\de_v|^s,\dot{v}_{M}] \de_v h_{M'}}.
\end{align}
The transport nonlinearity $T^h_M$ can be treated as in Section \ref{subsec:TNOmega}, with the simplification that the $z$-frequency is always the same. The reaction
and remainder terms are analogous to those in \cite{BM15}*{Section 8}, since the weight $A^v$ has the same properties as $A^R$ in \cite{BM15}. 
The different  assumptions  \eqref{boot:Ev}-\eqref{boot:vdot}  give
\begin{equation}\label{eq:nonlinearh}
|\jap{A^v|\de_v|^s h, A^v|\de_v|^s(\dot{v} \de_v h)}|\lesssim \delta \left( \sum_{j\in\{\lambda,w\}}G_j^{v}[|\de_v|^s h] +  \jap{t}^{2+2s} G_j\left[\jap{\de_v}^{-s}\cH\right]+\frac{\eps^2}{\jap{t}^{3/2}}\right).
\end{equation}
We now turn to the second term in \eqref{eq:hdteqn2}. For this, we  write
\begin{align*}
 |\left \l A^v|\de_v|^s h, A^v|\de_v|^s\Omega_0 \right \r|\leq L^h_1+L^h_2,
\end{align*}
with
\begin{align*}
L^h_1
&=\sum_{k\neq0}\int_{\RR^2} A^v(\eta)|\eta|^s|\hh(\eta)| A^v(\eta)|\eta|^s |\hOmega_0(\eta)|  \mathbbm{1}_{t\in I_{k,\eta}}\chi^I,\\
L^h_2
&=\sum_{k\neq0}\int_{\RR^2} A^v(\eta)|\eta|^s|\hh(\eta)| A^v(\eta)|\eta|^s |\hOmega_0(\eta)| \left(1-\mathbbm{1}_{t\in I_{k,\eta}}\chi^I\right).
\end{align*}
For $L^h_1$, we use the definition of $A^v$ in \eqref{def:ARvintro} and $w^v$ in \eqref{def:wv} to get 
\begin{align}\label{eq:exchange01}
A^v(\eta) \lesssim \frac{|\eta|}{k^2} \frac{\de_t w_k(\eta)}{w_k(\eta)}  A_0(\eta).
\end{align}
Then
\begin{align*}
L^h_1
&\lesssim  \jap{t}^{1+s}\norm{\sqrt{\frac{\de_t w}{w}} A^v|\de_v |^s h}\norm{\sqrt{\frac{\de_t w}{w}} A \Omega}
\leq \frac{t}{4}  G_w^v[|\de_v |^s h] +    \frac{C_1}{4} \jap{t}^{1+2s} G_w[\Omega],
\end{align*}
where $C_1\geq 1$  is independent of $\eps,\delta$. Turning to $L^h_2$, as $A^v \lesssim A_0$ in the support of the integral, we have
\begin{align*}
L^h_2
&\lesssim \norm{A^v|\de_v|^{s+\frac{s}{2}} h}  \norm{A|\de_v|^{\frac{s}{2}} \Omega} \leq \frac{t}{4}  G_\lambda^v[|\de_v |^s h] +    \frac{C_1}{4} \jap{t}^{1+2s} G_\lambda[\Omega].
\end{align*}
As a consequence, we have from \eqref{eq:hdteqn2} that
\begin{align}\label{eq:linearh}
\frac 1 t  \l t \r^{-2s}  |\left \l A^v|\de_v|^s h, A^v|\de_v|^s\Omega_0 \right \r|\leq \frac{1}{4} \l t \r^{-2s}  \sum_{j\in\{\lambda,w\}}G_j^{v}[|\de_v|^s h] +    \frac{C_1}{4}  \sum_{j\in\{\lambda,w\}}G_j[\Omega].
\end{align}
Collecting \eqref{eq:nonlinearh}, \eqref{eq:linearh} and the analogous bounds for \eqref{eq:hdteqn}, we end the proof of the estimates on $h$ in \eqref{boot:impEv}.

\subsection{Control of $\cH$}
To complete the proof of  \eqref{boot:impEv}, we start from
 the energy inequality
\begin{align}
&\frac12\ddt\left(\langle t \rangle^{2+2s}  \norm{\dfrac{A}{\langle \de_v \rangle^s} \cH}^2\right)  
+\jap{t}^{2+2s}\sum_{j\in\{\lambda,w\}}G_j\left[\jap{\de_v}^{-s}\cH\right]\leq T^{\cH} + F, \label{eq:energy-id-zeromode1}
\end{align}
where $G_j[\cdot]$ is defined in \eqref{def:Gw}
and the transport and forcing terms are given respectively by 
\begin{align}
	T^{\cH}&=- \l t \r^{2+2s} \jap{\dfrac{A}{\langle \de_v \rangle^{s}} \cH, \dfrac{A}{\langle \de_v \rangle^{s}} \dot{v}\de_v \cH},\label{eq:Th}\\
	F&=-  \frac{\l t \r^{2+2s} }{t}\jap{\dfrac{A}{\l \de_v \r^{s}} \cH, \dfrac{A}{\l \de_v \r^{s}} \left( v' \nabla^\perp \Psi_{\neq}  \cdot \nabla \Omega_{\neq}\right)_0}.\label{eq:forcing}
\end{align} 
In this case $T^{\cH}$ in \eqref{eq:Th} is similar to the transport terms of Section \ref{sec:mainEn};  $F$ in  \eqref{eq:forcing}
describes  the nonlinear feedback of the non-zero frequencies onto the zero one.
Bounds on $T^{\cH}$ are obtained as for \eqref{eq:nonlinearh}, giving
\begin{align}\label{eq:TCH}
	T^{\cH}\lesssim
	 \delta \left( \jap{t}^{2+2s}\sum_{j\in\{\lambda,w\}}G_j\left[\jap{\de_v}^{-s}\cH\right]+\frac{\eps^2}{\jap{t}^{3/2}}\right).
\end{align} 
We focus on the forcing term, which contains $v'=1+(v'-1)$, so that we can write
$\displaystyle F=F_0+F^\delta, $
where
\begin{align*}
F^0&=-  \frac{\l t \r^{2+2s} }{t}\jap{\dfrac{A}{\l \de_v \r^{s}} \cH, \dfrac{A}{\l \de_v \r^{s}} \left(  \nabla^\perp \Psi_{\neq}  \cdot \nabla \Omega_{\neq}\right)_0}, \\
F^\delta&=-  \frac{\l t \r^{2+2s} }{t}\jap{\dfrac{A}{\l \de_v \r^{s}} \cH, \dfrac{A}{\l \de_v \r^{s}} \left( (v'-1) \nabla^\perp \Psi_{\neq}  \cdot \nabla \Omega_{\neq}\right)_0}.
\end{align*}
As argued in \cite{BM15}*{Section 8}, it is enough to consider $F^0$, treating in a separate way  low-high, high-low and remainder interactions, namely
$\displaystyle F^0=F^0_{LH}+F^0_{HL}+F^0_{\mathcal{R}},$
with
\begin{align*}
F^0_{LH}&=-  \frac{\l t \r^{2+2s} }{t}\sum_{M> 8}\sum_{k\neq 0}\jap{\dfrac{A}{\l \de_v \r^{s}} \cH, \dfrac{A}{\l \de_v \r^{s}}  (\nabla^\perp \Psi_{k} )_{<M/8} \cdot (\nabla \Omega_{-k})_M},\\
F^0_{HL}&=-  \frac{\l t \r^{2+2s} }{t}\sum_{M> 8}\sum_{k\neq 0}\jap{\dfrac{A}{\l \de_v \r^{s}} \cH, \dfrac{A}{\l \de_v \r^{s}}  (\nabla^\perp \Psi_{k} )_{M} \cdot (\nabla \Omega_{-k})_{<M/8}},\\
F^0_{\mathcal{R}}&=-  \frac{\l t \r^{2+2s} }{t}\sum_{M\in \boldsymbol{D}}\sum_{M/8\leq M'\leq M} \sum_{k\neq 0}\jap{\dfrac{A}{\l \de_v \r^{s}} \cH, \dfrac{A}{\l \de_v \r^{s}} (\nabla^\perp \Psi_{k} )_{M'} \cdot (\nabla \Omega_{-k})_M}.
\end{align*}
There are various similarities with \cite{BM15}*{Section 8} in the treatment of all the non-resonant contributions, as the weight $A$ in our case is comparable to that in \cite{BM15}. In particular, as in \cite{BM15}, appealing to \eqref{bd:ketapc} and the usual arguments for short and long times, taking the case of $F^0_{HL}$ one has
\begin{align}\label{eq:FCH1}
|F^{0,NR}_{HL}|&=\left|  \frac{\l t \r^{2+2s} }{t}\sum_{M> 8}\sum_{k\neq 0}\jap{(1-\mathbbm{1}_{t\in I_{k,\xi}}\chi^I)\dfrac{A}{\l \de_v \r^{s}} \cH, \dfrac{A}{\l \de_v \r^{s}}  (\nabla^\perp \Psi_{k} )_{M} \cdot (\nabla \Omega_{-k})_{<M/8}}\right|\notag\\
&\lesssim \delta \jap{t}^{2+2s} G_\lambda\left[\jap{\de_v}^{-s}\cH\right] +\delta \jap{t}^{8s-2} G_\lambda\left[\jap{\de_v}^{-s}\cH\right] +\delta G^\delta_{elliptic}\notag\\
&\lesssim \delta \jap{t}^{2+2s} G_\lambda\left[\jap{\de_v}^{-s}\cH\right] +\delta G^\delta_{elliptic},
\end{align}
provided $s\leq2/3$. This is not restrictive, as explained after \eqref{def:dotlambda}. For $t\in I_{k,\xi}$, our weight and the one in \cite{BM15} are different, so that a careful treatment is needed in the following.
\begin{align*}
|F^{0,R}_{HL}|&\lesssim \l t \r^{1+2s} \sum_{M> 8}\sum_{k\neq 0}\int_{\RR^2}\mathbbm{1}_{t\in I_{k,\xi}}\chi^I\dfrac{A_0(\eta)}{\jap{\eta}^{s}} |\widehat{\cH}(\eta)|  \dfrac{A_0(\eta)}{\jap{\eta}^{s}}  |k,\xi||\widehat{\Psi}_{k}(\xi)_{M}|  |\widehat{\nabla \Omega}_{-k}(\eta-\xi)_{<M/8}|.
\end{align*}
As $ t\in I_{k,\xi}$, we now exchange $A_0(\eta)$ with $A_k(\xi)$, by means of \eqref{bd:Jexgood}; using that  $|\eta|\approx|\xi|\approx |kt|$, we get
\begin{align*}
\dfrac{A_0(\eta)}{\jap{\eta}^{s}}  |k,\xi| 
\lesssim   \jap{t}^{\frac12-s} \frac{\de_tw_k(\xi)}{w_k(\xi)} \e^{c\lambda |\eta-\xi|^s} p_k^\frac34(\xi)|k|^{\frac12}A_k(\xi).
\end{align*}
 This implies that (see \eqref{bd:wexfgen}) we have
\begin{align}\label{eq:FCH2}
|F^{0,R}_{HL}|
&\lesssim \delta \l t \r^{\frac32+s} \left(\norm{\sqrt{\frac{\de_tw}{w}} \dfrac{A_0}{\jap{\de_v}^{s}} \cH } +\norm{\frac{|\de_v|^\frac{s}{2}}{\jap{t}^s} \dfrac{A_0}{\jap{\de_v}^{s}} \cH }  \right)
\norm{\sqrt{\frac{\de_tw}{w}}\tA (-\Delta_L)^\frac34 |\de_z|^\frac12 \Psi } \notag\\
&\lesssim  \delta \l t \r^{2+2s}\sum_{j\in\{\lambda,w\}}G_j\left[\jap{\de_v}^{-s}\cH\right] +\delta \jap{t} G^\eps_{elliptic}.
\end{align}
We now turn our attention to $F^{0}_{LH}$ where the regularity gap between $\cH$ and $\Omega$ is crucial to control $\nabla_{-k} \Omega_M$ at high frequencies.  
Since $A_0(\eta)\lesssim \e^{c\lambda|\eta-\xi|^s} A_k(\eta)$ and $1-s\leq s$ notice that 
\begin{align*}
\frac{A_0(\eta)}{\jap{\eta}^s}|k,\xi|\lesssim |k,\xi|^{1-s} A_k(\xi) \jap{k}\e^{c\lambda |\eta-\xi|^s}\lesssim |\eta|^{\frac{s}{2}}|k,\xi|^{\frac{s}{2}}A_k(\xi)\e^{c\lambda |k,\eta-\xi|^s}.
\end{align*}
This way, we have
\begin{align}\label{eq:FCH3}
	|F^0_{LH}|&\lesssim \jap{t}^{1+2s}\norm{\frac{A}{\jap{\de_v}^s}|\de_v|^{\frac{s}{2}}\cH}\norm{|\nabla|^{\frac{s}{2}}A\Omega}\norm{\Psi}_{\G^{\lambda,\sigma-4}}\lesssim \delta \jap{t}^{2s-1}\norm{\frac{A}{\jap{\de_v}^s}|\de_v|^{\frac{s}{2}}\cH}\norm{|\nabla|^{\frac{s}{2}}A\Omega}\notag\\
	&\lesssim \delta \left(\jap{t}^{2+2s}G_\lambda[\jap{\de_v}^{-s}\cH]+\jap{t}^{4s-4}\lVert|\nabla|^{\frac{s}{2}}A\Omega\rVert^2\right)\notag\\
	&\lesssim \delta\left(\jap{t}^{2+2s}G_\lambda[\jap{\de_v}^{-s}\cH]+G_\lambda[\Omega]\right),
\end{align}
where the last inequality holds for $s\leq 2/3$.   
It remains to treat $F^{0}_{\mathcal{R}}$. Arguing as in \cite{BM15}, we get
\begin{align}\label{eq:FCH4}
	|F^{0}_{\mathcal{R}}|\lesssim \jap{t}^{1+2s} \norm{\frac{A}{\jap{\de_v}^s}\cH}\norm{\Omega}_{\G^{\lambda,\sigma-1}}\norm{\Psi_{\neq}}_{\G^{\lambda,\sigma-4}}\lesssim \delta\frac{\eps^2}{\jap{t}^{1-s}}.
\end{align}
From \eqref{eq:TCH}, \eqref{eq:FCH1}, \eqref{eq:FCH2}, \eqref{eq:FCH3}, \eqref{eq:FCH4}, and integrating on $(1,t)$ 
we get precisely \eqref{boot:impEv}.

\subsection{Control of $\dot{v}$}
To prove \eqref{boot:impvdot}, we begin from the identity
\begin{align*}
\frac12\ddt\left(\jap{t}^{4}\norm{\dot{v}}_{\G^{\lambda(t),\sigma-6}}^2\right)
&=2\jap{t}^{2}t\norm{\dot{v}}_{\G^{\lambda(t),\sigma-6}}^2+\jap{t}^{4}\left(\dot{\lambda}(t)\norm{|\de_v|^{\frac{s}{2}}\dot{v}}_{\G^{\lambda(t),\sigma-6}}^2+\jap{\dot{v},\de_t\dot{v}}_{\G^{\lambda(t),\sigma-6}}\right).
\end{align*}
Using \eqref{eq:vdot}, we notice that
\begin{align*}
\jap{\dot{v},\de_t\dot{v}}_{\G^{\lambda(t),\sigma-6}}
&=-\frac{2}{t}\norm{\dot{v}}_{\G^{\lambda(t),\sigma-6}}^2-\jap{\dot{v},\dot{v}\de_v\dot{v}}_{\G^{\lambda(t),\sigma-6}}-\frac1t\jap{\dot{v},v'\left(\nabla^\perp \Psi_{\neq} \cdot \nabla U^x_{\neq}\right)_0}_{\G^{\lambda(t),\sigma-6}}.
\end{align*}
A simple computation leads to
\begin{align}
\frac12\ddt\left(\jap{t}^{4}\norm{\dot{v}}_{\G^{\lambda(t),\sigma-6}}^2\right)
\leq \cV_1+\cV_2,\label{eq:vdoteqn}
\end{align}
with
\begin{align*}
\cV_1=-\jap{t}^{4}\jap{\dot{v},\dot{v}\de_v\dot{v}}_{\G^{\lambda(t),\sigma-6}},\qquad \cV_2=-\frac{\jap{t}^{4}}{t}\jap{\dot{v},v'\left(\nabla^\perp \Psi_{\neq} \cdot \nabla U^x_{\neq}\right)_0}_{\G^{\lambda(t),\sigma-6}}.
\end{align*}
To control $\cV_1$, using \eqref{eq:dedotv}, the algebra property and \eqref{bd:v'-1}, it easy to see that
\begin{align}
\cV_1
\lesssim\eps\jap{t}^{\frac72-s}\norm{\dot{v}}_{\G^{\lambda(t),\sigma-6}}^2\label{eq:V1esti}.
\end{align}
Turning to $\cV_2$, we first notice that by \eqref{bd:OmTh}-\eqref{bd:Psilow} we have that
\begin{align}
	\cV_2
	&\lesssim \eps^2\jap{t}\norm{\dot{v}}_{\G^{\lambda(t),\sigma-6}}
	\lesssim  \eps \jap{t}^{\frac72-s}\norm{\dot{v}}_{\G^{\lambda(t),\sigma-6}}^2 + \eps^3 \jap{t}^{-\frac32+s}\label{eq:V2esti}.
\end{align}
Putting together \eqref{eq:V1esti} and \eqref{eq:V2esti} back into \eqref{eq:vdoteqn} and integrating on $(1,t)$ gives precisely \eqref{boot:impvdot}.

\section*{Acknowledgements}
JB was
supported by National Science Foundation CAREER grant DMS-1552826.
MCZ and MD acknowledge funding from the Royal Society through a University Research Fellowship (URF\textbackslash R1\textbackslash 191492). RB and MCZ are partially supported by the GNAMPA (INdAM).

\bibliographystyle{siam}
\bibliography{NonlinearBouss}

\end{document}